\documentclass[11pt,letterpaper]{amsart}

\usepackage{amsmath, amsthm, amssymb}
\usepackage{amsfonts}
\usepackage{mathtools}

\usepackage[cal=boondox]{mathalfa}
\usepackage{bbm}
\usepackage{mathrsfs}
\usepackage{stmaryrd}
\usepackage{esint}
\usepackage[T1]{fontenc}
\usepackage{lmodern}

\usepackage{graphicx}
\usepackage{tikz}
\usepackage{tikz-cd}
\usetikzlibrary{arrows}
\usepackage[all,cmtip]{xy}
\usepackage{amscd}
\usepackage{picinpar}

\usepackage{enumerate}
\usepackage{enumitem}
\usepackage{comment}
\usepackage{array,booktabs,longtable,ragged2e}
\usepackage{extarrows}
\usepackage{verbatim}
\usepackage{calc}
\usepackage{xcolor}

\usepackage{hyperref}

\numberwithin{equation}{section}

\theoremstyle{plain}
\newtheorem{thm}{Theorem}
\numberwithin{thm}{subsection}
\newtheorem{cor}[thm]{Corollary}
\newtheorem{lem}[thm]{Lemma}
\newtheorem{prop}[thm]{Proposition}

\newtheorem{mainthm}{Theorem}

\theoremstyle{definition}

\theoremstyle{remark}
\newtheorem{rem}[thm]{Remark}

\newcolumntype{L}[1]{>{\RaggedRight\arraybackslash}p{#1}}
\DeclareMathOperator{\GL}{GL}
\DeclareMathOperator{\SL}{SL}

\DeclareMathOperator{\PU}{PU}
\DeclareMathOperator{\PGL}{PGL}
\DeclareMathOperator{\PSL}{PSL}
\DeclareMathOperator{\PN}{PN}

\DeclareMathOperator{\Proj}{P}

\DeclareMathOperator{\tr}{tr}

\DeclareMathOperator{\Stab}{Stab}

\DeclareMathOperator{\Ann}{Ann}
\DeclareMathOperator{\Fix}{Fix}
\DeclareMathOperator{\Cl}{Cl}

\def\bC{{\mathbb{C}}}

\def\bR{{\mathbb{R}}}

\def\P{{\mathbb{P}}}

\def\bF{{\mathbb{F}}}
\def\bH{{\mathbb{H}}}

\def\bZ{{\mathbb{Z}}}

\def\bP{{\mathbb{P}}}
\def\bN{{\mathbb{N}}}
\def\bQ{{\mathbb{Q}}}
\def\bK{{\mathbb{K}}}

\def\cO{{\mathcal{O}}}
\def\cG{{\mathcal{G}}}
\def\cH{{\mathcal{H}}}

\def\cA{{\mathscr{A}}}

\def\cC{{\mathscr{C}}}

\def\cE{{\mathscr{E}}}

\def\cM{{\mathscr{M}}}

\def\cT{{\mathscr{T}}}

\def\fn{{\mathfrak{n}}}

\def\fd{{\mathfrak{d}}}

\def\fp{{\mathfrak{p}}}
\def\fb{{\mathfrak{b}}}
\def\fa{{\mathfrak{a}}}
\def\fq{{\mathfrak{q}}}

\begin{document}

\title[Decay for cusps of congruence subgroups]{Decay of weighted cusp counts for congruence subgroups of \(\SL_2\) over number fields}

\author{Shengyuan Zhao}
\address{Universit\'e Paul Sabatier, Institut de Math\'ematiques de Toulouse, 118, route de Narbonne, F-31062 Toulouse, France}
\email{shengyuan.zhao@math.univ-toulouse.fr}

\begin{abstract}
For congruence subgroups commensurable with $\operatorname{SL}_2$
over number fields, we study cusp counts with arithmetic multiplicities. We prove that the ratio of the total weighted cusp
count to the group index is bounded by a negative power of the norm of the congruence level, with an exponent that can be chosen explicitly in terms of the degree of the number field. This is a generalization of a theorem of Cox--Parry over rational numbers. The main input is an explicit local orbit estimate for arbitrary subgroups of exact level in
$\operatorname{SL}_2(\mathcal O_K/\mathfrak p^e)$, uniform over all number fields of fixed degree. This estimate is covered by a general result of Finis--Lapid~\cite{FinisLapid}. We give a new explicit proof in our setting based on an analysis reminiscent of additive combinatorics. 
\end{abstract}

\maketitle

\section{Introduction}

We study non-cocompact irreducible arithmetic lattices in Lie groups of the form $\PSL_2(\bR)^r\times \PSL_2(\bC)^s$, acting on a product of two or three dimensional real hyperbolic spaces $(\bH^2)^r\times (\bH^3)^s$. This amounts to considering arithmetic subgroups of $\SL_2(\bK)$ where $\bK$ is a number field. Serre's theorem~\cite{SerreSL2CSP} asserts that all such arithmetic groups are congruence when $\bK$ is neither $\bQ$ nor an imaginary quadratic field. We are interested in the asymptotic number of cusps when the levels of the congruence subgroups grow. 

For \(\bK=\bQ\) Thompson~\cite{Thompson} proved the finiteness of conjugacy classes of congruence subgroups of fixed genus, while Cox--Parry~\cite{CoxParry1984} obtained effective bounds for the level by delicate $p$-adic arguments. Duque--Rosero and Voight~\cite{DuqueRoseroVoight} later asked whether the methods of Cox--Parry could be extended to other settings.

In this paper we generalize Cox--Parry's result to \(\SL_2\) over arbitrary number fields. We also add one layer of complexity: to each cusp $\sigma$ we associate a multiplicity $a_\sigma\in \bN^*$ (see \S\ref{sec:cusps-hirzebruch-multiplicity} for definition), and we count the sum of the multiplicities. This sum is in general a higher order quantity than the bare number of cusps because multiplicities may grow when the congruence level grows (see \S\ref{sec:examples-cusp-multiplicity} for examples). For $\bK=\bQ$ all multiplicities are one, while for a real quadratic $\bK$ it is the multiplicity that Hirzebruch~\cite{Hirzebruch1973} introduced in his study of resolution of singularities of Hilbert modular surfaces. Our main result is the following:

\begin{mainthm}[Theorem~\ref{thm:quantitative-global-cusp-decay}]\label{mainthmA}
Let $\bK$ be a number field. There are explicit constants \(C,c>0\), depending only on \(\bK\), such that, for every congruence subgroup \(\Gamma\) of $\SL(2,\cO_\bK)$ with level ideal $\fn$, we have
\[
 \sum_{\sigma \;\text{cusp of}\; \Gamma} a_\sigma(\Gamma)\le C\,N(\fn)^{-c}[\SL(2,\cO_\bK):\Gamma],
\]
where the sum runs over all cusps of $\Gamma$. In particular, the number of cusps of $\Gamma$ satisfies the same upper bound.
\end{mainthm}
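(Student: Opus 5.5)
We reduce the global weighted cusp count to a count of orbits on the projective line over finite quotients of $\cO_\bK$. Let $\Gamma\supseteq\Gamma(\fn)$ have exact level $\fn$, and let $H$ be the image of $\Gamma$ in $G_\fn:=\SL_2(\cO_\bK/\fn)$. The cusps of $\SL_2(\cO_\bK)$ are indexed by the class group, and over each of them the cusps of $\Gamma$ are the double cosets $\Gamma\backslash\SL_2(\cO_\bK)/B_\sigma$, where $B_\sigma$ is the stabilizer of the cusp $\sigma$. From the definition of the multiplicity $a_\sigma$ (\S\ref{sec:cusps-hirzebruch-multiplicity}), $a_\sigma$ should be controlled by the index of the unipotent-plus-unit part of $\Gamma\cap B_\sigma$ inside the corresponding part of $B_\sigma$, modulo the part already seen by the width. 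The first step is therefore to show that
\[
\sum_\sigma a_\sigma(\Gamma)\le C_0\cdot\#\{H\text{-orbits on } G_\fn/U_\fn\},
\]
with $C_0=h_\bK$ times a unit-index constant. Here $U_\fn$ is the image of the unipotent radical, so $G_\fn/U_\fn$ is essentially the set of primitive vectors in $(\cO_\bK/\fn)^2$. The sum $\sum_\sigma a_\sigma$ then becomes a weighted orbit count in which orbits of vectors replace orbits of lines, and the weighting by the torus part is absorbed into the passage from $B$ to $U$. By Burnside this count equals $|H|^{-1}\sum_{h\in H}\#\mathrm{Fix}(h)$. Since $[\SL_2(\cO_\bK):\Gamma]=|G_\fn|/|H|$ and $|G_\fn/U_\fn|\asymp N(\fn)^2$, it suffices to prove
\[
\#\bigl(H\backslash G_\fn/U_\fn\bigr)\le C\,N(\fn)^{-c}\,|G_\fn|/|H| .
\]

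The second step is to factor over primes. By the Chinese remainder theorem, $G_\fn=\prod_{\fp^e\|\fn}G_{\fp^e}$. However, $H$ need not be a product, so we pass to the projections $H_\fp$ and to the subgroups $H\cap G_{\fp^e}$. Because $\SL_2(\cO/\fp^e)$ has no common nonabelian composition factors for distinct $\fp$, apart from small exceptions with $N\fp\le 3$ which are handled by Goursat's lemma at a bounded cost, the orbit count of $H$ is at most the product of the local orbit counts of the $H_\fp$. Likewise $|G|/|H|$ is at least the product of the local indices divided by a bounded factor. This reduces everything to the local statement
\[
\#\bigl(H_\fp\backslash G_{\fp^e}/U\bigr)\le C_1\,N(\fp)^{-ce}\,[G_{\fp^e}:H_\fp]
\]
for every subgroup $H_\fp$ of exact level $\fp^e$. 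We need $C_1=1$ once $N(\fp^e)$ is large, so that the product of the constants stays bounded; the finitely many small prime powers contribute a constant depending on the degree.

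The third step, the local orbit estimate, is where the real work lies; this is the estimate the abstract attributes to Finis--Lapid. My plan is to write the count via Burnside as the average over $h\in H_\fp$ of the number of primitive vectors $v$ with $hv=v$. For each $h\neq1$ with $h\equiv1\bmod\fp^k$ but $h\not\equiv1\bmod\fp^{k+1}$, the fixed vectors form a set of size about $N(\fp)^{e+k}$, compared with $N(\fp)^{2e}$ in total. So one must bound the proportion of elements of $H_\fp$ lying deep in the congruence filtration $K_k=\ker(G_{\fp^e}\to G_{\fp^k})$. The key dichotomy is as follows. Either $|H_\fp\cap K_k|$ is small relative to $|K_k|$ for all large $k$, which makes the index large and gives the decay. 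Or $H_\fp$ contains a large part of some $K_k$. In the second case, an additive-combinatorics growth argument on the abelian quotients $K_k/K_{2k}\cong\mathfrak{sl}_2(\cO/\fp^k)$ applies: commutators and the Lie bracket force $H_\fp\cap K_k$ to be large, so $H_\fp$ contains $K_{k'}$ with $k'$ close to $k$. This contradicts exact level $e$ unless $e-k$ is bounded by a multiple of $e$, and that trade-off yields an exponent $c$ depending only on $[\bK:\bQ]$. The main obstacle is making this growth step uniform in $\fp$ and $e$, in particular controlling the ramification and residue degree through $[\bK:\bQ]$ alone.
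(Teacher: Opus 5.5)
Your Step 1 matches the paper's reduction ($a_\sigma\le C_{\rm UV}u_{\sigma|\tau}$ and $\sum_\sigma u_{\sigma|\tau}=|\overline\Gamma\backslash\overline\cG/\overline U_\tau|$). The genuine gap is in Step 2.

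\textbf{Step 2 fails.} Your claim that the groups $\SL_2(\cO/\fp^e)$ for distinct $\fp$ share no nonabelian composition factors is false whenever two primes of the same residue degree lie over one rational prime, for instance when $p$ splits and $\cO/\fp_1\cong\cO/\fp_2\cong\bF_p$. Take $\fn=\fp_1\fp_2$ and let $\Gamma$ be the preimage of the diagonal $H=\{(g,g)\}\subset\SL_2(\bF_p)^2$. The level of $\Gamma$ is exactly $\fn$, yet:
\begin{itemize}
\item both projections are all of $\SL_2(\bF_p)$, so they do not have exact level and have $\chi=1$;
\item both intersections $H\cap G_1(\fp_i)$ are trivial;
\item the Goursat gluing index is $|\SL_2(\bF_p)|$, which is unbounded.
\end{itemize}
Your orbit-count inequality also fails here: $H$ has $2(p-1)$ orbits on $X_1\times X_1$, while the product of the projections has one. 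The same thing happens across distinct rational primes through common abelian quotients. For example, a $p$-quotient of a congruence kernel at $\fp$ can be glued to a torus character of a Borel subgroup at $\fp'$ when $p\mid N(\fp')-1$. So projections can lose exact level there as well.

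The inequality $\chi(H)\le\prod_\fp\chi(\operatorname{pr}_\fp H)$ is true by monotonicity of the Burnside sum, but it is useless: its right side is $1$ in the example, while $\chi(H)=2/(p(p+1))$. The missing idea is to work with \emph{intersections}. These inherit exact level $e_\fp$ from the minimality of $\fn$: if $\overline\Gamma$ contained $K_{e_\fp}(\fp)$, then $\cG(\fn\fp^{-1})\subset\Gamma$. The paper reaches intersections without any Goursat argument, in two moves.
\begin{itemize}
\item Across rational primes: an element with a fixed point on $\overline\cG/\overline U_\tau$ has every $p$-coordinate of $p$-power order. Each coordinate is therefore a power of the element, so the Burnside sum factors exactly as $\prod_p\chi(\overline\Gamma\cap G(p))$ (Lemma~\ref{lem:primary-product-chi-Gamma}).
\item Within one rational prime: Lemma~\ref{lem:fibre-chi-inequality} gives $\chi(H)\le\chi(H\cap A)\,\chi(\operatorname{pr}_BH)\le\chi(H\cap A)$. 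One keeps only the prime $\fp(p)$ maximizing $e_\fp\log N(\fp)$, which costs a factor $[\bK:\bQ]$ in the exponent.
\end{itemize}

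\textbf{Step 3 is a plan, not a proof.} As formulated, the dichotomy does not yield decay. A large index only gives $\chi(H)\le|X_e|/[G_e:H]$, which is useless unless $[G_e:H]\gg q^{2e}$. The Borel subgroup modulo $\fp^e$ has $|W_k|=q^2$ at every level, so it is ``small in every $K_k$'', but its index is only about $q^{e}$. Its decay comes from the scarcity of fixed points, not from the index.

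The paper's local argument bounds exactly this fixed-point mass, $\sum_r T_r(\lambda)q^{r-2e}$, over trace-two elements $I+bN(\eta)$ with nilpotent leading term. It uses two tools:
\begin{itemize}
\item a parallelogram/energy argument: many slopes force $F$-directions in deeper $W_r$;
\item a semisimple count through the spaces $V_k$.
\end{itemize}
Exact level then forces one of $\jmath_1,\jmath_2,\hat c,|\Pi|$ to be of size $\gg e$.

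Your plan also does not address bounded $e$ with $N(\fp)\to\infty$, in particular $e=1$. There is no filtration to exploit there, and these primes cannot be discarded as finitely many exceptions. The paper handles this regime with three inputs:
\begin{itemize}
\item Dickson's classification, giving $\chi_1(H)\le 2/q$ for proper $H$ when $q\ge59$;
\item the comparison $\chi_e(H)\le\chi_e(N_2)+\chi_1(H_1)-q^{-1}$;
\item the fact that exact level forces $H_1\neq\SL_2(\bF_q)$ at unramified $p\ge5$, obtained from $p$-th powers of nilpotent lifts.
\end{itemize}
Finally, you do not need $C_1=1$. Per-prime constants such as $3$ are harmless, because $A^{\omega(\fn)}N(\fn)^{-\eta}$ is bounded.
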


For principal congruence subgroups or for special classical congruence subgroups such as $\Gamma_0(\fn),\Gamma_1(\fn)$, the ratio \((\sum a_\sigma)/[\cG:\Gamma]\) can sometimes be computed explicitly, and the asymptotic estimate follows, see \S\ref{sec:examples-cusp-multiplicity}. The difficulty addressed here is that
an arbitrary congruence subgroup of a fixed level may have a much less rigid
image in the finite congruence quotient. The main work is therefore to obtain
a uniform estimate for all subgroups of the relevant finite quotients with prescribed level. Theorem~\ref{mainthmA} over \(\bK=\bQ\) is obtained by Cox--Parry~\cite{CoxParry1984}. The central parts of our method deal with difficulties not present over $\bQ$.

Let us mention some qualitative or quantitative forms of cusp sublinearity known in other settings. For two or three dimensional real hyperbolic orbifolds Frączyk and Raimbault~\cite{Raimbault,Fraczyk,FraczykRaimbault}  obtained Benjamini--Schramm convergence results which imply that the cusp number is dominated by the volume. Note also that the Benjamini--Schramm convergence is done for general arithmetic groups in a very recent preprint \cite{FraczykHurtadoRaimbault} of Fr\k{a}czyk--Hurtado--Raimbault. Di Cerbo--Stern~\cite{DicerboStern} obtained bounds for cusp numbers for both real- and complex-hyperbolic manifolds. In contrast, Theorem~\ref{mainthmA} is uniform over all congruence subgroups of a prescribed level and controls also the weights. 

In the complex hyperbolic \(\PU(1,n)\)-setting, universal bounds for the number of cusps in terms of volume were obtained by Parker~\cite{Parker}, Hwang~\cite{Hwang},
Di Cerbo--Di Cerbo~\cite{DiCerboDiCerbo-sharp,DiCerboDiCerbo-effective},
and Bakker--Tsimerman~\cite{BakkerTsimerman}. These results give linear bounds in the volume,
rather than decay of the ratio in congruence towers. Actually without congruence hypothesis the cusp counting decay may even fail in the complex hyperbolic setting: Stover~\cite{Stover} constructed towers of finite covers of complex ball quotients in which the number of cusps grows linearly with the covering degree.

Let us explain ideas of the proof of Theorem~\ref{mainthmA}. We first reduce the problem to a local theorem over a prime ideal where the real difficulties lie:

\begin{mainthm}[Theorem~\ref{thm:uniform-exponential-local-chi}]\label{mainthmB}
Let $\bK$ be a number field with degree \(n=[\bK:\mathbb Q]\). There exist constants $B,\alpha>0$ depending only on $n$ such that, for every prime ideal $\fp$ of $\cO_\bK$, for every integer $e\ge 1$, for every subgroup \(H\subset \SL_2(\cO_\bK/\fp^e)\) of exact level $e$, we have
\[
\frac{\big\lvert H\backslash\SL_2(\cO_\bK/\fp^e)/U_e \big\rvert}{[\SL_2(\cO_\bK/\fp^e): H]}
\le 
\lvert\cO_\bK/\fp \rvert^{\displaystyle B-\alpha e},
\]
where $U_e$ denotes the unipotent upper triangular subgroup of $\SL_2(\cO_\bK/\fp^e)$.
\end{mainthm}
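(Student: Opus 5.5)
Write $G=\SL_2(\cO_\bK/\fp^e)$, $q=\lvert\cO_\bK/\fp\rvert$ and $U=U_e$. The plan is to turn the double coset count into a fixed-point average and then prove a uniform stabilizer bound. By Burnside's lemma applied to the right action of $U$ on the coset space $H\backslash G$,
\[
\lvert H\backslash G/U\rvert=\frac{1}{\lvert U\rvert}\sum_{u\in U}\#\{Hg: gug^{-1}\in H\}
=\frac{[G:H]}{\lvert U\rvert}\cdot\frac{1}{\lvert G\rvert}\sum_{g\in G}\lvert gUg^{-1}\cap H\rvert .
\]
So the ratio in Theorem~\ref{mainthmB} equals the average of $\lvert H\cap gUg^{-1}\rvert/\lvert U\rvert$ over $g\in G$. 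It therefore suffices to show
\[
\lvert H\cap U'\rvert\le q^{B-\alpha e}\lvert U\rvert=q^{B+(1-\alpha)e}
\]
for every conjugate $U'$ of $U$. Since exact level is preserved under conjugation, I may replace $H$ by $g^{-1}Hg$ and reduce to bounding $\lvert H\cap U\rvert$ uniformly over all subgroups $H$ of exact level $e$.

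Identify $U\cong\cO/\fp^e$. Then $H\cap U$ corresponds to an additive subgroup $M\subset\cO/\fp^e$. If $M$ contains an element of valuation $v$, it contains the full $\mathbb Z$-span of that element. That span is not all of $\fp^v/\fp^e$, because this quotient has $q^{e-v}$ elements but may be generated over $\mathbb Z$ only by about $n$ elements. This is where the difficulty over general $\bK$ lies, unlike over $\bQ$. The key step is a ``growth'' argument. Suppose $M$ is large, say $\lvert M\rvert\ge q^{(1-\alpha)e+B}$. Conjugating $\begin{pmatrix}1&x\\0&1\end{pmatrix}$ by torus elements and lower unipotents present in $H$ would multiply $M$ by squares of units, or mix it with the lower unipotent part. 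The aim is to show that $H$ then contains the whole congruence kernel $K_{e-1}=\ker(G\to\SL_2(\cO/\fp^{e-1}))$, contradicting exact level $e$.

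Concretely, I would first use a pigeonhole/sumset argument, in the spirit of additive combinatorics. A large subgroup $M$ of the finite ring $\cO/\fp^e$, of size at least $q^{e-\alpha e+B}$, must contain $\fp^{m}/\fp^e$ for some $m\le Cn\alpha e+B'$, up to a bounded index depending on $n$. The reason is that $\cO/\fp^e$ is a $\mathbb Z/p^{k}$-module of rank at most $n$ with uniformly bounded ramification. I then pass to the Lie algebra filtration: $H\cap K_j$ gives additive subgroups $L_j\subset\mathfrak{sl}_2(\cO/\fp)$. Commutators of the form $[1+p^a X,1+p^bY]\approx 1+p^{a+b}[X,Y]$ show that once $L_j$ contains a nonzero upper nilpotent together with enough lower or diagonal elements, the relevant brackets generate all of $\mathfrak{sl}_2$ after boundedly many steps. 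This would force $K_{e-1}\subset H$. On the other hand, if $H$ has no such elements, $H$ essentially lies in a Borel subgroup modulo high powers of $\fp$. In that case I handle it directly: conjugating back, only the $g$ lying in a small neighborhood of the Borel contribute large intersections, and these $g$ have measure about $q^{-(\text{depth})}$, which again gives decay.

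I expect the main obstacle to be the uniformity in $\fp$ and $e$ when $p$ is small or ramified. There the maps $x\mapsto x^2$ and $x\mapsto px$ interact badly with the $\fp$-adic filtration, and the $\mathbb Z$-span of elements is far from an $\cO$-ideal. The first thing I would try is to work with $\mathbb Z_p$-lattices of rank $n$ and lose a factor $q^{O(n)}$ (absorbed into $B$) in each step. I would also choose $\alpha\approx 1/(Cn)$ so that the bounded number of bracket steps fits within depth $e$.
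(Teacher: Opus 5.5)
\textbf{There is a genuine gap.} Your opening identity is correct: the ratio equals the average over $g\in G$ of $\lvert H\cap gUg^{-1}\rvert/\lvert U\rvert$, which is the orbital-integral form recalled in the appendix on Finis--Lapid.

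\emph{The pointwise reduction is false.} You claim it suffices to bound $\lvert H\cap U'\rvert\le q^{B-\alpha e}\lvert U\rvert$ for every conjugate $U'$. This bound fails for subgroups of exact level $e$. Take $H=U_e$, or $H=B_e$, the image of the upper triangular Borel subgroup in $G_e$. Neither contains the kernel of $G_e\to G_{e-1}$, since that kernel contains $I+\varpi^{e-1}F$. So both have exact level $e$, yet $H\cap U_e=U_e$.

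For the same reason your growth step, that a large $M=H\cap U$ forces $H$ to contain that kernel, is false as stated. It presupposes that $H$ contains torus and lower-unipotent elements, which need not be the case.

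For these groups the theorem still holds, but only because of averaging. If $g=\begin{pmatrix}1&0\\ \eta&1\end{pmatrix}$ with $v_\fp(\eta)=k$, then $B_e\cap gU_eg^{-1}$ has order $q^{\min(2k,e)}$. Such lines have density about $q^{-k}$, so the average is of order $q^{-e/2}$ up to a factor $e$, even though individual intersections are all of $U$.

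\emph{The fallback is not a proof.} Your last paragraph anticipates this for ``essentially Borel'' groups. However, the dichotomy you rely on is neither made precise nor proved. It would say: either $H$ has enough transverse elements to generate the kernel, or $H$ is Borel modulo a high power of $\fp$. The intermediate regime is where all the difficulty lies:
\begin{itemize}
\item lower-nilpotent directions may appear only late ($\jmath_2$ large), or only at sparse levels (the set $\Pi$);
\item the Lie images $W_j$ are in general only $\bF_p$-subspaces, so brackets need not produce full $\bF_q$-lines;
\item for $p=2$ one has $[D,E]=0$, so single commutators give nothing.
\end{itemize}

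\emph{What is missing} is a quantitative bound on the average itself, not on individual intersections. The paper gets it as follows.
\begin{itemize}
\item It applies Burnside to $N_{j_0}$ acting on $X_e$, losing only the bounded index $[H:N_{j_0}]$.
\item For each fixed parameter $b$, it bounds the number $\lvert\mathcal E_j(b)\rvert$ of slopes $\eta$ with $I+bN(\eta)\in H$, i.e.\ how many conjugates of $U$ contain a given unipotent element.
\item This uses an energy argument. By Proposition~\ref{prop:parallelogram-new}, four slopes forming a parallelogram in $\mathcal E_j(b)$ produce an element of $H$ with leading term in $\bF_q^*F$ at level $v_\fp(b\widetilde\delta\widetilde t)+\nu+1$. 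Proposition~\ref{prop:membership-transversality-new} bounds refinements through the diagonal direction.
\item Finally, exact level, combined with $p$-th power propagation (and fourfold commutators when $p=2$), forces one of $\jmath_1,\jmath_2,\hat c,\lvert\Pi\rvert$ to be of order $e$.
\end{itemize}
Without some averaged estimate of this kind, your argument does not reach the theorem.
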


After a first version of our paper is put online, the author discovered that a qualitative version of Theorem~\ref{mainthmB} with constants depending on $\bK$ is covered by a paper of Finis-Lapid~\cite{FinisLapid}. Finis-Lapid's results work for more general algebraic groups. We give a new explicit  rank-one proof of the orbital-integral decay phenomenon of Finis–Lapid; see Appendix \S\ref{sec:finis-lapid}.

The global argument expresses the weighted cusp contribution in
terms of double cosets and decomposes it prime by prime, thereby
reducing Theorem~A to the local estimate of Theorem~B. The cusps with multiplicities are expressed as double cosets, and the weighted cusp contribution is decomposed prime by prime. One is led to show that, in the finite quotient
\(\SL_2(\cO_\bK/\fp^e)\), a subgroup of exact level \(e\) cannot have too many orbits on
\(\SL_2(\cO_\bK/\fp^e)/U_e\) relative to its index. 


The technical bulk of the paper is Section~\ref{sec:local-estimates-II}. Though the underlying philosophy is similar to Finis--Lapid's work, our proof is different and more explicit. Let us explain the ideas of our proof. One fixes a nilpotent direction and a first coefficient, and then counts the possible coefficients. If too many slopes occur, then many pairs of slopes have controlled differences. The corresponding four-point configurations (with three consecutive differences, which we call parallelograms) force new infinitesimal directions at deeper congruence levels. This is the mechanism which prevents the fixed-point contribution from being too large. This use of differences and parallelograms is inspired by the energy argument in additive combinatorics, see \cite[\S~2.3]{TaoVuAdditiveCombinatorics}. The present argument, however, takes place inside the congruence filtration of a finite principal local ring, and the output is a propagation statement across levels rather than a growth statement inside a single abelian group. A useful way to summarize the proof is that all sources of asymptotic decay are eventually compressed into four numerical quantities. The final fixed-point estimate has, up to constants depending only on \([\bK:\bQ]\), the schematic form
\[
   p^{\jmath_1+\jmath_2+\widehat c+\lvert\Pi\rvert}.
\]
The notations are introduced later; in this overview it
is enough to regard these four terms as measuring four possible obstructions to
having many cusp orbits. Two of them measure how late the relevant nilpotent
directions appear, one measures a transverse codimension, and one counts levels at which a nilpotent direction is absent. The main point is that exact level \(e\) forces at least one of these four quantities to be comparable to \(e\). This alternative is what turns the local analysis into a uniform exponential decay in the level.

We now give a straightforward application of Theorem~\ref{mainthmA}. Hirzebruch, Van den Ven and Zagier~\cite{HirzebruchVandeVen,HirzebruchZagier} classified birational types of Hilbert modular surfaces, and raised a conjecture concerning their minimal models (see \cite[Conjecture~VII.4.4]{VDG}). The following theorem is due to van der Geer~\cite{VDG79} in the case of principal congruence subgroups. We denote by $Y_\Gamma$ the smooth projective surface obtained by Hirzebruch's resolution of $\bH\times \bH/\Gamma$. 
\begin{mainthm}\label{mainthm:large-level-normal-minimal}
Let \(\bK\) be a real quadratic field and \(\Omega\) be a generalized Hilbert modular group over \(\bK\). There exists a constant
\(N_0=N_0(\bK)\) with the following property. If \(\Gamma\triangleleft\Omega\) is a normal congruence subgroup of congruence level \(\fn\) with \(N(\fn)>N_0\) such that the smooth projective surface \(Y_\Gamma\) is not rational, then it is minimal, i.e.\ has no $(-1)$-curves.
\end{mainthm}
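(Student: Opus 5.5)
The approach is van der Geer's, with the principal-congruence input replaced by Theorem~\ref{mainthmA}. For a Hilbert modular surface the $(-1)$-curves on $Y_\Gamma$ are controlled by the Hirzebruch--Zagier exceptional curves. These are the curves $F_N$, the images of the diagonal and its twisted modular curves, together with the components of the resolution cycles. In van der Geer's argument (\cite[Ch.~VII]{VDG}), a non-rational $Y_\Gamma$ fails to be minimal only if some exceptional curve $E$ has self-intersection $-1$. Such an $E$ is the image of a modular curve $F_N$ passing through enough quotient or cusp singularities.

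The numerical criterion is that the self-intersection of the strict transform of a component of $F_N$ on $Y_\Gamma$ equals
\[
\text{(a volume term, proportional to }\tfrac{1}{2}\deg\text{)} \;-\;\text{(corrections at cusps and elliptic points on the curve)}.
\]
Concretely, with $C$ a component of the image of $F_N$ in $\bH\times\bH/\Gamma$, adjunction and the Hirzebruch intersection formula give
\[
 C^2 \;=\; 2\,\mathrm{vol}(C) \;-\; \sum_{\sigma\in C} (\text{cusp correction at }\sigma) \;-\; \sum_{x} (\text{elliptic correction at }x).
\]
Each cusp correction is bounded by a constant times the multiplicity $a_\sigma$, and the elliptic correction is bounded in terms of $\bK$ only.

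\textbf{Step 1: normality.} Because $\Gamma\triangleleft\Omega$, the quotient group $G=\Omega/\Gamma$ acts transitively on the components lying over a given curve of $Y_\Omega$, and transitively on cusps of $\Gamma$ over a given cusp of $\Omega$. So the cusp multiplicities are constant along $G$-orbits. The number of cusps of $\Gamma$ on a component $C$ is therefore $\deg(C\to F_N/\Omega)$ times the total cusp count of $\Gamma$ divided by $[\Omega:\Gamma]$, up to constants. This is exactly where normality enters.

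\textbf{Step 2: the volume term dominates.} By Step 1, the cusp correction on $C$ is at most
\[
C_1\,\deg(C)\,\frac{\sum_\sigma a_\sigma(\Gamma)}{[\Omega:\Gamma]} \;\le\; C_2\,\deg(C)\,N(\fn)^{-c}
\]
by Theorem~\ref{mainthmA}, applied after passing to $\Gamma\cap\SL_2(\cO_\bK)$, which changes the index only by bounded factors. The elliptic fixed points of a normal subgroup of large level disappear once $N(\fn)$ is large, since torsion elements have bounded order and lie outside $\Gamma(\fn)$. The volume term, on the other hand, is $\ge c_3\deg(C)$. Hence for $N(\fn)>N_0$ we get $C^2\ge 0$ for every such modular curve, and they cannot be $(-1)$-curves.

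\textbf{Step 3: the remaining $(-1)$-curves.} Here I need the classification (\cite[VII]{VDG}): on a non-rational Hilbert modular surface, every $(-1)$-curve is either a component of some $F_N$ with $N$ in a finite list depending on $\bK$, or lies in a configuration excluded once the elliptic points vanish. The curves in cusp resolution cycles have self-intersection $\le -2$. This reduces the theorem to Step 2 for finitely many $N$.

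The main obstacle is Step 3: making sure that, for generalized Hilbert modular groups $\Omega$ and non-principal $\Gamma$, any $(-1)$-curve really must map to one of finitely many $F_N$. I would prove it by bounding $E\cdot K_{Y}$, using the fact that $K_Y$ is effective up to cusp cycles, and then identifying irreducible curves with small canonical degree as modular curves (Hirzebruch--Zagier). The rest is bookkeeping with Theorem~\ref{mainthmA}.
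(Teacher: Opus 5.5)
\textbf{There is a genuine gap, in the step you flag yourself.} Your Step~3 claims that on a non-rational $Y_\Gamma$ every $(-1)$-curve is a component of some Hirzebruch--Zagier curve $F_N$, with $N$ in a finite list depending on $\bK$. No such classification exists in \cite{VDG} or elsewhere. A description of all exceptional curves on Hilbert modular surfaces is of the strength of the open minimal-model conjecture \cite[Conjecture~VII.4.4]{VDG}. Even for the full Hilbert modular group, nobody knows that exceptional curves are modular curves. Your proposed fix has no theorem behind it: you would bound $K_{Y_\Gamma}\cdot E$ and then ``identify irreducible curves of small canonical degree as modular curves''. Without it, Steps~1--2 only compute self-intersections of components of $F_N$ and say nothing about an arbitrary $(-1)$-curve.

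The bookkeeping in Steps~1--2 is also incomplete:
\begin{itemize}
\item The number of $\Gamma$-cusps on a component $C$ is not controlled by $\deg(C)\sum_\sigma a_\sigma/[\Omega:\Gamma]$ alone. The number of branches of the $G$-orbit of $C$ through each cusp also enters.
\item The bound ``cusp correction $\le$ const$\cdot a_\sigma$'' needs uniform control of the primitive cusp cycles, which you do not supply.
\item ``Torsion elements lie outside the principal congruence subgroup'' does not make $\Gamma$ torsion-free. In general $\Gamma$ is strictly larger than that subgroup.
\end{itemize}

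\textbf{The missing idea} is that you never need to identify the $(-1)$-curve; you only need to test it against the cusp divisor $\Delta_\Gamma$. The paper does this with two bounds on $K_{Y_\Gamma}\cdot\Delta_\Gamma$.
\begin{itemize}
\item \emph{Lower bound.} Let $E$ be a $(-1)$-curve. By normality its $G$-orbit $E_1,\dots,E_r$ consists of disjoint $(-1)$-curves. Van der Geer's argument (\cite[Proposition~7.18]{VDG}, \cite[Theorem~VII.7.19]{VDG}) gives $\sum_i E_i\cdot\Delta_\Gamma\ge r+\frac13[\Omega:\Gamma]$. Since $Y_\Gamma$ is not rational, this yields $K_{Y_\Gamma}\cdot\Delta_\Gamma\ge\frac13[\Omega:\Gamma]$.
\item \emph{Upper bound.} Normality also makes every quasi-amplitude $b(\Gamma,g)$ stable under $\bZ[v]$, for a fixed non-rational $v\in(\cO_\bK^*)^2$. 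So only finitely many strict classes of modules occur, hence only finitely many primitive cusp cycles. The cycle at $\sigma$ is the primitive cycle repeated $a_\sigma(\Gamma)$ times, so adjunction gives $K_{Y_\Gamma}\cdot\Delta_\Gamma\le B_\bK\sum_\sigma a_\sigma(\Gamma)$.
\end{itemize}
Theorem~\ref{mainthmA} makes these two bounds incompatible once $N(\fn)$ is large. Normality is used twice here, and this is exactly the input your proposal lacks.
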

The condition that \(Y_\Gamma\) is not rational is automatically satisfied for all but finitely many real quadratic $\bK$, see \cite{HirzebruchVandeVen,HirzebruchZagier} and \cite[\S~VII.3]{VDG}. Note that normal congruence subgroups do not outnumber principal congruence subgroups by much, see \cite{Vaserstein, Abe}. Theorem~\ref{mainthm:large-level-normal-minimal} is not true if we drop the normality condition; this will be studied in a subsequent paper. 

We refer to \cite{BGLS,ABBGNRS,FL18,Matz} an the references therein for other problems about asymptotic growth of invariants of arithmetic groups when the level grows.

\addtocontents{toc}{\protect\setcounter{tocdepth}{-1}}
\subsubsection*{Plan}
\addtocontents{toc}{\protect\setcounter{tocdepth}{2}}
Sections~\ref{sec:local-estimates-I} and \ref{sec:local-estimates-II} are local over a fixed prime while Section~\ref{sec:global-field-section} is over a global field. 

Section~\ref{sec:local-estimates-I} sets up the basic objects, notations and computations for Section~\ref{sec:local-estimates-II}. Section~\ref{sec:local-estimates-II} is technically the hardest part of the paper. Once the framework based on Burnside's lemma is settled, the key steps of our method are the four subsections \ref{sec:parallelogram}, \ref{sec:semisimple-obstruction}, \ref{sec:double-counting} and \ref{sec:semisimple-bounds}. Theorem~\ref{mainthmB} is proved at the end of Section~\ref{sec:local-estimates-II}. Note that the case of residue characteristic two requires often a separate, more complicated treatment; the computations in \S\ref{sec:commutator-constraints-two} illustrate the kind of troubles. 

In Section~\ref{sec:global-field-section} we define and explain the central objects of the paper. After the setup, the rest of Section~\ref{sec:global-field-section} is mostly a chain of formal arguments connecting Theorem~\ref{mainthmA} to Theorem~\ref{mainthmB}. 

Theorem~\ref{mainthm:large-level-normal-minimal} is a quick consequence of Theorem~\ref{mainthmA}, it is proved in the appendice.

\addtocontents{toc}{\protect\setcounter{tocdepth}{-1}}
\subsubsection*{Notations}
\addtocontents{toc}{\protect\setcounter{tocdepth}{2}}
Most notations used in the two local sections are not used in Section~\ref{sec:global-field-section}. In this paper we use both $\lvert \cdot \rvert$ and $\#$ to denote the cardinal of a finite set.

\begingroup
\small
\renewcommand{\arraystretch}{1.05}

\setlength{\tabcolsep}{3pt}

\begin{longtable}{@{}L{0.39\textwidth}L{0.08\textwidth}L{0.39\textwidth}L{0.08\textwidth}@{}}
\toprule
\textbf{Notation/Notion}   & \textbf{Ref.} &
\textbf{Notation/Notion}   & \textbf{Ref.} \\
\midrule
\endfirsthead

\endhead

\midrule
\multicolumn{4}{r@{}}{\emph{continued on the next page}}\\
\endfoot

\bottomrule
\endlastfoot

\(R_e=\cO_\bK/\fp^e\)
& \S\ref{sec:setup}
&
\(\bF_q=\cO_\bK/\fp\)
& \S\ref{sec:setup} \\

\(q=p^f\)
& \S\ref{sec:setup}
&
\(\varpi\) uniformizer
& \S\ref{sec:setup} \\

\(e_0=v_\fp(p)\)
& \S\ref{sec:setup}
&
\(p=u_0\varpi^{e_0}\)
& \S\ref{sec:setup} \\

\(G_e=\SL_2(R_e)\)
& \S\ref{sec:setup}
&
\(U_e\) upper unipotent group
& \S\ref{sec:setup} \\

\(X_e=G_e/U_e\)
& \S\ref{sec:setup}
&
\(\rho_{k,j}:G_k\to G_j\)
& \S\ref{sec:setup} \\

\(\chi_e(H)=|H\backslash X_e|/[G_e:H]\)
& \S\ref{sec:setup}
&
exact level \(e\)
& \S\ref{sec:setup} \\

\(|G_e|=q^{3e-2}(q^2-1)\)
& \S\ref{sec:setup}
&
primitive columns
& \S\ref{sec:primitive-column} \\

\(K_j=\ker(G_j\to G_{j-1})\)
& \S\ref{sec:filtration-lie-algebra}
&
\(H_j=\rho_{e,j}(H)\)
& \S\ref{sec:filtration-lie-algebra} \\

\(N_j=N_j(H)\)
& \S\ref{sec:filtration-lie-algebra}
&
\(W_j=W_j(H)\subset\mathfrak{sl}_2(\bF_q)\)
& \S\ref{sec:filtration-lie-algebra} \\

\(\psi_j:N_j\to W_j\)
& \S\ref{sec:filtration-lie-algebra}
&
\(d_j=\dim_{\bF_p}W_j\)
& \S\ref{sec:filtration-lie-algebra} \\

\(v_\fp\) valuation
& \S\ref{sec:filtration-lie-algebra}
&
\(M_2(R_e)\) matrix ring
& \S\ref{sec:filtration-lie-algebra} \\

\((x,y)=xyx^{-1}y^{-1}\)
& \S\ref{sec:commutator-constraints-one}
&
\([P,Q]=PQ-QP\)
& \S\ref{sec:commutator-constraints-one} \\

\(D=\mathrm{diag}(1,-1)\)
& \S\ref{sec:commutator-constraints-two}
&
\(E=e_{12}\), \(F=e_{21}\)
& \S\ref{sec:commutator-constraints-two} \\

\(\nu=v_\fp(2)\)
& \S\ref{sec:commutator-constraints-two}
&
\(\widehat{(x,y)}=((((x,y),x),x),x)\)
& \S\ref{sec:commutator-constraints-two} \\

\(\Lambda_j\) bad lines
& \S\ref{sec:bad-lines}
&
\(X_e^w\) fixed-point set
& \S\ref{sec:fixed-points-trace-two} \\

\(\Sigma_j=\varpi R_{e-j+1}\)
& \S\ref{sec:slope-decomposition}
&
\(N(\eta)\)
& \S\ref{sec:slope-decomposition} \\

\(M_j(\eta)\), \(m_j(\eta)\)
& \S\ref{sec:slope-decomposition}
&
\(S_j(\eta)\)
& \S\ref{sec:filtered-counting} \\

\(\pi_{j,r}:\Sigma_j\to\Sigma_{j+r}\)
& \S\ref{sec:filtered-counting}
&
\(\mathcal A_j(\ell)\), \(A_j(\ell)\)
& \S\ref{sec:filtered-counting} \\

\(\mathcal E_j(b)\)
& \S\ref{sec:filtered-counting}
&
\(T_j(\ell)\), \(T_j(\lambda)\)
& \S\ref{sec:conjugating-other-lines} \\

\(T_j=\sum_\lambda T_j(\lambda)\)
& \S\ref{sec:counting-formula}
&
exact formula for \(\chi_e(N_{j_0})\)
& \S\ref{sec:counting-formula} \\

\(E_\lambda,F_\mu,D_{\lambda,\mu}\)
& \S\ref{sec:choices-of-basis}
&
adapted \(\mathfrak{sl}_2\)-basis
& \S\ref{sec:choices-of-basis} \\

\(Q_{\eta',\eta}(b)\)
& \S\ref{prop:membership-transversality-new}
&
\(\mathcal P_{j,b}(\delta)\)
& \S\ref{sec:double-counting} \\

\(\jmath_1,\jmath_2,\jmath\)
& \S\ref{sec:double-counting}
&
\(\Pi\subset[\jmath_2,e]\)
& \S\ref{sec:double-counting} \\

\(m=e-j\)
& \S\ref{sec:double-counting}
&
\(\gamma=e+\nu+1-\jmath_2\)
& \S\ref{sec:double-counting} \\

\(c_*(j,s,d)\)
& \S\ref{sec:double-counting}
&
\(c^*(j,s,d)\)
& \S\ref{sec:double-counting} \\

\(\hat c(j,s)\)
& \S\ref{sec:double-counting}
&
\(V_k=\{a\in\bF_q:aD\in W_k\}\)
& \S\ref{sec:semisimple-bounds} \\

\(c_0(K)\)
& \S\ref{sec:semisimple-bounds}
&
\(\jmath_3,\jmath^*,J\)
& \S\ref{sec:semisimple-bounds} \\

\(c_0^*(K)\)
& \S\ref{sec:semisimple-bounds}
&
final \(A_j(\ell)\)-bounds
& \S\ref{sec:final-estimates-local} \\

\(G_\infty=\prod_{v\mid\infty}\PGL_2(\bK_v)\)
& \S\ref{sec:lattices-arithmetic-groups}
&
\(\iota_\infty\), diagonal embedding
& \S\ref{sec:lattices-arithmetic-groups} \\

\(\Gamma_{\bK}^{\mathrm{rat}}=\PSL_2(\cO_\bK)\)
& \S\ref{sec:lattices-arithmetic-groups}
&
\(\Delta_\bK=\iota_\infty(\Gamma_{\bK}^{\mathrm{rat}})\)
& \S\ref{sec:lattices-arithmetic-groups} \\

\(\cC_{\bK}^{\mathrm{rat}}\), rational commensurability class
& \S\ref{sec:lattices-arithmetic-groups}
&
\(\cC_{\bK}^{\infty}\), archimedean commensurability class
& \S\ref{sec:lattices-arithmetic-groups} \\

\(\cE\subset M_2(\bK)\), Eichler order
& \S\ref{sec:global-arithmetic-setting}
&
\(\cM\supset\cE\), maximal order
& \S\ref{sec:global-arithmetic-setting} \\

\(\cG=\SL(\cM)\)
& \S\ref{sec:global-arithmetic-setting}
&
\(\P\cG\), projective image of \(\cG\)
& \S\ref{sec:global-arithmetic-setting} \\

\(\Lambda\subset\PN(\cE)\)
& \S\ref{sec:global-arithmetic-setting}
&
\(\Lambda_\cM=\Lambda\cap\P\cG\)
& \S\ref{sec:global-arithmetic-setting} \\

\(\Gamma\), inverse image of \(\Lambda_\cM\) in \(\cG\)
& \S\ref{sec:global-arithmetic-setting}
&
\(\Proj g\), projective class of \(g\)
& \S\ref{sec:global-arithmetic-setting} \\

\(\cG(\fn)\), principal congruence
& \S\ref{sec:global-arithmetic-setting}
&
\(\fn\), congruence level
& \S\ref{sec:global-arithmetic-setting} \\

\(L\) with \(\cM=\operatorname{End}_{\cO_\bK}(L)\)
& \S\ref{sec:global-arithmetic-setting}
&
\(h_\bK\), class number
& \S\ref{sec:cusp-multiplicities-and-relative-multiplier-indices} \\

\(\omega(\fa)\), number of divisors
& \S\ref{sec:quantitative-global-decay}
&
\(N(\fa)=|\cO_\bK/\fa|\), norm
& \S\ref{sec:quantitative-global-decay} \\

\(\bK_+^*\), totally positive elements
& \S\ref{sec:cusps-hirzebruch-multiplicity}
&
\(U_M^+=\{\varepsilon\in\bK_+^*: \varepsilon M=M\}\)
& \S\ref{sec:cusps-hirzebruch-multiplicity} \\

\(B(\bK)\), upper triangular
& \S\ref{sec:cusps-hirzebruch-multiplicity}
&
\(u(x)=\begin{psmallmatrix}1&x\\0&1\end{psmallmatrix}\)
& \S\ref{sec:cusps-hirzebruch-multiplicity} \\

\(b(\Gamma,g)\), quasi-amplitude
& \S\ref{sec:cusps-hirzebruch-multiplicity}
&
\(w_\sigma=[b(\cG,g):b(\Gamma,g)]\)
& \S\ref{sec:cusps-hirzebruch-multiplicity} \\

\(V(\Gamma,g)\), multiplier group
& \S\ref{sec:cusps-hirzebruch-multiplicity}
&
\((M_\sigma,V_\sigma)\), cusp type
& \S\ref{sec:cusps-hirzebruch-multiplicity} \\

\(a_\sigma(\Gamma)=[U_{M_\sigma(\Gamma)}^+:V_\sigma(\Gamma)]\)
& \S\ref{sec:cusps-hirzebruch-multiplicity}
&
\(\cC_\tau(\Gamma)=\Gamma\backslash\cG\cdot\tau\)
& \S\ref{sec:stabilizer-index-decomposition} \\

\(g_\tau\), \(g_\sigma=r g_\tau\), cusp charts
& \S\ref{sec:stabilizer-index-decomposition}
&
\(P_\cH(g_\sigma)\)
& \S\ref{sec:stabilizer-index-decomposition} \\

\(w_{\sigma|\tau}\), \(u_{\sigma|\tau}\)
& \S\ref{sec:stabilizer-index-decomposition}
&
\(\cT\), cusp representatives
& \S\ref{sec:cusp-multiplicities-and-relative-multiplier-indices} \\

\(C_{\rm UV}\)
& \S\ref{sec:cusp-multiplicities-and-relative-multiplier-indices}
&
\(\overline\cG=\cG/\cG(\fn)\), \(\overline\Gamma=\Gamma/\cG(\fn)\)
& \S\ref{sec:finite-quotient-weighted-width} \\

\(P_\tau=\Stab_\cG(\tau)\)
& \S\ref{sec:finite-quotient-weighted-width}
&
\(U_\tau=g_\tau\{u(x)\mid x\in\bK\}g_\tau^{-1}\cap\cG\)
& \S\ref{sec:finite-quotient-weighted-width} \\

\(S(r,\tau)=P_\tau\cap r^{-1}\Gamma r\)
& \S\ref{sec:finite-quotient-weighted-width}
&
\(\overline P_\tau,\overline U_\tau,\overline S(r,\tau)\)
& \S\ref{sec:finite-quotient-weighted-width} \\

\(\overline X=\overline\Gamma\backslash\overline\cG\)
& \S\ref{sec:finite-quotient-weighted-width}
&
\(\chi_{Q,U}(H)=|H\backslash Q/U|/[Q:H]\)
& \S\ref{sec:product-bounds-localization} \\

\(G_e(\fp)=\SL(L/\fp^eL)\)
& \S\ref{sec:product-bounds-localization}
&
\(\fb_\tau=b(\cG,g_\tau)\)
& \S\ref{sec:product-bounds-localization} \\

\(\Gamma_{e_\fp}(\fp)\)
& \S\ref{sec:product-bounds-localization}
&
\(U_{\tau,e_\fp}(\fp)\)
& \S\ref{sec:product-bounds-localization} \\

\(C_{\mathrm{AL}}(\bK)\),
& \S\ref{sec:lambda-gamma-cusp-multiplicity-comparison}
&
\(d_\Lambda=[\Lambda:\Lambda_\cM]\)
& \S\ref{sec:lambda-gamma-cusp-multiplicity-comparison} \\

\(S_\cE=\{\fp:\cE_\fp\ne\cM_\fp\}\)
& \S\ref{sec:lambda-gamma-cusp-multiplicity-comparison}
&
\(\mathcal R(\Lambda)\)
& \S\ref{sec:quantitative-global-decay} \\

\end{longtable}

\endgroup

\section{Local asymptotic estimates I: preparation}\label{sec:local-estimates-I}

\subsection{Setup and introduction to the local problem}\label{sec:setup}

Let $\bK$ be a number field, $\fp$ a prime ideal of $\cO_\bK$
lying above a prime number $p\in \bN$, with residue field $\bF_q$ where $q=p^f$, $f\in \bN^*$, ramification index $e_0$, and uniformiser $\varpi\in\cO_\bK$. 
For $e\in \bN^*$ set
\[
R_e := \cO_\bK/\fp^e, \quad
G_e := \SL_2(R_e), \quad
U_e :=
\left\{
\begin{pmatrix}1&x\\0&1\end{pmatrix} \mid x \in R_e
\right\}.
\]
Write \(p=u_0\varpi^{e_0}\) for some unit $u_0\in R_e^*$.

Define the coset space $X_e := G_e/U_e$ and, for a subgroup $H \subset G_e$, set
\[
\chi_e(H) := \frac{|H\backslash X_e|}{[G_e:H]}.
\]

Write $\rho_{k,j} : G_k \to G_j$ for the reduction map, for \(j\le k\). We say that a subgroup $H\subset G_e$ has \emph{exact level $e$} if 
\[
H\neq \rho_{e,e-1}^{-1}\big(\rho_{e,e-1}(H)\big),\quad \text{when} \; e>1,
\]
and if $H\neq G_1$ when $e=1$.

The proof of the following three theorems will occupy Sections~\ref{sec:local-estimates-I} and \ref{sec:local-estimates-II}. 

\begin{thm}\label{thm:main_local_chi_B}
Assume that \(q\ge 59\). For every \(e\ge 1\) and every subgroup
\(H\subset G_e\) such that \(\rho_{e,1}(H)\neq \SL_2(\bF_q)\), one has
\[
\chi_e(H)<\frac{2}{q-1}.
\]
\end{thm}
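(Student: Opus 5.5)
The plan is to convert \(\chi_e(H)\) into an average of intersections of \(H\) with conjugates of \(U_e\), reduce modulo \(\fp\), and then run through Dickson's classification of the proper subgroups of \(\SL_2(\bF_q)\).

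\textbf{Step 1: an averaging formula.} Apply Burnside's lemma to the action of \(H\) on \(X_e=G_e/U_e\):
\[
|H\backslash X_e|=\frac1{|H|}\sum_{h\in H}|X_e^h|.
\]
A coset \(gU_e\) is fixed by \(h\) exactly when \(g^{-1}hg\in U_e\). Counting pairs \((h,g)\) therefore gives
\[
\sum_{h\in H}|X_e^h|=\frac{1}{q^e}\sum_{g\in G_e}\big|H\cap gU_eg^{-1}\big|.
\]
Dividing by \([G_e:H]\) yields
\[
\chi_e(H)=\frac{1}{|G_e|}\sum_{g\in G_e}\frac{|H\cap gU_eg^{-1}|}{q^e}.
\]
So \(\chi_e(H)\) is the average, over \(g\in G_e\), of the proportion of the conjugate unipotent group \(gU_eg^{-1}\) that lies in \(H\).

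\textbf{Step 2: reduction to level one.} Let \(\bar H=\rho_{e,1}(H)\) and \(\bar g=\rho_{e,1}(g)\). Reduction mod \(\fp\) sends \(H\cap gU_eg^{-1}\) into \(\bar H\cap \bar g U_1\bar g^{-1}\). The kernel of this map lies in \(g\ker(U_e\to U_1)g^{-1}\), which has order \(q^{e-1}\). Hence
\[
\frac{|H\cap gU_eg^{-1}|}{q^e}\le \frac{|\bar H\cap \bar gU_1\bar g^{-1}|}{q}.
\]
Since \(\rho_{e,1}\) has fibres of equal size, averaging gives
\[
\chi_e(H)\le \frac1q\cdot\frac1{q+1}\sum_{L\in\P^1(\bF_q)}|\bar H\cap U_L|,
\]
where \(U_L\) is the unipotent radical of the stabiliser of the line \(L\). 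These are the \(q+1\) conjugates of \(U_1\), each occurring equally often. Every nontrivial unipotent element of \(\SL_2(\bF_q)\) lies in exactly one \(U_L\). Writing \(u(\bar H)\) for the number of nontrivial unipotent elements of \(\bar H\), we get
\[
\chi_e(H)\le \frac1q\Big(1+\frac{u(\bar H)}{q+1}\Big).
\]
This is less than \(2/(q-1)\) as soon as \(u(\bar H)<(q+1)^2/(q-1)\), and in particular whenever \(u(\bar H)\le q+2\).

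\textbf{Step 3: Dickson's classification.} A proper subgroup \(\bar H\subsetneq\SL_2(\bF_q)\) falls into one of the following cases.
\begin{enumerate}[label=(\roman*)]
\item \(\bar H\) lies in a Borel subgroup. Its unipotent elements lie in a single \(U_L\), so \(u\le q-1\).
\item \(\bar H\) lies in the normaliser of a split or nonsplit torus. For odd \(p\) it contains no nontrivial unipotents, so \(u=0\). For \(p=2\) the unipotents are the involutions outside the torus, so \(u\le q+1\).
\item \(\bar H\) is one of the exceptional groups \(2.A_4\), \(2.S_4\), \(2.A_5\) (or \(A_4,S_4,A_5\) in characteristic \(2\)). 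Then \(|\bar H|\le 120\), so \(u\le 120\le q+2\) once \(q\ge 59\). This is exactly where the hypothesis \(q\ge59\) enters.
\item \(\bar H\) lies in \(\SL_2(\bF_{q'})\) or its extension by a diagonal element (the image of \(\PGL_2(\bF_{q'})\)), where \(q'^r=q\) and \(r\ge2\). The unipotent elements of this extension still lie in \(\SL_2(\bF_{q'})\), so \(u\le q'^2-1\le q-1\).
\end{enumerate}
In every case \(u(\bar H)\le q+2\), which gives the strict inequality.

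\textbf{Main obstacle.} The step needing the most care is (iii)–(iv): using the precise form of Dickson's list, including the characteristic-\(2\) and twisted subfield cases, to confirm that no proper subgroup contains more than about \(q\) unipotent elements. The Burnside and reduction steps are formal.
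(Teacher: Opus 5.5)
Your Steps 1 and 2 are correct, but case (iii) of Step 3 fails as written. The claim \(u\le 120\le q+2\) requires \(q\ge 118\), not \(q\ge 59\). Your own criterion \(u<(q+1)^2/(q-1)\) only allows \(u\le 62\) when \(q=59\). So the crude bound \(u\le|\overline H|-1\le 119\) leaves every prime power \(q\) with \(59\le q\le 115\) uncovered.

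The repair is one line. In odd characteristic the groups \(2.A_4,2.S_4,2.A_5\) contain \(-I\). Since \(\tr(-h)=-\tr(h)\) and \(2\neq-2\), at most one element of each pair \(\{h,-h\}\) has trace \(2\), and the pair \(\{I,-I\}\) contributes no non-trivial unipotent. Hence \(u\le 59\le q+2\). In characteristic \(2\) the exceptional subgroups have order at most \(60\), so again \(u\le 59\). Alternatively, a non-trivial unipotent has order \(p\), which forces \(p\in\{2,3,5\}\) and gives \(u\le 24\). The pairing argument is exactly what the paper does by passing to the image in \(\PSL_2(\bF_q)\). There the projection is injective on trace-\(2\) elements and the exceptional groups have at most \(60\) elements. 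That, not the order \(120\), is where \(q\ge 59\) comes from.

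With this fix your proof is complete, and it reaches level one by a different route from the paper. Your Steps 1--2 show directly that \(\chi_e(H)\le\chi_1(\rho_{e,1}(H))\). Equivalently, \(\chi_e(H)=|G_e|^{-1}\sum_{h\in H}|X_e^h|\) is monotone in \(H\) and equals \(\chi_1(H_1)\) on the full preimage of \(H_1\).

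The paper (Lemma~\ref{lem:burnside-first-level-comparison}) instead splits \(H=N_2\sqcup(H\setminus N_2)\). It bounds the part outside \(N_2\) by the same fibre count as yours, giving \(\chi_1(H_1)-q^{-1}\). But it bounds \(\chi_e(N_2)\) separately, by \((1-q^{-e})/(q-1)\), through the slope-counting estimate of Lemma~\ref{lem:trivial-chi-bound}. It therefore only obtains \(\chi_e(H)<1/(q-1)+1/q\).

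Your route is shorter, avoids the \(T_j\) machinery entirely, and gives the sharper intermediate inequality \(\chi_e(H)\le\chi_1(\rho_{e,1}(H))\). The paper's route has the advantage of reusing fixed-point formulas it needs anyway for Theorem~\ref{thm:main_local_chi}. At level one both arguments use Burnside plus Dickson; you count unipotents in \(\SL_2\), the paper in \(\PSL_2\).
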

Note that Theorem~\ref{thm:main_local_chi_B} works also for a subgroup not of exact level. Moreover the condition on $H$ is often automatically satisfied:
\begin{rem}\label{rem:the-H1G1-condition-unramified}
Assume that \(\fp\) is unramified and that \(p\ge 5\).  If \(H\) has exact
level \(e\), then \(H_1\neq \SL_2(\bF_q)\). This remark will be proved in \S\ref{sec:proof-of-main-local-thm-B}.
\end{rem}

\begin{thm}\label{thm:main_local_chi}
Let \(H\subset G_e\) be a subgroup of exact level \(e\).

If \(p\ge 3\) and \(e\ge 2e_0^2+3e_0+4\), then
\[
\chi_e(H)
\le
q(q^2-1)
\left(
q^{-e}+2q^{1-e}+q^2 e^{3/2}p^{-(e-3)/(6+4e_0)}
\right).
\]

If \(p=2\) and \(e>36e_0+13\), then
\[
\chi_e(H)
\le
q^{6e_0+1}(q^2-1)
\left(
q^{-e}+2q^{1-e}
+4q^{2+e_0/2} e^{3/2}p^{-(e-37e_0)/(18+34e_0)}
\right).
\]
\end{thm}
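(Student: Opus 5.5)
We prove Theorem~\ref{thm:main_local_chi} through Burnside's lemma. Counting $H$-orbits on $X_e$ and dividing by $[G_e:H]$ gives
\[
\chi_e(H)=\frac{1}{|G_e|}\sum_{h\in H}\lvert X_e^h\rvert ,
\]
so we must bound the fixed-point sum $\sum_{h\in H}|X_e^h|$ by $|G_e|$ times the right-hand side. A point $gU_e$ is fixed by $h$ exactly when $g^{-1}hg\in U_e$. Thus only elements of trace $2$ that are conjugate into $U_e$ contribute, and $|X_e^h|$ is the number of primitive columns $v$ with $hv=v$ (up to the $U_e$-ambiguity). We sort elements by their depth $k$, meaning $h\in K_k\setminus K_{k+1}$ after subtracting the identity, i.e.\ $h=1+\varpi^k w$ with $w$ nonzero mod $\varpi$.

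\textbf{Shallow contributions.} The identity contributes $|X_e|=|G_e|/q^e$, which yields the term $q(q^2-1)q^{-e}$. Elements of depth $\ge e-1$ fix at most $q$ times as many columns as a generic element. Summing their number against $|G_e|$ gives the $2q^{1-e}$ term; the factor $q(q^2-1)$ absorbs the count of $G_1$.

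\textbf{Main contribution.} For $h=1+\varpi^k w$ with $w\in\mathfrak{sl}_2$ nilpotent mod $\fp$, the fixed columns are the $v$ with $wv\equiv 0 \bmod \fp^{e-k}$. Their number is about $q^{e+k}$ times a correction that depends on how closely $w$ is nilpotent, i.e.\ on $v_\fp(\det w)$. We fix a nilpotent direction $\ell\in\mathfrak{sl}_2(\bF_q)$, which we conjugate to $E$, and a first coefficient. We then parametrize the elements of $H$ above it by their "slopes" $b\in\Sigma_j$, and we have to bound how many slopes occur (the quantities $A_j(\ell)$).

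Here the exact-level hypothesis and the Lie-algebra filtration enter. Since $H$ has exact level $e$, some $W_k(H)$ is a proper subspace of $\mathfrak{sl}_2(\bF_q)$ for $k$ near $e$. The commutator constraints, $(x,y)$ for $p\ge3$ and the iterated $\widehat{(x,y)}$ for $p=2$ at a cost of $\nu$ and $e_0$ levels, propagate properness downward: $W_k\subsetneq\mathfrak{sl}_2$ forces $W_{k'}$ to be small over a range of length proportional to $e/(6+4e_0)$. The parallelogram mechanism is the tool for this. If many slopes occur, many pairs have differences of controlled valuation. Four-point configurations then produce elements $1+\varpi^{k'}D$ or $1+\varpi^{k'}F$ in $H$ at deeper levels, and these new directions feed back via commutators. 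By double counting we conclude that at least one of $\jmath_1,\jmath_2$, the codimension $\widehat c$, or $|\Pi|$ is at least $(e-3)/(6+4e_0)$. That gives a saving of $p^{-(e-3)/(6+4e_0)}$ relative to the trivial bound. The factor $e^{3/2}$ comes from summing over levels and directions, and $q^2$ comes from the choice of the nilpotent line and the first coefficient. The hypothesis $e\ge 2e_0^2+3e_0+4$ guarantees that the propagation range is nonempty once the ramification losses are paid.

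\textbf{Main obstacle and the case $p=2$.} We expect the double-counting step to be the hard one: turning "many slopes" into "a new direction at a controlled deeper level" uniformly in $e_0$ and $f$. The semisimple directions $aD$ (the spaces $V_k$) must be handled separately, since they lower trace-$2$ fixed points only indirectly. For $p=2$ we run the same scheme, but commutators lose factors of $2=u\varpi^{\nu}$. We first pass to a subgroup of bounded index $q^{6e_0}$, where the $2$-adic pathologies vanish; this produces the prefactor $q^{6e_0+1}$, the $q^{e_0/2}$, and the worse exponent $(e-37e_0)/(18+34e_0)$.
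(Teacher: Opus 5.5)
Your outline reproduces the architecture of the paper: Burnside, conjugating a nilpotent line to $E$, the normal form $I+bN(\eta)$, the sums $A_j(\ell)$, parallelograms, and the four quantities $\jmath_1,\jmath_2,\widehat c,\lvert\Pi\rvert$. But it asserts the decisive estimate rather than deriving it, and several of your specific claims are wrong.

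First, the prefactor. In $\chi_e(H)=\lvert G_e\rvert^{-1}\sum_{h\in H}\lvert X_e^h\rvert$ the identity contributes exactly $q^{-e}$, not $q(q^2-1)q^{-e}$. Writing elements as $1+\varpi^k w$ only covers $N_2$. Elements of $H$ whose reduction mod $\fp$ is a non-trivial unipotent do have fixed points, and you never treat them. Bounding them directly, as in Lemma~\ref{lem:burnside-first-level-comparison}, only gives $\chi_1(H_1)-q^{-1}$, which does not decay in $e$. The paper instead uses $\chi_e(H)\le[H:N_2]\,\chi_e(N_2)$ (Lemma~\ref{lem:normal-subgroup}), so $[H:N_2]=\lvert H_1\rvert\le q(q^2-1)$ multiplies the whole bound for $\chi_e(N_2)$. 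You use such a passage for $p=2$ (the paper takes $N_{2\nu+2}$, of index at most $\lvert G_{2\nu+1}\rvert=q^{6\nu+1}(q^2-1)$), and you need it for $p\ge3$ as well. Similarly, the $2q^{1-e}$ term comes from the $b=0$ term and the ``$1+$'' in the bound for $\lvert\mathcal E_j(b)\rvert$, not from a separate count of shallow elements.

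The genuine gap is the sentence ``by double counting we conclude\dots'', which is the content of the theorem. The mechanisms you cite for it are not the ones that work, and three ingredients are missing.

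(i) Transversality (Proposition~\ref{prop:membership-transversality-new}). Since $N(\eta+\delta)-N(\eta)\equiv-\delta D$ to first order, once $I+bN(\eta)$ lies in $H$ modulo $\varpi^{k-1}$, membership modulo $\varpi^k$ confines the next digit of the slope to a coset of $\bar b^{-1}V_k$. This is the only way a deficient $V_k$ saves a factor $p$, and it is where $D$ enters; four-point configurations never produce $D$.

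(ii) The energy identity $\lvert\mathcal E_j(b)\rvert^2=\sum_\delta\lvert\mathcal P_{j,b}(\delta)\rvert$, combined with Proposition~\ref{prop:parallelogram-new}. A parallelogram with sides $\delta,t$ gives $R\in H$ with $\psi_r(R)\in\bF_q^*F$ at $r=v_\fp(b\widetilde\delta\widetilde t)+\nu+1$. This forces $v_\fp(t)\ge\jmath_2-j-s-d-\nu$ and, for $t\ne0$, forbids $v_\fp(t)+j+s+d+\nu\in\Pi$. The square root taken here halves every saving. So one needs $j+\jmath_2+\widehat c(j,0)\ge(e-3)/(3+2e_0)$, not $(e-3)/(6+4e_0)$, to reach the stated exponent.

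(iii) The cross-level lemma (Lemma~\ref{lem:cross-level-Vk}). The propagation actually used is upward, by $h\mapsto h^p$ (Lemma~\ref{lem:propagation}), together with brackets. Suppose $\bF_qD\subset W_k$ at some level in every residue class modulo $e_0$ below $e-\jmath$. Then $E$ at $\jmath_1$, $F$ at $\jmath_2$, and $[\bF_qD,\mu E]=\bF_qE$, $[\bF_qD,\mu F]=\bF_qF$ (valid for $p\ne2$) give $W_e=\mathfrak{sl}_2(\bF_q)$. So exact level does not make $W_{k'}$ small on a range. It gives a dichotomy, after reducing to $j\ge\jmath_1$ by telescoping: either $\max(j,\jmath_2)\ge e/(3+2e_0)$, or $\dim_{\bF_p}V_k<f$ at enough levels that $\widehat c(j,0)+j\ge(e-3)/(3+2e_0)$. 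The quantity $\lvert\Pi\rvert$ is needed only for $p=2$. There $[D,E]=2E$ degenerates, $\widehat{(x,y)}$ replaces the bracket, and $F$ must be present at every level $\ge\jmath_3$.

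Without (i)--(iii), nothing in your sketch produces a saving that grows with $e$.
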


In practice, Theorem~\ref{thm:main_local_chi} is used in the following simpler form:
\begin{thm}\label{thm:uniform-exponential-local-chi}
Let \(n=[\bK:\mathbb Q]\).  For every \(e\ge1\), and
every subgroup \(H\subset G_e\) of exact level \(e\), the following estimates
hold.

If \(p\ge3\), then
\[
  \chi_e(H)\le q^{B_{1}-\alpha_1 e},
  \quad
  \alpha_{1}=\frac{1}{2n(6+4n)},\;
  B_{1}=7n+4.
\]

If \(p=2\), then
\[
  \chi_e(H)\le q^{B_2-\alpha_2e},
  \;
  \alpha_2=\frac{1}{2n(18+34n)},\,
  B_2=\frac{13}{2}n^2+\frac{27}{2}n+\frac{17}{2}.
\]
\end{thm}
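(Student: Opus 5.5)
Theorem~\ref{thm:uniform-exponential-local-chi} will be derived from Theorem~\ref{thm:main_local_chi} by simplifying the bound, plus a trivial estimate for small $e$. Two elementary inputs are needed. First, $\chi_e(H)\le 1$ always: $|H\backslash X_e|\le |X_e|/\min(\text{orbit size})$, but more simply $|H\backslash X_e|\le |H\backslash G_e|=[G_e:H]$. Second, $e_0\le n$, $f\le n$, $q=p^f$, so $p\ge q^{1/f}\ge q^{1/n}$, and $e\le q^{e}$-type bounds absorb polynomial factors in $e$. For $e$ below the threshold $T$ of Theorem~\ref{thm:main_local_chi} ($T=2e_0^2+3e_0+4$ if $p\ge3$, $T=36e_0+13$ if $p=2$), the bound $\chi_e(H)\le1\le q^{B-\alpha e}$ holds as soon as $B\ge\alpha T$. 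Since $\alpha_1T\le(2n^2+3n+4)/(2n(6+4n))<1\le B_1$ and similarly $\alpha_2(36n+13)<1\le B_2$, this case is immediate.

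For $e\ge T$ and $p\ge3$, I bound each term inside the parentheses. Using $q(q^2-1)\le q^3$, the first two terms give at most $3q^{4-e}$. For the main term, $p^{-(e-3)/(6+4e_0)}\le q^{-(e-3)/(n(6+4n))}$, using $e_0\le n$ and $p\ge q^{1/n}$. The factor $e^{3/2}$ is handled by splitting off half the exponential decay: I need $e^{3/2}\le C\,q^{e/(2n(6+4n))}$ with $C$ a bounded power of $q$. Since $q\ge3$, $\sup_e e^{3/2}3^{-e/(2n(6+4n))}$ is at most roughly $(3\cdot 2n(6+4n)/(2e\log 3))^{3/2}$. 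This is polynomial in $n$, but it must be bounded by a power of $q$, uniformly in $q\ge 2$. That is impossible if it is independent of $q$ with exponent $\ge$ a constant. So I instead bound it by $q^{c}$ with $c$ linear in $n$, using $\log(n\cdot\text{const})/\log 2\lesssim n$. This is where the slack in $B_1=7n+4$ (versus the naive $3+2+\ldots$) is spent. The result is $\chi_e\le q^{5}\cdot q^{O(n)}q^{-\alpha_1 e}\cdot q^{3/(n(6+4n))}$. Also $3q^{4-e}\le q^{B_1-\alpha_1e}$ because $\alpha_1<1$ and $3\le q^{2}$. Summing the two terms costs another factor of $2\le q$.

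The case $p=2$ is identical in structure. The prefactor is $q^{6e_0+1}(q^2-1)\cdot 4q^{2+e_0/2}\le q^{6n+3+2+n/2+2}$ (with $4\le q^2$). The decay is $2^{-(e-37e_0)/(18+34e_0)}\le q^{-(e-37n)/(n(18+34n))}$. Here the $37n/(n(18+34n))$ term contributes only $O(1)$ to the exponent. Absorbing $e^{3/2}$ as before with half the decay yields $\alpha_2=1/(2n(18+34n))$. The quadratic term $\tfrac{13}{2}n^2$ in $B_2$ indicates that the authors absorb $e^{3/2}$ with a cost of order $\tfrac32\log_2(n(18+34n))$ bounded crudely by a quadratic.

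The main obstacle is purely bookkeeping: obtaining a uniform bound $\sup_{e}e^{3/2}q^{-\alpha e}\le q^{c(n)}$ valid for all $q\ge2$, and checking that the accumulated constants stay below the stated $B_1,B_2$. I would first prove the auxiliary inequality $e^{3/2}\le 2^{3/2\cdot\log_2(3/(\alpha\ln2))}\,2^{\alpha e}$ via $\max_x x^{a}e^{-bx}=(a/(eb))^a$. Then I would verify numerically that the exponents sum to at most $7n+4$ and $\tfrac{13}{2}n^2+\tfrac{27}{2}n+\tfrac{17}{2}$ respectively, adjusting the grouping of terms if the first attempt slightly overshoots.
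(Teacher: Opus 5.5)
Your route is the paper's: apply Theorem~\ref{thm:main_local_chi} above its threshold, use $\chi_e(H)\le 1$ below it (your check $\alpha T<1\le B$ is correct), replace $e_0,f$ by $n$, and pay for $e^{3/2}$ with half of the decay rate. Because the statement fixes explicit constants, however, two steps of your bookkeeping fail as written.

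\emph{The auxiliary inequality is too lossy.} Your $e^{3/2}\le (3/(\alpha\ln 2))^{3/2}\,2^{\alpha e}$ does not fit under $B_1$ when $n=1$. There $q=p\ge 3$, and the prefactor together with the shift in $p^{-(e-3)/10}$ already costs $q^{5.3}$. So the constant $C$ in $e^{3/2}\le C\,q^{\alpha_1 e}$ must satisfy $C<q^{5.7}$, which is about $5\times 10^2$ at $q=3$. But $(60/\ln 2)^{3/2}\approx 8\times 10^2$. Use instead the sharp maximum you wrote earlier, in base $3$ (legitimate since $q\ge 3$): $\max_{x>0}x^{3/2}3^{-x/20}=\bigl(30/(\exp(1)\ln 3)\bigr)^{3/2}\approx 32\le 3^{3.2}$. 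This gives a total exponent of about $9.1\le 11$, and the margin only grows with $n$.

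The paper avoids the calculus by staying in base $p$. It bounds the three terms by $p^{3-e}$, $p^{5-e}$ and $e^{3/2}p^{5n+1-e/M}$ with $M=6+4n$, and uses $3x^{3/2}\le 3^{M/2+x/(2M)}$. The cost $M/2=2n+3$ is exactly $B_1-(5n+1)$. Only at the end does it convert $p^{7n+4-e/(2M)}\le q^{7n+4-e/(2nM)}$ via $1\le f\le n$. For $p=2$ it uses $3x^{3/2}\le 2^{M/4+x/(2M)}$ with $M=18+34n$. Note that the $\tfrac{13}{2}n^2$ in $B_2$ comes from bounding $q^{13e_0/2}$ by $2^{13n^2/2}$, not from absorbing $e^{3/2}$.

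\emph{The change of base for $p=2$.} Your display $2^{-(e-37e_0)/(18+34e_0)}\le q^{-(e-37n)/(n(18+34n))}$ is false whenever $e<37n$ and $f<n$. With $y:=(37n-e)/(18+34n)>0$, you would need $2^{y}\le q^{y/n}=2^{fy/n}$, which fails. This range does occur, for instance with $e_0=n\ge 15$, $f=1$, $e=36n+14$, where Theorem~\ref{thm:main_local_chi} already applies. Peel off the constant before changing base:
$2^{-(e-37e_0)/(18+34e_0)}\le 2^{37/34}\,2^{-e/(18+34n)}\le q^{37/34}\,q^{-e/(n(18+34n))}$.

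With these two repairs your argument goes through with room to spare.
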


\subsection{Preparatory material}
Most of the time we will work with \(H\) an arbitrary subgroup of $G_e$ not necessarily of exact level $e$.

\subsubsection{}\label{sec:primitive-column}
Recall that $X_e = G_e/U_e$ is the coset space. There is a concrete description:
\begin{prop}\label{prop:primitive-cols}
The map $gU_e \mapsto g\begin{pmatrix}1 \\ 0\end{pmatrix}$ is a
$G_e$-equivariant bijection from $X_e=G_e/U_e$ to the set of primitive column vectors
\[
\Bigl\{\begin{pmatrix}a \\ b\end{pmatrix}\in R_e^2
\mid \begin{pmatrix}a \\ b\end{pmatrix}\notin \fp R_e^2\Bigr\}.
\]
\end{prop}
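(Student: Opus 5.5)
The plan is to apply the orbit–stabilizer theorem to the action of \(G_e\) on primitive columns, after checking that this action is transitive with stabilizer exactly \(U_e\). Set \(v_0=\begin{pmatrix}1\\0\end{pmatrix}\) and \(\Phi(gU_e)=gv_0\), the first column of \(g\).

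\textbf{Well-definedness and equivariance.} Every \(u\in U_e\) fixes \(v_0\), so \(guv_0=gv_0\) and \(\Phi\) is well defined. The identity \(\Phi(hgU_e)=hgv_0=h\Phi(gU_e)\) gives \(G_e\)-equivariance.

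\textbf{The image is primitive.} Suppose the first column \((a,b)^T\) of \(g=\begin{pmatrix}a&c\\b&d\end{pmatrix}\) lay in \(\fp R_e^2\). Then \(\det g=ad-bc\in\fp R_e\). This is impossible, because \(\det g=1\) is a unit of the local ring \(R_e\), whose unique maximal ideal is \(\fp R_e\). Hence every vector in the image is primitive.

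\textbf{Surjectivity.} Let \((a,b)^T\) be primitive, so one of \(a,b\) is a unit.
\begin{itemize}
\item If \(a\in R_e^*\), the matrix \(\begin{pmatrix}a&0\\b&a^{-1}\end{pmatrix}\) lies in \(G_e\) and has first column \((a,b)^T\).
\item If \(b\in R_e^*\), the matrix \(\begin{pmatrix}a&-b^{-1}\\b&0\end{pmatrix}\) works, since its determinant is \(0+bb^{-1}=1\).
\end{itemize}

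\textbf{Injectivity.} It suffices to show that \(\Stab_{G_e}(v_0)=U_e\). If \(gv_0=v_0\), then \(g=\begin{pmatrix}1&c\\0&d\end{pmatrix}\). The condition \(\det g=1\) forces \(d=1\), so \(g\in U_e\). Consequently, \(gv_0=g'v_0\) implies \(g^{-1}g'\in U_e\), that is, \(gU_e=g'U_e\).

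None of these steps is a real obstacle. The only point needing care is the locality of \(R_e=\cO_\bK/\fp^e\): its maximal ideal is \(\fp R_e\), so primitivity of a column is equivalent to one of its entries being a unit. Both the primitivity step and the surjectivity step use this fact.
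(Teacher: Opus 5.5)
Your proof is correct and takes the same approach as the paper. The paper uses that the stabilizer of $\begin{pmatrix}1\\0\end{pmatrix}$ is exactly $U_e$ to get well-definedness and injectivity. It gets surjectivity by completing a primitive column to a determinant-one matrix, using $aR_e+bR_e=R_e$. You additionally write out the explicit completions and check that every first column is primitive, two points the paper leaves implicit.
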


\begin{proof}
The stabilizer of $\begin{pmatrix}1 \\ 0\end{pmatrix}$ is exactly $U_e$. Therefore the map is well defined and the injectivity follows. If $\begin{pmatrix}a \\ b\end{pmatrix}\notin \fp R_e^2$ then $aR_e+bR_e=R_e$, so the column can be completed to a matrix with determinant $1$, thus surjectivity. 
\end{proof}

\subsubsection{}\label{sec:filtration-lie-algebra}
For $j \ge 2$, let
\[
K_j := \ker(G_j \to G_{j-1}).
\]
Every element of $K_j$ has the form
$I + \varpi^{j-1}A$ where
$A\in M_2(\bF_q)$ is a matrix with coefficients in the finite field $\bF_q$. The condition $\det (I + \varpi^{j-1}A)=1$ is equivalent to $\tr(A)=0$, so $K_j$ is an abelian group of order $q^3$ isomorphic to
$(\mathfrak{sl}_2(\bF_q),+)$ as an $\bF_{p}$-vector space of $\bF_{p}$-dimension $3f$, where $q=p^f$. For any $M\in M_2(R_e)$, we denote $v_\fp(M)$ the minimal valuation of the coefficients of $M$.

\begin{lem}\label{lem:order-Ge}
For every \(e\ge 1\), the cardinal of $G_e=\SL_2(R_e)$ is
\[
|G_e|=q^{3e-2}(q^2-1).
\]
\end{lem}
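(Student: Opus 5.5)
We argue by induction on \(e\), using the reduction maps \(\rho_{e,e-1}\colon G_e\to G_{e-1}\). In the base case \(e=1\) we have \(G_1=\SL_2(\bF_q)\), whose order \(q(q^2-1)=q^{3\cdot 1-2}(q^2-1)\) is classical. One way to see it is through Proposition~\ref{prop:primitive-cols}: \(G_1\) acts transitively on the \(q^2-1\) nonzero columns of \(\bF_q^2\), and the stabilizer of \(\begin{pmatrix}1\\0\end{pmatrix}\) is \(U_1\), of order \(q\).

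For \(e\ge 2\), we first check that \(\rho_{e,e-1}\) is surjective. Given \(\bar g\in \SL_2(R_{e-1})\), lift its entries arbitrarily to a matrix \(g'\in M_2(R_e)\). Then \(\det g'=1+\varpi^{e-1}t\) for some \(t\in R_e\). Since \(g'\) is invertible mod \(\fp\), some entry of its first row, say the \((1,1)\) entry \(a\), is a unit (the other cases are symmetric). Replacing the \((2,2)\) entry \(d\) by \(d-\varpi^{e-1}ta^{-1}\) changes the determinant by \(-\varpi^{e-1}t\), giving determinant \(1\) without changing the reduction mod \(\fp^{e-1}\). More generally, one may solve \(\det=1\) by Hensel's lemma, since the differential of \(\det\) at an invertible matrix is \(X\mapsto\tr(\operatorname{adj}(g)X)\), which is surjective.

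Next we compute the kernel \(K_e\). As explained in \S\ref{sec:filtration-lie-algebra}, an element of \(K_e\) has the form \(I+\varpi^{e-1}A\) with \(A\in M_2(\bF_q)\), using \(\fp^{e-1}/\fp^e\cong\bF_q\). Because \(\varpi^{2(e-1)}=0\) in \(R_e\) for \(e\ge2\), we get \(\det(I+\varpi^{e-1}A)=1+\varpi^{e-1}\tr A\). So the determinant condition is exactly \(\tr A=0\) in \(\bF_q\), and \(|K_e|=q^3\).

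It follows that \(|G_e|=q^3|G_{e-1}|=q^3\cdot q^{3(e-1)-2}(q^2-1)=q^{3e-2}(q^2-1)\). The only step needing a little care is the surjectivity lift, and the unit-entry adjustment handles it directly. Alternatively, one can avoid induction altogether: count primitive columns, \(q^{2e}-q^{2(e-1)}=q^{2e-2}(q^2-1)\), and multiply by \(|U_e|=q^e\), using Proposition~\ref{prop:primitive-cols} and transitivity.
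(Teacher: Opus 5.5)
Your proof is correct and follows the same route as the paper. Both argue by induction on \(e\), using that the kernel \(K_e\) of \(\rho_{e,e-1}\) is identified with \(\mathfrak{sl}_2(\bF_q)\) and so has order \(q^3\); you also supply the surjectivity of \(\rho_{e,e-1}\), which the paper's induction uses without comment, and your alternative count \(|G_e|=|X_e|\,|U_e|\) via Proposition~\ref{prop:primitive-cols} is valid too.
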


\begin{proof}
For \(e=1\), this is the standard formula
\[
|\SL_2(\bF_q)|=q(q^2-1).
\]
For \(j\ge2\), the kernel of the reduction map $\SL_2(R_j)\to \SL_2(R_{j-1})$ is in bijection with $\mathfrak{sl}_2(\bF_q)$ and has cardinal \(q^3\). The conclusion is then obtained by induction.
\end{proof}

Let \(H\subset G_e\) be a subgroup. For \(1\le j\le e\), define
\[
H_j := \rho_{e,j}(H).
\]
For \(2\le j\le e\), define
\[
N_j=N_j(H) := \ker(\rho_{e,j-1}|_{H}).
\]
Then \(\rho_{e,j}(N_j)=H_j\cap K_j\). If $x$ belongs to the complement $N_j\setminus N_{j+1}$, then $x=I+M$ with $v_\fp(M)=j-1$.

Define the \emph{Lie image at level $j$} of $H$ as
\[
W_j=W_j(H) := \{A\in\mathfrak{sl}_2(\bF_q)\mid
I+\varpi^{j-1}A\in\rho_{e,j}(N_j)\}.
\]

For $j\ge 2$ the map 
\[
\psi_j \colon
\left\{
\begin{array}{rcl}
N_j & \longrightarrow & W_j\subset \mathfrak{sl}_2(\bF_q)\\
n & \longmapsto & A
\end{array}
\right.
\quad\text{with}\;
n \equiv I+\varpi^{j-1}A \pmod{\varpi^j}.
\]
is a group homomorphism. In particular, for $j\ge 2$ the group $W_j=\mathrm{Im}(\psi_j)$ is an
$\bF_{p}$-subspace of $\mathfrak{sl}_2(\bF_q)$. In general \(W_j\) need not be an \(\bF_q\)-subspace; this is one of the major sources of technical difficulties.

We use the convention $N_j:=K_j, W_j:=\mathfrak{sl}_2(\bF_q)$ for all $j\ge e+1$.

Define, for $2\le j\le e$, 
\[
d_j\;:=\;\dim_{\bF_{p}} W_j,
\]
so that $|W_j|=p^{d_j}$ and
$d_j\in\{0,1,\ldots,3f\}$.

\subsubsection{}\label{sec:commutator-constraints-one}

We explain now how commutators behave in \(G_e\) with respect to the filtration $\{ N_j\}$.  

\begin{lem}\label{lem:trace-val}
Let $M\in M_2(R_e)$ with $\det(I+M)=1$. Then 
\[
\tr(M) = M_{12}M_{21}-M_{11}M_{22},
\]
and in particular
\[
v_\fp(\tr(M))\ge\min\bigl(v_\fp(M_{12})+v_\fp(M_{21}),\; v_\fp(M_{11})+v_\fp(M_{22})\bigr).
\]
\end{lem}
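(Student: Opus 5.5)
The plan is to expand the determinant directly. Write
\[
I+M=\begin{pmatrix}1+M_{11}&M_{12}\\ M_{21}&1+M_{22}\end{pmatrix},
\]
so that
\[
\det(I+M)=(1+M_{11})(1+M_{22})-M_{12}M_{21}=1+\tr(M)+M_{11}M_{22}-M_{12}M_{21}.
\]
This is a polynomial identity, so it holds in any commutative ring, in particular in \(R_e\). Imposing \(\det(I+M)=1\) and cancelling the \(1\) on both sides yields \(\tr(M)=M_{12}M_{21}-M_{11}M_{22}\), which is the first claim.

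For the valuation estimate, I will use the truncated valuation \(v_\fp\) on \(R_e=\cO_\bK/\fp^e\). It takes values in \(\{0,\dots,e\}\), with the convention \(v_\fp(0)=e\) (the paper's convention; one could also use \(\infty\)). Two properties of it are needed. First, it is ultrametric: \(v_\fp(x+y)\ge\min(v_\fp(x),v_\fp(y))\). Second, it is supermultiplicative: \(v_\fp(xy)\ge\min(e,\,v_\fp(x)+v_\fp(y))\). Both follow from lifting to \(\cO_{\bK,\fp}\), where \(v_\fp\) is an honest discrete valuation, and then reducing modulo \(\fp^e\).

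Applying these two properties to the identity gives
\[
v_\fp(\tr M)\ge\min\bigl(v_\fp(M_{12}M_{21}),\,v_\fp(M_{11}M_{22})\bigr)\ge\min\bigl(v_\fp(M_{12})+v_\fp(M_{21}),\,v_\fp(M_{11})+v_\fp(M_{22})\bigr).
\]
The sign in front of \(M_{11}M_{22}\) does not affect the valuation, and the truncation at \(e\) is harmless, since every valuation in \(R_e\) is at most \(e\) anyway.

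The only point requiring care, and thus the main obstacle, is the meaning of "valuation" in the finite quotient ring \(R_e\). I will state explicitly that \(v_\fp(x)\) denotes the largest \(k\le e\) with \(x\in\fp^kR_e\). With that definition, both properties above are immediate and the argument goes through.
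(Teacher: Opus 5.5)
Your expansion of $\det(I+M)$ is exactly the paper's proof of the trace identity. As in the paper, the valuation bound is meant to be read off from that identity, and that part of your plan is fine.

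The step that does not work as written is the valuation on $R_e$. You commit to $v_\fp(x)=\max\{k\le e \mid x\in\fp^kR_e\}$, so $v_\fp(0)=e$. This is not a convention stated in the paper. With it, the inequality you are proving is false in degenerate cases. For example, $M=0$ satisfies $\det(I+M)=1$, but the left side is $v_\fp(0)=e$ while the right side is $2e$.

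What your argument actually yields is $v_\fp(\tr M)\ge\min\bigl(e,\,v_\fp(M_{12})+v_\fp(M_{21}),\,v_\fp(M_{11})+v_\fp(M_{22})\bigr)$. Your remark that the truncation is harmless ``since every valuation in $R_e$ is at most $e$'' is backwards. The right-hand side is a \emph{sum} of two valuations and can exceed $e$, and that is exactly when the truncation matters.

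The fix is to take $v_\fp(0)=\infty$, i.e.\ to read $v_\fp(x)\ge k$ as $x\in\fp^kR_e$, which forces $x=0$ once $k\ge e$. Then $x\in\fp^aR_e$ and $y\in\fp^bR_e$ give $xy\in\fp^{a+b}R_e$. So $v_\fp(xy)\ge v_\fp(x)+v_\fp(y)$ holds with no truncation, and your chain of inequalities closes.

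This is also the reading the paper needs downstream. For instance, the proof of Lemma~\ref{lem:two-word-one} uses bounds such as $v_\fp(\tr(R))\ge 2a+2b-4$ and $v_\fp(\tr(P)\tr(R)I)\ge 4a+2b-6$, which may exceed $e$.
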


\begin{proof}
Expanding 
\[
\det(I+M) = (1+M_{11})(1+M_{22})-M_{12}M_{21} = 1+\tr(M)+M_{11}M_{22}-M_{12}M_{21}
\]
gives $\tr(M) = M_{12}M_{21}-M_{11}M_{22}$.
\end{proof}

\begin{lem}\label{lem:lie-bracket-ring}
Let $x=I+P\in N_j$ and $y=I+Q\in N_k$ with $j, k\ge 2$. Then the commutator $(x,y)=xyx^{-1}y^{-1}$ satisfies 
\begin{equation}\label{eq:comm-exp-formula}
(x,y) - I =[P,Q] + \tr(P)\tr(Q)I + \Theta,
\end{equation}
where $\Theta\in M_2(R_e)$ satisfies
\begin{equation}\label{eq:Theta-bound}
v_\fp(\Theta)\ge j+k+\min(j,k)-3.
\end{equation}
In particular, if $j+k-1\le e$, then $(x,y)\in N_{j+k-1}$ and
\begin{equation}\label{eq:comm-exp-cong}
(x,y)\equiv I+[P,Q]\pmod{\varpi^{j+k-1}}.
\end{equation}
\end{lem}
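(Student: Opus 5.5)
Since $\det x=\det y=1$, I will write the inverses by the adjugate formula: $x^{-1}=\operatorname{adj}(I+P)=(1+\tr P)I-P$ and likewise $y^{-1}=(1+\tr Q)I-Q$. These formulas are exact, so the product $xyx^{-1}y^{-1}$ becomes a polynomial identity in $P,Q,\tr P,\tr Q$. The valuation inputs are $v_\fp(P)\ge j-1$ and $v_\fp(Q)\ge k-1$. By Lemma~\ref{lem:trace-val}, $v_\fp(\tr P)\ge 2(j-1)$ and $v_\fp(\tr Q)\ge 2(k-1)$, because the trace is a sum of products of two entries of $P$. Set $s=\tr P$ and $t=\tr Q$.

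Next I expand
\[
(I+P)(I+Q)\bigl((1+s)I-P\bigr)\bigl((1+t)I-Q\bigr).
\]
I group the terms of total degree at most two in the "small" quantities $P,Q,s,t$. The linear terms give $s+t+P+Q-P-Q$, so only $(s+t)I$ survives. Among the quadratic terms I find $PQ-QP=[P,Q]$. The other quadratic terms, such as $-P^2$, $-Q^2$, $st$, $sP$, $tQ$ and the cross terms $-PQ$, $+PQ$ coming from different factors, have to be tracked carefully.

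The essential simplification is Cayley--Hamilton: $P^2=sP-\det(P)I$, and since $\det(I+P)=1$ we have $\det P=-s$. Hence $P^2=sP+sI$, which is of valuation at least $3(j-1)$ in its $sP$ part. The $sI$ part exactly cancels the linear $sI$ term together with the $sP$ terms. The same holds for $Q$. It may be cleaner to use the form $x^{-1}=I-P+P^2-\cdots$ truncated, but I prefer the exact adjugate route because it keeps the bookkeeping finite. Collecting terms, what survives at order two is $[P,Q]+st\,I$. Everything else is a product containing either a trace times a matrix ($sQ$, $tP$, and so on) or at least three factors among $P$ and $Q$.

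To bound $\Theta$, I estimate each leftover monomial. A triple product of $P$'s and $Q$'s containing at least one of each has valuation at least $(j-1)+(k-1)+\min(j,k)-1=j+k+\min(j,k)-3$. Terms like $sQ$ have valuation at least $2(j-1)+(k-1)$, which is at least the bound, and the same holds for $tP$. Terms built only from $P$ (or only from $Q$) must cancel identically, since the commutator is trivial when $Q=0$ (resp.\ $P=0$); I will check this cancellation explicitly rather than estimate those terms. This bookkeeping, confirming that no pure-$P$ or pure-$Q$ quadratic or trace residue survives, is the main obstacle. I would organize it by writing the full product as a polynomial in noncommuting $P,Q$ with central $s,t$ and subtracting its specializations at $Q=0$ and at $P=0$.

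For the final claim, $[P,Q]$ has valuation at least $j+k-2$, and $st$ has valuation at least $2(j+k)-4\ge j+k-1$. Also $j+k+\min(j,k)-3\ge j+k-1$ because $\min(j,k)\ge2$. So modulo $\varpi^{j+k-1}$ the commutator is $I+[P,Q]$. Since $(x,y)\in H$ reduces to $I$ modulo $\varpi^{j+k-2}$, it lies in $N_{j+k-1}$ when $j+k-1\le e$.
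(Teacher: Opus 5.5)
Your proposal is correct and is essentially the paper's argument. The paper also uses the exact inverse $x^{-1}=I-P+\tr(P)I$ and expands the fourfold product. It kills the pure-$P$ and pure-$Q$ terms through the relation $(I+P)\bigl(I-P+\tr(P)I\bigr)=I$, which is what your ``specialization at $Q=0$'' observation amounts to. Among the mixed quadratic terms it reads off $[P,Q]+\tr(P)\tr(Q)I$; note that $\tr(P)Q$ and $\tr(Q)P$ actually cancel there, though your valuation bound covers them anyway. Every remaining mixed term, of degree at least three, is then bounded by $a(j-1)+b(k-1)$ with $a,b\ge1$ and $a+b\ge3$, which gives $j+k+\min(j,k)-3$.
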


\begin{proof}
Since $\det(x)=1$, the Cayley--Hamilton theorem over a commutative ring gives
\[
x^{-1}=I-P+\tr(P)I,
\]
and similarly for $y$. Thus
\[
(x,y)=(I+P)(I+Q)(I-P+\tr(P)I)(I-Q+\tr(Q)I).
\]
Set \(X_1=P, X_2=Q,X_3=-P+\tr(P)I,X_4=-Q+\tr(Q)I\).
Expanding the product gives
\[
(x,y)-I=\sum_{\emptyset\ne S\subseteq\{1,2,3,4\}}X_S,
\]
where \(X_S=X_{s_1}\cdots X_{s_l}\) for
\(S=\{s_1<\cdots<s_l\}\).

The inverse relation \((I+X_1)(I+X_3)=I\) gives
\[
\sum_{\emptyset\ne S\subseteq\{1,3\}}X_S=0,\quad \text{and similarly} \;\sum_{\emptyset\ne S\subseteq\{2,4\}}X_S=0.
\]
The four mixed two-element terms are
\begin{align*}
& X_1X_2 = PQ,\quad X_1X_4 = -PQ+\tr(Q)P,\quad X_2X_3 = -QP+\tr(P)Q,\\
& X_3X_4 = PQ-\tr(Q)P-\tr(P)Q+\tr(P)\tr(Q)I.
\end{align*}
Their sum is \([P,Q]+\tr(P)\tr(Q)I\). Therefore \eqref{eq:comm-exp-formula} holds with
\[
\Theta:=\sum_{|S|\ge 3}X_S.
\]

It remains to estimate \(\Theta\). By Lemma~\ref{lem:trace-val},
\[
v_\fp(X_1),v_\fp(X_3)\ge j-1,
\qquad
v_\fp(X_2),v_\fp(X_4)\ge k-1.
\]
Every subset \(S\) with \(|S|\ge3\) meets both \(\{1,3\}\) and \(\{2,4\}\). Hence, if \(a=|S\cap\{1,3\}|\) and \(b=|S\cap\{2,4\}|\), then \(a,b\ge1\) and \(a+b\ge3\), so
\[
v_\fp(X_S)\ge a(j-1)+b(k-1)
   \ge j+k+\min(j,k)-3.
\]
This proves \eqref{eq:Theta-bound}. Finally,
\[
v_\fp([P,Q])\ge j+k-2,\quad
v_\fp(\tr(P)\tr(Q)I)\ge 2j+2k-4\ge j+k-1,
\]
and \eqref{eq:Theta-bound} give
\(v_\fp(\Theta)\ge j+k-1\). Therefore, if \(j+k-1\le e\), then
\[
(x,y)\equiv I+[P,Q]\pmod{\varpi^{j+k-1}},
\]
and in particular \((x,y)\in N_{j+k-1}\).
\end{proof}

\begin{lem}\label{lem:lie-bracket}
For all $j, k\ge 2$ with $j+k-1\le e$\textup{:}
\[
[W_j,\,W_k]\subset W_{j+k-1}.
\]
\end{lem}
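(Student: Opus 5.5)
The plan is to deduce this directly from Lemma~\ref{lem:lie-bracket-ring}, using the group structure of \(N_{j+k-1}\) to pass from one pair of lifts to all of \(W_j\) and \(W_k\).

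Fix \(A\in W_j\) and \(B\in W_k\). By definition of \(W_j\) and \(\psi_j\), there are \(x\in N_j\) and \(y\in N_k\) with \(\psi_j(x)=A\) and \(\psi_k(y)=B\). Write \(x=I+P\) and \(y=I+Q\) in \(M_2(R_e)\). Since \(x\in N_j\) we have \(v_\fp(P)\ge j-1\), and \(P\equiv\varpi^{j-1}\tilde A\pmod{\varpi^j}\) for some lift \(\tilde A\in M_2(\cO_\bK)\) of \(A\). Similarly \(Q\equiv\varpi^{k-1}\tilde B\pmod{\varpi^k}\).

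Next we apply Lemma~\ref{lem:lie-bracket-ring}. Since \(j+k-1\le e\), the commutator \((x,y)\) lies in \(N_{j+k-1}\); it lies in \(H\) because \(H\) is a group and \(x,y\in H\). The lemma also gives
\[
(x,y)\equiv I+[P,Q]\pmod{\varpi^{j+k-1}}.
\]

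It remains to compute \([P,Q]\) modulo \(\varpi^{j+k-1}\). Write \(P=\varpi^{j-1}\tilde A+P'\) with \(v_\fp(P')\ge j\), and \(Q=\varpi^{k-1}\tilde B+Q'\) with \(v_\fp(Q')\ge k\). Expanding the bracket, every cross term has valuation at least \(j+k-1\). Hence
\[
[P,Q]\equiv\varpi^{j+k-2}[\tilde A,\tilde B]\pmod{\varpi^{j+k-1}},
\]
so \(\psi_{j+k-1}((x,y))=[A,B]\), the bracket being computed in \(\mathfrak{sl}_2(\bF_q)\). This shows \([A,B]\in W_{j+k-1}\).

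No step here is a real obstacle. The only points needing care are that \(\psi_{j+k-1}\) is well defined on \((x,y)\), which holds because \((x,y)\in N_{j+k-1}\), and that the valuation bookkeeping on the cross terms is correct.

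We need \([W_j,W_k]\subset W_{j+k-1}\) in the set-theoretic sense, meaning every bracket \([A,B]\) lies in \(W_{j+k-1}\). This is exactly what was shown. If the additive span is meant instead, it follows as well, since \(W_{j+k-1}\) is an \(\bF_p\)-subspace.
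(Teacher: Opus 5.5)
Your proposal is correct and is exactly the paper's argument: the paper states this lemma as a direct corollary of Lemma~\ref{lem:lie-bracket-ring}, and you have simply spelled out that deduction. That means lifting $A,B$ to $x\in N_j$, $y\in N_k$, using $(x,y)\equiv I+[P,Q]\pmod{\varpi^{j+k-1}}$, and checking that the cross terms of $[P,Q]$ vanish modulo $\varpi^{j+k-1}$, so that $\psi_{j+k-1}((x,y))=[A,B]$.
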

\begin{proof}
This is a direct corollary of Lemma~\ref{lem:lie-bracket-ring}.
\end{proof}

\subsection{Commutators when \texorpdfstring{$p=2$}{p=2}}\label{sec:commutator-constraints-two}

Consider the standard basis:
\[
D=\begin{pmatrix}1&0\\0&-1\end{pmatrix},\; E=\begin{pmatrix}0&1\\0&0\end{pmatrix},\;F=\begin{pmatrix}0&0\\1&0\end{pmatrix}\in \mathfrak{sl}_2(\bF_q).
\]
We have \([D,E]=2E,[D,F]=-2F,[E,F]=D\). 

Note that when $p=2$, we have $D=I$ and \([D,E]=[D,F]=0\), so Lemma~\ref{lem:lie-bracket} does not give much information. We will need finer information in caracteristic $2$. Recall that when $p=2$ the ramification index satisfies $e_0=\nu=v_\fp(2)$.

\begin{lem}\label{lem:two-word-one}
Assume $p=2$. Let $a,b$ be integers with
\[
a\ge \nu+2,\quad b\ge \nu+2, \quad 2a+b+\nu-2\le e.
\]
Let $x\in N_a$ and $y\in N_b$ satisfy
\[
\psi_a(x)=\alpha D+\mu E,\qquad
\psi_b(y)=\beta F,
\]
with $\alpha\in\bF_q$ and $\mu,\beta\in\bF_q^*$. Denote by
$\bar u_0$ the image in $\bF_q$ of the unit $u_0$.
We have $\big((x,y),x\big)\in N_{2a+b+\nu-2}$ and
\[
\psi_{2a+b+\nu-2}\Big(\big((x,y),x\big)\Big) = \bar u_0 \mu \beta(\alpha D + \mu E).
\]
\end{lem}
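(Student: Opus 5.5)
The plan is to apply Lemma~\ref{lem:lie-bracket-ring} twice. The first application is to \(x,y\) and the second to \(z:=(x,y)\) and \(x\). In each case I keep track of the \(\fp\)-adic valuations finely enough that the factor \(2=u_0\varpi^{\nu}\) appears explicitly. Write \(x=I+P\), \(y=I+Q\) with \(P=\varpi^{a-1}P_0\), \(Q=\varpi^{b-1}Q_0\), where \(P_0,Q_0\in M_2(R_e)\) are lifts with \(P_0\equiv \alpha D+\mu E\) and \(Q_0\equiv \beta F \pmod{\varpi}\). By Lemma~\ref{lem:trace-val}, \(v_\fp(\tr P)\ge 2a-2\) and \(v_\fp(\tr Q)\ge 2b-2\). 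So \(t:=\tr P_0\) has \(v_\fp(t)\ge a-1\), and similarly for \(\tr Q_0\).

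\emph{Step 1.} By Lemma~\ref{lem:lie-bracket-ring}, \(z=I+Z\) with \(Z=[P,Q]+\tr(P)\tr(Q)I+\Theta_1\) and \(v_\fp(\Theta_1)\ge a+b+\min(a,b)-3\ge a+b+\nu-1\), using \(\min(a,b)\ge\nu+2\). In particular \(z\in N_{a+b-1}\). \emph{Step 2.} Apply Lemma~\ref{lem:lie-bracket-ring} to \(z\in N_{a+b-1}\) and \(x\in N_a\). The error term then has valuation at least \((a+b-1)+a+a-3=3a+b-4\ge 2a+b+\nu-2\), since \(a\ge\nu+2\). The term \(\tr(Z)\tr(P)I\) has even larger valuation. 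Hence, modulo \(\varpi^{2a+b+\nu-2}\),
\[
(z,x)-I\equiv [Z,P]\equiv [[P,Q],P]+[\Theta_1,P].
\]
The scalar part of \(Z\) drops out, and \(v_\fp([\Theta_1,P])\ge 2a+b+\nu-2\). So everything reduces to the exact matrix \([[P,Q],P]=\varpi^{2a+b-3}[[P_0,Q_0],P_0]\). It remains to show that \([[P_0,Q_0],P_0]\) has valuation \(\ge\nu\), and to compute its reduction after dividing by \(2\).

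\emph{Step 3 (the main point).} I will use the Cayley--Hamilton identity \(P_0^2=tP_0-\det(P_0)I\) over \(R_e\). This gives
\[
[P_0,[P_0,Q_0]]=P_0^2Q_0-2P_0Q_0P_0+Q_0P_0^2=t(P_0Q_0+Q_0P_0)-2\det(P_0)Q_0-2P_0Q_0P_0 .
\]
Every term is divisible either by \(2\) or by \(t\), so this has valuation \(\ge\min(\nu,a-1)=\nu\). This mod-\(2\) vanishing is the step I expect to be the real obstacle. A naive bracket computation in \(\mathfrak{sl}_2(\bF_q)\) gives zero and hides the information, so the argument has to be run over \(R_e\) itself and not on Lie images. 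After factoring \(2=u_0\varpi^\nu\), the \(t\)-term contributes at valuation \(a-1\ge\nu+1\) and can be discarded. Reducing mod \(\varpi\), one is left with
\[
\bar u_0\bigl(-\det(A)\,\beta F-A\,\beta F\,A\bigr),\qquad A=\alpha D+\mu E .
\]
This is a direct \(2\times2\) computation. In characteristic \(2\) one has \(\det A=\alpha^2\) and \(AFA=\alpha^2F+\alpha\mu D+\mu^2E\) (up to the harmless sign conventions, which are immaterial in characteristic \(2\)). The sum is therefore \(\bar u_0\mu\beta(\alpha D+\mu E)\), and the sign of \([[P,Q],P]\) versus \([P,[P,Q]]\) is likewise irrelevant. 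Equivalently, one can expand \([P_0,Q_0]=\mu\beta D-2\alpha\beta F+(\text{terms}\ \equiv0 \bmod \varpi)\) and bracket once more with \(P_0\), using \([D,E]=2E\) and \([F,E]=-D\). In that form one must check that the \(\varpi\)-higher corrections of \(P_0,Q_0\) contribute only through \(2\)-divisible or \(t\)-divisible terms, which is exactly what the Cayley--Hamilton form makes transparent.

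\emph{Conclusion.} This yields \(((x,y),x)\equiv I+\varpi^{2a+b+\nu-3}\,\bar u_0\mu\beta(\alpha D+\mu E)\pmod{\varpi^{2a+b+\nu-2}}\). Since \(2a+b+\nu-2\le e\), this shows \(((x,y),x)\in N_{2a+b+\nu-2}\) and gives the stated value of \(\psi_{2a+b+\nu-2}\).
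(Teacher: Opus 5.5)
Your proof is correct, and it takes a genuinely different route from the paper.

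\textbf{The paper's route.} It works entrywise. It writes $(x,y)=I+R$ and determines separately the residues of $R_{11}-R_{22}$ and $R_{21}$ and the valuation of $R_{12}$, so that the factor $2$ is already visible inside $R$. It then proves the sharpened bound $v_\fp(\Omega)\ge 3a+b+\nu-4$ for the error term of the second commutator, using that $R$ is scalar modulo $\varpi^{a+b+\nu-2}$. Finally it reads off the four entries of $T$.

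\textbf{Your route.} You discard all error terms with the generic bound of Lemma~\ref{lem:lie-bracket-ring}. This suffices here because $3a+b-4\ge 2a+b+\nu-2$. You then reduce to the exact matrix $[[P,Q],P]$ and extract the factor $2$ in one stroke from the Cayley--Hamilton identity $P_0^2=tP_0-\det(P_0)I$, using $v_\fp(t)\ge a-1\ge \nu+1$. Your final evaluation $-\det(A)\beta F-A\beta FA=\mu\beta(\alpha D+\mu E)$ in characteristic $2$ is right. The one step worth writing out is $v_\fp(\tr P_0)\ge a-1$. It follows from Lemma~\ref{lem:trace-val} together with $e\ge 2a-2$, which makes division by $\varpi^{a-1}$ in $R_e$ harmless.

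\textbf{What each buys.} Your argument is shorter and explains conceptually why the double bracket is divisible by $2$. The paper's heavier bookkeeping yields finer information: the residues of individual entries of $R$ and $T$, and the improved bound on $\Omega$. Lemmas~\ref{lem:two-word-two} and~\ref{lem:two-word-three} reuse this. There one needs $T$ beyond its leading term, for instance $T_{11}-T_{22}$ modulo $\varpi^{2a+b+2\nu-2}$. For that, the generic error bound $3a+b-4$ would require $a\ge 2\nu+2$ rather than the available $a\ge\nu+2$. So your streamlined version proves this lemma cleanly but would need refinement to feed the subsequent ones.
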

\begin{proof}
We denote $z=(x,y)$. Then by Lemma~\ref{lem:lie-bracket-ring} $z\in N_{a+b-1}$ and
\[
\psi_{a+b-1}(z)=\mu\beta D, \qquad \mu\beta\in \bF_q^*.
\]
We write $x=I+P,y=I+Q,z=I+R$. Then by Lemma~\ref{lem:lie-bracket-ring}
\[
R=PQ-QP+\tr(P)\tr(Q)I+\Theta.
\]
We have
\begin{align*}
R_{11}-R_{22} &  = (PQ-QP)_{11}-(PQ-QP)_{22} + (\Theta)_{11}-(\Theta)_{22}\\
& = 2(P_{12}Q_{21}-Q_{12}P_{21})+(\Theta)_{11}-(\Theta)_{22}.
\end{align*}
Recall that $\nu=e_0=v_\fp(2)$ and $2=u_0\varpi^\nu$. Note that the hypothesis on $\psi_a(x),\psi(y)$ implies that
\[
v_\fp(2P_{12}Q_{21}) = \nu+a+b-2,\; v_\fp(2Q_{12}P_{21})\ge \nu+a+b.
\]
The term $\Theta$ satisfies, by Lemma~\ref{lem:lie-bracket-ring},
\[
v_\fp((\Theta)_{11}-(\Theta)_{22})
\ge v_\fp(\Theta)
\ge a+b+\min(a,b)-3
\ge \nu+a+b-1,
\]
because \(\min(a,b)\ge\nu+2\).
Therefore we can write
\begin{align}\label{eq:two-commutator-formula-11}
R_{11}-R_{22} &  = 2P_{12}Q_{21}+S, \quad v_\fp(S)\ge a+b+\nu-1 \\
R_{11}-R_{22} & \equiv \bar u_0\mu\beta \varpi^{a+b+\nu-2} \pmod {\varpi^{a+b+\nu-1}}. \notag
\end{align}
We have \(R_{21}  = (PQ-QP)_{21}+ \Theta_{21}\), so
\begin{align*}
R_{21}  = Q_{21}(P_{22}-P_{11})+P_{21}(Q_{11}-Q_{22})+\Theta_{21}.
\end{align*}
Combining $2P_{22}-\tr(P)=P_{22}-P_{11}$ with Lemma~\ref{lem:trace-val} gives
\begin{equation}\label{eq:two-commutator-formula-12}
P_{22}-P_{11}
\equiv
\bar u_0\alpha\,\varpi^{a+\nu-1}
\pmod{\varpi^{a+\nu}}.
\end{equation}
Combining \eqref{eq:two-commutator-formula-12} with the hypotheses on
\(\psi_a(x)\) and \(\psi_b(y)\) gives
\begin{align}\label{eq:two-commutator-formula-13}
R_{21} & \equiv Q_{21}(P_{22}-P_{11}) \equiv 2Q_{21}P_{22} \pmod {\varpi^{a+b+\nu-1}},
\notag \\
R_{21} & \equiv \bar u_0\alpha\beta \varpi^{a+b+\nu-2} \pmod {\varpi^{a+b+\nu-1}}. 
\end{align}
Similarly we obtain
\begin{equation}\label{eq:two-commutator-formula-14}
v_\fp(R_{12})\ge a+b+\nu-1.
\end{equation}

Write $w=(z,x)=I+T$. Using the same commutator expansion as in
Lemma~\ref{lem:lie-bracket-ring}, we have
\begin{equation}\label{eq:two-commutator-formula-15}
T=RP - PR + \tr(R)\tr(P)I + \Omega,\qquad
v_\fp(\Omega)\ge 3a+b+\nu-4.
\end{equation}
Indeed, put \(r:=a+b-2\).  By
\eqref{eq:two-commutator-formula-11},
\eqref{eq:two-commutator-formula-13}, and
\eqref{eq:two-commutator-formula-14}, one may write
\begin{align*}
& R=sI+R',\quad -R+\tr(R)I=sI+R'', \qquad
\text{with} \\
& v_\fp(s)\ge r,\qquad v_\fp(R'),v_\fp(R'')\ge r+\nu.
\end{align*}
The degree at least three terms with two \(R\)-side factors have valuation at least \(2r+a-1\ge 3a+b+\nu-4\).  The remaining degree three terms are
\[
RPP^*+P(-R+\tr(R)I)P^*,
\qquad P^*:=-P+\tr(P)I.
\]
Using \(PP^*=-\tr(P)I\), their sum is
\[
-2s\tr(P)I+R'PP^*+PR''P^*,
\]
which has valuation at least
\[
\nu+r+2a-2=3a+b+\nu-4.
\]
This proves the asserted bound for \(\Omega\).

By Lemma~\ref{lem:trace-val}
\begin{align}
& v_\fp\big(\tr(P)\big)\ge 2a-2,\; v_\fp\big(\tr(R)\big)\ge 2a+2b-4, \notag \\
& v_\fp\big(\tr(P)\tr(R)I\big)\ge 4a+2b-6. \label{eq:two-commutator-formula-16}
\end{align}

Compute
\begin{equation}\label{eq:two-commutator-formula-17}
T_{12} = P_{12}(R_{11}-R_{22}) + R_{12}(P_{22}-P_{11}) + \Omega_{12}.    
\end{equation}
By \eqref{eq:two-commutator-formula-11} we have 
\begin{align}
 & v_\fp\big(P_{12}(R_{11}-R_{22})\big)= a-1+a+b+\nu-2=2a+b+\nu-3 \notag \\
& P_{12}(R_{11}-R_{22})\equiv u_0\mu^2\beta \varpi^{2a+b+\nu-3} \pmod \varpi^{2a+b+\nu-2}\label{eq:two-commutator-formula-18}
\end{align}
By \eqref{eq:two-commutator-formula-12} and \eqref{eq:two-commutator-formula-14} we get
\begin{equation}\label{eq:two-commutator-formula-19}
v_\fp\big(R_{12}(P_{22}-P_{11})\big)\ge a-1+\nu+a+b+\nu-1>2a+b+\nu-3.
\end{equation}
Combining \eqref{eq:two-commutator-formula-15}, \eqref{eq:two-commutator-formula-18}, \eqref{eq:two-commutator-formula-19} gives
\begin{equation}\label{eq:two-commutator-formula-110}
T_{12} \equiv u_0\mu^2\beta \varpi^{2a+b+\nu-3} \pmod \varpi^{2a+b+\nu-2}.
\end{equation}

Now compute
\begin{equation}\label{eq:two-commutator-formula-111}
T_{21} = R_{21}(P_{11}-P_{22}) + P_{21}(R_{22}-R_{11}) + \Omega_{21}.
\end{equation}
Combining \eqref{eq:two-commutator-formula-13}, \eqref{eq:two-commutator-formula-12}, \eqref{eq:two-commutator-formula-11} gives
\begin{align}\label{eq:two-commutator-formula-112}
 & v_\fp\big(R_{21}(P_{11}-P_{22})\big)\ge a+b+\nu-2+a-1+\nu\ge 2a+b+\nu-2,\\
 & v_\fp\big(P_{21}(R_{22}-R_{11})\big)\ge a+a+b+\nu-2\ge 2a+b+\nu-2.
\end{align}
Since $v_\fp(\Omega_{21})\ge 2a+b+\nu-2$, we get
\begin{equation}\label{eq:two-commutator-formula-113}
v_\fp(T_{21})\ge 2a+b+\nu-2.
\end{equation}

Now compute
\begin{equation}\label{eq:two-commutator-formula-114}
T_{11} = R_{12}P_{21} - P_{12}R_{21} + \tr(R)\tr(P) + \Omega_{11}.
\end{equation}
By \eqref{eq:two-commutator-formula-14} we have
\begin{equation}\label{eq:two-commutator-formula-115}
v_\fp(R_{12}P_{21})\ge a+b+\nu-1+a>2a+b+\nu-2,
\end{equation}
while by \eqref{eq:two-commutator-formula-13} we have
\begin{equation}\label{eq:two-commutator-formula-116}
P_{12}R_{21}  \equiv \bar u_0\alpha\beta\mu \varpi^{2a+b+\nu-3} \pmod {\varpi^{2a+b+\nu-2}}.
\end{equation}
Combining \eqref{eq:two-commutator-formula-114}, \eqref{eq:two-commutator-formula-115}, \eqref{eq:two-commutator-formula-116} with \eqref{eq:two-commutator-formula-16} gives
\begin{equation}\label{eq:two-commutator-formula-117}
T_{11}\equiv \bar u_0\alpha\beta\mu \varpi^{2a+b+\nu-3} \pmod {\varpi^{2a+b+\nu-2}}.
\end{equation}
Since \(T_{22} = R_{21}P_{12}  -P_{21}R_{12}+ \tr(R)\tr(P) + \Omega_{11}\), same arguments give
\begin{equation}\label{eq:two-commutator-formula-118}
T_{22}\equiv \bar u_0\alpha\beta\mu \varpi^{2a+b+\nu-3} \pmod {\varpi^{2a+b+\nu-2}}.
\end{equation}
Formulas \eqref{eq:two-commutator-formula-110}, \eqref{eq:two-commutator-formula-113}, \eqref{eq:two-commutator-formula-117}, and \eqref{eq:two-commutator-formula-118} provide the required information on all four entries of the matrix \(T\). This yields the conclusion.
\end{proof}

\begin{lem}\label{lem:two-word-two}
Assume the hypotheses of Lemma~\ref{lem:two-word-one}, and assume moreover that
$\alpha\neq 0, 3a+b+2\nu-3\le e$.
Set \(u:=(((x,y),x),x)\). Then
\[
u\in N_{3a+b+2\nu-3},
\qquad
\psi_{3a+b+2\nu-3}(u)
=
\bar u_0^{\,2}\alpha^2\mu\beta\,D.
\]
\end{lem}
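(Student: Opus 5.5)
We will iterate the computation of Lemma~\ref{lem:two-word-one} once more. Set \(w=((x,y),x)\) and write \(w=I+T\), \(x=I+P\), \(u=(w,x)=I+Z\). Put \(c:=2a+b+\nu-2\), so that by Lemma~\ref{lem:two-word-one} we have \(w\in N_c\) with \(\psi_c(w)=\bar u_0\mu\beta(\alpha D+\mu E)\). More precisely, the proof of Lemma~\ref{lem:two-word-one} gives the entrywise information
\[
T_{11}\equiv T_{22}\equiv \bar u_0\alpha\beta\mu\varpi^{c-1},\quad T_{12}\equiv \bar u_0\mu^2\beta\varpi^{c-1}\pmod{\varpi^{c}},\qquad v_\fp(T_{21})\ge c.
\]
The naive bracket \([\psi_c(w),\psi_a(x)]\) vanishes, because both are multiples of \(\alpha D+\mu E\) and in characteristic two \(D=I\). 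So, exactly as in the step from \(z\) to \(w\), the leading term has to come from the factor \(2\) hidden in \(T_{11}-T_{22}\) and \(P_{11}-P_{22}\). This explains the extra \(\nu\) in the target level \(3a+b+2\nu-3=c+a+\nu-1\).

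\textbf{Step 1: refine the information on \(T\).} We first sharpen the entrywise information on \(T\). We need \(T_{11}-T_{22}\) to one more \(\varpi\)-adic digit beyond \(c-1\), namely its class modulo \(\varpi^{c+\nu}\), and we need \(T_{21}\) modulo a suitable power. For the trace part we use Lemma~\ref{lem:trace-val}: \(T_{11}+T_{22}=\tr(T)\) has valuation at least \(2c-2\), so \(T_{11}-T_{22}=2T_{11}-\tr(T)\). Combined with \(T_{11}\equiv\bar u_0\alpha\beta\mu\varpi^{c-1}\), this gives
\[
T_{11}-T_{22}\equiv \bar u_0^{2}\alpha\beta\mu\,\varpi^{c+\nu-1}\pmod{\varpi^{c+\nu}},
\]
by the same trick that produced \eqref{eq:two-commutator-formula-12}. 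This requires \(2c-2\ge c+\nu\), which holds.

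\textbf{Step 2: expand the commutator and read off the diagonal.} Write \(Z=[T,P]+\tr(T)\tr(P)I+\Omega'\), and bound \(\Omega'\) by the same splitting as in \eqref{eq:two-commutator-formula-15}. Set \(T=sI+T'\) with \(v_\fp(s)\ge c-1\) and \(v_\fp(T')\ge c-1\). The terms in \(T'\) enjoy extra vanishing in the \(21\)-slot only, so this step needs care. Next compute
\[
Z_{11}=T_{12}P_{21}-P_{12}T_{21}+\dots,\qquad Z_{12}=P_{12}(T_{11}-T_{22})+T_{12}(P_{22}-P_{11})+\Omega'_{12}.
\]
Here \(v_\fp(P_{21})\ge a\) and \(v_\fp(T_{21})\ge c\), so the diagonal entries \(Z_{11}\) and \(Z_{22}\) have valuation at least \(c+a-1\), which is not yet enough. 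We therefore need \(P_{21}\) to higher precision. The cleanest way is to note \(P_{21}\equiv 0\) modulo \(\varpi^{a}\) and to extract the next digit as an unknown \(\pi\). Its contribution \(T_{12}P_{21}-P_{12}T_{21}\) must then be handled by pairing it with the \(12\)-entries. The expected leading diagonal value is \(\bar u_0^2\alpha^2\mu\beta\,\varpi^{c+a+\nu-2}\).

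\textbf{Step 3: the off-diagonal entries.} For \(Z_{12}\), the two terms \(P_{12}(T_{11}-T_{22})\) and \(T_{12}(P_{22}-P_{11})\) have leading parts \(\bar u_0^2\alpha\beta\mu^2\varpi^{a-1+c+\nu-1}\) and \(\bar u_0\mu^2\beta\varpi^{c-1}\cdot\bar u_0\alpha\varpi^{a+\nu-1}\), using \eqref{eq:two-commutator-formula-12}. These are equal, and in characteristic two their difference cancels modulo \(\varpi^{c+a+\nu-1}\). Hence the \(E\)-component vanishes. The \(F\)-component similarly vanishes, since \(v_\fp(T_{21})\ge c\) and \(v_\fp(P_{21})\ge a\). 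Finally, the hypothesis \(3a+b+2\nu-3\le e\) ensures that everything takes place inside \(R_e\).

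\textbf{Main obstacle.} The diagonal entries in Step 2 are where we expect the difficulty. The natural candidates, namely \(\tr(T)\tr(P)\) of valuation at least \(2c+2a-4\) and the cross terms \(T_{12}P_{21}\) and \(P_{12}T_{21}\), seem to lie deeper than \(c+a+\nu-2\) unless the \(21\)-entries carry hidden information. We would therefore first recheck whether \(T_{21}\) has exact valuation \(c+\nu-1\) with leading coefficient \(\bar u_0^2\alpha^2\beta\), obtained by propagating \eqref{eq:two-commutator-formula-13} through one more commutator. Then \(-P_{12}T_{21}\) supplies \(\bar u_0^2\alpha^2\mu\beta\varpi^{c+a+\nu-2}\) on the diagonal, which is the claimed coefficient. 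This would require redoing \eqref{eq:two-commutator-formula-111}–\eqref{eq:two-commutator-formula-113} with one more digit of precision, and it is where the careful bookkeeping of the error term \(\Omega\) is essential.
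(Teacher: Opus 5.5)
Your Step~1 is correct; it is the $2T_{11}-\tr(T)$ trick the paper uses in Lemma~\ref{lem:two-word-three}. Your treatment of the off-diagonal entries matches the paper's. The error term also needs no splitting: Lemma~\ref{lem:lie-bracket-ring} already gives $v_\fp(\Omega')\ge m+2a-3\ge c$ because $a\ge\nu+2$. Here and below, $m:=2a+b+\nu-2$ is your $c$, and $c:=3a+b+2\nu-3$ is the target level.

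The gap is in the diagonal entries, which carry the whole content of the lemma, and the resolution you propose there fails. By \eqref{eq:two-commutator-formula-111}, $T_{21}$ contains the term $P_{21}(R_{22}-R_{11})$. Its valuation is only $\ge m$, and its leading digit involves the digit $\pi$ of $P_{21}$ at level $a$, which the hypotheses (they only fix $P \bmod \varpi^a$) leave free. Hence $T_{21}\equiv\bar u_0^{2}\alpha^2\beta\,\varpi^{m+\nu-1}\pmod{\varpi^{m+\nu}}$ is false in general:
\begin{itemize}
\item for $\nu\ge2$ and $\pi\ne0$ one has $v_\fp(T_{21})=m$;
\item for $\nu=1$ the digit of $T_{21}$ at level $m$ is $\bar u_0^2\alpha^2\beta+\bar u_0\mu\beta\pi$.
\end{itemize}
Likewise $T_{12}P_{21}$ is not negligible: when $\pi\neq0$ it has valuation exactly $m+a-1\le c-1$. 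So your remark that these cross terms ``lie deeper'' than the target is backwards. Neither $T_{12}P_{21}$ nor $P_{12}T_{21}$ is determined modulo $\varpi^{c}$ by the available data; only their difference is. Reading the diagonal off $-P_{12}T_{21}$ alone therefore gives the wrong coefficient.

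The missing step is to go back one commutator and expand both $T_{12}$ and $T_{21}$ in terms of $R$ and $P$ via \eqref{eq:two-commutator-formula-17} and \eqref{eq:two-commutator-formula-111}:
\begin{align*}
T_{12}P_{21}-P_{12}T_{21}
&=2P_{12}P_{21}(R_{11}-R_{22})+R_{12}(P_{22}-P_{11})P_{21}\\
&\quad-P_{12}R_{21}(P_{11}-P_{22})+\Omega_{12}P_{21}-P_{12}\Omega_{21}.
\end{align*}
The two $\pi$-dependent pieces combine exactly into $2P_{12}P_{21}(R_{11}-R_{22})$. Thanks to the factor $2$, this has valuation $\ge\nu+(a-1)+a+(a+b+\nu-2)=c$.

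The remaining terms are handled as follows:
\begin{itemize}
\item $R_{12}(P_{22}-P_{11})P_{21}$ and the $\Omega$-terms have valuation $\ge c$, by \eqref{eq:two-commutator-formula-14}, \eqref{eq:two-commutator-formula-12}, \eqref{eq:two-commutator-formula-15} and $a\ge\nu+2$;
\item $\tr(T)\tr(P)$ and $\Omega'_{11}$ are also negligible;
\item the only surviving term is $-P_{12}R_{21}(P_{11}-P_{22})\equiv\bar u_0^{2}\alpha^2\mu\beta\,\varpi^{c-1}\pmod{\varpi^{c}}$, by \eqref{eq:two-commutator-formula-13} and \eqref{eq:two-commutator-formula-12}.
\end{itemize}
The same expansion with signs reversed gives $Z_{22}$. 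So the input you need is the information on $R$ one level back, not a sharper statement about $T_{21}$ by itself.
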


\begin{proof}
Set
\(
z:=(x,y), w:=(z,x), u:=(w,x),
\)
and write
\(
x=I+P, z=I+R, w=I+T, u=I+S.
\)
Also denote
\[
m:=2a+b+\nu-2,\qquad c:=3a+b+2\nu-3.
\]
The hypothesis $\psi_a(x)=\alpha D+\mu E$ gives
\[
P_{12}\equiv \mu\,\varpi^{a-1}\pmod{\varpi^a},\qquad v_\fp(P_{21})\ge a,
\]
and, combined with \eqref{eq:two-commutator-formula-12},
\begin{equation}\label{eq:two-word-two-6}
P_{22}-P_{11}\equiv \bar u_0\alpha\,\varpi^{a+\nu-1}\pmod{\varpi^{a+\nu}}.
\end{equation}

We shall freely use several formulas from the proof of Lemma~\ref{lem:two-word-one}. 
We first sharpen the information on $T_{11}-T_{22}$.
We have
\[
T_{11}-T_{22}
=
2(R_{12}P_{21}-P_{12}R_{21})+(\Omega_{11}-\Omega_{22}).
\]
By \eqref{eq:two-commutator-formula-15}, we have
\[
v_\fp(\Omega_{11}-\Omega_{22})\ge 3a+b+\nu-4.
\]
Now, by \eqref{eq:two-commutator-formula-11} and by the inequality $v_\fp(P_{21})\ge a$,
\[
v_\fp\bigl(2R_{12}P_{21}\bigr)\ge \nu+(a+b+\nu-1)+a
=2a+b+2\nu-1>m+\nu-1,
\]
while by \eqref{eq:two-commutator-formula-117}, \(2P_{12}R_{21}\equiv
\bar u_0^2\alpha\mu\beta\,\varpi^{m+\nu-1}
\pmod{\varpi^{m+\nu}}\). Since
\[
3a+b+\nu-4\ge 2a+b+2\nu-2=m+\nu
\]
because $a\ge \nu+2$, we obtain
\begin{equation}\label{eq:two-word-two-8}
T_{11}-T_{22}
\equiv
-\bar u_0^{\,2}\alpha\mu\beta\,\varpi^{m+\nu-1}
\pmod{\varpi^{m+\nu}}.
\end{equation}

Now applying Lemma~\ref{lem:lie-bracket-ring} to $(w,x)$ we get
\begin{align}
S & =TP-PT+\tr(T)\tr(P)I+\Xi,
\notag\\
v_\fp(\Xi) & \ge m+2a-3=4a+b+\nu-5\ge c.\label{eq:two-commutator-formula-20}
\end{align}
Note that $\Xi$ will be negligible for the purpose of our computation.
We now inspect the four entries of $S$. We have
\begin{equation}\label{eq:two-word-two-D}
S_{12}=T_{12}(P_{22}-P_{11})+P_{12}(T_{11}-T_{22})+\Xi_{12}.
\end{equation}
Using \eqref{eq:two-commutator-formula-110}, \eqref{eq:two-word-two-6}, and \eqref{eq:two-word-two-8}, we obtain
\begin{align*}
T_{12}(P_{22}-P_{11})
&\equiv \bar u_0^{\,2}\alpha\mu^2\beta\,\varpi^{c-1}
\pmod{\varpi^c},\\
P_{12}(T_{11}-T_{22})
&\equiv -\bar u_0^{\,2}\alpha\mu^2\beta\,\varpi^{c-1}
\pmod{\varpi^c}.
\end{align*}
Therefore
\begin{equation}\label{eq:two-word-two-9}
v_\fp(S_{12})\ge c.
\end{equation}
For the $(2,1)$-entry,
\begin{equation}\label{eq:two-word-two-E}
S_{21}=T_{21}(P_{11}-P_{22})+P_{21}(T_{22}-T_{11})+\Xi_{21}.
\end{equation}
Using \eqref{eq:two-commutator-formula-113}, \eqref{eq:two-word-two-6}, \eqref{eq:two-word-two-8}, and $v_\fp(P_{21})\ge a$, we obtain
\begin{align*}
& v_\fp\bigl(T_{21}(P_{11}-P_{22})\bigr)\ge m+(a+\nu-1)=c,\\
 & v_\fp\bigl(P_{21}(T_{22}-T_{11})\bigr)\ge a+(m+\nu-1)=c.
\end{align*}
Hence
\begin{equation}\label{eq:two-word-two-10}
v_\fp(S_{21})\ge c.
\end{equation}

We have \(S_{11}=T_{12}P_{21}-P_{12}T_{21}+\tr(T)\tr(P)+\Xi_{11}\). Using \eqref{eq:two-commutator-formula-17} and
\eqref{eq:two-commutator-formula-111}, we obtain
\begin{align*}
T_{12}P_{21}-P_{12}T_{21}
&=
\bigl(P_{12}(R_{11}-R_{22})+R_{12}(P_{22}-P_{11})+\Omega_{12}\bigr)P_{21}\\
&\qquad
-P_{12}\bigl(R_{21}(P_{11}-P_{22})+P_{21}(R_{22}-R_{11})+\Omega_{21}\bigr)\\
&=
2P_{12}P_{21}(R_{11}-R_{22})
+R_{12}(P_{22}-P_{11})P_{21}\\
&\qquad
-P_{12}R_{21}(P_{11}-P_{22})
+\Omega_{12}P_{21}-P_{12}\Omega_{21}.
\end{align*}
By \eqref{eq:two-commutator-formula-11} and $v_\fp(P_{21})\ge a$,
\begin{equation}\label{eq:two-word-two-A}
v_\fp\!\bigl(2P_{12}P_{21}(R_{11}-R_{22})\bigr)
\ge \nu+(a-1)+a+(a+b+\nu-2)=c.
\end{equation}
By \eqref{eq:two-commutator-formula-14} and \eqref{eq:two-word-two-6},
\begin{equation}\label{eq:two-word-two-B}
v_\fp\!\bigl(R_{12}(P_{22}-P_{11})P_{21}\bigr)
\ge (a+b+\nu-1)+(a+\nu-1)+a=c+1.
\end{equation}
By \eqref{eq:two-commutator-formula-15},
\begin{equation}\label{eq:two-word-two-C}
v_\fp(\Omega_{12}P_{21})\ge c,\qquad
v_\fp(P_{12}\Omega_{21})\ge c.
\end{equation}
Therefore, by \eqref{eq:two-commutator-formula-13}, \eqref{eq:two-word-two-6}, and $P_{12}\equiv \mu\varpi^{a-1}\pmod{\varpi^a}$,
\begin{equation}\label{eq:two-word-two-11}
T_{12}P_{21}-P_{12}T_{21}
\equiv
-P_{12}R_{21}(P_{11}-P_{22})
\equiv
\bar u_0^{\,2}\alpha^2\mu\beta\,\varpi^{c-1}
\pmod{\varpi^c}.
\end{equation}
Hence
\[
S_{11}\equiv
\bar u_0^{\,2}\alpha^2\mu\beta\,\varpi^{c-1}
\pmod{\varpi^c}.
\]
We have
\[
S_{22}=T_{21}P_{12}-P_{21}T_{12}+\tr(T)\tr(P)+\Xi_{22}.
\]
Expanding as above gives
\begin{align*}
T_{21}P_{12}-P_{21}T_{12}
&=
-2P_{12}P_{21}(R_{11}-R_{22})
+R_{21}(P_{11}-P_{22})P_{12}\\
&\qquad
-P_{21}R_{12}(P_{22}-P_{11})
+\Omega_{21}P_{12}-P_{21}\Omega_{12}.
\end{align*}
The first term has valuation at least \(c\), the third and last two terms are
\(>c-1\), and by \eqref{eq:two-commutator-formula-13}, \eqref{eq:two-word-two-6}, and $P_{12}\equiv \mu\varpi^{a-1}\pmod{\varpi^a}$,
\[
R_{21}(P_{11}-P_{22})P_{12}
\equiv
-\bar u_0^{\,2}\alpha^2\mu\beta\,\varpi^{c-1}
\pmod{\varpi^c}.
\]
Therefore \(S_{22}\equiv
-\bar u_0^{\,2}\alpha^2\mu\beta\,\varpi^{c-1}
\pmod{\varpi^c}\).
\end{proof}

\begin{lem}\label{lem:two-word-three}
Assume the hypotheses of Lemma~\ref{lem:two-word-one}, and assume moreover that
$\alpha\neq 0,a\ge 2\nu+2, 4a+b+3\nu-4\le e$.
Set
\[
\widehat{(x,y)}:=((((x,y),x),x),x).
\]
Then
\[
\widehat{(x,y)}\in N_{4a+b+3\nu-4},
\quad
\psi_{4a+b+3\nu-4}\big(\widehat{(x,y)}\big)
=
\bar u_0^{\,3}\alpha^2\mu\beta\,(\alpha D+\mu E).
\]
\end{lem}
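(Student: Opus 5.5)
We follow the pattern of Lemmas~\ref{lem:two-word-one} and \ref{lem:two-word-two}. Set $u:=(((x,y),x),x)=I+S$ and $\widehat{(x,y)}=(u,x)=I+V$, with $x=I+P$ as before, and write $c:=3a+b+2\nu-3$ and $d:=4a+b+3\nu-4=c+a+\nu-1$. Lemma~\ref{lem:two-word-two}, applicable since $c\le d\le e$, gives $u\in N_c$ with $\psi_c(u)=\kappa D$, where $\kappa:=\bar u_0^{2}\alpha^2\mu\beta\in\bF_q^*$. Naively, Lemma~\ref{lem:lie-bracket-ring} then yields only $(u,x)\in N_{c+a-1}$ with leading term $[\kappa D,\alpha D+\mu E]=2\kappa\mu E$, which vanishes in characteristic two. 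We therefore need the extra factor $\varpi^\nu$, which arises exactly as in the passage from $z$ to $w$ in Lemma~\ref{lem:two-word-one}. There, $\psi(z)=\mu\beta D$ produced $\psi(w)=\bar u_0\mu\beta(\alpha D+\mu E)$, with the shift $a-1+\nu$. The situation here is identical after replacing $(R,\mu\beta)$ by $(S,\kappa)$, which gives exactly the claimed value $\bar u_0\kappa(\alpha D+\mu E)=\bar u_0^3\alpha^2\mu\beta(\alpha D+\mu E)$.

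\textbf{Step 1 (inputs on $S$).} We need the analogues of \eqref{eq:two-commutator-formula-11}, \eqref{eq:two-commutator-formula-13}, \eqref{eq:two-commutator-formula-14} for $S$:
\begin{align*}
&S_{11}-S_{22}\equiv \bar u_0\kappa\varpi^{c+\nu-1}\pmod{\varpi^{c+\nu}},\\
&S_{21}\equiv \bar u_0\kappa'\varpi^{c+\nu-1}\ \text{for some }\kappa',\qquad v_\fp(S_{12})\ge c+\nu-1.
\end{align*}
Only the first congruence and the valuation bounds matter. For the diagonal, $S_{11}-S_{22}=2(T_{12}P_{21}-P_{12}T_{21})+(\Xi_{11}-\Xi_{22})$. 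Equation \eqref{eq:two-word-two-11} gives the leading term $\bar u_0\kappa\varpi^{c+\nu-1}$, but we must improve the bound on $\Xi_{11}-\Xi_{22}$ from $c$ to $c+\nu$. For this we redo the degree-three analysis of \eqref{eq:two-commutator-formula-15}: write $T=tI+T'$, where $t$ has valuation $\ge m-1$ and $T'$ has extra valuation coming from \eqref{eq:two-word-two-8}, and use $PP^*=-\tr(P)I$ to see that the scalar parts cancel to $2t\tr(P)$, which is harmless. For the off-diagonal entries we need the sharper bounds $v_\fp(S_{12}),v_\fp(S_{21})\ge c+\nu-1$. Currently \eqref{eq:two-word-two-9} and \eqref{eq:two-word-two-10} give only $\ge c$, so we must track one more order of the cancellations in \eqref{eq:two-word-two-D}. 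Here the hypothesis $a\ge 2\nu+2$ enters: it makes the cross terms $P_{21}(\cdot)$ of valuation $\ge a$ and all $\Xi$ contributions fall beyond $c+\nu-1$.

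\textbf{Step 2 (the final commutator).} Write $V=SP-PS+\tr(S)\tr(P)I+\Omega'$ and bound $\Omega'$ as in \eqref{eq:two-commutator-formula-15}, obtaining $v_\fp(\Omega')\ge d$ using $a\ge 2\nu+2$. Then compute the entries exactly as in \eqref{eq:two-commutator-formula-110}–\eqref{eq:two-commutator-formula-118}. The entry $V_{12}=P_{12}(S_{11}-S_{22})+\dots$ is $\equiv\bar u_0\kappa\mu\varpi^{d-1}$. For $V_{11}$ and $V_{22}$ the leading term is $P_{12}S_{21}$-type, which gives $\bar u_0\kappa\alpha\varpi^{d-1}$; this uses $S_{21}\equiv$ (twice something from $P_{22}-P_{11}$), mirroring \eqref{eq:two-commutator-formula-13}, which forces $\kappa'=\kappa\alpha/\mu\cdot\mu$ appropriately. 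Finally $v_\fp(V_{21})\ge d$.

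\textbf{Main obstacle.} The hard step is Step 1, namely sharpening the off-diagonal valuations of $S$ and the error $\Xi$ by an extra $\nu-1$. This requires carrying the expansions of $T$ (and hence of $R$) to one further order rather than reusing the stated congruences. I would first try to express $S_{21}$ and $S_{12}$ via the same factorization as in \eqref{eq:two-commutator-formula-13}, i.e.\ $S_{21}\equiv T_{21}(P_{11}-P_{22})+P_{21}(T_{22}-T_{11})$, where each factor already carries a $\varpi^\nu$.
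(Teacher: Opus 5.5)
Your plan has a genuine gap in the range the lemma allows. Your Step~2 discards $S_{12}P_{21}$ in $V_{11}$ and reads $V_{11}$ off from $-P_{12}S_{21}$. Since only $v_\fp(P_{21})\ge a$ is known, this needs two things: $v_\fp(S_{12})\ge c+\nu-1$, and a value of $S_{21}$ modulo $\varpi^{c+\nu}$ determined by $\alpha,\mu,\beta$. Both Step~1 claims are false when $\nu\ge2$ and $b\le 2\nu$.

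To see this, substitute \eqref{eq:two-commutator-formula-17} and $T_{11}-T_{22}=2(R_{12}P_{21}-P_{12}R_{21})+\Omega_{11}-\Omega_{22}$ into \eqref{eq:two-word-two-D}. This gives the exact identity
\[
S_{12}=\Xi_{12}+P_{12}(\Omega_{11}-\Omega_{22})+\Delta_P\Omega_{12}+(\Delta_P^2+4P_{12}P_{21})R_{12}-P_{12}\tau ,
\]
where $\Delta_P=P_{22}-P_{11}$ and $\tau=(P_{11}-P_{22})(R_{11}-R_{22})+2P_{12}R_{21}+2P_{21}R_{12}=2\tr(PR)-\tr(P)\tr(R)$.

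From $x^2=\tr(x)x-I$ one gets $\tr(PR)=\tr(R)(1+\tr P)$, so $\tau=\tr(R)(2+\tr P)$. Since $R_{11}\equiv R_{22}\equiv\mu\beta\varpi^{a+b-2}$, we have $v_\fp(\tr R)=2a+2b-4$, hence $v_\fp(P_{12}\tau)=3a+2b+\nu-5$. All other terms have valuation at least $\min(4a+b+\nu-5,\,c+\nu)$.

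Now take $\nu\ge2$ and $\nu+2\le b\le 2\nu$, which the hypotheses allow (and then $a>b$). Then $v_\fp(S_{12})=3a+2b+\nu-5<c+\nu-1$. For example, $\nu=2$, $a=6$, $b=4$ gives $v_\fp(S_{12})=c$.

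Symmetrically, $S_{21}$ contains $-P_{21}\tau$. Its class modulo $\varpi^{c+\nu}$ therefore depends on the uncontrolled $\varpi^{a}$-coefficient of $P_{21}$, so no universal $\kappa'$ exists. This term comes from $\tr(R)\neq 0$, a group-level effect with no Lie-algebra counterpart. That is exactly why your analogy ``replace $(R,\mu\beta)$ by $(S,\kappa)$'' breaks down. Your route does go through when $b\ge 2\nu+1$, for instance when $\nu=1$, but not in the stated range.

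The missing idea is to never bound $S_{12}$ and $S_{21}$ separately, and instead treat $S_{12}P_{21}-P_{12}S_{21}$ as one quantity. There the contributions $-P_{12}P_{21}\tau$ and $+P_{12}P_{21}\tau$ cancel. The paper achieves this by substituting \eqref{eq:two-word-two-D} and \eqref{eq:two-word-two-E}, then \eqref{eq:two-commutator-formula-17} and \eqref{eq:two-commutator-formula-111}:
\begin{itemize}
\item the pieces $P_{12}P_{21}(T_{11}-T_{22})$ add up to $2P_{12}P_{21}(T_{11}-T_{22})$, which has valuation $\ge d$;
\item the pieces $P_{12}P_{21}(R_{11}-R_{22})$ cancel;
\item only $-P_{12}R_{21}\Delta_P^2$ survives at order $\varpi^{d-1}$.
\end{itemize}
The hypothesis $a\ge 2\nu+2$ enters through $v_\fp(\Xi)\ge 4a+b+\nu-5\ge c+\nu$. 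For $V_{12}$ and $V_{21}$, the bounds $v_\fp(S_{12}),v_\fp(S_{21})\ge c$ from Lemma~\ref{lem:two-word-two} already suffice.

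Your treatment of $S_{11}-S_{22}$ is correct but heavier than needed. The bound \eqref{eq:two-commutator-formula-20} together with $a\ge 2\nu+2$ already gives $v_\fp(\Xi)\ge c+\nu$, so no redo of the degree-three analysis is required. The paper simply writes $S_{11}-S_{22}=2S_{11}-\tr(S)$ and uses $v_\fp(\tr S)\ge 2c-2\ge c+\nu$.
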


\begin{proof}
Set $z:=(x,y)$, $w:=(z,x)$, $u:=(w,x)$, $v:=(u,x)=\widehat{(x,y)}$, and write $x=I+P$, $z=I+R$, $w=I+T$, $u=I+S$, $v=I+V$. Also denote
\[
m:=2a+b+\nu-2,\quad
c:=3a+b+2\nu-3,\quad
d:=4a+b+3\nu-4.
\]

By Lemma~\ref{lem:two-word-two}, $u\in N_c$ with $\psi_c(u)=\bar u_0^{\,2}\alpha^2\mu\beta\,D$. 
Set $\Delta_P:=P_{22}-P_{11}$. 
We first sharpen the information on \(S_{11}-S_{22}\). By
Lemma~\ref{lem:two-word-two},
\[
S\in N_c,\qquad
\psi_c(u)=\bar u_0^{\,2}\alpha^2\mu\beta\,D.
\]
Hence
\[
S_{11}\equiv
\bar u_0^{\,2}\alpha^2\mu\beta\,\varpi^{c-1}
\pmod{\varpi^c}.
\]
Since \(u=I+S\in \SL_2(R_e)\), Lemma~\ref{lem:trace-val} gives
\[
v_\fp(\tr(S))\ge 2c-2\ge c+\nu.
\]
Therefore \(S_{11}-S_{22}=2S_{11}-\tr(S)\), so
\begin{equation}\label{eq:two-word-three-12}
S_{11}-S_{22}
\equiv
\bar u_0^{\,3}\alpha^2\mu\beta\,\varpi^{c+\nu-1}
\pmod{\varpi^{c+\nu}}.
\end{equation}

Now applying Lemma~\ref{lem:lie-bracket-ring} to the pair $(u,x)$ we get
\begin{align*}
V & =SP-PS+\tr(S)\tr(P)I+\Lambda,\\
v_\fp(\Lambda) & \ge c+2a-3=5a+b+2\nu-6\ge d.
\end{align*}
We begin with the $(1,2)$-entry:
\[
V_{12}=S_{12}(P_{22}-P_{11})+P_{12}(S_{11}-S_{22})+\Lambda_{12}.
\]
By \eqref{eq:two-word-two-9} and \eqref{eq:two-word-two-6},
\[
v_\fp\bigl(S_{12}(P_{22}-P_{11})\bigr)\ge c+(a+\nu-1)=d.
\]
Using $P_{12}\equiv \mu\varpi^{a-1}\pmod{\varpi^a}$ and \eqref{eq:two-word-three-12}, we obtain
\begin{equation}\label{eq:two-word-three-13}
V_{12}\equiv
\bar u_0^{\,3}\alpha^2\mu^2\beta\,\varpi^{d-1}
\pmod{\varpi^d}.
\end{equation}
For the $(2,1)$-entry,
\[
V_{21}=S_{21}(P_{11}-P_{22})+P_{21}(S_{22}-S_{11})+\Lambda_{21}.
\]
By \eqref{eq:two-word-two-10}, \eqref{eq:two-word-two-6}, \eqref{eq:two-word-three-12}, and $v_\fp(P_{21})\ge a$,
\begin{align*}
& v_\fp\bigl(S_{21}(P_{11}-P_{22})\bigr)\ge c+(a+\nu-1)=d,\\
& v_\fp\bigl(P_{21}(S_{22}-S_{11})\bigr)\ge a+(c+\nu-1)=d.
\end{align*}
Hence
\begin{equation}\label{eq:two-word-three-14}
v_\fp(V_{21})\ge d.
\end{equation}

We now compute the diagonal entries. We have
\[
V_{11}=S_{12}P_{21}-P_{12}S_{21}+\tr(S)\tr(P)+\Lambda_{11}.
\]
Substituting \eqref{eq:two-word-two-D} and \eqref{eq:two-word-two-E} gives
\begin{align}\label{eq:two-word-three-15}
V_{11}
&= (T_{12}P_{21}+P_{12}T_{21})\Delta_P
+2P_{12}P_{21}(T_{11}-T_{22}) \notag\\
&\quad +\Xi_{12}P_{21}-P_{12}\Xi_{21}
+\tr(S)\tr(P)+\Lambda_{11}.
\end{align}
Using \eqref{eq:two-commutator-formula-17} and \eqref{eq:two-commutator-formula-111}, the same expansion as in the proof of Lemma~\ref{lem:two-word-two} gives
\[
T_{12}P_{21}+P_{12}T_{21}
=
R_{12}\Delta_P P_{21}
-P_{12}R_{21}\Delta_P
+\Omega_{12}P_{21}+P_{12}\Omega_{21},
\]
the $2P_{12}P_{21}(R_{11}-R_{22})$ term now cancelling because of the sign change. Substituting into \eqref{eq:two-word-three-15} yields
\begin{align*}
V_{11}
&=
R_{12}\Delta_P^2 P_{21}
-P_{12}R_{21}\Delta_P^2
+(\Omega_{12}P_{21}+P_{12}\Omega_{21})\Delta_P\\
&\quad
+2P_{12}P_{21}(T_{11}-T_{22})
+\Xi_{12}P_{21}-P_{12}\Xi_{21}
+\tr(S)\tr(P)+\Lambda_{11}.
\end{align*}
By \eqref{eq:two-word-two-B}, \eqref{eq:two-word-two-C}, and \eqref{eq:two-word-two-6}, after multiplication by $\Delta_P$,
\[
v_\fp(R_{12}\Delta_P^2 P_{21})\ge d+1,\qquad
v_\fp\bigl((\Omega_{12}P_{21}+P_{12}\Omega_{21})\Delta_P\bigr)\ge d.
\]
By \eqref{eq:two-word-two-11} and \eqref{eq:two-word-two-6},
\[
-P_{12}R_{21}\Delta_P^2
\equiv
-\bar u_0^{\,3}\alpha^3\mu\beta\,\varpi^{d-1}
\pmod{\varpi^d}.
\]
By \eqref{eq:two-word-two-8} and $v_\fp(P_{21})\ge a$,
\[
v_\fp\bigl(2P_{12}P_{21}(T_{11}-T_{22})\bigr)
\ge \nu+(a-1)+a+(m+\nu-1)=d.
\]
Since by \eqref{eq:two-commutator-formula-20} 
\[
v_\fp(\Xi)
\ge 4a+b+\nu-5
\ge c+\nu
\]
because $a\ge 2\nu+2$, we have $v_\fp(\Xi_{12}P_{21}), v_\fp(P_{12}\Xi_{21})\ge d$. By Lemma~\ref{lem:trace-val}, $v_\fp(\tr(S))\ge 2c-2$ and $v_\fp(\tr(P))\ge 2a-2$, so $v_\fp(\tr(S)\tr(P))\ge 2c+2a-4>d-1$. Therefore
\begin{equation}\label{eq:two-word-three-17}
V_{11}\equiv
-\bar u_0^{\,3}\alpha^3\mu\beta\,\varpi^{d-1}
\pmod{\varpi^d}.
\end{equation}

For the lower-right entry,
\[
V_{22}=S_{21}P_{12}-P_{21}S_{12}+\tr(S)\tr(P)+\Lambda_{22}.
\]
The same expansion, with signs reversed, gives
\begin{align*}
V_{22}
&=
-(R_{12}P_{21}-P_{12}R_{21})\Delta_P^2
-(\Omega_{21}P_{12}+P_{21}\Omega_{12})\Delta_P\\
&\quad
-2P_{12}P_{21}(T_{11}-T_{22})
+\Xi_{21}P_{12}-P_{21}\Xi_{12}
+\tr(S)\tr(P)+\Lambda_{22}.
\end{align*}
Exactly as above, all terms except $+P_{12}R_{21}\Delta_P^2$ are in
$\varpi^d$, and therefore
\begin{equation}\label{eq:two-word-three-20}
V_{22}\equiv
\bar u_0^{\,3}\alpha^3\mu\beta\,\varpi^{d-1}
\pmod{\varpi^d}.
\end{equation}

Now the conclusion follows from \eqref{eq:two-word-three-13}, \eqref{eq:two-word-three-14}, \eqref{eq:two-word-three-17}, and \eqref{eq:two-word-three-20}. Note that the residue characteristic is $2$, so $\alpha=-\alpha$.
\end{proof}

\subsection{Level progressions}

\subsubsection{}\label{sec:bad-lines}
For $2\le j\le e$, define
\[
\Lambda_j := \{\ell\in\bP^1(\bF_q) :
W_j\cap\Ann(\ell)\ne\{0\}\},
\]
where $\Ann(\ell)=\{A\in\mathfrak{sl}_2(\bF_q)\mid
Av=0\;\forall v\in\ell\}$
is a one-dimensional $\bF_q$-subspace of
$\mathfrak{sl}_2(\bF_q)$
spanned by a nonzero nilpotent element.
Equivalently, $\ell\in\Lambda_j$ if and only if
$W_j$ contains a nonzero nilpotent whose kernel
is~$\ell$. Remark that each $\Lambda_j$ is $H_1$-invariant.

\subsubsection{}
The following simple observation will be used repeatedly:
\begin{lem}\label{lem:propagation}
Let $A\in W_j$ for some $j\ge 2$.
Assume that either $A$ is nilpotent and $j\ge e_0+1$, or $j\ge e_0+2$. Then there exists $u\in\bF_q^*$ such that
\(
uA\in W_{j+e_0}
\).
\end{lem}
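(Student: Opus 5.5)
The plan is to take the $p$-th power of a lift of $A$ and read off its leading term with the binomial expansion, using Cayley--Hamilton to control the top power. If $j+e_0>e$ there is nothing to prove, since $W_{j+e_0}=\mathfrak{sl}_2(\bF_q)$ by convention. If $A=0$ the statement is also trivial. So assume $j+e_0\le e$ and $A\neq 0$. Pick $x\in N_j$ with $\psi_j(x)=A$ and write $x=I+P$, where $P\equiv \varpi^{j-1}A\pmod{\varpi^j}$ and $v_\fp(P)=j-1$. The candidate is $x^p\in H$; it lies in $N_j$, since $N_j$ is a subgroup.

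\textbf{Expansion.} Expand
\[
x^p=I+pP+\sum_{k=2}^{p-1}\binom{p}{k}P^k+P^p .
\]
Since $p=u_0\varpi^{e_0}$, the term $pP$ satisfies $pP\equiv \bar u_0\varpi^{j+e_0-1}A\pmod{\varpi^{j+e_0}}$. For $2\le k\le p-1$ the binomial coefficients are divisible by $p$. Hence those terms have valuation at least $e_0+k(j-1)\ge e_0+2j-2\ge j+e_0$, because $j\ge 2$. It remains to show $v_\fp(P^p)\ge j+e_0$. Once this holds, $x^p\equiv I+\bar u_0\varpi^{j+e_0-1}A\pmod{\varpi^{j+e_0}}$. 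In particular $x^p\equiv I\pmod{\varpi^{j+e_0-1}}$, so $x^p\in N_{j+e_0}$ and $\psi_{j+e_0}(x^p)=\bar u_0A$. This gives the lemma with $u=\bar u_0\in\bF_q^*$.

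\textbf{Bounding $P^p$ (the only real step).} I would treat the two hypotheses separately.

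\emph{General case, $j\ge e_0+2$.} Since $\det(I+P)=1$, Lemma~\ref{lem:trace-val} gives $\det P=-\tr P$. Cayley--Hamilton then gives $P^2=\tr(P)P-\det(P)I=\tr(P)(I+P)$. By Lemma~\ref{lem:trace-val}, $v_\fp(\tr P)\ge 2j-2$. Inductively $v_\fp(P^k)\ge k(j-1)$, so
\[
v_\fp(P^p)\ge p(j-1)\ge 2j-2\ge j+e_0 .
\]
The last inequality is exactly where $j\ge e_0+2$ is used.

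\emph{Nilpotent case, $j\ge e_0+1$.} This is the expected obstacle, since the general bound $2j-2$ falls one short of $j+e_0$. Here I would write $P=\varpi^{j-1}A'$ with $A'\equiv A\pmod\varpi$. Then $A'^2\equiv A^2=0\pmod\varpi$, so $v_\fp(P^2)\ge 2j-1\ge j+e_0$. Consequently $v_\fp(P^p)\ge j+e_0$ for every $p\ge 2$, including $p=2$, where the middle binomial sum is empty. This completes the argument in both cases.
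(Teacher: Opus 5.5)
Your proof is correct and is essentially the paper's argument: you take the $p$-th power of a lift $I+P$ of $A$, use $p=u_0\varpi^{e_0}$ so that $pP$ contributes $\bar u_0\varpi^{j+e_0-1}A$, and bound the remaining binomial terms, with nilpotence of $A$ supplying the extra unit of valuation $v_\fp(P^p)\ge p(j-1)+1$ exactly as in the paper. The Cayley--Hamilton step in the case $j\ge e_0+2$ is harmless but unnecessary, since $v_\fp(P^k)\ge k(j-1)$ already follows directly from $v_\fp(P)\ge j-1$.
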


\begin{proof}
Choose $h\in H$ with \(h\equiv I+\varpi^{j-1}A \pmod{\varpi^j}\). Write
\(B:=h-I, B=\varpi^{j-1}A+C, v_\fp(C)\ge j\).
Then
\[
h^p=(I+B)^p=\sum_{k=0}^p \binom{p}{k} B^k.
\]
Recall that
\(p=u_0\varpi^{e_0}\) with \(u_0\in R_e^\times\), and \(\overline{u_0}\in\bF_q^\times\) denotes the reduction of \(u_0\).

\noindent
\emph{Case 1: $A$ is nilpotent.}
Then $A^2=0$.
For $k\ge 2$, expand
\[
B^k=(\varpi^{j-1}A+C)^k.
\]
The pure leading term is
\[
\varpi^{k(j-1)}A^k=0
\qquad (k\ge 2),
\]
and every other term contains at least one factor of $C$, hence has $v_\fp$ valuation
\[
 \ge (k-1)(j-1)+j = k(j-1)+1
\]
for all $k\ge 2$, $j\ge 2$.
For $2\le k\le p-1$, we obtain
\begin{align*}
& v_\fp\!\left(\binom{p}{k}B^k\right)
\ge e_0+k(j-1)+1
\ge e_0+2(j-1)+1\ge j+e_0, \\
& v_\fp(B^p)\ge p(j-1)+1. 
\end{align*}
The condition $p(j-1)+1\ge j+e_0$,
equivalently $(p-1)(j-1)\ge e_0-1$,
holds since $j\ge e_0+1$.
Therefore
\begin{equation}\label{eq:propagation-one}
v_\fp\!\left(\binom{p}{k}B^k\right)\ge j+e_0
\qquad (k\ge 2).
\end{equation}

\noindent
\emph{Case 2: $j\ge e_0+2$.}
When $j\ge e_0+2$ the bound \eqref{eq:propagation-one} holds too without the need to use nilpotence to deal with the leading $A$ power.

Thus in either case, we have
\begin{align*}
& pB=u_0\varpi^{e_0}(\varpi^{j-1}A+C)
=u_0\varpi^{j-1+e_0}A+O(\varpi^{j+e_0}),\\
& h^p\equiv I+u_0\varpi^{j-1+e_0}A \pmod{\varpi^{j+e_0}}.
\end{align*}
Therefore
\(\bar u_0 A\in W_{j+e_0}\).
\end{proof}

Here are some immediate consequences of Lemma~\ref{lem:propagation}:
\begin{cor}
\label{cor:bad-lines-persist}
For any $j\ge e_0+1$ we have
$\Lambda_j \subset \Lambda_{j+e_0}$.
In particular,
$\Lambda_j\subset \Lambda_k$
for all $k\ge j$
with $k\equiv j\pmod{e_0}$.
\end{cor}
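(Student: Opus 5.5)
The plan is to apply Lemma~\ref{lem:propagation} in its nilpotent case, where the only hypothesis needed is \(j\ge e_0+1\). Let \(\ell\in\Lambda_j\). By the definition in \S\ref{sec:bad-lines}, there is a nonzero \(A\in W_j\cap\Ann(\ell)\). Every element of \(\Ann(\ell)\) is nilpotent: it is a traceless matrix killing a line, and it is a multiple of the nilpotent generator. Hence Lemma~\ref{lem:propagation} applies when \(j+e_0\le e\), and gives \(u\in\bF_q^*\) with \(uA\in W_{j+e_0}\). Since \(\Ann(\ell)\) is an \(\bF_q\)-subspace, \(uA\) is still a nonzero element of \(\Ann(\ell)\). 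Therefore \(W_{j+e_0}\cap\Ann(\ell)\neq\{0\}\), that is, \(\ell\in\Lambda_{j+e_0}\).

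The only point requiring care is the range of indices. The sets \(\Lambda_k\) were defined for \(2\le k\le e\), while the proof of Lemma~\ref{lem:propagation} takes \(h\in H\) and reads \(h^p\) modulo \(\varpi^{j+e_0}\), which needs \(j+e_0\le e\). For \(k=j+e_0>e\), the paper's convention sets \(W_k=\mathfrak{sl}_2(\bF_q)\). With the same definition of \(\Lambda_k\), this gives \(\Lambda_k=\bP^1(\bF_q)\), so the inclusion is trivial there. I will state this convention explicitly, or else restrict to \(j+e_0\le e\). I will also check that \(j\ge e_0+1\ge 2\), so that \(W_j\) is defined.

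For the ``in particular'' statement, I will argue by induction on \(t\), where \(k=j+te_0\). The case \(t=0\) is trivial. Since \(j\ge e_0+1\), every index \(j+te_0\) also satisfies \(j+te_0\ge e_0+1\). So the first part applies at each step and gives \(\Lambda_{j+te_0}\subset\Lambda_{j+(t+1)e_0}\). Chaining these inclusions yields \(\Lambda_j\subset\Lambda_k\) for all \(k\ge j\) with \(k\equiv j\pmod{e_0}\). No step is a real obstacle; the only thing needing attention is the boundary convention above.
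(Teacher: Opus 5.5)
Your proof is correct and is the argument the paper intends. The paper states the corollary without proof, as an immediate consequence of Lemma~\ref{lem:propagation}. Applying that lemma in its nilpotent case to a nonzero $A\in W_j\cap\Ann(\ell)$ gives $uA\in W_{j+e_0}\cap\Ann(\ell)$ with $uA\neq 0$, and the second assertion follows by iteration. Your remark on the range $j+e_0\le e$, or the convention $W_k=\mathfrak{sl}_2(\bF_q)$ for $k>e$, makes explicit a point the paper leaves implicit.
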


\begin{cor}
\label{cor:dim-nondecreasing}
For all $j\ge e_0+2$, \(d_{j+e_0}\ge d_j\).
\end{cor}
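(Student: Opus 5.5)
The corollary follows from Lemma~\ref{lem:propagation} applied to every element of $W_j$ at once, together with the fact that $\psi_{j+e_0}$ turns the map $h\mapsto h^p$ into a multiplicative twist. Fix $j\ge e_0+2$. If $j+e_0>e$, then $W_{j+e_0}=\mathfrak{sl}_2(\bF_q)$ by convention, so $d_{j+e_0}=3f\ge d_j$ and there is nothing to prove. We may therefore assume $j+e_0\le e$.

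\textbf{Step 1: the $p$-th power map.} Consider $\Phi:N_j\to N_{j+e_0}$, $h\mapsto h^p$. For $h\in N_j$ with $\psi_j(h)=A$, the computation in the proof of Lemma~\ref{lem:propagation} (Case 2, since $j\ge e_0+2$) gives $h^p\equiv I+u_0\varpi^{j-1+e_0}A\pmod{\varpi^{j+e_0}}$. Thus $h^p\in N_{j+e_0}$ and $\psi_{j+e_0}(h^p)=\bar u_0\,\psi_j(h)$. A point to check here is that the lemma's congruence holds with the \emph{same} unit $\bar u_0$ for all $h$, independent of the choice of lift; this is clear from the proof, where the unit is the reduction of the fixed $u_0$ with $p=u_0\varpi^{e_0}$. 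The congruence also only uses $v_\fp(h-I)\ge j-1$, so it applies when $A=0$ as well.

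\textbf{Step 2: inclusion of Lie images.} By Step 1, the image $\bar u_0W_j=\{\bar u_0A: A\in W_j\}$ is contained in $W_{j+e_0}$.

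\textbf{Step 3: counting dimensions.} Multiplication by the unit $\bar u_0\in\bF_q^*$ is an $\bF_p$-linear automorphism of $\mathfrak{sl}_2(\bF_q)$. Hence $\bar u_0W_j$ is an $\bF_p$-subspace of dimension $d_j$ contained in the $\bF_p$-subspace $W_{j+e_0}$, and therefore $d_{j+e_0}\ge d_j$.

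The only step needing care is the uniformity of the scalar in Step 1. Lemma~\ref{lem:propagation} as stated gives, for each $A$, only \emph{some} $u\in\bF_q^*$, and if $u$ depended on $A$ the image would not obviously be a subspace of dimension $d_j$. Reading off from the proof that $u=\bar u_0$ for every $A$ resolves this.
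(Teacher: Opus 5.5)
Your proof is correct and is the argument the paper intends. The paper lists this corollary as an immediate consequence of Lemma~\ref{lem:propagation} without writing out a proof.

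You also correctly identify the one point that makes the deduction legitimate. The scalar produced by that lemma's proof is the fixed unit $\bar u_0$, independent of $A$. Hence $\bar u_0 W_j\subset W_{j+e_0}$, and since multiplication by $\bar u_0$ is an $\bF_p$-linear automorphism of $\mathfrak{sl}_2(\bF_q)$, it preserves $\bF_p$-dimension, giving $d_{j+e_0}\ge d_j$.
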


\begin{cor}\label{cor:all-active}
Assume
$H\subset G_e$ is of exact local level~$e$, i.e.\ $d_e<3f$. Then every level $j\in\{e_0+2,\ldots,e\}$ with $j\equiv e\pmod{e_0}$ satisfies $d_j<3f$.
\end{cor}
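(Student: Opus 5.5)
We argue by contradiction and propagate downward along the arithmetic progression of levels congruent to $e$ modulo $e_0$, using Corollary~\ref{cor:dim-nondecreasing}. First we make the hypothesis usable. Exact level $e$ means $H\neq\rho_{e,e-1}^{-1}(\rho_{e,e-1}(H))$. Since $\rho_{e,e-1}^{-1}(H_{e-1})$ has order $|H_{e-1}|\cdot|K_e|$ while $|H|=|H_{e-1}|\cdot|N_e|$, this is equivalent to $N_e\subsetneq \ker(\rho_{e,e-1})$. Because $\psi_e$ identifies $N_e$ with $W_e$ when $e\ge2$, this is the same as $W_e\neq\mathfrak{sl}_2(\bF_q)$, i.e.\ $d_e<3f$. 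This is presumably the meaning of ``i.e.'' in the statement; we record it and work with $d_e<3f$.

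Next take $j\in\{e_0+2,\ldots,e\}$ with $j\equiv e\pmod{e_0}$, and write $e=j+te_0$ with $t\ge0$. If $t=0$ there is nothing to prove. Otherwise we apply Corollary~\ref{cor:dim-nondecreasing} successively at the levels $j,\,j+e_0,\,\ldots,\,j+(t-1)e_0$. Each of these levels is at least $j\ge e_0+2$, so the corollary applies and gives
\[
d_j\le d_{j+e_0}\le\cdots\le d_{j+te_0}=d_e<3f.
\]
Hence $d_j<3f$.

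The only point needing care is that Lemma~\ref{lem:propagation}, and hence Corollary~\ref{cor:dim-nondecreasing}, is applied only when the target level $j'+e_0$ is at most $e$. That holds here, since all levels in the chain stay $\le e$. Beyond this the argument is pure bookkeeping.

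Strictly, the corollary as stated only compares cardinalities. That suffices, because $\dim_{\bF_p}W=3f$ if and only if $W=\mathfrak{sl}_2(\bF_q)$. For intuition, though, one can see the monotonicity directly: the lemma sends each $A\in W_{j'}$ to $\bar u_0A\in W_{j'+e_0}$, where $\bar u_0$ is independent of $A$. So multiplication by $\bar u_0$ is an injective $\bF_p$-linear map $W_{j'}\to W_{j'+e_0}$, and if $W_{j'}=\mathfrak{sl}_2(\bF_q)$ then $W_{j'+e_0}\supseteq\bar u_0\,\mathfrak{sl}_2(\bF_q)=\mathfrak{sl}_2(\bF_q)$.
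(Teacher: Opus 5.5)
Your proof is correct and is essentially the argument the paper intends: it lists this corollary as an immediate consequence of Lemma~\ref{lem:propagation}, i.e.\ you chain Corollary~\ref{cor:dim-nondecreasing} along the levels $j, j+e_0, \ldots, e$ and end at $d_e<3f$. Your additional remarks are correct and fill in details the paper leaves implicit: the equivalence of exact level $e$ with $W_e\neq\mathfrak{sl}_2(\bF_q)$, and the observation that the scalar $\bar u_0$ is uniform, so $A\mapsto \bar u_0A$ is an injective $\bF_p$-linear map $W_{j}\to W_{j+e_0}$.
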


\subsection{Fixed points and slope decomposition}

\subsubsection{}\label{sec:fixed-points-trace-two}
We now study the set of fixed points for the action of an arbitrary element $w\in G_e$ on $X_e$; this set is denoted $X_e^w$.

\begin{lem}\label{lem:ss-fp}
Let $w\in G_e$ satisfy
$w\equiv I+\varpi^{j-1}A\pmod{\varpi^j}$
for some $j\in\{2,\ldots,e\}$ and some
$A\in\mathfrak{sl}_2(\bF_q)$ with $\det(A)\ne 0$.
Then $X_e^w=\emptyset$.
\end{lem}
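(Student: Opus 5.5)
By Proposition~\ref{prop:primitive-cols} we may identify $X_e$ with the set of primitive columns $v\in R_e^2$, $v\notin\fp R_e^2$. The plan is to assume some primitive $v$ satisfies $wv=v$ and derive a contradiction from the invertibility of $A$.

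\textbf{Step 1: write $w$ as a perturbation of the identity.} Write $w=I+\varpi^{j-1}A'+C$ in $M_2(R_e)$. Here $A'$ is a fixed lift of $A$ to $M_2(R_e)$ (for instance with entries among Teichmüller-type representatives), and $v_\fp(C)\ge j$. Equivalently, $w-I=\varpi^{j-1}(\tilde A)$, where $\tilde A\in M_2(R_e)$ is some matrix (defined modulo $\varpi^{e-j+1}$) whose reduction mod $\fp$ is $A$.

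\textbf{Step 2: reduce the fixed-point equation.} The equation $wv=v$ reads $\varpi^{j-1}\tilde A v=0$ in $R_e^2$. Since $j\le e$, we have $e-j+1\ge 1$, and the equation forces $\tilde A v\equiv 0\pmod{\varpi^{e-j+1}}$. In particular $\tilde A v\equiv 0\pmod{\varpi}$, i.e.\ $A\bar v=0$ in $\bF_q^2$, where $\bar v$ is the reduction of $v$ mod $\fp$.

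\textbf{Step 3: use primitivity and invertibility.} Since $v$ is primitive, $\bar v\neq 0$. Since $\det A\neq 0$, $A$ is invertible over $\bF_q$, so $A\bar v\neq0$. This contradicts Step 2, hence $X_e^w=\emptyset$.

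The only point needing care is the annihilator step in Step 2: in $R_e$ the element $\varpi^{j-1}x$ vanishes exactly when $x\in\fp^{e-j+1}R_e$. This uses that $R_e$ is a finite chain ring with uniformizer $\varpi$, and it requires $e-j+1\ge1$, which is exactly the hypothesis $j\le e$. I expect no real obstacle beyond this routine check. Note also that no assumption on $p$ is needed, and that in characteristic $2$ the matrix $D=I$ is nonsingular, so the lemma also applies there.
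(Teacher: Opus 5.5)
Your proof is correct and is essentially the paper's argument: write $w=I+\varpi^{j-1}M$ with $M\equiv A\pmod{\varpi}$, use $j\le e$ to pass from $\varpi^{j-1}Mv=0$ to $A\bar v=0$ in $\bF_q^2$, and contradict the primitivity of $v$ via the invertibility of $A$. Your explicit check of the annihilator step ($\varpi^{j-1}x=0$ in $R_e$ iff $x\in\fp^{e-j+1}R_e$) is exactly the detail the paper leaves implicit.
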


\begin{proof}
Assume for contradiction that there exists \(v=\binom{a}{c}\in X_e^w\).
Then \((w-I)v=0\) in \(R_e^2\).
Since \(w\equiv I+\varpi^{j-1}A\pmod{\varpi^j}\), we can write
\(w=I+\varpi^{j-1}M\) with \(M\in M_2(R_e)\) and
\(M\equiv A\pmod{\varpi}\).
Hence \(\varpi^{j-1}Mv=0\) in \(R_e^2\), so \(A\binom{\bar a}{\bar c}=0\) in \(\bF_q^2\).
But \(A\) is invertible over \(\bF_q\), and therefore
\((\bar a,\bar c)=(0,0)\), contradicting the primitivity of \(v\in X_e\).
\end{proof}

\begin{lem}\label{lem:trace-vanishing}
Let $w\in\SL_2(R_e)$.
If $\tr(w)\ne 2$ in $R_e$,
then $X_e^w=\emptyset$.
\end{lem}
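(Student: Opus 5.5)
The plan is to conjugate $w$ so that the fixed vector becomes the standard basis vector, and then to read off the trace directly. The argument is by contraposition: we show that if $X_e^w\neq\emptyset$, then $\tr(w)=2$ in $R_e$.

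Suppose $w$ fixes a point of $X_e$. By Proposition~\ref{prop:primitive-cols} this point corresponds to a primitive column $v=\binom{a}{c}\in R_e^2$ with $wv=v$. Since the bijection there is $G_e$-equivariant and surjective, there is $g\in G_e$ with $g\binom{1}{0}=v$. Concretely, $aR_e+cR_e=R_e$ lets us complete $v$ to a matrix of determinant $1$.

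Set $w'=g^{-1}wg$. Then
\[
w'\begin{pmatrix}1\\0\end{pmatrix}=g^{-1}wv=g^{-1}v=\begin{pmatrix}1\\0\end{pmatrix},
\]
so the first column of $w'$ is $\binom{1}{0}$. Hence $w'=\begin{pmatrix}1&x\\0&y\end{pmatrix}$ for some $x,y\in R_e$. The condition $\det(w')=1$ forces $y=1$, and therefore $\tr(w')=2$. Since the trace is invariant under conjugation in $M_2(R_e)$, over any commutative ring, we get $\tr(w)=\tr(w')=2$. This contradicts the hypothesis, so $X_e^w=\emptyset$.

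There is no real obstacle here. The only point needing care is that conjugation by $g$ is legitimate in $\SL_2(R_e)$, which is exactly the surjectivity part of Proposition~\ref{prop:primitive-cols}. In other words, the stabilizer of every point of $X_e$ is conjugate to $U_e$, and every element of $U_e$ has trace $2$.
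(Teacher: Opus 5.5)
Your proof is correct, but it takes a different route from the paper's. The paper does not conjugate. It uses the Cayley--Hamilton identity $(w-I)^2=(\tr(w)-2)\,w$, so a fixed primitive column $v=\binom{a}{c}$ satisfies $t a=t c=0$ with $t=\tr(w)-2$. It then argues with valuations: if $t\neq 0$, i.e.\ $v_\fp(t)<e$, both coordinates of $v$ would lie in $\fp$, contradicting primitivity.

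You instead use the transitivity of $G_e$ on $X_e$ (the surjectivity in Proposition~\ref{prop:primitive-cols}) to move the fixed vector to $\binom{1}{0}$. There the stabilizer is visibly $U_e$, and the determinant condition forces the trace to be $2$.

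Your route gives slightly more: every $w$ with $X_e^w\neq\emptyset$ is $G_e$-conjugate into $U_e$. The paper itself uses this stabilizer description later, in the proofs of Lemma~\ref{lem:burnside-first-level-comparison} and Lemma~\ref{lem:primary-product-chi-Gamma}. The paper's route needs no completion of $v$ to a determinant-one matrix and works directly from a polynomial identity plus primitivity.

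Both arguments extend to any commutative ring for unimodular vectors. In the paper's version, replace the valuation step by writing $1=\alpha a+\beta c$, which gives $t=0$. In yours, complete $v$ by $\begin{pmatrix} a&-\beta\\ c&\alpha\end{pmatrix}$.
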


\begin{proof}
By Cayley--Hamilton Theorem $(w-I)^2=(\tr(w)-2)\,w$.
If $v\in X_e^w$ then $(w-I)v=0$,
so $(\tr(w)-2)\,v=0$.
Writing $t:=\tr(w)-2$ and $v=\binom{a}{c}$, we get
$ta=tc=0$.
If $v_\fp(t)<e$:
$v_\fp(a),v_\fp(c)\ge 1$,
contradicting primitivity.
\end{proof}

\subsubsection{}\label{sec:slope-decomposition}
Let \(\ell=[1\!:\!0]\in\bP^1(\bF_q)\) denote the standard line, and recall that \(E=e_{12}=
\begin{pmatrix}0&1\\0&0\end{pmatrix}\), so that \(\ker(E)=\ell\).  For \(j\in\{2,\dots,e\}\), consider the
\(\bF_q\)-line \(\bF_qE\subset\mathfrak{sl}_2(\bF_q)\). Define
\begin{align*}
& T_j(\ell):=
\#\Bigl\{
w\in N_j\setminus N_{j+1}\mid\ \tr(w)=2,\ X_e^w\neq \emptyset,\ 
\psi_j(w)\in \bF_q^* E
\Bigr\},\\
& \Sigma_j:=\varpi R_{e-j+1}\subset R_{e-j+1}.
\end{align*}

For \(\eta\in\Sigma_j\), choose any lift
\(\widetilde\eta\in\varpi R_e\), and write
\[
N(\widetilde\eta):=
\begin{pmatrix}
-\widetilde\eta & 1\\
-\widetilde\eta^2 & \widetilde\eta
\end{pmatrix}.
\]
We shall write \(N(\eta)\) in expressions such as \(bN(\eta)\) when
\(v_\fp(b)\ge j-1\); Lemma~\ref{lem:borel-slope-well-defined} below shows
that these expressions are independent of the chosen lift. Note that, for any lift \(\widetilde\eta\in\varpi R_e\),
\[
\begin{pmatrix}1&0\\ \widetilde\eta&1\end{pmatrix}E \begin{pmatrix}1&0\\ \widetilde\eta&1\end{pmatrix}^{-1}
=
\begin{pmatrix}
-\widetilde\eta & 1\\
-\widetilde\eta^2 & \widetilde\eta
\end{pmatrix}.
\]
Such conjugation is also used in \cite{CoxParry1984}. Recall that elements of $X_e$ are primitive columns; see Prop.~\ref{prop:primitive-cols}. Define
\begin{align}
M_j(\eta) & :=
\Bigl\{
b\in R_e\mid\ v_\fp(b)=j-1,\ I+bN(\eta)\in N_j
\Bigr\},\\
m_j(\eta) & :=|M_j(\eta)|, \\
\Fix_j(\eta) & :=
\left\{
\binom{x}{y}\in X_e\mid\ x\in R_e^*,\ 
y\equiv \widetilde\eta\,x \pmod{\varpi^{e-j+1}}
\right\}.
\end{align}

We need to check that the expressions \(bN(\eta)\) and the set
\(\Fix_j(\eta)\) do not depend on the choice of \(\widetilde\eta\).
\begin{lem}\label{lem:borel-slope-well-defined}
Let \(\eta\in \Sigma_j\), choose a lift
\(\widetilde\eta\in\varpi R_e\), and let \(b\in R_e\) satisfy
\(v_\fp(b)=j-1\).  Then the matrix \(bN(\widetilde\eta)\in M_2(R_e)\)
is independent of the chosen lift.  Moreover, for every lift
\(\widetilde\eta\),
\[
N(\widetilde\eta)^2=0, \quad
N(\widetilde\eta)\equiv E \pmod{\varpi}, \quad
\det(I+bN(\widetilde\eta))=1.
\]
In particular, $I+bN(\widetilde\eta)\in N_j$, and $\psi_j(I+bN(\widetilde\eta))\in \bF_q^* E$.
\end{lem}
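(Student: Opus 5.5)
The whole lemma reduces to writing out the entries of \(bN(\widetilde\eta)\) and reading off valuations, with every claim deduced from \(v_\fp(b)=j-1\). Explicitly,
\[
bN(\widetilde\eta)=\begin{pmatrix}-b\widetilde\eta & b\\ -b\widetilde\eta^2 & b\widetilde\eta\end{pmatrix}.
\]

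\emph{Independence of the lift.} Two lifts \(\widetilde\eta,\widetilde\eta'\in\varpi R_e\) of \(\eta\in\Sigma_j=\varpi R_{e-j+1}\) differ by some \(\delta\) with \(v_\fp(\delta)\ge e-j+1\).
\begin{itemize}
\item The off-diagonal entries \(\pm b\widetilde\eta\) change by \(b\delta\). This has valuation at least \(j-1+e-j+1=e\), so it vanishes in \(R_e\).
\item The lower-left entry changes by \(b(\widetilde\eta'^2-\widetilde\eta^2)=b\delta(\widetilde\eta'+\widetilde\eta)\). This also has valuation at least \(e\), since \(v_\fp(\widetilde\eta'+\widetilde\eta)\ge 1\).
\end{itemize}
Hence \(bN(\widetilde\eta)\) does not depend on the lift.

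\emph{The three identities.}
\begin{itemize}
\item \(N(\widetilde\eta)^2=0\) is a direct computation. Alternatively, \(N(\widetilde\eta)\) is the conjugate of \(E\) by \(\begin{psmallmatrix}1&0\\ \widetilde\eta&1\end{psmallmatrix}\), as displayed before the lemma, and \(E^2=0\).
\item \(N(\widetilde\eta)\equiv E\pmod{\varpi}\) holds because \(\widetilde\eta\in\varpi R_e\).
\item The determinant is \(\det(I+bN)=(1-b\widetilde\eta)(1+b\widetilde\eta)+b^2\widetilde\eta^2=1\). This can also be seen from \(\tr N=\det N=0\).
\end{itemize}

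\emph{The final claim.} Write \(b=\varpi^{j-1}\beta\) with \(\beta\in R_e^*\), and let \(\bar\beta\in\bF_q^*\) be its reduction. Then
\[
I+bN(\widetilde\eta)\equiv I+\varpi^{j-1}\bar\beta E \pmod{\varpi^j},
\]
so the matrix lies in \(\SL_2(R_e)\) and in the kernel of reduction to level \(j-1\). Its \(\psi_j\)-image would then be \(\bar\beta E\in\bF_q^*E\).

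One point needs care: \(N_j\) is defined as a subgroup of \(H\). Read literally, "\(I+bN\in N_j\)" should mean "lies in \(K_j\)-type level \(j\) of \(G_e\)", that is, \(\ker(\rho_{e,j-1})\), with \(\psi_j\) computed as above. Membership in \(H\) is exactly what the set \(M_j(\eta)\) imposes separately, so this is the intended meaning. I will state this interpretation explicitly; apart from it, there is no real obstacle.
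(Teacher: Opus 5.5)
Your proof is correct and is essentially the paper's: changing the lift moves the entries of $N(\widetilde\eta)$ by elements of $\varpi^{e-j+1}R_e$, multiplication by $b$ kills these differences, and the remaining identities are direct computations. Your reading of the last clause is also right: $I+bN(\widetilde\eta)$ always lies in $\ker(\rho_{e,j-1})$, but it lies in $N_j$ only when it belongs to $H$, which is exactly the condition imposed in the definition of $M_j(\eta)$. The only slip is that the entries $\pm b\widetilde\eta$ are the diagonal entries, not the off-diagonal ones.
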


\begin{proof}
If $\widetilde\eta'=\widetilde\eta+\varpi^{e-j+1}\xi$, then every entry of
$N(\widetilde\eta')-N(\widetilde\eta)$ lies in $\varpi^{e-j+1}R_e$.
Multiplying by $b$ with $v_\fp(b)=j-1$ kills this difference in $R_e$, since
$(j-1)+(e-j+1)=e$. Thus $bN(\eta)$ is well-defined. The other claims are straightforward computations.
\end{proof}

\begin{lem}\label{lem:fix-slope-class}
The set $\Fix_j(\eta)$ is independent of the chosen lift
$\widetilde\eta$. Moreover, \(\lvert\Fix_j(\eta)\rvert=(q-1)\,q^{e+j-2}\).
\end{lem}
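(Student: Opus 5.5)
The plan is to prove the independence claim and the count separately, both directly from the definition of \(\Fix_j(\eta)\) and the description of \(X_e\) as primitive columns in Proposition~\ref{prop:primitive-cols}.

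\emph{Independence of the lift.} Let \(\widetilde\eta'=\widetilde\eta+\varpi^{e-j+1}\xi\) be another lift of \(\eta\in\Sigma_j=\varpi R_{e-j+1}\). Then
\[
\widetilde\eta' x-\widetilde\eta x=\varpi^{e-j+1}\xi x\equiv 0 \pmod{\varpi^{e-j+1}}.
\]
Hence the congruence \(y\equiv\widetilde\eta x \pmod{\varpi^{e-j+1}}\) holds for one lift if and only if it holds for the other. The remaining condition \(x\in R_e^*\) does not involve the lift at all, so \(\Fix_j(\eta)\) is the same set for every choice of \(\widetilde\eta\).

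\emph{Counting.} We parametrize \(\Fix_j(\eta)\) by pairs \((x,y)\).
\begin{itemize}
\item Choose \(x\in R_e^*\). Since \(R_e\) is a finite local ring with residue field \(\bF_q\) and \(|R_e|=q^e\), there are \(|R_e^*|=(q-1)q^{e-1}\) choices.
\item Given \(x\), the element \(y\) is determined modulo \(\varpi^{e-j+1}\), so it ranges over a coset of the ideal \(\varpi^{e-j+1}R_e\). This ideal has cardinality \(|R_e|/|R_{e-j+1}|=q^{e}/q^{e-j+1}=q^{j-1}\), giving \(q^{j-1}\) choices for \(y\).
\end{itemize}

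Every such column \(\binom{x}{y}\) is automatically primitive, because \(x\) is a unit and so \(\binom{x}{y}\notin\fp R_e^2\). Distinct pairs \((x,y)\) give distinct columns, hence distinct elements of \(X_e\) by Proposition~\ref{prop:primitive-cols}. Therefore
\[
|\Fix_j(\eta)|=(q-1)q^{e-1}\cdot q^{j-1}=(q-1)q^{e+j-2}.
\]

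There is no real obstacle here. The only point that needs care is computing the size of the ideal \(\varpi^{e-j+1}R_e\) correctly: it is \(q^{j-1}\), as shown above, and this holds for every \(j\) with \(2\le j\le e\).
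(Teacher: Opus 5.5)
Your proposal is correct and follows the paper's proof essentially verbatim: lift-independence because two lifts differ by an element of \(\varpi^{e-j+1}R_e\), then \((q-1)q^{e-1}\) choices of a unit \(x\) times \(q^{j-1}\) solutions \(y\) of the congruence. Your extra remarks (automatic primitivity since \(x\) is a unit, and \(|\varpi^{e-j+1}R_e|=q^{j-1}\)) simply spell out details the paper leaves implicit.
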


\begin{proof}
If $\widetilde\eta'=\widetilde\eta+\varpi^{e-j+1}\xi$, then
\[
\widetilde\eta' x\equiv \widetilde\eta x \pmod{\varpi^{e-j+1}}
\]
for every $x\in R_e$, so $\Fix_j(\eta)$ is well-defined. 
For each unit $x\in R_e^*$, the congruence equation
\[
y\equiv \widetilde\eta x \pmod{\varpi^{e-j+1}}
\]
has exactly $q^{j-1}$ solutions $y\in R_e$. Since $|R_e^*|=(q-1)q^{e-1}$, we get
\[
|\Fix_j(\eta)|=(q-1)q^{e-1}\cdot q^{j-1}
=(q-1)\,q^{e+j-2}.
\]
\end{proof}

\begin{prop}\label{prop:borel-slope-parameterization}
Let $2\le j\le e$ and $w\in N_j\setminus N_{j+1}$ satisfy
\[
\tr(w)=2,\quad X_e^w\neq \emptyset,\quad \psi_j(w)\in \bF_q^* E.
\]
Then there exists a unique pair $b\in R_e, \eta\in \Sigma_j$ such that
\[
v_\fp(b)=j-1,\quad w=I+bN(\eta).
\]
Moreover, one has \(X_e^w=\Fix_j(\eta)\).
Conversely, for every $\eta\in \Sigma_j$ and every $b\in M_j(\eta)$, the
matrix $w=I+bN(\eta)$ belongs to the set counted by $T_j(\ell)$ and satisfies
$X_e^w=\Fix_j(\eta)$.
\end{prop}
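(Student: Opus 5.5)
The plan is to read off $b$ and $\eta$ directly from the entries of $M:=w-I$, using the three hypotheses $\tr(w)=2$, $X_e^w\neq\emptyset$ and $\psi_j(w)\in\bF_q^*E$ in turn.

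\emph{Setting up.} Write $w=I+M$ with $v_\fp(M)=j-1$ and $M\equiv\varpi^{j-1}cE\pmod{\varpi^j}$ for some $c\in\bF_q^*$. Thus $v_\fp(M_{12})=j-1$, while $M_{11},M_{21},M_{22}$ have valuation $\ge j$. The hypothesis $\tr(w)=2$ gives $\tr(M)=0$ exactly in $R_e$, so $M_{22}=-M_{11}$. Lemma~\ref{lem:trace-val} then gives $M_{12}M_{21}=M_{11}M_{22}$, i.e.\ $\det M=0$.

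\emph{Defining $b$ and $\eta$.} Set $b:=M_{12}$, which has valuation exactly $j-1$ and is forced by $w=I+bN(\eta)$, since $N(\eta)_{12}=1$. Write $M_{11}=\varpi^{j-1}m$ and $b=\varpi^{j-1}\beta$ with $\beta$ a unit. The element $m$ is well defined modulo $\varpi^{e-j+1}$ and lies in $\varpi R$. Put $\eta:=-m\beta^{-1}\in\Sigma_j$. Then $M_{11}=-b\eta$ and $M_{22}=b\eta$, and $\eta$ is the unique element of $\Sigma_j$ with $M_{11}=-b\eta$, because $\mathrm{Ann}(b)=\varpi^{e-j+1}R_e$. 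This also gives uniqueness of the pair $(b,\eta)$.

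\emph{The main obstacle: the $(2,1)$-entry.} We need $M_{21}=-b\eta^2$. The relation $\det M=0$ alone only yields $b(M_{21}+b\eta^2)=0$, which determines $M_{21}$ only modulo $\varpi^{e-j+1}$. Here the fixed point is used. Take $v=\binom{x}{y}\in X_e^w$, so $Mv=0$.
\begin{itemize}
\item Reducing the first row modulo $\varpi^j$ gives $\bar y=0$, so $x\in R_e^*$ by primitivity.
\item The first row reads $b(y-\widetilde\eta x)=0$, hence $y\equiv\widetilde\eta x\pmod{\varpi^{e-j+1}}$, which means $v\in\Fix_j(\eta)$. It also gives $b\widetilde\eta\, y=b\widetilde\eta^2x$.
\item The second row reads $M_{21}x+b\widetilde\eta\, y=0$, so $(M_{21}+b\widetilde\eta^2)x=0$. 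Since $x$ is a unit, $M_{21}=-b\eta^2$.
\end{itemize}
Hence $w=I+bN(\eta)$ and $X_e^w\subset\Fix_j(\eta)$.

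\emph{Reverse inclusion.} A direct computation gives, for $v=\binom{x}{y}$,
\[
bN(\widetilde\eta)v=b(y-\widetilde\eta x)\binom{1}{\widetilde\eta}.
\]
This vanishes on $\Fix_j(\eta)$, so $X_e^w=\Fix_j(\eta)$.

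\emph{Converse.} Let $\eta\in\Sigma_j$ and $b\in M_j(\eta)$. By definition $I+bN(\eta)\in N_j$. Lemma~\ref{lem:borel-slope-well-defined} gives $\det=1$ and $\psi_j\in\bF_q^*E$, and $w\notin N_{j+1}$ because $v_\fp(b)=j-1$. Since $N(\widetilde\eta)$ has trace $0$, we get $\tr(w)=2$. The displayed identity shows $X_e^w=\Fix_j(\eta)$, and this set is nonempty since it contains $\binom{1}{\widetilde\eta}$.
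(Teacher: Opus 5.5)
Your proof is correct and follows essentially the same route as the paper. You read off $b$ from the $(1,2)$-entry, use a fixed primitive vector to force $x\in R_e^*$ and to pin down the diagonal and $(2,1)$-entries as $\mp b\widetilde\eta$ and $-b\widetilde\eta^2$, and check the converse by direct computation; the differences are cosmetic: you define $\eta$ intrinsically from $M_{11}$ and $b$ (so uniqueness follows at once from $\mathrm{Ann}(b)=\varpi^{e-j+1}R_e$) before invoking the fixed point, whereas the paper sets $\widetilde\eta:=y/x$ from a chosen fixed vector, and your identity $bN(\widetilde\eta)v=b(y-\widetilde\eta x)\binom{1}{\widetilde\eta}$ makes the equality $X_e^w=\Fix_j(\eta)$ slightly more transparent.
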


\begin{proof}
Write
\[
w=I+\begin{pmatrix}u&b\\ c&-u\end{pmatrix}
\in N_j\setminus N_{j+1}
\]
with $v_\fp(b)=j-1,v_\fp(u),v_\fp(c)\ge j$. 
Consider a primitive column $\binom{x}{y}\in R_e^2$ fixed by $w$. It satisfies $ux+by=0$, hence
\[
v_\fp(y)\ge v_\fp(ux)-v_\fp(b)\ge j+v_\fp(x)-(j-1)=v_\fp(x)+1.
\]
Primitivity thus forces $x\in R_e^*$. Set \(\widetilde\eta:=y/x\in R_e\). Then the fixed-point equations \(ux+by=0,cx-uy=0\) become
\[
u=-b\widetilde\eta,\qquad c=-b\widetilde\eta^2.
\]
Since $v_\fp(u)\ge j$ and $v_\fp(b)=j-1$, we obtain $v_\fp(\widetilde\eta)\ge 1$, so the class $\eta:=\widetilde\eta \bmod \varpi^{e-j+1}$ lies in $\Sigma_j=\varpi R_{e-j+1}$.

Now write $b=\varpi^{j-1}\beta$ with some $\beta\in R_e^*$, so multiplication by
$b$ induces a bijection
\[
R_{e-j+1}\xrightarrow{\sim}\varpi^{j-1}R_e,\qquad t\mapsto bt.
\]
Therefore the class $\eta$ in $R_{e-j+1}$ determines $u=-b\widetilde\eta$ uniquely in $R_e$,
and then $c=-b\widetilde\eta^2$ is also uniquely determined. Thus $w=I+bN(\eta)$.

The fixed-point description above shows that the primitive columns fixed by
$w$ are exactly those with
\[
x\in R_e^*,\qquad
y\equiv \widetilde\eta\,x \pmod{\varpi^{e-j+1}},
\]
namely $X_e^w=\Fix_j(\eta)$. 
Conversely, let $\eta\in\Sigma_j$ and $b\in M_j(\eta)$, and set $z=I+bN(\eta)$. By Lemma~\ref{lem:borel-slope-well-defined} $z$ is well defined. Moreover $\tr(z)=2$, $\det(z)=1$, and $X_e^z=\Fix_j(\eta)\neq \emptyset$.
\end{proof}

\begin{cor}\label{cor:Tj-slope-sum}
One has the identity \(T_j(\ell)=\sum_{\eta\in \Sigma_j} m_j(\eta)\).
\end{cor}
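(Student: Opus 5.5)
The plan is to read Corollary~\ref{cor:Tj-slope-sum} off Proposition~\ref{prop:borel-slope-parameterization}. The idea is to build an explicit bijection between the set counted by \(T_j(\ell)\) and the disjoint union
\[
\bigsqcup_{\eta\in\Sigma_j} M_j(\eta)=\{(\eta,b)\mid \eta\in\Sigma_j,\ b\in M_j(\eta)\}.
\]
Counting both sides then gives \(T_j(\ell)=\sum_{\eta\in\Sigma_j}m_j(\eta)\).

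\emph{Forward map.} Take \(w\) in the set counted by \(T_j(\ell)\), so \(w\in N_j\setminus N_{j+1}\), \(\tr(w)=2\), \(X_e^w\neq\emptyset\) and \(\psi_j(w)\in\bF_q^*E\). By the first part of Proposition~\ref{prop:borel-slope-parameterization} there is a unique pair \((b,\eta)\), with \(\eta\in\Sigma_j\) and \(v_\fp(b)=j-1\), such that \(w=I+bN(\eta)\). Since \(w\in N_j\), we have \(I+bN(\eta)\in N_j\), hence \(b\in M_j(\eta)\). We send \(w\) to this pair \((\eta,b)\); the uniqueness statement in the proposition makes the map well defined.

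\emph{Backward map.} Given \((\eta,b)\) with \(b\in M_j(\eta)\), send it to \(w=I+bN(\eta)\). This matrix is independent of the chosen lift by Lemma~\ref{lem:borel-slope-well-defined}. The converse part of Proposition~\ref{prop:borel-slope-parameterization} shows that \(w\) lies in the set counted by \(T_j(\ell)\).

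\emph{Checking the two maps are inverse.} One composition is immediate: starting from \(w\), forming \((\eta,b)\) and then \(I+bN(\eta)\) returns \(w\) by construction. For the other composition, start from \((\eta,b)\) and form \(w=I+bN(\eta)\). The pair \((b,\eta)\) already satisfies \(w=I+bN(\eta)\) and \(v_\fp(b)=j-1\). By the uniqueness clause of the proposition it is therefore the pair attached to \(w\), so the forward map returns \((\eta,b)\).

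The one point needing care is membership in \(N_j\setminus N_{j+1}\) rather than just \(N_j\), which the converse statement of the proposition asserts. It holds because \(\psi_j(w)=\bar\beta E\neq 0\), where \(b=\varpi^{j-1}\beta\) with \(\beta\) a unit. I expect no real obstacle beyond this bookkeeping, since the corollary is purely a counting consequence of the parameterization.
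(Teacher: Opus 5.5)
Your proposal is correct and follows the paper's proof. The paper observes that, by Proposition~\ref{prop:borel-slope-parameterization}, the map \(w\mapsto\eta\) is well defined and its fiber over \(\eta\) is exactly \(\{I+bN(\eta)\mid b\in M_j(\eta)\}\), which is your bijection with \(\bigsqcup_{\eta}M_j(\eta)\). Your use of the uniqueness clause to see that distinct \(b\) give distinct \(w\), and your check that \(\psi_j(w)=\bar\beta E\neq 0\) forces \(w\notin N_{j+1}\), only make explicit two points the paper leaves implicit.
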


\begin{proof}
By Proposition~\ref{prop:borel-slope-parameterization}, the map $w\to \eta$ from the set counted by $T_j(\ell)$ to $\Sigma_j$ is well-defined, and its fiber
over $\eta$ is exactly the set
\[
\{\,I+bN(\eta)\mid \ b\in M_j(\eta)\,\}.
\]
\end{proof}

\begin{cor}\label{cor:trivial-Tj-bound}
We have $\lvert T_j(\ell) \rvert\le q^{2e-2j+1}$.
\end{cor}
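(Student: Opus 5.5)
The bound follows from Corollary~\ref{cor:Tj-slope-sum}, which gives $T_j(\ell)=\sum_{\eta\in\Sigma_j} m_j(\eta)$. It then suffices to bound the number of slopes $\lvert\Sigma_j\rvert$ and, separately, each fibre size $m_j(\eta)$ uniformly in $\eta$.

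For the slopes, $\Sigma_j=\varpi R_{e-j+1}$ is the maximal ideal of $R_{e-j+1}=\cO_\bK/\fp^{e-j+1}$. Hence
\[
\lvert\Sigma_j\rvert=q^{e-j}.
\]

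For the fibres, every element of $M_j(\eta)$ is some $b\in R_e$ with $v_\fp(b)=j-1$. Such $b$ can be written as $\varpi^{j-1}\beta$, and $\beta$ is determined modulo $\varpi^{e-j+1}$. Its class is a unit of $R_{e-j+1}$, so the number of such $b$ is
\[
\lvert R_{e-j+1}^*\rvert=(q-1)q^{e-j}\le q^{e-j+1}.
\]
Dropping the membership condition $I+bN(\eta)\in N_j$ only enlarges the set, so $m_j(\eta)\le q^{e-j+1}$ for every $\eta$.

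Multiplying the two bounds gives
\[
T_j(\ell)\le q^{e-j}\cdot q^{e-j+1}=q^{2e-2j+1},
\]
which is the claim. The bound is actually slightly sharper, $T_j(\ell)\le (q-1)q^{2e-2j}$. The proof is elementary counting. The only point needing care is that distinct classes of $b$ modulo $\varpi^e$ correspond exactly to distinct units of $R_{e-j+1}$, and this is the bijection $t\mapsto bt$ already used in the proof of Proposition~\ref{prop:borel-slope-parameterization}.
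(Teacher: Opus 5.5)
Your proposal is correct and follows essentially the same route as the paper: both use Corollary~\ref{cor:Tj-slope-sum}, $\lvert\Sigma_j\rvert=q^{e-j}$, and a uniform bound on $m_j(\eta)$. The only difference is that the paper bounds $m_j(\eta)$ by $\lvert\varpi^{j-1}R_e\rvert=q^{e-j+1}$, while you count only elements of exact valuation $j-1$, which gives the slightly sharper $(q-1)q^{2e-2j}$.
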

\begin{proof}
The set $\Sigma_j=\varpi R_{e-j+1}$ has cardinal $q^{e-j}$. Each $M_j(\eta)$ is a subset of $\varpi^{j-1}R_e$ which has cardinal $q^{e-j+1}$. The identity of Corollary~\ref{cor:Tj-slope-sum} then implies that the cardinal of $T_j(\ell)$ is $\leq q^{e-j}q^{e-j+1}$.
\end{proof}

\subsection{Filtered counting}\label{sec:filtered-counting}

\subsubsection{}

For $\eta\in \Sigma_j$, define
\[
S_j(\eta):=\{\,b\in R_e\mid\  I+bN(\eta)\in N_j\,\}.
\]
Then every $b\in S_j(\eta)$ satisfies $v_\fp(b)\ge j-1$. We have
\[
M_j(\eta)=\{\,b\in S_j(\eta)\mid\ v_\fp(b)=j-1\,\}.
\]

\begin{lem}\label{lem:borel-Sj-basic}
For every $\eta\in \Sigma_j$, the set $S_j(\eta)$ is an additive subgroup of
$R_e$, $S_j(\eta)\subset \varpi^{j-1}R_e$, and $m_j(\eta)=|S_j(\eta)|-\bigl|S_j(\eta)\cap \varpi^jR_e\bigr|$.
\end{lem}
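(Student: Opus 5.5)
The plan is to obtain the subgroup property from the fact that the matrices $I+bN(\eta)$, for fixed $\eta$, form a one-parameter unipotent subgroup of $G_e$. Fix a lift $\widetilde\eta\in\varpi R_e$ and write $N=N(\widetilde\eta)$. By Lemma~\ref{lem:borel-slope-well-defined}, $N^2=0$. Hence for any $b,b'\in R_e$,
\[
(I+bN)(I+b'N)=I+(b+b')N,\qquad (I+bN)^{-1}=I-bN,
\]
and $\det(I+bN)=1+b\tr(N)-b^2\cdot 0=1$ because $\tr N=0$ and $\det N=0$. So $b\mapsto I+bN$ is a group homomorphism $\phi:(R_e,+)\to G_e$.

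First I will show that $S_j(\eta)$ is well defined. For $b\in S_j(\eta)$ we need $bN$ to be independent of the lift. Since $N\equiv E\pmod\varpi$, the $(1,2)$-entry of $I+bN$ is exactly $b$. Membership in $N_j$ forces $I+bN\equiv I\pmod{\varpi^{j-1}}$, so $v_\fp(b)\ge j-1$, which is the claimed inclusion $S_j(\eta)\subset\varpi^{j-1}R_e$. For such $b$, the argument of Lemma~\ref{lem:borel-slope-well-defined} applies unchanged: the lift changes $N$ by multiples of $\varpi^{e-j+1}$, and $(j-1)+(e-j+1)=e$. If $v_\fp(b)>j-1$ the argument works even better. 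Thus $S_j(\eta)=\phi^{-1}(N_j)\cap\varpi^{j-1}R_e$ with a lift-independent meaning. Strictly, I will phrase it as $\phi^{-1}(N_j)$ for a fixed lift and then note that the result lies in $\varpi^{j-1}R_e$, where the lift does not matter.

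Next, $N_j$ is a subgroup of $H\subset G_e$, being a kernel, so its preimage under the homomorphism $\phi$ is an additive subgroup of $R_e$. Finally, $M_j(\eta)$ is by definition the set of elements of $S_j(\eta)$ of valuation exactly $j-1$. Since $S_j(\eta)\subset\varpi^{j-1}R_e$, this set is the complement of $S_j(\eta)\cap\varpi^jR_e$ in $S_j(\eta)$, which gives the formula for $m_j(\eta)$.

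There is no real obstacle here. The only point needing care is the well-definedness of $bN(\eta)$ when $v_\fp(b)\ge j-1$ rather than $=j-1$. This is handled by working with a fixed lift and observing that the valuation bound comes out automatically.
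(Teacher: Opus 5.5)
Your proof is correct and follows the paper's argument: $N(\eta)^2=0$ makes $b\mapsto I+bN(\eta)$ an additive homomorphism, so $S_j(\eta)$ is the preimage of the subgroup $N_j$, and the formula for $m_j(\eta)$ follows directly from $S_j(\eta)\subset\varpi^{j-1}R_e$. Your extra steps fill in what the paper only asserts. You read off $v_\fp(b)\ge j-1$ from the $(1,2)$-entry, which is exactly $b$ because $N(\widetilde\eta)_{12}=1$ for every lift, not merely because $N\equiv E \pmod{\varpi}$. You also check that $bN(\eta)$ does not depend on the lift whenever $v_\fp(b)\ge j-1$.
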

\begin{proof}
Since $N(\eta)^2=0$, one has
\begin{equation*}
(I+bN(\eta))(I+b'N(\eta))=I+(b+b')N(\eta),
\quad
(I+bN(\eta))^{-1}=I-bN(\eta).
\end{equation*}
As $N_j$ is a subgroup of $H$, it follows that $S_j(\eta)$ is an
additive subgroup of $R_e$. 
\end{proof}

\begin{lem}\label{lem:borel-Sj-recursion}
For $1\le r\le e-j$, let $\pi_{j,r}:\Sigma_j\to \Sigma_{j+r}$ be the natural reduction map. Then for every $\eta\in \Sigma_j$ one has
\[
S_j(\eta)\cap \varpi^{j+r-1}R_e
=
S_{j+r}\bigl(\pi_{j,r}(\eta)\bigr).
\]
\end{lem}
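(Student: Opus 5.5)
The plan is to prove the set equality by showing that both sides are defined by one and the same condition on \(b\). The key point is that, once \(b\) is divisible by \(\varpi^{j+r-1}\), the matrix \(bN(\eta)\) depends only on \(\pi_{j,r}(\eta)\).

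Fix \(\eta\in\Sigma_j\) and a lift \(\widetilde\eta\in\varpi R_e\). The image of \(\widetilde\eta\) in \(R_{e-j-r+1}\) is then a lift of \(\pi_{j,r}(\eta)\in\Sigma_{j+r}=\varpi R_{e-j-r+1}\), so the same \(\widetilde\eta\) may be used on both sides.

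\emph{Step 1: the matrices agree.} Take \(b\in\varpi^{j+r-1}R_e\). Then \(bN(\widetilde\eta)\) is literally the same matrix in \(M_2(R_e)\) whether \(\widetilde\eta\) is regarded as a lift of \(\eta\) or of \(\pi_{j,r}(\eta)\).
\begin{itemize}
\item The argument of Lemma~\ref{lem:borel-slope-well-defined} applies to any \(b\) with \(v_\fp(b)\ge j+r-1\), not only equality. Changing the lift by an element of \(\varpi^{e-j-r+1}R_e\) changes every entry of \(N(\widetilde\eta)\) by something in \(\varpi^{e-j-r+1}R_e\).
\item That difference is killed on multiplying by \(b\), since \((j+r-1)+(e-j-r+1)=e\).
\item I will state this extension explicitly, because the paper only wrote the lemma for \(v_\fp(b)=j-1\), where \(S_j\) is defined via the same expression.
\end{itemize}

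\emph{Step 2: the groups \(N_j\) and \(N_{j+r}\).} Since \(N_{j+r}=\ker(\rho_{e,j+r-1}|_H)\subset\ker(\rho_{e,j-1}|_H)=N_j\), we have \(N_{j+r}\subset N_j\). Also \(I+bN(\widetilde\eta)\equiv I\pmod{\varpi^{j+r-1}}\) whenever \(v_\fp(b)\ge j+r-1\). Hence, for such \(b\), the element \(I+bN(\widetilde\eta)\in H\) lies in \(N_j\) if and only if it lies in \(N_{j+r}\): being in \(H\) and in the kernel of reduction mod \(\varpi^{j+r-1}\) is exactly membership in \(N_{j+r}\).

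\emph{Step 3: the two inclusions.}
\begin{itemize}
\item Let \(b\in S_j(\eta)\cap\varpi^{j+r-1}R_e\). Then \(I+bN\in N_j\subset H\) and it is trivial mod \(\varpi^{j+r-1}\), so \(I+bN\in N_{j+r}\). This gives \(b\in S_{j+r}(\pi_{j,r}(\eta))\).
\item Conversely, let \(b\in S_{j+r}(\pi_{j,r}(\eta))\). Every element of \(N_{j+r}\) is trivial mod \(\varpi^{j+r-1}\), and \(N(\widetilde\eta)\) has entry \(1\) in position \((1,2)\). Hence \(v_\fp(b)\ge j+r-1\) (this is also the remark following the definition of \(S_j\)). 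Then \(I+bN\in N_{j+r}\subset N_j\), so \(b\in S_j(\eta)\cap\varpi^{j+r-1}R_e\).
\end{itemize}

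There is no real obstacle here. The only point needing care is the lift-independence in Step 1, which justifies using one common lift for \(\eta\) and \(\pi_{j,r}(\eta)\). The edge case \(j+r\le e\) is guaranteed by the hypothesis \(r\le e-j\), so the convention \(N_{e+1}=K_{e+1}\) is never needed.
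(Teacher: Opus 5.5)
Your proof is correct and follows essentially the same route as the paper. The paper's one-line proof is exactly your Step~1: for $b\in\varpi^{j+r-1}R_e$, the matrix $bN(\eta)$ depends only on $\pi_{j,r}(\eta)$. It leaves implicit the point you spell out in Steps~2--3, namely that an element of $H$ congruent to $I$ modulo $\varpi^{j+r-1}$ lies in $N_j$ if and only if it lies in $N_{j+r}$, and that membership in $S_{j+r}$ already forces $v_\fp(b)\ge j+r-1$.
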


\begin{proof}
Any two lifts in $\varpi R_e$ of $\pi_{j,r}(\eta)$ differ by an element of $\varpi^{e-j-r+1}R_e$, so for $b\in \varpi^{j+r-1}R_e$ we have $bN(\eta)=bN\bigl(\pi_{j,r}(\eta)\bigr)$ in $M_2(R_e)$.
\end{proof}

\subsubsection{}\label{sec:telescopic}
For $j\in\{2,\dots,e\}$, define
\begin{align*}
\cA_j(\ell) & :=\bigcup_{\eta\in\Sigma_j}S_j(\eta)\;\subset\;\varpi^{j-1}R_e,\\
A_j(\ell)& :=\sum_{\eta\in \Sigma_j}|S_j(\eta)|, \quad A_{e+1}(\ell):=0.
\end{align*}
For \(b\in \varpi^{j-1}R_e\), define the fiber 
\[ 
\mathcal E_j(b):= \{\eta\in \Sigma_j\mid\ I+bN(\eta)\in H\}.
\] 
Recall that by Lemma~\ref{lem:borel-slope-well-defined} the matrix $I+bN(\eta)$ does not depend on the lift $\widetilde\eta$. The following formula is immediate.
\begin{lem}\label{lem:Aj-fiber-sum}
\[ 
A_j(\ell)=\sum_{b\in \varpi^{j-1}R_e} |\mathcal E_j(b)|. 
\]     
\end{lem}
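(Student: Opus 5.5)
The identity is a double counting of the incidence set
\[
\mathcal I_j:=\bigl\{(\eta,b)\in\Sigma_j\times\varpi^{j-1}R_e \;\bigm|\; b\in S_j(\eta)\bigr\}.
\]
Counting $\mathcal I_j$ by first fixing $\eta$ gives $\sum_{\eta\in\Sigma_j}|S_j(\eta)|=A_j(\ell)$. This uses the inclusion $S_j(\eta)\subset\varpi^{j-1}R_e$ from Lemma~\ref{lem:borel-Sj-basic}, so no element of $S_j(\eta)$ is lost when we restrict $b$ to $\varpi^{j-1}R_e$. Counting $\mathcal I_j$ by first fixing $b\in\varpi^{j-1}R_e$ gives $\sum_b\#\{\eta\in\Sigma_j : I+bN(\eta)\in N_j\}$.

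It therefore remains to show that, for $b\in\varpi^{j-1}R_e$, the conditions $I+bN(\eta)\in N_j$ and $I+bN(\eta)\in H$ are equivalent. This is what identifies the inner set with $\mathcal E_j(b)$.

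First, the matrix $I+bN(\eta)$ is well defined. When $v_\fp(b)\ge j-1$, Lemma~\ref{lem:borel-slope-well-defined} applies verbatim: the argument only uses $(j-1)+(e-j+1)=e$, so it needs just a lower bound on $v_\fp(b)$.

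Second, $N_j\subset H$ by definition, so membership in $N_j$ implies membership in $H$.

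Conversely, suppose $I+bN(\eta)\in H$. Since $b\in\varpi^{j-1}R_e$, we have $I+bN(\eta)\equiv I\pmod{\varpi^{j-1}}$. Hence the element lies in $\ker(\rho_{e,j-1}|_H)=N_j$.

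With this equivalence the two counts agree, which proves the formula. I do not expect any real obstacle. The only point needing care is that well-definedness holds for all $b$ with $v_\fp(b)\ge j-1$, not merely for those of exact valuation $j-1$.
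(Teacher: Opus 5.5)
Your proof is correct and is the same double-counting argument as the paper's: both count pairs $(\eta,b)\in\Sigma_j\times\varpi^{j-1}R_e$, using $S_j(\eta)\subset\varpi^{j-1}R_e$ from Lemma~\ref{lem:borel-Sj-basic}. The only addition is your explicit check that, for $b\in\varpi^{j-1}R_e$, the conditions $I+bN(\eta)\in N_j$ and $I+bN(\eta)\in H$ coincide because $N_j=\ker(\rho_{e,j-1}|_H)$; the paper uses this equivalence without stating it.
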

\begin{proof}
Both sides count the set of pairs
$\{(\eta,b)\in \Sigma_j\times \varpi^{j-1}R_e\mid\ I+bN(\eta)\in H\}$,
the left by summing over $\eta$ (using $S_j(\eta)\subset \varpi^{j-1}R_e$
from Lemma~\ref{lem:borel-Sj-basic}), the right by summing over $b$.
\end{proof}

\begin{lem}\label{lem:borel-tail-telescoping}
For every $j\in\{2,\dots,e\}$, \(T_j(\ell)=A_j(\ell)-q\,A_{j+1}(\ell)\). 
Consequently, for every interval $[a,b]\subset \{2,\dots,e\}$,
\[
\sum_{r=a}^b q^{\,r-2e}T_r(\ell)
=
q^{\,a-2e}A_a(\ell)-q^{\,b+1-2e}A_{b+1}(\ell),
\]
and in particular $\sum_{r=a}^e q^{\,r-2e}T_r(\ell)=q^{\,a-2e}A_a(\ell)$.
\end{lem}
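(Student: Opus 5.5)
The identity $T_j(\ell)=A_j(\ell)-q\,A_{j+1}(\ell)$ will follow from Corollary~\ref{cor:Tj-slope-sum} together with Lemmas~\ref{lem:borel-Sj-basic} and~\ref{lem:borel-Sj-recursion}; the interval formula is then a telescoping sum.

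Start from
\[
T_j(\ell)=\sum_{\eta\in\Sigma_j}m_j(\eta)=\sum_{\eta\in\Sigma_j}|S_j(\eta)|-\sum_{\eta\in\Sigma_j}\bigl|S_j(\eta)\cap\varpi^jR_e\bigr|,
\]
using Corollary~\ref{cor:Tj-slope-sum} and Lemma~\ref{lem:borel-Sj-basic}. The first sum is $A_j(\ell)$ by definition.

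For the second sum with $j\le e-1$, apply Lemma~\ref{lem:borel-Sj-recursion} with $r=1$. This gives $S_j(\eta)\cap\varpi^jR_e=S_{j+1}(\pi_{j,1}(\eta))$. The reduction map $\pi_{j,1}:\Sigma_j=\varpi R_{e-j+1}\to\Sigma_{j+1}=\varpi R_{e-j}$ is a surjective group homomorphism, and $|\Sigma_j|=q^{e-j}$ while $|\Sigma_{j+1}|=q^{e-j-1}$. So every fiber has exactly $q$ elements, and the second sum equals $q\sum_{\eta'\in\Sigma_{j+1}}|S_{j+1}(\eta')|=q\,A_{j+1}(\ell)$.

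For the edge case $j=e$, we have $\Sigma_e=\varpi R_1=\{0\}$. Also $S_e(0)\subset\varpi^{e-1}R_e$, so $S_e(0)\cap\varpi^eR_e=\{0\}$ and contributes $1$, whereas the convention is $A_{e+1}(\ell)=0$. The naive formula would therefore read $T_e=A_e-1$. I need to check this point, and I expect it to be the only real obstacle. The honest reading seems to be that $S_j(\eta)\cap\varpi^jR_e$ always contains $b=0$. With $e+1>e$, the convention $N_{e+1}=K_{e+1}$ suggests interpreting $S_{e+1}$ as containing just $\{0\}$, which would give $A_{e+1}$ the value $1$ rather than $0$. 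I would reconcile this by checking whether the paper intends $A_{e+1}(\ell)=q^{-1}$ so that $qA_{e+1}=1$. Alternatively, the off-by-one may be harmless for the later estimates. If necessary, I would state the case $j=e$ separately as $T_e(\ell)=A_e(\ell)-1$.

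Finally, multiply the identity by $q^{r-2e}$ to get
\[
q^{r-2e}T_r=q^{r-2e}A_r-q^{(r+1)-2e}A_{r+1},
\]
and sum over $r=a,\dots,b$. The sum telescopes to $q^{a-2e}A_a-q^{b+1-2e}A_{b+1}$. Taking $b=e$ and using $A_{e+1}=0$ gives the final claim.
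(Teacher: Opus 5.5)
For $2\le j\le e-1$ your argument is the paper's, and it is correct. The ingredients are $m_j(\eta)=|S_j(\eta)|-|S_j(\eta)\cap\varpi^jR_e|$, Lemma~\ref{lem:borel-Sj-recursion} with $r=1$, and the fact that $\pi_{j,1}\colon\Sigma_j\to\Sigma_{j+1}$ has fibres of size $q$. The interval formula with $b\le e-1$ also follows correctly.

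Your suspicion at $j=e$ is justified, and the discrepancy cannot be reconciled: with the convention $A_{e+1}(\ell)=0$ the stated identity is false at $j=e$.

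\begin{itemize}
\item We have $0\in S_e(\eta)$, but $b=0$ gives $w=I$, which $T_e(\ell)$ does not count because $\psi_e(I)=0\notin\bF_q^*E$. Hence $m_e(\eta)=|S_e(\eta)|-1$.
\item Since $\Sigma_e=\{0\}$, it follows that $T_e(\ell)=A_e(\ell)-1$.
\item Concretely, take $H=G_e$. Then $S_e(0)=\varpi^{e-1}R_e$, so $A_e(\ell)=q$, while $T_e(\ell)$ counts only the $q-1$ matrices $I+bE$ with $0\neq b\in\varpi^{e-1}R_e$.
\end{itemize}

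The paper's proof makes exactly the slip you flagged. From $S_e(\eta)\cap\varpi^eR_e=\{0\}$ it concludes $m_e(\eta)=|S_e(\eta)|$, which contradicts Lemma~\ref{lem:borel-Sj-basic}. Neither $A_{e+1}(\ell)=0$ nor your alternative $A_{e+1}(\ell)=1$ fixes this. The uniform formula holds only with $A_{e+1}(\ell):=q^{-1}$, and the last claim then becomes
\[
\sum_{r=a}^{e}q^{\,r-2e}T_r(\ell)=q^{\,a-2e}A_a(\ell)-q^{-e}.
\]
As a check, for $H=G_e$ and $a=2$ the left side equals $(q-1)\sum_{r=2}^e q^{-r}=q^{-1}-q^{-e}$, whereas $q^{2-2e}A_2(\ell)=q^{-1}$. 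Only the corrected value is consistent with Burnside: it gives the true value $\chi_e(N_2)=q^{-1}$ for $H=G_e$.

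The real defect in your proposal is its final sentence. After correctly finding $T_e(\ell)=A_e(\ell)-1$, you use $A_{e+1}=0$ to obtain the last claim, which reinstates the false step. You should commit to the corrected statement:
\begin{itemize}
\item $T_j(\ell)=A_j(\ell)-qA_{j+1}(\ell)$ for $j\le e-1$;
\item $T_e(\ell)=A_e(\ell)-1$;
\item hence the displayed identity, or equivalently the inequality $\sum_{r=a}^{e}q^{\,r-2e}T_r(\ell)\le q^{\,a-2e}A_a(\ell)$.
\end{itemize}

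That inequality is all the paper uses later, so nothing downstream breaks. Propositions~\ref{prop:chi-N2-odd} and~\ref{prop:chi-Nnu-dyadic} only bound $\chi_e$ from above. The reduction $q^{j-2e}A_j(\ell)=q^{\jmath_1-2e}A_{\jmath_1}(\ell)$ in Proposition~\ref{prop:Aj-bound-three} uses only the cases $j<\jmath_1\le e$. The exception is $\jmath_1=e+1$; there $q^{j-2e}A_j(\ell)=q^{-e}$ and the required bound holds trivially.
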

\begin{proof}
For \(j=e\), the set \(S_e(\eta)\) is contained in \(\varpi^{e-1}R_e\), and \(S_e(\eta)\cap\varpi^eR_e=\{0\}\).
Thus \(m_e(\eta)=|S_e(\eta)|\) while \(A_{e+1}(\ell)=0\).  Now assume \(j<e\). By Lemma~\ref{lem:borel-Sj-basic} and Lemma~\ref{lem:borel-Sj-recursion} applied with $r=1$,
\[
m_j(\eta)=|S_j(\eta)|-\bigl|S_{j+1}\bigl(\pi_{j,1}(\eta)\bigr)\bigr|.
\]
The map $\pi_{j,1}:\Sigma_j\to\Sigma_{j+1}$ is the reduction $\varpi R_{e-j+1}\to \varpi R_{e-j}$, which is surjective with fibers of cardinality $q$. Therefore
\[
\sum_{\eta\in\Sigma_j}\bigl|S_{j+1}\bigl(\pi_{j,1}(\eta)\bigr)\bigr|
=q\sum_{\eta'\in\Sigma_{j+1}}|S_{j+1}(\eta')|=q\,A_{j+1}(\ell).
\]
Combining this with Corollary~\ref{cor:Tj-slope-sum} and the definition of $A_j(\ell)$,
\[
T_j(\ell)=\sum_{\eta\in\Sigma_j}m_j(\eta)=A_j(\ell)-q\,A_{j+1}(\ell),
\]
which is the first identity. Multiplying by $q^{j-2e}$ gives
\[
q^{j-2e}T_j(\ell)=q^{j-2e}A_j(\ell)-q^{(j+1)-2e}A_{j+1}(\ell),
\]
and summing from $j=a$ to $j=b$ gives the result. 

\end{proof}

\section{Local asymptotic estimates II: proofs}\label{sec:local-estimates-II}

We continue with the same setup and notations as in Section~\ref{sec:local-estimates-I}. 

\subsection{Counting with Burnside's Lemma}\label{sec:Counting-with-Burnside}
In this subsection we establish the basic counting formulas that will guide the proof of Theorem~\ref{thm:main_local_chi}.

\subsubsection{Choices of $\mathfrak{sl}_2$-basis}
\label{sec:choices-of-basis}

The slope decomposition of \S\ref{sec:slope-decomposition} uses explicit
matrices.  It is therefore useful to separate the quantities which depend on
auxiliary coordinates from the quantities which depend only on a nilpotent
direction.

Let \(\lambda,\mu\in\bP^1(\bF_q)\) be two distinct lines.  Choose generators
\[
E_\lambda\in\Ann(\lambda)\subset \mathfrak{sl}_2(\bF_q),
\qquad
F_\mu\in\Ann(\mu)\subset \mathfrak{sl}_2(\bF_q),
\]
and set
\[
D_{\lambda,\mu}:=[E_\lambda,F_\mu]\in \mathfrak{sl}_2(\bF_q).
\]
If \(E_\lambda\) and \(F_\mu\) are rescaled, then \(D_{\lambda,\mu}\) is also
rescaled by a non-zero scalar.  Hence the three $\bF_q$-lines in $\mathfrak{sl}_2(\bF_q)$
\[
\bF_qE_\lambda,\qquad
\bF_qD_{\lambda,\mu},\qquad
\bF_qF_\mu
\]
depend only on the ordered pair \((\lambda,\mu)\).  Choosing such a pair and
such generators is what we mean by choosing an adapted
\(\mathfrak{sl}_2(\bF_q)\)-basis.

In \S\ref{sec:slope-decomposition} and \S\ref{sec:filtered-counting}, the
chosen pair is the standard pair
\[
\ell=[1\!:\!0],
\qquad
\ell'=[0\!:\!1],
\]
together with generators \(E\in\Ann(\ell)\) and \(F\in\Ann(\ell')\).  The
explicit normal form \(I+bN(\eta)\) is tied to this adapted basis.  Consequently
the auxiliary objects
\[
M_j(\eta),\qquad m_j(\eta),\qquad S_j(\eta),\qquad \mathcal E_j(b)
\]
depend on the choice of the second line and on the generators.

By contrast, according to its very definition \(T_j(\ell)\) depends only on the first line \(\ell\).  Indeed it
counts elements whose level-\(j\) image lies in the nilpotent line
\(\Ann(\ell)\), and this condition is independent of the choice of a generator
of \(\Ann(\ell)\).  Although \(A_j(\ell)\) is defined using the same auxiliary
slope coordinates, it is also intrinsic because it can be expressed in terms of the $T_j(\ell)$ by Lemma~\ref{lem:borel-tail-telescoping}.

\subsubsection{Conjugating to other lines}
\label{sec:conjugating-other-lines}

The identities of \S\ref{sec:slope-decomposition} and
\S\ref{sec:filtered-counting} were proved for the standard nilpotent line
\(\Ann(\ell)=\bF_qE\), where \(\ell=[1\!:\!0]\).  We explain now how to use them
when the nilpotent direction corresponds to an arbitrary
\(\lambda\in\bP^1(\bF_q)\).

Choose a generator \(E_\lambda\in\Ann(\lambda)\).  For \(2\le j\le e\), set
\[
T_j(\lambda)
:=
\#\Bigl\{
w\in N_j\setminus N_{j+1} \mid
\tr(w)=2,\ X_e^w\neq\emptyset,\ 
\psi_j(w)\in\bF_q^*E_\lambda
\Bigr\}.
\]
This number does not depend on the choice of \(E_\lambda\).  For
\(\lambda=\ell\), it is the previously defined \(T_j(\ell)\). Choose \(g\in\SL_2(\bF_q)\) with \(g\ell=\lambda\), and choose a lift
\(\hat g\in G_e\).  Since \(E\) has kernel \(\ell\), \(gEg^{-1}\) has kernel
\(\lambda\), hence \(gEg^{-1}\in\bF_q^*E_\lambda\).  Put
\(H_{\hat g}:=\hat g^{-1}H\hat g\).  For \(w\in H\), write
\(w_{\hat g}:=\hat g^{-1}w\hat g\).

Conjugation by \(\hat g\) carries the filtration of \(H\) onto the filtration
of \(H_{\hat g}\).  More precisely,
\[
w\in N_j(H)\setminus N_{j+1}(H)
\quad\Longleftrightarrow\quad
w_{\hat g}\in N_j(H_{\hat g})\setminus N_{j+1}(H_{\hat g}).
\]
If \(w\in N_j(H)\), then
\[
\psi_j^{H_{\hat g}}(w_{\hat g})
=
g^{-1}\psi_j^H(w)g.
\]
This follows directly from the congruence
\(w\equiv I+\varpi^{j-1}\psi_j^H(w)\pmod{\varpi^j}\), after conjugating by
\(\hat g\) and reducing the coefficient of \(\varpi^{j-1}\) modulo \(\varpi\).

The remaining conditions in the definition of \(T_j(\lambda)\) are also
preserved.  Trace is invariant under conjugation, and
\(x\mapsto \hat g^{-1}x\) identifies \(X_e^w\) with \(X_e^{w_{\hat g}}\).
Moreover,
\[
\psi_j^H(w)\in\bF_q^*E_\lambda
\quad\Longleftrightarrow\quad
\psi_j^{H_{\hat g}}(w_{\hat g})\in\bF_q^*E,
\]
because \(g^{-1}E_\lambda g\in\bF_q^*E\).  Hence conjugation identifies the
set counted by \(T_j(\lambda)\) for \(H\) with the set counted by the standard
quantity \(T_j(\ell)\) for \(H_{\hat g}\). In symbols,
\[
T_j(\lambda;H)=T_j(\ell;H_{\hat g}),
\]
where the semicolon indicates the subgroup used to form the count.  With
the previous notation, \(T_j(\lambda)=T_j(\ell;H_{\hat g})\).

Applying Lemma~\ref{lem:borel-tail-telescoping} to \(H_{\hat g}\), we obtain
\begin{equation}\label{eq:conjugated-tail-telescoping-pointwise}
T_j(\lambda)
=
A_j(\ell;H_{\hat g})-qA_{j+1}(\ell;H_{\hat g}).
\end{equation}
Consequently, for every interval \([a,b]\subset\{2,\ldots,e\}\),
\begin{equation}\label{eq:conjugated-tail-telescoping-interval}
\sum_{r=a}^b q^{\,r-2e}T_r(\lambda)
=
q^{\,a-2e}A_a(\ell;H_{\hat g})
-
q^{\,b+1-2e}A_{b+1}(\ell;H_{\hat g}).
\end{equation}

\subsubsection{Burnside's Lemma}
\label{sec:counting-formula}
 Recall that
\[
T_r
=
\#\{w\in N_r\setminus N_{r+1}\mid
\tr(w)=2,\ X_e^w\neq\emptyset\}.
\]
If \(w\) is counted by \(T_r\), then \(\psi_r(w)\neq 0\), and
Lemma~\ref{lem:ss-fp} gives \(\det(\psi_r(w))=0\).  Thus \(\psi_r(w)\) is a
non-zero nilpotent element of \(\mathfrak{sl}_2(\bF_q)\), hence has a unique
kernel line.  Therefore
\begin{equation}\label{eq:Tj-sum-over-lines}
T_r=\sum_{\lambda\in\bP^1(\bF_q)}T_r(\lambda).
\end{equation}

\begin{prop}\label{prop:exact-chi}
For every \(j_0\in\{2,\ldots,e\}\), one has
\begin{equation}\label{eq:exact-chi-total}
\chi_e(N_{j_0})
=
q^{-e}
+
\frac{1}{q+1}\sum_{r=j_0}^e T_r q^{\,r-2e}.
\end{equation}
Equivalently,
\begin{equation}\label{eq:exact-chi-lines}
\chi_e(N_{j_0})
=
q^{-e}
+
\frac{1}{q+1}
\sum_{\lambda\in\bP^1(\bF_q)}
\sum_{r=j_0}^e T_r(\lambda)q^{\,r-2e}.
\end{equation}
\end{prop}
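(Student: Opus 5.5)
We apply Burnside's lemma to the action of $N_{j_0}$ on $X_e$:
\[
|N_{j_0}\backslash X_e|=\frac{1}{|N_{j_0}|}\sum_{w\in N_{j_0}}|X_e^w|.
\]
Dividing by $[G_e:N_{j_0}]=|G_e|/|N_{j_0}|$ gives
\[
\chi_e(N_{j_0})=\frac{1}{|G_e|}\sum_{w\in N_{j_0}}|X_e^w|.
\]
We then split the sum according to the filtration. Since $N_{j_0}\subset N_{j_0}\supset N_{j_0+1}\supset\cdots\supset N_{e+1}=\{I\}$ (inside $H\subset G_e$, the kernel of reduction to $G_e$ is trivial), every $w\neq I$ in $N_{j_0}$ lies in exactly one $N_r\setminus N_{r+1}$ with $j_0\le r\le e$.

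\emph{The identity.} We have $|X_e^I|=|X_e|=|G_e|/|U_e|=|G_e|/q^{e}$. Its contribution to $\chi_e(N_{j_0})$ is therefore exactly $q^{-e}$.

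\emph{Elements in $N_r\setminus N_{r+1}$.} Let $w\in N_r\setminus N_{r+1}$ with $X_e^w\neq\emptyset$. Lemma~\ref{lem:trace-vanishing} forces $\tr(w)=2$, so only elements counted by $T_r$ contribute. As noted before \eqref{eq:Tj-sum-over-lines}, $\psi_r(w)$ is then a nonzero nilpotent, so $w$ is counted by $T_r(\lambda)$ for a unique $\lambda$. Conjugate by $\hat g$ as in \S\ref{sec:conjugating-other-lines}; since $x\mapsto\hat g^{-1}x$ is a bijection $X_e^w\to X_e^{w_{\hat g}}$, the fixed-point count is preserved. Proposition~\ref{prop:borel-slope-parameterization} applied to $H_{\hat g}$ then gives $X_e^{w_{\hat g}}=\Fix_r(\eta)$. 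By Lemma~\ref{lem:fix-slope-class}, $|X_e^w|=(q-1)q^{e+r-2}$, which is independent of $w$, $\eta$ and $\lambda$.

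\emph{Summing.} Combining the two cases,
\[
\chi_e(N_{j_0})=q^{-e}+\frac{1}{|G_e|}\sum_{r=j_0}^e T_r\,(q-1)q^{e+r-2}.
\]
Lemma~\ref{lem:order-Ge} gives $|G_e|=q^{3e-2}(q-1)(q+1)$. Hence
\[
\frac{(q-1)q^{e+r-2}}{|G_e|}=\frac{q^{r-2e}}{q+1},
\]
which yields \eqref{eq:exact-chi-total}. Substituting \eqref{eq:Tj-sum-over-lines} gives \eqref{eq:exact-chi-lines}.

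\emph{Main point to check.} The only step needing care is the transfer of the fixed-point computation from the standard line to an arbitrary line $\lambda$ via conjugation. This relies on $H_{\hat g}$ satisfying the same hypotheses, which holds because Proposition~\ref{prop:borel-slope-parameterization} is stated for an arbitrary subgroup.
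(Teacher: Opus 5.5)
Your proposal is correct and follows the paper's proof essentially step by step. You apply Burnside's lemma to $N_{j_0}$ and divide by $[G_e:N_{j_0}]$, the identity contributes $|X_e|/|G_e|=q^{-e}$, Lemmas~\ref{lem:trace-vanishing} and~\ref{lem:ss-fp} restrict the sum to elements counted by $T_r$, and the conjugation of \S\ref{sec:conjugating-other-lines} together with Proposition~\ref{prop:borel-slope-parameterization} and Lemma~\ref{lem:fix-slope-class} gives $|X_e^w|=(q-1)q^{e+r-2}$, with only the cosmetic difference that you get $|X_e|=|G_e|/|U_e|$ directly rather than by counting primitive columns.
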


\begin{proof}
Burnside's lemma says that the number of orbits is the average number of fixed
points:
\[
|N_{j_0}\backslash X_e|
=
\frac{1}{|N_{j_0}|}\sum_{w\in N_{j_0}}|X_e^w|.
\]
Since
\(N_{j_0}=\{1\}\sqcup\bigsqcup_{r=j_0}^e(N_r\setminus N_{r+1})\), this gives
\[
|N_{j_0}\backslash X_e|
=
\frac{|X_e|}{|N_{j_0}|}
+
\frac{1}{|N_{j_0}|}
\sum_{r=j_0}^e
\sum_{w\in N_r\setminus N_{r+1}}|X_e^w|.
\]
For \(w\in N_r\setminus N_{r+1}\), Lemmas~\ref{lem:trace-vanishing} and
\ref{lem:ss-fp} show that \(w\) contributes only if it is counted by \(T_r\).
For such a \(w\), the conjugation argument of
\S\ref{sec:conjugating-other-lines}, together with
Proposition~\ref{prop:borel-slope-parameterization} and
Lemma~\ref{lem:fix-slope-class}, gives \(|X_e^w|=(q-1)q^{e+r-2}\). Hence
\[
\sum_{w\in N_r\setminus N_{r+1}}|X_e^w|
=
(q-1)q^{e+r-2}T_r.
\]
Dividing by \([G_e:N_{j_0}]=|G_e|/|N_{j_0}|\), we obtain
\[
\chi_e(N_{j_0})
=
\frac{|X_e|}{|G_e|}
+
\sum_{r=j_0}^e
\frac{(q-1)q^{e+r-2}}{|G_e|}T_r.
\]
By Lemma~\ref{lem:order-Ge}, one has \(|G_e|=q^{3e-2}(q^2-1)\). Moreover, Proposition~\ref{prop:primitive-cols} gives \(|X_e|=q^{2e-2}(q^2-1)\). Thus
\[
\chi_e(N_{j_0})
=
q^{-e}
+
\frac{1}{q+1}\sum_{r=j_0}^e T_rq^{\,r-2e}.
\]
This proves \eqref{eq:exact-chi-total}, and \eqref{eq:exact-chi-lines}
follows from \eqref{eq:Tj-sum-over-lines}.
\end{proof}
\begin{lem}\label{lem:normal-subgroup}
Let \(H\subset G_e\) be a subgroup, and let \(\mathcal N\trianglelefteq H\) be a
normal subgroup.  Then
\[
\chi_e(H)\le [H:\mathcal N]\chi_e(\mathcal N).
\]
\end{lem}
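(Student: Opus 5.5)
Since \(\mathcal N\) is normal in \(H\), the group \(H\) permutes the \(\mathcal N\)-orbits on \(X_e\), and \(\mathcal N\) acts trivially on this set of orbits. In other words, \(H/\mathcal N\) acts on \(\mathcal N\backslash X_e\), and \(H\backslash X_e\) is exactly the orbit set of this induced action. In particular, the natural map
\[
\mathcal N\backslash X_e\longrightarrow H\backslash X_e,\qquad \mathcal N x\longmapsto Hx,
\]
is surjective. This already gives \(|H\backslash X_e|\le|\mathcal N\backslash X_e|\). We will not use any finer information, such as orbit sizes of \(H/\mathcal N\).

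It then remains to rewrite the indices. Since \(\mathcal N\subset H\subset G_e\), we have
\[
[G_e:\mathcal N]=[G_e:H]\,[H:\mathcal N].
\]
Hence
\[
\chi_e(H)=\frac{|H\backslash X_e|}{[G_e:H]}
\le \frac{|\mathcal N\backslash X_e|}{[G_e:H]}
=[H:\mathcal N]\,\frac{|\mathcal N\backslash X_e|}{[G_e:\mathcal N]}
=[H:\mathcal N]\,\chi_e(\mathcal N),
\]
which is the claimed inequality.

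The only point to check is well-definedness: the \(H\)-action on \(\mathcal N\)-orbits requires \(h\mathcal N x=\mathcal N hx\), and this is exactly where normality is used. The surjectivity of the displayed map, on the other hand, does not need normality at all, since it holds for any subgroup. So normality is only a convenience here. There is no real obstacle; the lemma is pure bookkeeping.
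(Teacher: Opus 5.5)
Your proof is correct and follows the paper's argument: the paper also observes that every \(H\)-orbit on \(X_e\) is a union of \(\mathcal N\)-orbits, so \(|H\backslash X_e|\le|\mathcal N\backslash X_e|\), and then rewrites the indices using \([G_e:\mathcal N]=[G_e:H]\,[H:\mathcal N]\). Your remark that normality is never actually used is also right; the paper's proof does not use it either.
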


\begin{proof}
Since \(\mathcal N\subset H\), every \(H\)-orbit on \(X_e\) is a union of
\(\mathcal N\)-orbits.  Thus
\[
|H\backslash X_e|\le |\mathcal N\backslash X_e|.
\]
Since \([G_e:H]=[G_e:\mathcal N]/[H:\mathcal N]\), we get
\[
\chi_e(H)
=
\frac{|H\backslash X_e|}{[G_e:H]}
\le
[H:\mathcal N]\frac{|\mathcal N\backslash X_e|}{[G_e:\mathcal N]}
=
[H:\mathcal N]\chi_e(\mathcal N).
\]
\end{proof}

\begin{lem}\label{lem:trivial-chi-bound}
For every \(j_0\in\{2,\ldots,e\}\), one has
\[
  \chi_e(N_{j_0})
  \le
  q^{-e}+\sum_{j=j_0}^e q^{1-j}.
\]
\end{lem}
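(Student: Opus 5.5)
Start from the exact formula of Proposition~\ref{prop:exact-chi}:
\[
\chi_e(N_{j_0})=q^{-e}+\frac{1}{q+1}\sum_{r=j_0}^e T_r\,q^{\,r-2e},
\qquad T_r=\sum_{\lambda\in\bP^1(\bF_q)}T_r(\lambda).
\]
It then suffices to bound each $T_r(\lambda)$ trivially and sum over the $q+1$ lines. The factor $q+1$ will cancel exactly against the prefactor $\frac{1}{q+1}$.

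For a fixed line $\lambda$, use the conjugation identity of \S\ref{sec:conjugating-other-lines}, namely $T_r(\lambda)=T_r(\ell;H_{\hat g})$. Then apply Corollary~\ref{cor:trivial-Tj-bound} to the subgroup $H_{\hat g}$; that corollary holds for an arbitrary subgroup, since it only uses $|\Sigma_r|=q^{e-r}$ and $M_r(\eta)\subset\varpi^{r-1}R_e$. This gives $T_r(\lambda)\le q^{2e-2r+1}$ uniformly in $\lambda$. One point needs a remark: $N_{j_0}$ is itself the group acting, and its own filtration satisfies $N_r(N_{j_0})=N_r(H)$ for $r\ge j_0$, with trivial contributions below $j_0$. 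So the formula and the bounds apply with $H$ replaced by $N_{j_0}$, or equivalently they are already phrased in terms of $N_r$, $r\ge j_0$.

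Summing over lines gives $T_r\le (q+1)q^{2e-2r+1}$. Substituting,
\[
\frac{1}{q+1}\sum_{r=j_0}^e T_r q^{r-2e}\le \sum_{r=j_0}^e q^{2e-2r+1}q^{r-2e}=\sum_{r=j_0}^e q^{1-r},
\]
which is the claimed bound after renaming the index $r$ as $j$.

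The argument is entirely formal. The only point needing care, and the main obstacle, is to check that the uniform bound $T_r(\lambda)\le q^{2e-2r+1}$ does not depend on $\lambda$ or on the choice of conjugating lift $\hat g$. This is immediate because Corollary~\ref{cor:trivial-Tj-bound} uses only cardinalities of $\Sigma_r$ and $\varpi^{r-1}R_e$, not any property of the subgroup.
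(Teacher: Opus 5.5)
Your proposal is correct and is essentially the paper's own proof. You insert the uniform bound $T_r(\lambda)\le q^{2e-2r+1}$, obtained from Corollary~\ref{cor:trivial-Tj-bound} via the conjugation of \S\ref{sec:conjugating-other-lines}, into the exact formula \eqref{eq:exact-chi-lines}, and the sum over the $q+1$ lines cancels the prefactor $\frac{1}{q+1}$.
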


\begin{proof}
By Corollary~\ref{cor:trivial-Tj-bound} and the conjugation argument of
\S\ref{sec:conjugating-other-lines},
\[
T_j(\lambda)\le q^{2e-2j+1},
\qquad
\lambda\in\bP^1(\bF_q).
\]
Inserting this into \eqref{eq:exact-chi-lines} gives
\[
\chi_e(N_{j_0})
\le
q^{-e}
+
\frac{1}{q+1}
\sum_{\lambda\in\bP^1(\bF_q)}
\sum_{j=j_0}^e q^{2e-2j+1}q^{j-2e}
=
q^{-e}+\sum_{j=j_0}^e q^{1-j}.
\]
The formula follows by summing the finite geometric series.
\end{proof}

\begin{rem}\label{rem:trivial-bound-not-enough}
Since \(N_2\trianglelefteq H\),
Lemma~\ref{lem:normal-subgroup} gives
\[
\chi_e(H)\le [H:N_2]\chi_e(N_2).
\]
The trivial estimate of Lemma~\ref{lem:trivial-chi-bound} provides a uniform upper bound for $\chi_e(H)$, which does not yield the decay required in
Theorem~\ref{thm:main_local_chi}. Most of the rest of
the section is devoted to sharper estimates for the weighted sums of the
\(T_j(\lambda)\).
\end{rem}

\subsection{Proof of Theorem~\ref{thm:main_local_chi_B}}\label{sec:proof-of-main-local-thm-B}

\begin{prop}\label{prop:proper-subgroup-level-one-bound}
Let \(\Delta\) be a proper subgroup of \(G_1=\SL_2(\bF_q)\), and assume \(q\ge 59\). Then \(\chi_1(\Delta)\le \frac{2}{q}\).
\end{prop}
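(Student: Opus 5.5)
The plan is to compute \(\chi_1(\Delta)\) exactly with Burnside's lemma and then bound the single unknown term using Dickson's classification of the subgroups of \(\SL_2(\bF_q)\).

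\textbf{Step 1: reduction to counting unipotents.} By Proposition~\ref{prop:primitive-cols}, \(X_1=\bF_q^2\setminus\{0\}\), so \(|X_1|=q^2-1\). Burnside's lemma gives
\[
\chi_1(\Delta)=\frac{1}{|G_1|}\sum_{w\in\Delta}|X_1^w| .
\]
The identity fixes \(q^2-1\) points. An element \(w\neq I\) fixes a nonzero vector only if \(1\) is an eigenvalue. Since \(\det w=1\), this forces \(\tr w=2\) (as in Lemma~\ref{lem:trace-vanishing}), so \(w\) is a nontrivial unipotent element. Such a \(w\) fixes exactly the \(q-1\) nonzero vectors of its fixed line. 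Writing \(u(\Delta)\) for the number of nontrivial unipotent elements of \(\Delta\) and using \(|G_1|=q(q^2-1)\), we get
\[
\chi_1(\Delta)=\frac1q+\frac{u(\Delta)}{q(q+1)}.
\]
It therefore suffices to prove \(u(\Delta)\le q+1\) for every proper subgroup \(\Delta\) when \(q\ge 59\).

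\textbf{Step 2: structure of the unipotent elements.} Every nontrivial unipotent element has order \(p\) and lies in a unique Sylow \(p\)-subgroup of \(G_1\). These Sylow subgroups are the unipotent radicals of the \(q+1\) Borel subgroups, each of order \(q\), and they intersect pairwise trivially. Hence \(u(\Delta)\) equals the sum, over the Sylow \(p\)-subgroups \(P\) of \(G_1\), of \(|P\cap\Delta|-1\).

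\textbf{Step 3: case analysis via Dickson.} A proper subgroup \(\Delta\) of \(\SL_2(\bF_q)\) falls into one of the following cases.
\begin{enumerate}
\item[(a)] \(\Delta\) is contained in a Borel subgroup. Then all its unipotents lie in a single \(P\), so \(u(\Delta)\le q-1\).
\item[(b)] \(\Delta\) is contained in the normalizer of a split or nonsplit torus, a dihedral-type group of order \(2(q\pm1)\) up to \(\pm I\). For odd \(p\), its order is prime to \(p\), so \(u(\Delta)=0\). For \(p=2\), its elements of order \(2\) number at most \(q+1\).
\item[(c)] \(\Delta\) is contained in a subfield subgroup \(\SL_2(\bF_{q'})\), possibly extended by a diagonal element, with \(\bF_{q'}\subsetneq\bF_q\). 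Then \(\Delta\) meets at most \(q'+1\) Sylow subgroups, each in at most \(q'\) elements, so \(u(\Delta)\le q'^2-1\le q-1\), because \(q'\le\sqrt q\).
\item[(d)] \(\Delta\) is exceptional (its image in \(\PSL_2\) is \(A_4\), \(S_4\) or \(A_5\)), so \(|\Delta|\le 120\). Unipotents exist only if \(p\in\{2,3,5\}\), and then \(q\ge 59\) forces \(q\ge 64\). A direct count of elements of order \(p\) in these groups gives at most \(24\) (order-\(5\) elements of the binary icosahedral group, or the \(15\) involutions of \(A_5\) in characteristic \(2\)), which is \(\le q+1\).
\end{enumerate}
In every case \(u(\Delta)\le q+1\), so \(\chi_1(\Delta)\le \frac1q+\frac1q=\frac2q\).

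\textbf{Main obstacle.} The main obstacle is applying Dickson's list correctly and completely in all characteristics, especially \(p=2\), where \(-I=I\) and the dihedral and exceptional cases behave differently. This is where the hypothesis \(q\ge 59\) is used: it removes small-\(q\) coincidences such as \(\PSL_2(\bF_q)\cong A_5\), which would make an exceptional subgroup equal to all of \(G_1\). If the case analysis becomes awkward, I would fall back on the uniform argument of Step 2. If \(\Delta\) contains two distinct full Sylow \(p\)-subgroups, they generate \(\SL_2(\bF_q)\), so \(\Delta\) would not be proper. Otherwise \(u(\Delta)\) is at most \(q-1\) plus contributions from proper subgroups \(P\cap\Delta\), each of order \(\le q/p\). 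The only remaining task would then be to bound the number of Sylow subgroups that \(\Delta\) meets nontrivially, using (b)–(d).
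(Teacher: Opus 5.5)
Your proposal is correct and follows the paper's proof: Burnside's lemma gives \(\chi_1(\Delta)=q^{-1}+u(\Delta)/(q(q+1))\), and a case-by-case pass through Dickson's classification gives \(u(\Delta)\le q+1\). The differences are minor. You apply the subgroup list in \(\SL_2(\bF_q)\) directly, whereas the paper passes to the image in \(\PSL_2(\bF_q)\) and uses perfectness plus injectivity of the projection on trace-\(2\) elements. In the exceptional case you count elements of order \(p\) (at most \(24\)) instead of using the cruder bound \(60\le q+1\).
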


\begin{proof}
We first compute the fixed-point contribution on \(X_1\).  Using the
identification \(X_1=G_1/U_1\simeq \bF_q^2\setminus\{0\}\), an element
\(\gamma\in G_1\) fixes a point of \(X_1\) if and only if it fixes a non-zero
vector.  Since \(\det(\gamma)=1\), this is equivalent to \(1\) being an
eigenvalue of \(\gamma\), hence to \(\tr(\gamma)=2\).  The identity fixes all
\(q^2-1\) points of \(X_1\).  If \(\gamma\ne I\) and \(\tr(\gamma)=2\), then
\(\gamma\) has a one-dimensional fixed space, so it fixes exactly \(q-1\)
points of \(X_1\).

Let \(u(\Delta):=
\#\{\gamma\in\Delta\setminus\{I\}\mid\tr(\gamma)=2\}\). Burnside's lemma gives
\[
|\Delta\backslash X_1|
=
\frac{1}{|\Delta|}
\bigl((q^2-1)+u(\Delta)(q-1)\bigr).
\]
Since \(|G_1|=q(q^2-1)\), it follows that
\begin{equation}\label{eq:level-one-chi-u}
\chi_1(\Delta)
=
\frac{|\Delta\backslash X_1|}{[G_1:\Delta]}
=
q^{-1}+\frac{u(\Delta)}{q(q+1)}.
\end{equation}

It remains to prove \(u(\Delta)\le q+1\).  Let
\(\overline{\Delta}\) be the image of \(\Delta\) in \(\PSL_2(\bF_q)\).  This
image is proper.  Indeed, if \(q\) is even, then
\(\SL_2(\bF_q)=\PSL_2(\bF_q)\), so this is immediate.  If \(q\) is odd and
\(\overline{\Delta}=\PSL_2(\bF_q)\), then either \(\Delta=G_1\), or
\(\Delta\) is an index-two subgroup of \(G_1\); the latter is impossible
because \(\SL_2(\bF_q)\) is perfect for \(q\ge4\).

We shall compare \(u(\Delta)\) with the number of non-trivial \(p\)-elements
in \(\overline{\Delta}\).  If \(\gamma\ne I\) and \(\tr(\gamma)=2\), then the
characteristic polynomial of \(\gamma\) is \((X-1)^2\).  Hence the image of
\(\gamma\) in \(\PSL_2(\bF_q)\) has \(p\)-power order.  Moreover, the
projection \(\SL_2(\bF_q)\to\PSL_2(\bF_q)\) is injective on the set counted by
\(u(\Delta)\): if \(\gamma'\) and \(\gamma\) have trace \(2\) and the same
projective image, then \(\gamma'=z\gamma\) for a central scalar \(z\); in even
characteristic \(z=I\), while in odd characteristic \(z=-I\) would change the
trace from \(2\) to \(-2\).  Thus \(z=I\).  Therefore \(u(\Delta)\) is bounded
above by the number of non-trivial \(p\)-elements of \(\overline{\Delta}\).

By Dickson's classification \cite[Theorems~6.25 and~6.26]{SuzukiGroupTheoryI}, the proper subgroups of
\(\PSL_2(\bF_q)\) are of the following types: \(p\)-groups, Borel type, cyclic
or dihedral type, subfield type, or exceptional type \(A_4,S_4,A_5\).

We now bound the number of non-trivial \(p\)-elements in each case.  A
\(p\)-subgroup of \(\PSL_2(\bF_q)\) is contained in the image of the upper
unitriangular subgroup, which has order \(q\).  Hence the \(p\)-group case
contributes at most \(q-1\) non-trivial \(p\)-elements.

In the Borel case, the \(p\)-elements lie in the unipotent radical of the
Borel subgroup.  This radical has order \(q\), so the number of non-trivial
\(p\)-elements is at most \(q-1\).

In the cyclic case, the non-trivial \(p\)-elements form a cyclic \(p\)-subgroup
of \(\PSL_2(\bF_q)\), hence there are at most \(q-1\) of them.  In the dihedral
case, if \(p\) is odd, there are no non-trivial \(p\)-elements in a torus
normalizer.  If \(p=2\), a dihedral subgroup of a torus normalizer has at most
\(q+1\) involutions.  Thus the cyclic and dihedral cases contribute at most
\(q+1\) non-trivial \(p\)-elements.

In the subfield case, the subgroup is contained in a group of type
\(\PSL_2(\bF_{q_0})\) or \(\PGL_2(\bF_{q_0})\), where
\(\bF_{q_0}\subsetneq\bF_q\) is a proper subfield.  Thus \(q=q_0^m\) with
\(m\ge2\).  In these groups, the non-trivial \(p\)-elements are contained in
the conjugates of the upper triangular unipotent subgroup over \(\bF_{q_0}\).  There
are \(q_0+1\) such conjugates, each has \(q_0-1\) non-identity elements, and
distinct such subgroups intersect trivially.  Hence the number of non-trivial
\(p\)-elements is at most
\[
(q_0+1)(q_0-1)=q_0^2-1\le q-1.
\]

Finally, in the exceptional cases, \(\overline{\Delta}\) is isomorphic to one
of \(A_4,S_4,A_5\).  Hence it has at most \(60\) elements in total.  Since \(q\ge59\), this is at most \(q+1\).

In every case, \(\overline{\Delta}\) contains at most \(q+1\) non-trivial
\(p\)-elements.  Hence \(u(\Delta)\le q+1\).  Substituting this into
\eqref{eq:level-one-chi-u}, we obtain
\[
\chi_1(\Delta)
\le
q^{-1}+\frac{q+1}{q(q+1)}
=
\frac{2}{q}.
\]
\end{proof}

\begin{lem}\label{lem:burnside-first-level-comparison}
Assume \(e\ge 2\). One has
\[
\chi_e(H)
\le
\chi_e(N_2)+\chi_1(H_1)-q^{-1}.
\]
\end{lem}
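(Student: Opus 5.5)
We apply Burnside's lemma to \(H\) directly and split the sum over \(H\) according to the image in \(H_1\). Write \(N_2=\ker(H\to H_1)\), so \(|H|=|H_1|\,|N_2|\). Burnside gives
\[
|H\backslash X_e|=\frac{1}{|H|}\sum_{w\in H}|X_e^w|
=\frac{1}{|H|}\sum_{w\in N_2}|X_e^w|+\frac{1}{|H|}\sum_{w\in H\setminus N_2}|X_e^w|.
\]
Since \(|N_2\backslash X_e|=\frac{1}{|N_2|}\sum_{w\in N_2}|X_e^w|\) and \([G_e:H]=[G_e:N_2]/|H_1|\), the first term divided by \([G_e:H]\) equals exactly \(\chi_e(N_2)\).

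For the second term, take \(w\in H\setminus N_2\), so \(\bar w:=\rho_{e,1}(w)\neq I\). If \(w\) fixes a primitive column \(v\), then \(\bar w\) fixes \(\bar v\neq 0\) in \(\bF_q^2\). Thus \(\bar w\ne I\) has trace \(2\), and its fixed vectors form a single line \(L\). The fixed columns of \(w\) in \(X_e\) reduce into \(L\setminus\{0\}\), a set of size \(q-1\). The fibers of \(X_e\to X_1\) have size \(q^{2(e-1)}\), so \(|X_e^w|\le (q-1)q^{2e-2}\). We then group the elements \(w\) by their image: each nontrivial \(\gamma\in H_1\) with \(\tr\gamma=2\) has exactly \(|N_2|\) preimages. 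Hence
\[
\sum_{w\in H\setminus N_2}|X_e^w|\le |N_2|\,u(H_1)\,(q-1)q^{2e-2},
\]
where \(u(H_1)\) is as in the proof of Proposition~\ref{prop:proper-subgroup-level-one-bound}.

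Dividing by \(|H|\,[G_e:H]=|G_e|=q^{3e-2}(q^2-1)\) (Lemma~\ref{lem:order-Ge}), the second contribution to \(\chi_e(H)\) is at most
\[
\frac{|N_2|\,u(H_1)(q-1)q^{2e-2}}{q^{3e-2}(q^2-1)}\cdot\frac{1}{|N_2|}\cdot |N_2|\,\Big/\,|N_2|,
\]
which we should simplify carefully. Precisely, \(\frac{1}{|H|}\cdot\frac{1}{[G_e:H]}=\frac{1}{|G_e|}\), so the contribution is
\[
\frac{|N_2|\,u(H_1)(q-1)q^{2e-2}}{|G_e|}.
\]
To keep track of \(|N_2|\), we use \(|N_2|\le q^{3(e-1)}\). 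Rather than bounding it, we compare with the level-one formula \eqref{eq:level-one-chi-u}, \(\chi_1(H_1)-q^{-1}=\frac{u(H_1)}{q(q+1)}\). We need \(\frac{|N_2|(q-1)q^{2e-2}}{q^{3e-2}(q^2-1)}\le\frac{1}{q(q+1)}\), that is, \(|N_2|\le q^{e-1}\).

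This last inequality is the main obstacle: it fails in general, so the per-\(w\) bound above is too crude. The correct step is to bound the fixed points summed over each fiber \(wN_2\). Fix the line \(L\). Then
\[
\sum_{n\in N_2}|X_e^{wn}|=\sum_{v}\#\{n: wnv=v\},
\]
where \(v\) runs over the columns with \(\bar v\in L\setminus\{0\}\). For each such \(v\), the set \(\{n\in N_2: wn v=v\}\) is either empty or a coset of \(\Stab_{N_2}(v)\). So the fiber sum is at most \(\sum_v|\Stab_{N_2}(v)|\), with \(v\) ranging over the \((q-1)q^{2e-2}\) columns lying over \(L\setminus\{0\}\). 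Since \(N_2\) is normal in \(H\) and the stabilizers within an orbit are conjugate, this sum equals \(|N_2|\) times the number of \(N_2\)-orbits meeting that set. That number is at most \(|N_2\backslash X_e|\cdot\frac{q-1}{q^2-1}\) if the orbits are equidistributed over \(X_1\). Here \(N_2\) acts trivially on \(X_1\), so every orbit lies in a single fiber over a point of \(X_1\). We still need the number of orbits over each point to be at most the average; we would try to get this from \(H\)-transitivity among the fibers over \(L\setminus\{0\}\) together with averaging over the \(u(H_1)\) choices.

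Summing over \(\gamma\) and dividing by \(|G_e|\) then yields a bound of the shape \(\frac{u(H_1)}{q(q+1)}\cdot q\,\chi_e(N_2)\)-type. Combining it with \(\chi_e(N_2)\le1\) (Lemma~\ref{lem:trivial-chi-bound}) should give the claimed \(\chi_1(H_1)-q^{-1}\) term.
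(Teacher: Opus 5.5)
Your treatment of the $N_2$-part is correct, since it contributes exactly $\chi_e(N_2)$. You are also right to discard the per-element bound $|X_e^w|\le (q-1)q^{2e-2}$. The gap is in the repair, where two things go wrong.

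First, the claim that the $N_2$-orbits are equidistributed over $X_1$ is unproved and false in general. Since $N_2$ is only normalized by $H$, the number of $N_2$-orbits above $\bar x\in X_1$ is constant only along $H_1$-orbits, and $H_1$ need not be transitive on $X_1$. Take $H$ to be the upper triangular subgroup of $G_e$. Each fiber above a point off $\ell=[1\!:\!0]$ is then a single $N_2$-orbit. Above each point of $\ell\setminus\{0\}$, however, the valuation of the second coordinate is an $N_2$-invariant, which gives at least $e$ orbits. The non-trivial trace-two elements of $H_1$ fix exactly $\ell$, so they sit precisely over the over-populated fibers.

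Second, even granting equidistribution, your computation gives the contribution $\frac{u(H_1)}{q+1}\chi_e(N_2)$. With $\chi_e(N_2)\le 1$ this is $q$ times the required $\chi_1(H_1)-q^{-1}=\frac{u(H_1)}{q(q+1)}$. Even Lemma~\ref{lem:trivial-chi-bound}, which gives $\chi_e(N_2)<\frac1{q-1}$, does not reach it. So the last step does not follow.

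The missing idea is to compare with the full fiber of $\rho_{e,1}$ in $G_e$ rather than with $N_2$. Fixed-point counts are non-negative, so for $\bar\gamma\in H_1\setminus\{I\}$
\[
\sum_{h\in H\cap\rho_{e,1}^{-1}(\bar\gamma)}|X_e^h|\le\sum_{g\in\rho_{e,1}^{-1}(\bar\gamma)}|X_e^g| = q^{3e-3}|X_1^{\bar\gamma}|.
\]
The equality comes from counting pairs $(g,x)$ with $gx=x$. Each $\bar x\in X_1^{\bar\gamma}$ has $q^{2e-2}$ lifts $x$. For each such $x$, the admissible $g$ form one fiber of the surjection $\Stab_{G_e}(x)\to\Stab_{G_1}(\bar x)$, which has size $q^{e-1}$. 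Dividing by $|G_e|=q^{3e-3}|G_1|$ and summing over $\bar\gamma\ne I$ gives exactly $\chi_1(H_1)-q^{-1}$.

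You can also close the gap inside your own framework. Let $\mathcal K:=\ker\rho_{e,1}$, which has order $q^{3e-3}$. It acts transitively on every fiber of $X_e\to X_1$ and contains $N_2=H\cap\mathcal K$, so each fiber carries at most $[\mathcal K:N_2]$ orbits of $N_2$. Your fiber sum is then at most $|N_2|(q-1)[\mathcal K:N_2]=(q-1)q^{3e-3}$. This is the same bound, and it replaces the equidistribution hypothesis.
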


\begin{proof}
Burnside's lemma for the action of \(H\) on \(X_e\) gives
\[
|H\backslash X_e|
=
\frac{1}{|H|}
\sum_{h\in H}|X_e^h|.
\]
After division by \([G_e:H]=|G_e|/|H|\), this becomes
\begin{equation}\label{eq:burnside-H-expanded}
\chi_e(H)
=
\frac{1}{|G_e|}
\sum_{h\in H}|X_e^h|.
\end{equation}
We split the sum according to \(H=N_2\sqcup(H\setminus N_2)\).

The contribution of \(N_2\) is exactly \(\chi_e(N_2)\).  Indeed, Burnside's
lemma applied to \(N_2\) gives
\[
\frac{1}{|G_e|}
\sum_{h\in N_2}|X_e^h|
=
\frac{|N_2\backslash X_e|}{[G_e:N_2]}
=
\chi_e(N_2).
\]

It remains to estimate the contribution of \(H\setminus N_2\).  Fix
\(\bar\gamma\in H_1\setminus\{I\}\).  Since
\(H\cap\rho_{e,1}^{-1}(\bar\gamma)\subset \rho_{e,1}^{-1}(\bar\gamma)\), we
have
\[
\sum_{h\in H\cap\rho_{e,1}^{-1}(\bar\gamma)}|X_e^h|
\le
\sum_{g\in\rho_{e,1}^{-1}(\bar\gamma)}|X_e^g|.
\]

We claim that
\begin{equation}\label{eq:first-level-fiber-fixed-sum}
\sum_{g\in\rho_{e,1}^{-1}(\bar\gamma)}|X_e^g|
=
q^{3e-3}|X_1^{\bar\gamma}|.
\end{equation}
The left-hand side counts pairs \((g,x)\) with
\(g\in\rho_{e,1}^{-1}(\bar\gamma)\), \(x\in X_e\), and \(gx=x\).  Let
\(\bar x\in X_1\) be the reduction of \(x\).  If \(\bar\gamma\bar x\ne\bar x\),
then no such \(g\) exists above \(x\).  Suppose now that
\(\bar\gamma\bar x=\bar x\).

Choose \(a\in G_e\) with \(x=aU_e\).  Then
\[
\Stab_{G_e}(x)=aU_ea^{-1},
\qquad
\Stab_{G_1}(\bar x)=\rho_{e,1}(a)U_1\rho_{e,1}(a)^{-1}.
\]
The reduction map \(\Stab_{G_e}(x)\to \Stab_{G_1}(\bar x)\) is surjective, and its kernel is conjugate to \(\ker(U_e\to U_1)\), hence has cardinality \(q^{e-1}\).  Since \(\bar\gamma\in\Stab_{G_1}(\bar x)\), the set of elements
\(g\in\rho_{e,1}^{-1}(\bar\gamma)\) fixing \(x\) is exactly one fibre of this
surjective map, and therefore has cardinality \(q^{e-1}\).
Finally, each \(\bar x\in X_1\) has \(q^{2e-2}\) lifts to \(X_e\).  Thus each
\(\bar x\in X_1^{\bar\gamma}\) contributes
\(q^{2e-2}q^{e-1}=q^{3e-3}\) pairs \((g,x)\), proving
\eqref{eq:first-level-fiber-fixed-sum}.

By Lemma~\ref{lem:order-Ge}, we have $|G_e|=q^{3e-3}|G_1|$. Therefore
\[
\begin{aligned}
\frac{1}{|G_e|}
\sum_{h\in H\setminus N_2}|X_e^h|
\le
\frac{1}{|G_e|}
\sum_{\bar\gamma\in H_1\setminus\{I\}}
q^{3e-3}|X_1^{\bar\gamma}| 
=
\frac{1}{|G_1|}
\sum_{\bar\gamma\in H_1\setminus\{I\}}
|X_1^{\bar\gamma}|.
\end{aligned}
\]
By Burnside's lemma applied to the action of \(H_1\) on \(X_1\), we have
\[
|H_1\backslash X_1|
=
\frac{1}{|H_1|}
\sum_{\bar\gamma\in H_1}|X_1^{\bar\gamma}|.
\]
Dividing by \([G_1:H_1]=|G_1|/|H_1|\), we get
\[
\chi_1(H_1)
=
\frac{1}{|G_1|}
\sum_{\bar\gamma\in H_1}|X_1^{\bar\gamma}|.
\]
Separating the identity element gives
\[
\frac{1}{|G_1|}
\sum_{\bar\gamma\in H_1\setminus\{I\}}
|X_1^{\bar\gamma}|
=
\chi_1(H_1)-\frac{|X_1|}{|G_1|}
=
\chi_1(H_1)-q^{-1}.
\]
Together with \eqref{eq:burnside-H-expanded} and the preceding estimate for
the contribution of \(H\setminus N_2\), this proves \(\chi_e(H)\le\chi_e(N_2)+\chi_1(H_1)-q^{-1}\).
\end{proof}

\begin{proof}[Proof of Theorem~\ref{thm:main_local_chi_B}]
If \(e=1\), then \(H=H_1\subsetneq G_1\).  Proposition~\ref{prop:proper-subgroup-level-one-bound}
gives $\chi_1(H)\le \frac{2}{q}$. 
Assume now that \(e\ge2\).  By Lemma~\ref{lem:burnside-first-level-comparison},
\begin{equation}\label{eq:chi_H_chi_N2}
\chi_e(H)
\le
\chi_e(N_2)+\chi_1(H_1)-q^{-1}.
\end{equation}
Since \(H_1\subsetneq G_1\), Proposition~\ref{prop:proper-subgroup-level-one-bound}
gives \(\chi_1(H_1)\le 2/q\).  By Lemma~\ref{lem:trivial-chi-bound},
\[
\chi_e(N_2)
\le
\frac{1}{q-1}-\frac{q^{-e}}{q-1}
<
\frac{1}{q-1}.
\]
Then \eqref{eq:chi_H_chi_N2} gives the desired inequality. 
\end{proof}

\begin{proof}[Proof of {Remark~\ref{rem:the-H1G1-condition-unramified}}]
Assume, by contradiction, that \(H_1=\SL_2(\bF_q)\).  Choose a non-zero
nilpotent \(A\in\mathfrak{sl}_2(\bF_q)\), and lift \(I+A\in H_1\) to an
element \(h=I+B\in H\), with \(B\equiv A\pmod{\fp}\). Since \(\fp\) is
unramified, we have \(p=u_0\varpi\).

We claim that
\[
h^p=(I+B)^p\equiv I+pB\equiv I+u_0\varpi A\pmod{\varpi^2}.
\]
Indeed, since \(A^2=0\), we have \(v_\fp(B^2)\ge1\). Hence, for
\(2\le i\le p-1\),
\[
v_\fp\!\left(\binom pi B^i\right)\ge2,
\]
because \(p\vert \binom pi\). It remains to check \(B^p\). Write
\(t=\tr(B)\) and \(d=\det(B)\). Since \(B\equiv A\pmod\fp\) and \(A\) is
nilpotent, one has \(v_\fp(t),v_\fp(d)\ge1\). By Cayley--Hamilton, \(B^2=tB-dI\). It follows that \(v_\fp(B^4)\ge2\), and since \(p\ge5\), we get \(v_\fp(B^p)\ge2\). This proves the congruence.

Thus \(h^p\in N_2\) and \(\psi_2(h^p)=\bar u_0A\). Hence \(W_2\) contains a
non-zero nilpotent element. Since \(H_1=\SL_2(\bF_q)\), the space \(W_2\)
is stable under the full adjoint action of \(\SL_2(\bF_q)\). Therefore it
contains the span of the \(\SL_2(\bF_q)\)-orbit of \(A\), which is
\(\mathfrak{sl}_2(\bF_q)\). Hence \(W_2=\mathfrak{sl}_2(\bF_q)\).

Finally, \(e_0=1\) because \(\fp\) is unramified. By
Lemma~\ref{lem:propagation}, we get
\(W_e=\mathfrak{sl}_2(\bF_q)\), contradicting the exact-level condition.
Therefore \(H_1\neq\SL_2(\bF_q)\).
\end{proof}

\subsection{Parallelogram obstruction}\label{sec:parallelogram}

For $j\ge 2$, $\eta\in \Sigma_j$, and $b\in \varpi^{j-1}R_e$, write
\[
x_\eta(b):=I+bN(\eta)
=\begin{pmatrix} 1-b\widetilde\eta & b\\ -b\widetilde\eta^2 & 1+b\widetilde\eta \end{pmatrix}\in G_e;
\]
this is well-defined by Lemma~\ref{lem:borel-slope-well-defined}, and $N(\eta)^2=0$ gives $x_\eta(b)^{-1}=x_\eta(-b)$.

\begin{lem}\label{lem:quotient-formula-new}
For $b\in \varpi^{j-1}R_e$ and $\eta,\xi\in \Sigma_j$,
\[
Q_{\eta,\xi}(b)
:=x_\eta(b)\,x_\xi(b)^{-1}
=\begin{pmatrix}
1-b(\widetilde\eta-\widetilde\xi)(1+b\widetilde\xi)
& b^2(\widetilde\eta-\widetilde\xi) \\[1mm]
-b(\widetilde\eta-\widetilde\xi)(\widetilde\eta+\widetilde\xi+b\widetilde\eta\widetilde\xi)
& 1+b(\widetilde\eta-\widetilde\xi)(1+b\widetilde\eta)
\end{pmatrix}.
\]
\end{lem}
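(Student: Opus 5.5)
This is a direct matrix computation; I will carry it out with fixed lifts $\widetilde\eta,\widetilde\xi\in\varpi R_e$. By Lemma~\ref{lem:borel-slope-well-defined}, for $b\in\varpi^{j-1}R_e$ the matrices $x_\eta(b)=I+bN(\widetilde\eta)$ and $x_\xi(b)=I+bN(\widetilde\xi)$ do not depend on the chosen lifts. Their product $Q_{\eta,\xi}(b)$ is therefore also independent of the lifts, so it is enough to verify the displayed formula for one such choice.

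Since $N(\widetilde\xi)^2=0$, we have $x_\xi(b)^{-1}=x_\xi(-b)=I-bN(\widetilde\xi)$. Expanding gives
\[
Q_{\eta,\xi}(b)=I+b\bigl(N(\widetilde\eta)-N(\widetilde\xi)\bigr)-b^2N(\widetilde\eta)N(\widetilde\xi).
\]
Write $\delta:=\widetilde\eta-\widetilde\xi$. The linear term is
\[
N(\widetilde\eta)-N(\widetilde\xi)=\begin{pmatrix}-\delta&0\\-\delta(\widetilde\eta+\widetilde\xi)&\delta\end{pmatrix}.
\]
For the product I will multiply the explicit matrices entrywise. The entries should be
\[
(N(\widetilde\eta)N(\widetilde\xi))_{11}=\widetilde\eta\widetilde\xi-\widetilde\xi^2=\widetilde\xi\delta,\qquad
(N(\widetilde\eta)N(\widetilde\xi))_{12}=-\widetilde\eta+\widetilde\xi=-\delta,
\]
\[
(N(\widetilde\eta)N(\widetilde\xi))_{21}=\widetilde\eta^2\widetilde\xi-\widetilde\eta\widetilde\xi^2=\widetilde\eta\widetilde\xi\delta,\qquad
(N(\widetilde\eta)N(\widetilde\xi))_{22}=-\widetilde\eta^2+\widetilde\eta\widetilde\xi=-\widetilde\eta\delta.
\]

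Substituting these into the expansion, each entry of $Q_{\eta,\xi}(b)$ acquires a factor $\delta$:
\begin{itemize}
\item the $(1,1)$ entry is $1-b\delta-b^2\widetilde\xi\delta=1-b\delta(1+b\widetilde\xi)$;
\item the $(1,2)$ entry is $b^2\delta$;
\item the $(2,1)$ entry is $-b\delta(\widetilde\eta+\widetilde\xi)-b^2\widetilde\eta\widetilde\xi\delta=-b\delta(\widetilde\eta+\widetilde\xi+b\widetilde\eta\widetilde\xi)$;
\item the $(2,2)$ entry is $1+b\delta+b^2\widetilde\eta\delta=1+b\delta(1+b\widetilde\eta)$.
\end{itemize}
These are exactly the entries in the statement.

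No step is a real obstacle. The only points needing care are the sign bookkeeping in the product $N(\widetilde\eta)N(\widetilde\xi)$ and the lift-independence of $Q_{\eta,\xi}(b)$, which the first paragraph settles by Lemma~\ref{lem:borel-slope-well-defined}.
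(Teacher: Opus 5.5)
Your proposal is correct and follows the same route as the paper: expand $x_\eta(b)x_\xi(b)^{-1}=I+b\bigl(N(\widetilde\eta)-N(\widetilde\xi)\bigr)-b^2N(\widetilde\eta)N(\widetilde\xi)$ and substitute the explicit matrices. Your entrywise products and the resulting four entries all check out. Since the computation holds for arbitrary fixed lifts, it already gives the identity for every choice of lifts; the lift-independence remark is harmless but not needed.
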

\begin{proof}
Expand $x_\eta(b)\,x_\xi(b)^{-1}=I+b(N(\eta)-N(\xi))-b^2 N(\eta)N(\xi)$ and substitute the explicit form of $N(\eta),N(\xi)$.
\end{proof}

\begin{prop}\label{prop:parallelogram-new}
Fix $j\geq 2$. Let $b\in\varpi^{j-1}R_e$, and let
$\xi,\delta,t\in\Sigma_j$. Set 
\[
r:=v_\fp(b\widetilde\delta\widetilde t)+\nu+1, \qquad R:=Q_{\xi+t+\delta,\xi+t}(b)\,Q_{\xi+\delta,\xi}(b)^{-1}.
\]
Assume that
\[
\xi,\ \xi{+}\delta,\ \xi{+}t,\
\xi{+}t{+}\delta\in\mathcal E_j(b).
\]
Assume that $j\ge \nu +2$. Then 
\begin{itemize}
\item If \(b\widetilde\delta\widetilde t\ne0\) in \(R_e\) and \(r\le e\), then
\(
\psi_r(R)\in\bF_q^*F\cap W_r.
\)
\item If \(r\le e\) and \(\bF_qF\cap W_r=\{0\}\), then necessarily
\(b\widetilde\delta\widetilde t=0\) in \(R_e\).
\end{itemize}
\end{prop}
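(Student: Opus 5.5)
The plan is to compute \(R\) explicitly and show that \(R\equiv I-2b\widetilde\delta\widetilde t\,F \pmod{\varpi^{r}}\). Since \(v_\fp(2b\widetilde\delta\widetilde t)=\nu+v_\fp(b\widetilde\delta\widetilde t)=r-1\), this gives \(R\in N_r\setminus N_{r+1}\) with \(\psi_r(R)\in\bF_q^*F\).

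First, \(R\in H\). Indeed, the four points \(\xi,\xi+\delta,\xi+t,\xi+t+\delta\) lie in \(\mathcal E_j(b)\), and
\[
R=x_{\xi+t+\delta}(b)\,x_{\xi+t}(b)^{-1}\,x_{\xi}(b)\,x_{\xi+\delta}(b)^{-1}
\]
is a product of elements of \(H\) and their inverses. Once we know \(R\equiv I\pmod{\varpi^{r-1}}\), we get \(R\in N_{r}\) and \(\psi_r(R)\in W_r\). So everything reduces to the congruence for \(R\).

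Write \(Q_1=Q_{\xi+t+\delta,\xi+t}(b)=I+X\) and \(Q_2^{-1}=Q_{\xi,\xi+\delta}(b)=I+Y\). Then \(R-I=(X+Y)+XY\). Lemma~\ref{lem:quotient-formula-new} gives \(X\) and \(Y\) entrywise.

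The sum \(X+Y\) is the "second difference". Its first-order part is \(b(N(\xi+t+\delta)-N(\xi+t))-b(N(\xi+\delta)-N(\xi))\).
\begin{itemize}
\item The diagonal entries \(\mp b\widetilde\delta\) cancel.
\item The lower-left entry is \(-b\widetilde\delta\bigl((2\widetilde\xi+2\widetilde t+\widetilde\delta)-(2\widetilde\xi+\widetilde\delta)\bigr)=-2b\widetilde\delta\widetilde t\).
\item The \(b^2\) terms behave as follows. The upper-right \(b^2\widetilde\delta\) cancels exactly. The diagonal \(b^2\) contributions differ by \(\pm b^2\widetilde\delta\widetilde t\), of valuation \(\ge v_\fp(b\widetilde\delta\widetilde t)+j-1\ge r\), since \(j\ge\nu+2\). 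The lower-left \(b^2\) contributions similarly differ by multiples of \(b^2\widetilde\delta\widetilde t\).
\end{itemize}
Hence \(X+Y\equiv -2b\widetilde\delta\widetilde t\,F\pmod{\varpi^r}\).

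For the quadratic term, write \(XY=-X^2+X(X+Y)\). The term \(X(X+Y)\) has valuation \(\ge v_\fp(b)+1+(r-1)\ge r\), using \(v_\fp(X)\ge v_\fp(b\widetilde\delta)\ge j\ge 1\) and \(v_\fp(X+Y)\ge r-1\).

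The genuinely delicate piece is \(X^2\). Here \(Q_1=I+X\) has determinant one and \(X\equiv b\widetilde\delta\,M\) with \(M\) of the shape \(\begin{psmallmatrix}-1&\ast\\\ast&1\end{psmallmatrix}\). By Cayley--Hamilton, \(X^2=\tr(X)X-\det(X)I\), and \(\det(I+X)=1\) gives \(\det X=-\tr X\). So \(X^2=\tr(X)(X+I)\) is essentially a scalar multiple of \(I\), of size \(\tr(X)\). By Lemma~\ref{lem:trace-val}, \(\tr(X)\) is controlled by products of entries of \(X\). The same holds for \(Y\), and I expect the scalar pieces \(-\tr(X)I\) from \(-X^2\) to combine with the explicit traces from Lemma~\ref{lem:quotient-formula-new} into a multiple of \(\widetilde t\) of valuation \(\ge r\).

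If that bookkeeping fails, the fallback is to compute \(\det R=1\) directly. The off-diagonal information, together with Lemma~\ref{lem:trace-val}, then forces \(v_\fp(R_{11}+R_{22}-2)\) to be large. The remaining antisymmetric diagonal part comes from the \(X+Y\) computation, which shows it vanishes modulo \(\varpi^r\). This step, verifying that no \(b^2\widetilde\delta^2\)-type term without a factor \(\widetilde t\) survives at valuation \(r-1\), is the main obstacle. The hypothesis \(j\ge\nu+2\) is exactly what is needed to push the \(b^2\) corrections past level \(r-1\) despite the extra factor \(2\) in the main term.

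With \(R\equiv I-u_0\varpi^{\nu}b\widetilde\delta\widetilde t F\pmod{\varpi^r}\), the first bullet follows: \(\psi_r(R)\) is the nonzero residue of \(-u_0\varpi^{\nu}b\widetilde\delta\widetilde t/\varpi^{r-1}\) times \(F\), and it lies in \(W_r\) because \(R\in H\).

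The second bullet is the contrapositive. Suppose \(r\le e\) and \(b\widetilde\delta\widetilde t\neq0\). Then the first bullet produces a nonzero element of \(\bF_qF\cap W_r\), contradicting \(\bF_qF\cap W_r=\{0\}\).
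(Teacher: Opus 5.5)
Your reduction is sound: \(R=x_{\xi+t+\delta}(b)\,x_{\xi+t}(b)^{-1}x_\xi(b)\,x_{\xi+\delta}(b)^{-1}\in H\), and the second bullet is the contrapositive of the first. The gap is in the central congruence \(R\equiv I-2b\widetilde\delta\widetilde t\,F\pmod{\varpi^r}\), and one of your intermediate claims is false. The matrix \(Y=Q_2^{-1}-I=Q_{\xi,\xi+\delta}(b)-I\) is not \(-(Q_2-I)\) at second order in \(b\). Lemma~\ref{lem:quotient-formula-new} gives
\[
(X+Y)_{11}=b^2\widetilde\delta(\widetilde\delta-\widetilde t),\qquad (X+Y)_{22}=b^2\widetilde\delta(\widetilde\delta+\widetilde t),
\]
so a \(\widetilde t\)-free term \(b^2\widetilde\delta^2\) survives on the diagonal. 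Its valuation can be far below \(r-1\). For example, take \(p\) odd, \(j=2\), \(v_\fp(b)=v_\fp(\widetilde\delta)=1\), \(v_\fp(\widetilde t)=e-4\) and \(e\ge 7\); then \(r=e-1\) but \(v_\fp\bigl((X+Y)_{11}\bigr)=4\). Hence \(X+Y\not\equiv-2b\widetilde\delta\widetilde t\,F\pmod{\varpi^r}\). Your bound on \(X(X+Y)\) used \(v_\fp(X+Y)\ge r-1\), so it is unjustified. The cancellation you only ``expect'' from \(-X^2\) is exactly the content of the proposition. The determinant fallback does not close the gap either. For \(p=2\), recovering \(R_{11}-1\) from \(\tr(R-I)\) and \(R_{11}-R_{22}\) costs a factor \(2\), i.e.\ \(\nu\) in valuation. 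Moreover \(R_{11}-R_{22}\) also receives contributions from \(XY\), which you have not computed.

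Your Cayley--Hamilton idea is the right fix if you apply it to \(Q_1^{-1}\) rather than to \(X^2\). From the formula for \(X\) one gets exactly \(\tr X=b^2\widetilde\delta^2\), and \(Q_1^{-1}=(1+\tr X)I-X\). Therefore
\[
R-I=Q_1\bigl(Q_2^{-1}-Q_1^{-1}\bigr)=Q_1Z,\qquad
Z:=X+Y-b^2\widetilde\delta^2I
=b\widetilde\delta\widetilde t\begin{pmatrix}-b&0\\-\bigl(2+b(2\widetilde\xi+\widetilde t+\widetilde\delta)\bigr)&b\end{pmatrix}.
\]
Since \(v_\fp(b)\ge j-1\ge\nu+1\), you can read off the valuations directly:
\(v_\fp(Z_{21})=r-1\), \(v_\fp(Z_{11}),v_\fp(Z_{22})\ge r\), and \(v_\fp(XZ)\ge v_\fp(b\widetilde\delta)+r-\nu-1\ge r+1\). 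This gives \(R\in N_r\setminus N_{r+1}\) with \(\psi_r(R)\in\bF_q^*F\).

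This is precisely what the paper does. It expands \(R\) entrywise and finds every entry of \(R-I\) to be \(b\widetilde\delta\widetilde t\) times an explicit cofactor: \(-bA\), \(b^3\widetilde\delta\), \(-C\), \(bD\), with \(A,D\in1+\varpi R_e\) and \(C\in2+\varpi R_e\). It then reads off the same valuations.
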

\begin{proof}
Since all four points are in \(\mathcal E_j(b)\), the two quotients 
\[ 
Q_{\xi+\delta,\xi}(b),\qquad Q_{\xi+t+\delta,\xi+t}(b) 
\] 
belong to \(H\). Hence their ratio 
\( 
R:=Q_{\xi+t+\delta,\xi+t}(b)\,Q_{\xi+\delta,\xi}(b)^{-1} 
\)
also lies in \(H\). A direct computation gives 
\begin{align*}
R & = I+ \begin{pmatrix} -b^2\widetilde t\,\widetilde\delta\,A & b^4\widetilde t\,\widetilde\delta^2 \\[1mm] -b\,\widetilde t\,\widetilde\delta\,C & b^2\widetilde t\,\widetilde\delta\,D \end{pmatrix}, \quad \text{where}\, 
\\
A & =1+b\widetilde\delta+b^2\widetilde\delta(\widetilde\xi+\widetilde\delta), \quad
D=1+b\widetilde\delta+b^2\widetilde\delta(\widetilde\xi+\widetilde t+\widetilde\delta), 
\\
C & = 2+b(3\widetilde\delta+\widetilde t+2\widetilde\xi) +b^2\widetilde\delta(2\widetilde\delta+\widetilde t+2\widetilde\xi) +b^3\widetilde\delta \bigl( \widetilde\delta^2+\widetilde\delta\widetilde t+2\widetilde\delta\widetilde\xi +\widetilde t\widetilde\xi+\widetilde\xi^2 \bigr).
\end{align*}
Since 
\(b,\widetilde\xi,\widetilde\delta,\widetilde t\in \varpi R_e \), we have 
\[ 
A,D\in 1+\varpi R_e, \qquad C\in 2+\varpi R_e. 
\] 
Assume that \(b\widetilde\delta\widetilde t\ne0\) in \(R_e\) and that
\(r\le e\). Then
\begin{align*}
& v_\fp(R_{21}) =v_\fp(b\widetilde\delta\widetilde t)+\nu
=r-1,
\\
& v_\fp(R_{11}-1),v_\fp(R_{22}-1)
\ge v_\fp(b\widetilde\delta\widetilde t)
+v_\fp(b)\ge (r-\nu-1)+j-1\ge r,
\\
& v_\fp(R_{12})
 \ge v_\fp(b\widetilde\delta\widetilde t)
+3v_\fp(b)+v_\fp(\widetilde\delta)
\ge r+1.
\end{align*}
Hence $R\in N_r\setminus N_{r+1}$
and $\psi_r(R)$ is a nonzero multiple
of $F$.
Since $R\in H$, we deduce that $\psi_r(R)\in W_r$.
\end{proof}

\subsection{Semisimple obstruction}\label{sec:semisimple-obstruction}

Define
\[
V_k
:=\{a\in\bF_q\mid a D\in W_k\}.
\]
This is an $\bF_p$-subspace
of $\bF_q$,
with $\dim_{\bF_p} V_k\le f$
and $\dim_{\bF_p} V_k=f$
if and only if $\bF_qD\subset W_k$.


The following first-variation identity explains why the diagonal
line \(\bF_qD\) appears in the next proposition.  For lifts
\(\widetilde\eta,\widetilde\delta\in R_e\), one has
\[
N(\widetilde\eta+\widetilde\delta)-N(\widetilde\eta)
=
-\widetilde\delta D
-
(2\widetilde\eta\widetilde\delta+\widetilde\delta^2)F,
\quad
D=\begin{pmatrix}1&0\\0&-1\end{pmatrix},
\;
F=\begin{pmatrix}0&0\\1&0\end{pmatrix}.
\]

\begin{prop}
\label{prop:membership-transversality-new}
Let $j\ge 2$, $\eta\in\Sigma_j$,
$b\in R_e$ with
$v_\fp(b)=j-1+s$, $0\le s\le e-j$.
For $k\in\{j{+}s{+}1,\ldots,e\}$,
suppose
$\rho_{e,k-1}(I+bN(\eta))\in H_{k-1}$.
Then the condition
$\rho_{e,k}(I+bN(\eta'))\in H_k$
for $\eta'\in\Sigma_j$ satisfying
$\eta'\equiv\eta\pmod{\varpi^{k-j-s}}$
depends only on the class of $\eta'$
modulo $\varpi^{k-j-s+1}$,
and the number of such classes is either \(0\) or
\(p^{\dim_{\bF_p} V_k}\). 
\end{prop}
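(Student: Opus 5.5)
Write $\eta'=\eta+\delta$ with lifts $\widetilde\eta,\widetilde\delta$ and $v_\fp(\widetilde\delta)\ge k-j-s$. The plan is to compare $I+bN(\eta')$ with $I+bN(\eta)$ at level $k$ through the first-variation identity
\[
N(\widetilde\eta+\widetilde\delta)-N(\widetilde\eta)=-\widetilde\delta D-(2\widetilde\eta\widetilde\delta+\widetilde\delta^2)F .
\]
Set $x=I+bN(\eta)$ and $x'=I+bN(\eta')$. Then $x'x^{-1}=x'x_\eta(-b)=I+b(N(\eta')-N(\eta))-b^2N(\eta')N(\eta)$, exactly as in Lemma~\ref{lem:quotient-formula-new}. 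Since $v_\fp(b)=j-1+s$ and $v_\fp(\widetilde\delta)\ge k-j-s$, the term $-b\widetilde\delta D$ has valuation $\ge k-1$. The $F$-term $b(2\widetilde\eta\widetilde\delta+\widetilde\delta^2)$ has valuation $\ge k$, because $v_\fp(\widetilde\eta)\ge1$ and $v_\fp(\widetilde\delta)\ge 1$. For the quadratic term, write $N(\eta')N(\eta)=(N(\eta')-N(\eta))N(\eta)$, using $N(\eta)^2=0$. Then $b^2N(\eta')N(\eta)$ has valuation $\ge 2(j-1+s)+(k-j-s)\ge k$, since $j\ge2$. Hence
\[
x'x^{-1}\equiv I-\varpi^{k-1}\bar a D \pmod{\varpi^k},
\qquad \bar a:=\overline{b\widetilde\delta/\varpi^{k-1}}\in\bF_q .
\]
In particular $\rho_{e,k}(x'x^{-1})\in K_k$, corresponding to $-\bar aD$. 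The argument also shows that $x'\equiv x\pmod{\varpi^{k-1}}$.

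Next, decide membership. Suppose some $\eta'$ in the congruence class satisfies $\rho_{e,k}(x')\in H_k$; fix it as a base point $\eta_0$. For another $\eta''$ in the class, $\rho_{e,k}(I+bN(\eta''))\in H_k$ holds iff $\rho_{e,k}\big((I+bN(\eta''))(I+bN(\eta_0))^{-1}\big)\in H_k\cap K_k=\rho_{e,k}(N_k)$. By the computation above, applied with base $\eta_0$ in place of $\eta$, this quotient is $I+\varpi^{k-1}(-\bar a''D)$ modulo $\varpi^k$, where $\bar a''$ is the residue of $b(\widetilde\eta''-\widetilde\eta_0)/\varpi^{k-1}$. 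So the condition is $\bar a''\in V_k$.

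Now parametrize. The map $\eta''\mapsto \bar a''$ depends only on $\widetilde\eta''$ modulo $\varpi^{k-j-s+1}$: changing $\widetilde\eta''$ by $\varpi^{k-j-s+1}$ changes $b\widetilde\eta''$ by something of valuation $\ge k$. This proves the ``depends only on the class'' claim. Write $b=\varpi^{j-1+s}\beta$ with $\beta$ a unit. Then this map induces a bijection from the classes modulo $\varpi^{k-j-s+1}$ lying over $\eta$ modulo $\varpi^{k-j-s}$ onto $\bF_q$, namely multiplication by $\bar\beta$ on the next $\varpi$-adic digit. It is affine up to the base point. Consequently the admissible classes correspond to a coset of $V_k$ (or to nothing), giving either $0$ or $p^{\dim_{\bF_p}V_k}$ classes.

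Two points need care. The first is the range condition $k-j-s\ge1$, so that $\widetilde\delta\in\varpi R_e$ and the slopes remain in $\Sigma_j$; this is guaranteed by $k\ge j+s+1$. Note also that when $k-j-s$ exceeds $e-j+1$ the count is vacuous, but $k\le e$ keeps it consistent. The second is that the hypothesis $\rho_{e,k-1}(x)\in H_{k-1}$ enters only to make the question meaningful: all $x'$ in the class agree with $x$ modulo $\varpi^{k-1}$, so they lie over $H_{k-1}$. The main obstacle is the valuation bookkeeping of the quadratic term $b^2N(\eta')N(\eta)$ and of $b\widetilde\eta\widetilde\delta$. I expect both to be routine because $j\ge2$, and in particular no residue-characteristic-two issue arises, since no factor of $2$ appears in the $D$-coefficient.
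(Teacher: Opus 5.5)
Your proof is correct and is essentially the paper's argument. In both, the same first-variation computation shows that $I+bN(\eta')$ differs from a base point at level $k$ by $I-\varpi^{k-1}\bar aD$, where $\bar a$ depends bijectively and affinely on the next $\varpi$-adic digit of $\eta'-\eta$, so membership in $H_k$ becomes a coset condition inside $W_k\cap\bF_qD=V_kD$. The only difference is cosmetic: you anchor at an admissible $\eta_0$, which makes the hypothesis $\rho_{e,k-1}(I+bN(\eta))\in H_{k-1}$ unnecessary, whereas the paper anchors at $\eta$ itself and intersects the coset $W_k+A_0$ with $\bF_qD$.
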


\begin{proof}
Write $w:=I+bN(\eta)$. For $\eta'\in\Sigma_j$ satisfying $\eta'\equiv\eta\pmod{\varpi^{k-j-s}}$, write $w':=I+bN(\eta')$.

\emph{Step 1: $\rho_{e,k}(w')$ depends only on $\eta'\bmod\varpi^{k-j-s+1}$.}

Let $\eta''\in\Sigma_j$ satisfy $\eta''\equiv\eta'\pmod{\varpi^{k-j-s+1}}$, and choose lifts $\widetilde\eta',\widetilde\eta''\in\varpi R_e$ with $v_\fp(\widetilde\eta''-\widetilde\eta')\ge k-j-s+1$. Direct computation from the definition of $N(\eta)$ gives the entries of $N(\eta'')-N(\eta')$:
\begin{align*}
\bigl(N(\eta'')-N(\eta')\bigr)_{12}
&=0,\quad
\bigl(N(\eta'')-N(\eta')\bigr)_{21}
=-(\widetilde\eta''-\widetilde\eta')(\widetilde\eta''+\widetilde\eta'),
\\
\bigl(N(\eta'')-N(\eta')\bigr)_{11}
&=-\bigl(N(\eta'')-N(\eta')\bigr)_{22}
=-(\widetilde\eta''-\widetilde\eta').
\end{align*}
Multiplying by $b$ and using $v_\fp(b)=j-1+s$ and $v_\fp(\widetilde\eta''+\widetilde\eta')\ge 1$, we obtain
\[
v_\fp\bigl(b(\widetilde\eta''-\widetilde\eta')\bigr)
\ge(j-1+s)+(k-j-s+1)=k,
\]
for the diagonal entries, and
\[
v_\fp\bigl(b(\widetilde\eta''-\widetilde\eta')(\widetilde\eta''+\widetilde\eta')\bigr)\ge k+1
\]
for the $(2,1)$-entry. Hence $bN(\eta'')\equiv bN(\eta')\pmod{\varpi^k}$, and $\rho_{e,k}(w'')=\rho_{e,k}(w')$.

\emph{Step 2: Reduction to a parameter $t\in\bF_q$.}

Let $\eta'\in\Sigma_j$ with $\eta'\equiv\eta\pmod{\varpi^{k-j-s}}$. Choose lifts $\widetilde\eta,\widetilde\eta'\in\varpi R_e$ of $\eta,\eta'$ and set $\delta:=\widetilde\eta'-\widetilde\eta$, so that $v_\fp(\delta)\ge k-j-s$. We have
\begin{align*}
\bigl(bN(\eta')-bN(\eta)\bigr)_{12}
&=0,\quad
\bigl(bN(\eta')-bN(\eta)\bigr)_{21}
=-b\delta(\widetilde\eta'+\widetilde\eta),
\\
\bigl(bN(\eta')-bN(\eta)\bigr)_{11}
&=-\bigl(bN(\eta')-bN(\eta)\bigr)_{22}
=-b\delta.
\end{align*}
The diagonal entries have $\fp$-valuation $\ge(j-1+s)+(k-j-s)=k-1$, and the $(2,1)$-entry has valuation $\ge k$ since $v_\fp(\widetilde\eta'+\widetilde\eta)\ge 1$. Therefore $bN(\eta')\equiv bN(\eta)\pmod{\varpi^{k-1}}$, whence $\rho_{e,k-1}(w')=\rho_{e,k-1}(w)$.

Modulo $\varpi^k$, only the diagonal entries of $bN(\eta')-bN(\eta)$ contribute. Write $b=\varpi^{j-1+s}u$ with $u\in R_e^*$, and set $\bar b:=u\bmod\varpi\in\bF_q^*$. Let $t\in\bF_q$ denote the image of $\eta'-\eta$ under the projection
\[
\varpi^{k-j-s}R_{e-j+1}\;\longrightarrow\;\varpi^{k-j-s}R_{e-j+1}\big/\varpi^{k-j-s+1}R_{e-j+1}\;\cong\;\bF_q.
\]
Then $\delta\equiv\varpi^{k-j-s}\hat t\pmod{\varpi^{k-j-s+1}}$ for some $\hat t\in R_e$ with $\hat t\bmod\varpi=t$, and
\[
b\delta=\varpi^{j-1+s}u\cdot\delta\equiv\varpi^{k-1}\bar b\,t\pmod{\varpi^k}.
\]
Combining with the entry computation yields
\begin{equation}\label{eq:Vk-bound-Mequation-new}
bN(\eta')\equiv bN(\eta)-\varpi^{k-1}\bar b\,t\begin{pmatrix}1&0\\0&-1\end{pmatrix}\pmod{\varpi^k}.
\end{equation}

\emph{Step 3: The constraint on $t$.}

Since $N(\eta)^2=0$, we have $w^{-1}=I-bN(\eta)$ in $G_e$. Reducing modulo $\varpi^k$,
\[
\rho_{e,k}(w)^{-1}\rho_{e,k}(w')=\bigl(I-bN(\eta)\bigr)\bigl(I+bN(\eta')\bigr)\bmod\varpi^k.
\]
We show that the cross term $bN(\eta)\cdot bN(\eta')$ vanishes modulo $\varpi^k$. Since $N(\eta)^2=0$, we have
\begin{align*}
bN(\eta)\cdot bN(\eta')
&=bN(\eta)\cdot bN(\eta)+bN(\eta)\bigl(bN(\eta')-bN(\eta)\bigr)\\
&=bN(\eta)\bigl(bN(\eta')-bN(\eta)\bigr).
\end{align*}
The right-hand side has $\fp$-valuation at least $(j-1+s)+(k-1)\ge k$, because $v_\fp(bN(\eta))\ge j-1+s\ge 1$ and $v_\fp(bN(\eta')-bN(\eta))\ge k-1$ by Step~2. Therefore, using~\eqref{eq:Vk-bound-Mequation-new},
\begin{equation}\label{eq:Vk-bound-Ak-new}
\rho_{e,k}(w)^{-1}\rho_{e,k}(w')\equiv I-\varpi^{k-1}\bar b\,t\begin{pmatrix}1&0\\0&-1\end{pmatrix}\pmod{\varpi^k},
\end{equation}
which lies in $K_k=\ker(\rho_{k,k-1})$.

Since $\rho_{e,k-1}(w')=\rho_{e,k-1}(w)\in H_{k-1}$ and $\rho_{e,k}(w)^{-1}\rho_{e,k}(w')\in K_k$ by~\eqref{eq:Vk-bound-Ak-new}, the condition $\rho_{e,k}(w')\in H_k$ is equivalent to
\[
\rho_{e,k}(w)^{-1}\rho_{e,k}(w')\in\rho_{e,k}(w)^{-1}H_k\cap K_k.
\]
The intersection $\rho_{e,k}(w)^{-1}H_k\cap K_k$ is non-empty, since the hypothesis $\rho_{e,k-1}(w)\in H_{k-1}$ provides some $h_0\in H_k$ with $\rho_{k,k-1}(h_0)=\rho_{e,k-1}(w)$, and then $\rho_{e,k}(w)^{-1}h_0$ lies in this intersection. Hence the intersection is a coset of $H_k\cap K_k$ in $K_k$. The isomorphism $K_k\cong\mathfrak{sl}_2(\bF_q)$, $I+\varpi^{k-1}A\mapsto A$, sends $H_k\cap K_k$ to $W_k$ and, by~\eqref{eq:Vk-bound-Ak-new}, sends $\rho_{e,k}(w)^{-1}\rho_{e,k}(w')$ to $-\bar b\,t\,D$. Denote by $W_k+A_0$ the image of $\rho_{e,k}(w)^{-1}H_k\cap K_k$ under this isomorphism, which is a coset of $W_k$ in $\mathfrak{sl}_2(\bF_q)$ depending on $\eta$. The condition on $t$ becomes
\[
-\bar b\,t\,D\in(W_k+A_0)\cap\bF_qD.
\]
The right-hand side is a coset of $W_k\cap\bF_qD=V_k\cdot D$ in $\bF_qD$. If this intersection is non-empty, then \(t\) runs through a coset of
\(\bar b^{-1}V_k\) in \(\bF_q\), of cardinality
\(p^{\dim_{\bF_p} V_k}\); otherwise there is no admissible class.
\end{proof}

\subsection{Double counting}\label{sec:double-counting}

Define
\begin{align*}
\jmath_1 & := \min\{k\le e\mid W_k\cap \bF_q^*E\ne \emptyset\}, \; \jmath_1:=e+1\ \text{if the min does not exist},
\\
\jmath_2 & := \min\{k\le e\mid W_k\cap \bF_q^*F\ne \emptyset\}, \; \jmath_2:=e+1 \ \text{if the min does not exist},\\
\jmath & :=\max(\jmath_1,\jmath_2),
\quad
\Pi :=\{k\in [\jmath_2,e]\mid W_k\cap \bF_q^*F= \emptyset\}.
\end{align*}
Recall that the notation \(\mathcal E_j(b)\) is defined in \S\ref{sec:telescopic}.
For fixed $b\in\varpi^{j-1}R_e$ and $\delta \in \Sigma_j$, define 
\[ 
\mathcal P_{j,b}(\delta):= \{\xi\in \Sigma_j\mid \xi,\xi+\delta\in \mathcal E_j(b)\}. 
\]

\begin{prop}\label{prop:difference-fiber-new}
Let $2+\nu\le j\le e$, $m:=e-j$. Let $b\in R_e$ with
$v_\fp(b)=j-1+s$, $0\le s\le m$,
and let $\delta\in\Sigma_j$ with
$v_\fp(\widetilde\delta)=d$, $1\le d\le m$. Set
\begin{align*}
\gamma & :=\nu+e+1-\jmath_2, 
\\
c_*(j,s,d) & :=
\#\bigl\{k\in\bZ \mid
\max(j{+}s{+}1,\jmath_2)\le k\le e,\;\\
& \dim_{\bF_p}V_k\le f-1,\;
k+d+\nu\notin\Pi\bigr\}
\\
c^*(j,s,d) & :=\lvert \Pi\cap [\,j+s+d+\nu+1,e\,]\rvert
\end{align*}
Then, for any \(\xi,\xi'\in \mathcal P_{j,b}(\delta)\), denoting
\(\tau:=\xi'-\xi\), one has
\[
v_\fp(\tau)\ge \jmath_2-j-s-d-\nu,
\]
and, if \(\tau\neq0\), then
\[
v_\fp(\tau)+j+s+d+\nu\notin\Pi.
\]
Moreover,
\[
|\mathcal P_{j,b}(\delta)|
\le
q^{\min(m,\,s+d+\gamma)}q^{-c^*(j,s,d)}\,p^{-c_*(j,s,d)}.
\]
\end{prop}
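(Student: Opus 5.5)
The plan is to apply the parallelogram obstruction, Proposition~\ref{prop:parallelogram-new}, to four points built from two elements of $\mathcal P_{j,b}(\delta)$. Take $\xi,\xi'\in\mathcal P_{j,b}(\delta)$ with $\tau=\xi'-\xi\neq0$. By definition the four points $\xi,\ \xi+\delta,\ \xi+\tau,\ \xi+\tau+\delta$ all lie in $\mathcal E_j(b)$. Since $j\ge\nu+2$, the proposition applies with $t=\tau$. It gives
\[
r=v_\fp(b\widetilde\delta\widetilde\tau)+\nu+1=j-1+s+d+v_\fp(\tau)+\nu+1.
\]
In the generic case $b\widetilde\delta\widetilde\tau\neq0$ and $r\le e$, it yields $W_r\cap\bF_q^*F\neq\emptyset$. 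This forces two things:
\begin{itemize}
\item $r\ge\jmath_2$, which gives the bound $v_\fp(\tau)\ge\jmath_2-j-s-d-\nu$;
\item $r\notin\Pi$, which is the second assertion once we note $r=v_\fp(\tau)+j+s+d+\nu$.
\end{itemize}
The degenerate cases must be checked separately. If $b\widetilde\delta\widetilde\tau=0$, or if $r>e$, then $v_\fp(\tau)$ is already large, and one verifies that the inequality still holds. Here $\jmath_2\le e+1$, and the threshold $e+1-(j-1+s+d)$ from vanishing matches up to the harmless $\nu$ shift.

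The counting bound follows from these constraints on differences. Fix a base point $\xi_0\in\mathcal P_{j,b}(\delta)$. Every other element lies in $\xi_0+\varpi^{a}\Sigma$-type balls, where $a=\max(1,\jmath_2-j-s-d-\nu)$, inside $\Sigma_j\cong\varpi R_{m+1}$. The number of residue classes available is $q^{m+1-a}\le q^{\min(m,s+d+\gamma)}$, since $m+1-(\jmath_2-j-s-d-\nu)=s+d+\gamma$. To get the extra saving, filter the set level by level, i.e.\ by the valuation digits of $\tau$. At a digit position $v$, the next digit of $\tau$ can be nonzero for two elements of the set only if $v+j+s+d+\nu\notin\Pi$. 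So for every $k\in\Pi\cap[j+s+d+\nu+1,e]$, all elements agree at the digit $v=k-j-s-d-\nu$, and each such level costs a factor $q$. This gives $q^{-c^*}$. The argument uses only that the set of differences avoids certain valuations, which is a tree/ultrametric counting fact: the number of points of a set whose pairwise differences never have valuation $v$ is at most the product of $q$ over the allowed $v$.

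For the factor $p^{-c_*}$, I would bring in Proposition~\ref{prop:membership-transversality-new}. Consider a digit level $v$ corresponding to $k=j+s+v\in[\max(j+s+1,\jmath_2),e]$. Membership of $I+bN(\xi')$ in $H_k$, given its class mod $\varpi^{k-j-s}$, leaves at most $p^{\dim V_k}\le p^{f-1}=q/p$ choices for the next digit whenever $\dim V_k\le f-1$. This applies both to $\xi'$ and to $\xi'+\delta$, but I use only one of them. The condition $k+d+\nu\notin\Pi$ in the definition of $c_*$ serves to avoid double counting: a level already counted in $c^*$ has its digit completely frozen, and there one should not also claim the $p$-saving. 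Therefore at each digit we take the better of the two constraints, and the levels counted by $c_*$ and $c^*$ are disjoint.

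The main obstacle is the bookkeeping in this combined digit-by-digit count. One must match the index shift between the digit position of $\tau$, the level $k$ in Proposition~\ref{prop:membership-transversality-new}, which is offset by $j+s$, and the level $r$ from the parallelogram, which is offset by $j+s+d+\nu$. One must also handle the boundary cases $r>e$ and digits below $\jmath_2-j-s$ cleanly. Finally, the range of allowed digits has to be consistent with the truncation $\min(m,\cdot)$ coming from $|\Sigma_j|=q^m$.
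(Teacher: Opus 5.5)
Your proposal is correct and is essentially the paper's proof: the parallelogram obstruction (Proposition~\ref{prop:parallelogram-new}) applied to $\xi,\xi+\delta,\xi+\tau,\xi+\tau+\delta$ yields both constraints on $\tau$, and the bound comes from the same digit-by-digit fibration over positions $i$ from $\max(1,\jmath_2-j-s-d-\nu)$ to $m$, with fibers of size at most $1$ when $j+s+d+\nu+i\in\Pi$, at most $p^{\dim_{\bF_p}V_{j+s+i}}$ via Proposition~\ref{prop:membership-transversality-new} in the $c_*$ cases, and at most $q$ otherwise. Two wording points to tighten: a level in $\Pi$ means that any two elements congruent modulo $\varpi^{i}$ are congruent modulo $\varpi^{i+1}$ (not that all elements share that digit), and the vanishing case $b\widetilde\delta\widetilde\tau=0$ corresponds to $v_\fp(\tau)\ge e-(j-1+s+d)$, not $e+1-(j-1+s+d)$; this still gives $r\ge e+1\ge\jmath_2$ and hence $r\notin\Pi$.
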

 
\begin{proof}

Assume $\xi,\xi'\in\mathcal P_{j,b}(\delta)$. Set $t:=\xi'-\xi$. Then $\xi,\xi+\delta,\xi+t,\xi+t+\delta \in \mathcal{E}_j(b)$. By Proposition~\ref{prop:parallelogram-new} we obtain that
\begin{equation}\label{eq:difference-bound-formula-1}
b\widetilde\delta \widetilde t=0\in R_e\quad \text{when}\ W_{r(t)}\cap \bF_q^* F=\emptyset,   
\end{equation}
where $\widetilde\delta, \widetilde t\in R_e$ are lifts of $\delta,t$ and
\begin{align}\label{eq:difference-bound-formula-2}
r(t): & =v_\fp(b\widetilde\delta \widetilde t)+\nu+1= v_\fp(b)+v_\fp(\delta)+v_\fp(t)+\nu+1
\\
& = j+s+d+\nu+v_\fp(t).
\end{align}
Recall that $W_{k}\cap \bF_q^* F=\emptyset$ for all $k<\jmath_2$. Therefore, $t$ has to satisfy (also true when $t=0$)
\[
v_\fp(t) \ge \jmath_2 -j-s-d-\nu. 
\]
Moreover, if \(t\neq0\), then \(r(t)\) cannot lie in \(\Pi\), by the
definition of \(\Pi\). Thus, for every pair
\(\xi,\xi'\in\mathcal P_{j,b}(\delta)\), with \(t=\xi'-\xi\), one has
\[
v_\fp(t)\ge \jmath_2-j-s-d-\nu,
\]
and, if \(t\neq0\), then
\[
v_\fp(t)+j+s+d+\nu\notin\Pi.
\]

Before we continue the proof let us simplify a little bit the notations. 
We set \(g:=\jmath_2-j-s-d-\nu,g_0:=\max(1,g)\). 
Fix $\xi_0\in\mathcal P_{j,b}(\delta)$ and set
\[
P:=\mathcal P_{j,b}(\delta)-\xi_0\subset \Sigma_j.
\]
Then $0\in P$, and the previous argument shows that for all
$\tau,\tau'\in P$ one has
\[
v_\fp(\tau-\tau')\ge g,
\]
and if $\tau\neq\tau'$ then
\begin{equation}\label{eq:double-counting-formula-1}
v_\fp(\tau-\tau')+j+s+d+\nu\notin\Pi.
\end{equation}
In particular, any two elements of $P$ are congruent modulo $\varpi^{g_0}$.

If $g_0\ge m+1$, then \(P=\{0\}\), hence
$|\mathcal P_{j,b}(\delta)|=1$ and there is nothing more to prove.
Assume now that $g_0\le m$.

For each integer $i\in\{g_0-1,\dots,m\}$, let $P_i$ be the image of $P$ in $\Sigma_j/\varpi^{i+1}R_{m+1}$. Then $|P_{g_0-1}|=1, |P_m|=|P|=|\mathcal P_{j,b}(\delta)|$. For each $i=g_0,\dots,m$, let $\pi_i:P_i\to P_{i-1}$ be the natural projection.

We claim that the fibers of $\pi_i$ satisfy the following bounds.

\smallskip\noindent
\emph{Case 1: if $j+s+d+\nu+i\in\Pi$, then every fiber of $\pi_i$ has
cardinality at most $1$.}

Indeed, if two distinct classes in $P_i$ had the same image in $P_{i-1}$, we could choose representatives $\tau,\tau'\in P$ such that
\[
\tau\equiv\tau' \pmod{\varpi^i},
\qquad
\tau\not\equiv\tau' \pmod{\varpi^{i+1}}.
\]
Then $v_\fp(\tau-\tau')=i$, hence
\[
v_\fp(\tau-\tau')+j+s+d+\nu=j+s+d+\nu+i\in\Pi,
\]
contrary to \eqref{eq:double-counting-formula-1}.

\smallskip\noindent
\emph{Case 2.} We claim the following: if
\begin{align*}
& \max(j{+}s{+}1,\jmath_2)\le j+s+i\le e,\\
& \dim_{\bF_p}V_{j+s+i}\le f-1,\quad j+s+i+d+\nu\notin\Pi,   
\end{align*}
then every fiber of $\pi_i$ has cardinality at most
\[
p^{\dim_{\bF_p}V_{j+s+i}}\le p^{f-1}=q/p.
\]

Let us prove the claim now. Fix a class \(\bar\tau_0\in P_{i-1}\) and choose
\(\tau_0\in P\) mapping to it.
Set
\[
\eta_0:=\xi_0+\tau_0\in \mathcal P_{j,b}(\delta)\subset \mathcal E_j(b).
\]
If $\tau\in P$ maps to the same class $\bar\tau_0$, then
$\tau\equiv\tau_0\pmod{\varpi^i}$, so
\[
\xi_0+\tau\equiv \eta_0 \pmod{\varpi^i},
\]
and also $\xi_0+\tau\in\mathcal E_j(b)$.
Since $\eta_0\in\mathcal E_j(b)$, we have
\[
\rho_{e,j+s+i-1}(I+bN(\eta_0))\in H_{j+s+i-1}.
\]
Therefore we can apply Proposition~\ref{prop:membership-transversality-new} at
level $j+s+i$ and show that, once the class modulo
$\varpi^{i}$ is fixed, there are at most
$p^{\dim_{\bF_p}V_{j+s+i}}$ possible classes modulo
$\varpi^{i+1}$ inside $\mathcal E_j(b)$.
Hence the fiber of $\pi_i$ above $\bar\tau$ has size at most
$p^{\dim_{\bF_p}V_{j+s+i}}$.

\smallskip\noindent
\emph{Case 3:} at all remaining levels $i$, we use the trivial bound
\[
|\pi_i^{-1}(\bar\tau)|\le q.
\]
Multiplying the fiber bounds for $i=g_0,\dots,m$, we obtain
\[
|\mathcal P_{j,b}(\delta)|
=
|P_m|
\le
q^{\,m+1-g_0-\lvert \Pi\cap [\,j+s+d+\nu+1,e\,]\rvert}\,p^{-c_*(j,s,d)}.
\]
Since
\[
m+1-g_0=\min(m,\,s+d+\gamma),
\]
this may also be written as
\[
|\mathcal P_{j,b}(\delta)|
\le
q^{\,\min(m,\,s+d+\gamma)-\lvert \Pi\cap [\,j+s+d+\nu+1,e\,]\rvert}\,p^{-c_*(j,s,d)}.
\]
\end{proof}

\begin{cor}\label{cor:square-root-F}
Let $j\ge\nu+2$. 
For every $b\ne 0$ with $v_\fp(b)=j-1+s$,
$0\le s\le m$,
\[
|\mathcal E_j(b)|
\le 1+\sqrt{m(q{-}1)}
q^{(m+s+\gamma-\lvert \Pi\rvert)/2}.
\]
\end{cor}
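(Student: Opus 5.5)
This is an additive-energy argument: compare $|\mathcal E_j(b)|^2$ with the number of ordered pairs $(\xi,\xi')\in\mathcal E_j(b)^2$, and sort those pairs by their difference $\delta=\xi'-\xi\in\Sigma_j$.

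\textbf{Step 1 (counting by differences).} Set $\mathcal E:=\mathcal E_j(b)$. Then
\[
|\mathcal E|^2=\sum_{\delta\in\Sigma_j}|\mathcal P_{j,b}(\delta)|
=|\mathcal E|+\sum_{\delta\ne0}|\mathcal P_{j,b}(\delta)|,
\]
where the term $\delta=0$ contributes exactly $|\mathcal P_{j,b}(0)|=|\mathcal E|$. Group the nonzero $\delta$ by $d=v_\fp(\widetilde\delta)\in\{1,\dots,m\}$; for such $\delta$ the hypotheses of Proposition~\ref{prop:difference-fiber-new} hold. The number of $\delta\in\Sigma_j=\varpi R_{m+1}$ of exact valuation $d$ is $(q-1)q^{m-d}$.

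\textbf{Step 2 (bounding each group).} By Proposition~\ref{prop:difference-fiber-new},
\[
|\mathcal P_{j,b}(\delta)|\le q^{s+d+\gamma}\,q^{-c^*(j,s,d)}.
\]
Here $p^{-c_*}\le 1$ is discarded, and $\min(m,s+d+\gamma)$ is replaced by $s+d+\gamma$. Multiplying by $(q-1)q^{m-d}$, the $d$ cancels and the contribution of each $d$ is at most
\[
(q-1)\,q^{m+s+\gamma-c^*(j,s,d)}.
\]

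\textbf{Step 3 (main obstacle: replacing $c^*$ by $|\Pi|$).} We have $c^*(j,s,d)=|\Pi\cap[j+s+d+\nu+1,e]|$, while $\Pi\subset[\jmath_2,e]$. So we need $|\Pi|-c^*\lesssim$ something already absorbed in $\gamma=\nu+e+1-\jmath_2$, which compensates for the elements of $\Pi$ lying below $j+s+d+\nu+1$. My first attempt is the crude bound $|\Pi\cap[\jmath_2,j+s+d+\nu]|\le j+s+d+\nu+1-\jmath_2$, but this is too weak: it loses a factor $q^{s+d}$. Instead, I would go back into the proof of Proposition~\ref{prop:difference-fiber-new}. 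There the fibers of $\pi_i$ were multiplied only for $i\ge g_0$, and the levels $i<g_0$ (in number $g_0-1$) already contribute no $q$ factor. A level $k=j+s+d+\nu+i\in\Pi$ with $i\le g_0-1$ satisfies $k\le\jmath_2-1$ when $g_0=g$, which is impossible since $\Pi\subset[\jmath_2,e]$. The only leftover elements of $\Pi$ are those in the window $[\jmath_2, j+s+d+\nu]$, of length at most $s+d+\nu+1$ after comparing with $\gamma$. Concretely, I would redo the exponent count as $m+1-g_0-c^*\le m+s+\gamma-|\Pi|$, checking that each element of $\Pi$ not counted in $c^*$ is matched by a unit of slack in $(s+d+\gamma)-(m+1-g_0)$ or in $d$. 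If this matching fails, the fallback is to exploit the sum over $d$ (Step 4), which can absorb a factor $q^{d}$ lost on the low levels by no longer letting the $q^{m-d}$ cancel completely.

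\textbf{Step 4 (conclusion).} Summing the Step 2 bound over $d=1,\dots,m$ gives
\[
|\mathcal E|^2\le|\mathcal E|+m(q-1)\,q^{m+s+\gamma-|\Pi|}.
\]
Write $X:=m(q-1)\,q^{m+s+\gamma-|\Pi|}$. From $x^2\le x+X$ we get
\[
x\le\tfrac12+\sqrt{\tfrac14+X}\le 1+\sqrt X,
\]
which is exactly the bound in Corollary~\ref{cor:square-root-F}.
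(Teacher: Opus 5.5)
Your architecture is the paper's: the energy identity $|\mathcal E_j(b)|^2=\sum_{\delta}|\mathcal P_{j,b}(\delta)|$, the diagonal term $|\mathcal E_j(b)|$, the count $(q-1)q^{m-d}$, and the quadratic inequality. But Step~3 is a genuine gap, and Step~2 creates it. When you replace $\min(m,s+d+\gamma)$ by $s+d+\gamma$, you discard exactly the slack needed to pass from $c^*(j,s,d)$ to $|\Pi|$. After that, the per-$d$ bound $(q-1)q^{m+s+\gamma-c^*(j,s,d)}$ is genuinely too weak. Indeed, with $h:=d-m+s+\gamma$, when $h>0$ the quantity $c^*$ can fall short of $|\Pi|$ by up to $h$. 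So Step~4 asserts a conclusion that does not follow from Step~2. The exponent inequality you propose, $m+1-g_0-c^*\le m+s+\gamma-|\Pi|$, is also not the one needed: it forgets the factor $q^{m-d}$ counting the $\delta$'s. What you need per $\delta$ is $m+1-g_0-c^*\le s+d+\gamma-|\Pi|$.

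The fix is short, and your idea of matching elements of $\Pi$ with the slack $(s+d+\gamma)-(m+1-g_0)$ is the right one; you just have to carry it out. Keep the minimum and use the identity $\min(m,s+d+\gamma)+m-d=m+s+\gamma-\max(0,h)$. Since $m=e-j$ and $\gamma=e+\nu+1-\jmath_2$, one has $j+s+d+\nu+1=\jmath_2+h$, hence $c^*(j,s,d)=|\Pi\cap[\jmath_2+h,e]|$. Because $\Pi\subset[\jmath_2,e]$, at most $\max(0,h)$ elements of $\Pi$ lie below $\jmath_2+h$. This is precisely the ``crude bound'' $|\Pi\cap[\jmath_2,j+s+d+\nu]|\le j+s+d+\nu+1-\jmath_2$ that you dismissed. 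It loses a factor $q^{h}$, and the minimum you threw away returns exactly $q^{h}$, so $c^*(j,s,d)+\max(0,h)\ge|\Pi|$. Each valuation $d$ then contributes at most $(q-1)q^{m+s+\gamma-|\Pi|}$, and summing over $1\le d\le m$ gives your Step~4. No fallback through the sum over $d$ is needed.
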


\begin{proof}
For $b\in \varpi^{j-1}R_e$, recall that
$\mathcal E_j(b)=\{\eta\in\Sigma_j\mid 
I+bN(\eta)\in H\}\subset\Sigma_j$
and
$\mathcal P_{j,b}(\delta)
=\{\xi\in\Sigma_j\mid 
\xi,\xi{+}\delta\in\mathcal E_j(b)\}$.
Since $\Sigma_j=\varpi R_{e-j+1}$
is an additive group,
the substitution
$(\xi,\eta)\mapsto(\xi,\delta:=\eta-\xi)$
is a bijection
$\Sigma_j^2\to\Sigma_j^2$, giving
\begin{align}
|\mathcal E_j(b)|^2
&=\bigl|\{(\xi,\eta)\in
\mathcal E_j(b)^2\}\bigr|
\notag
\\
&=\bigl|\{(\xi,\delta)\in\Sigma_j^2\mid 
\xi\in\mathcal E_j(b),\;
\xi{+}\delta\in\mathcal E_j(b)\}\bigr|=\sum_{\delta\in\Sigma_j}
|\mathcal P_{j,b}(\delta)|.\label{eq:double-summation-1}
\end{align}
The $\delta=0$ term contributes
$|\mathcal P_{j,b}(0)|=|\mathcal E_j(b)|$.
For $\delta\ne 0$ with
$v_\fp(\delta)=d$,
$1\le d\le m$,
there are $(q{-}1)q^{m-d}$
such elements.
By Proposition~\ref{prop:difference-fiber-new}, the contribution to the sum in \eqref{eq:double-summation-1} from such $\delta$ is
\begin{equation}\label{eq:double-summation-2}
\sum_{\delta\in\Sigma_j, v_\fp(\delta)=d}
|\mathcal P_{j,b}(\delta)|\le (q{-}1)q^{m-d}\cdot q^{\min(m,s+d+\gamma)}
q^{-c^*(j,s,d)}
 p^{-c_*(j,s,d)}.  
\end{equation}
Note that 
\begin{equation*}
\min(m,s+d+\gamma)+m-d  \le m+s+\gamma-\max(0,d-m+s+\gamma).
\end{equation*}
So \eqref{eq:double-summation-2} becomes
\begin{align*}
\sum_{\delta\in\Sigma_j, v_\fp(\delta)=d}
|\mathcal P_{j,b}(\delta)|\le (q-1)q^{m+s+\gamma}q^{-c^*(j,s,d)-\max(0,d-m+s+\gamma)}
 p^{-c_*(j,s,d)}.
\end{align*}

Summing over $d=1,\ldots,m$,
\[
|\mathcal E_j(b)|^2
\;\le\;
|\mathcal E_j(b)|
+(q{-}1)\,q^{m+s+\gamma}\Big(\sum_{d=1}^m
q^{-c^*(j,s,d)-\max(0,d-m+s+\gamma)}
 p^{-c_*(j,s,d)}\Big).
\]
The quadratic inequality
$X^2\le X+A$ gives
$X\le 1+\sqrt{A}$,
hence
\begin{equation}\label{eq:double-summation-3}
|\mathcal E_j(b)|
\le 1+\sqrt{q{-}1}
q^{(m+s+\gamma)/2}\Big(\sum_{d=1}^m
q^{-c^*(j,s,d)-\max(0,d-m+s+\gamma)}
 p^{-c_*(j,s,d)}\Big)^{\frac{1}{2}}.
\end{equation}

We claim that, for every \(1\le d\le m\),
\begin{equation}\label{eq:double-summation-4}
c^*(j,s,d)+\max(0,d-m+s+\gamma)\ge |\Pi|.
\end{equation}
Indeed, set
\[
h:=d-m+s+\gamma.
\]
Since \(m=e-j\) and \(\gamma=e+\nu+1-\jmath_2\), we have
\[
j+s+d+\nu+1=\jmath_2+h.
\]
If \(h\le0\), then
\[
c^*(j,s,d)
=
|\Pi\cap[\jmath_2+h,e]|
=
|\Pi|,
\]
because \(\Pi\subset[\jmath_2,e]\), and \(\max(0,h)=0\). If \(h>0\), then
\[
c^*(j,s,d)+h
=
|\Pi\cap[\jmath_2+h,e]|+h
\ge |\Pi|,
\]
because at most \(h\) integers of \(\Pi\subset[\jmath_2,e]\) can lie below
\(\jmath_2+h\). This proves \eqref{eq:double-summation-4}.
Combining \eqref{eq:double-summation-3} and
\eqref{eq:double-summation-4} gives
\begin{equation*}
|\mathcal E_j(b)|
\le 1+\sqrt{q{-}1}
q^{(m+s+\gamma-\lvert \Pi\rvert)/2}\Big(\sum_{d=1}^m
 p^{-c_*(j,s,d)}\Big)^{\frac{1}{2}}.
\end{equation*}
The result follows because 
$\sum_{d=1}^m
 p^{-c_*(j,s,d)}\leq m$.
\end{proof}

For $s\ge 0, j\ge 2$ set
\[
\hat c(j,s) :=
\#\bigl\{k\in\bZ \mid
\max(j{+}s{+}1,\jmath_2)\le k\le e,\;\\
\dim_{\bF_p}V_k\le f-1\bigr\}.
\]

\begin{cor}\label{cor:square-root-D}
Let $j\ge\nu+2$. 
For every $b\ne 0$ with $v_\fp(b)=j-1+s$,
$0\le s\le m$,
\[
|\mathcal E_j(b)|
\le 1+\sqrt{m(q{-}1)}
q^{(m+s+\gamma)/2}
p^{-\hat c (j,s)/2}.
\]
\end{cor}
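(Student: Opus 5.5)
The argument follows the proof of Corollary~\ref{cor:square-root-F} step by step. The only change is at the last stage: instead of keeping the factor \(q^{-c^*(j,s,d)}\) coming from \(\Pi\), we discard it and keep a version of \(p^{-c_*}\) that no longer depends on \(d\).

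Start from the double-counting identity \eqref{eq:double-summation-1},
\[
|\mathcal E_j(b)|^2=|\mathcal E_j(b)|+\sum_{d=1}^m\sum_{v_\fp(\delta)=d}|\mathcal P_{j,b}(\delta)|.
\]
We need a bound on \(|\mathcal P_{j,b}(\delta)|\) in which the \(\Pi\)-condition is dropped. Repeat the fibration argument from the proof of Proposition~\ref{prop:difference-fiber-new}, with the following changes.
\begin{itemize}
\item In Case~1, use only the trivial bound \(1\le q\), or ignore the case altogether.
\item In Case~2, drop the hypothesis \(j+s+i+d+\nu\notin\Pi\). This is allowed because Proposition~\ref{prop:membership-transversality-new} never used it: it only needs \(\rho_{e,j+s+i-1}(I+bN(\eta_0))\in H_{j+s+i-1}\), and this holds since \(\eta_0\in\mathcal E_j(b)\).
\end{itemize}
For every level \(k=j+s+i\) with \(\max(j+s+1,\jmath_2)\le k\le e\) and \(\dim_{\bF_p}V_k\le f-1\), the fibre of \(\pi_i\) then has at most \(p^{f-1}=q/p\) points. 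On the index range \(i\in[g_0,m]\) used in that proof, this gives
\[
|\mathcal P_{j,b}(\delta)|\le q^{\min(m,s+d+\gamma)}p^{-\hat c'(j,s,d)},
\]
where \(\hat c'(j,s,d)\) counts the levels \(k\) as above with \(k-j-s\in[g_0,m]\).

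The main obstacle is the lower end \(g_0\) of the range, since \(\hat c(j,s)\) counts every \(k\ge\max(j+s+1,\jmath_2)\). The condition \(i\ge g_0=\max(1,\jmath_2-j-s-d-\nu)\) means \(k\ge \max(j+s+1,\jmath_2-d-\nu)\), and this threshold is at most \(\max(j+s+1,\jmath_2)\) because \(d\ge1\). The upper end \(i\le m\) means \(k\le e+s\), which is not binding since \(k\le e\). So every level counted by \(\hat c(j,s)\) is indeed some \(j+s+i\) with \(i\in[g_0,m]\), hence \(\hat c'(j,s,d)\ge\hat c(j,s)\) for every \(d\). One point to check is that for these \(i\) the pairs in \(P\) already share the class modulo \(\varpi^{i}\), so that Proposition~\ref{prop:membership-transversality-new} applies. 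This is exactly what the fibration structure provides.

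To finish, count the \((q-1)q^{m-d}\) elements \(\delta\) of valuation \(d\) and use \(\min(m,s+d+\gamma)+m-d\le m+s+\gamma\). This gives
\[
|\mathcal E_j(b)|^2\le|\mathcal E_j(b)|+m(q-1)q^{m+s+\gamma}p^{-\hat c(j,s)}.
\]
Applying \(X^2\le X+A\Rightarrow X\le1+\sqrt A\) yields the stated bound
\[
|\mathcal E_j(b)|\le 1+\sqrt{m(q-1)}\,q^{(m+s+\gamma)/2}\,p^{-\hat c(j,s)/2}.
\]
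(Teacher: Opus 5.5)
Your proof is correct and follows the paper's strategy: the double-counting identity, the fibration from Proposition~\ref{prop:difference-fiber-new}, and the quadratic inequality $X^2\le X+A$. The one difference is that the paper does not reopen the fibration. It plugs in \eqref{eq:double-summation-3} directly, bounds $q^{-c^*}\le p^{-c^*}$, and uses $c_*(j,s,d)+c^*(j,s,d)\ge\hat c(j,s)$, since the shift $k\mapsto k+d+\nu$ injects the levels dropped from $c_*$ into $\Pi\cap[j+s+d+\nu+1,e]$. You instead observe that Case~2 never used the $\Pi$-hypothesis, which gives $p^{-\hat c(j,s)}$ for each $\delta$ directly.
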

\begin{proof}
Recall that
\begin{align*}
c_*(j,s,d) & :=
\#\bigl\{k\in\bZ \mid
\max(j{+}s{+}1,\jmath_2)\le k\le e,\;\\
& \dim_{\bF_p}V_k\le f-1,\;
k+d+\nu\notin\Pi\bigr\}
\\
c^*(j,s,d) & :=\lvert \Pi\cap [\,j+s+d+\nu+1,e\,]\rvert.
\end{align*}
When $k\ge j+s+1$, we have 
\[
k+d+\nu\ge j+s+d+\nu+1,
\]
so
\(
c_*(j,s,d)+c^*(j,s,d)\ge \hat c(j,s)
\).
Combining with \eqref{eq:double-summation-3} we get
\begin{align*}
|\mathcal E_j(b)|
 & \le 1+\sqrt{q{-}1}
q^{(m+s+\gamma)/2}\Big(\sum_{d=1}^m
q^{-c^*(j,s,d)-\max(0,d-m+s+\gamma)}
 p^{-c_*(j,s,d)}\Big)^{\frac{1}{2}}
 \\
 & \le 1+\sqrt{q{-}1}
q^{(m+s+\gamma)/2}\Big(\sum_{d=1}^m
p^{-c^*(j,s,d)-c_*(j,s,d)}\Big)^{\frac{1}{2}}\\
& \le 1+\sqrt{m(q{-}1)}
q^{(m+s+\gamma)/2}
p^{-\hat c (j,s)/2}.
\end{align*}

\end{proof}

\subsection{Semisimple dimension bounds}\label{sec:semisimple-bounds}

Recall that $\jmath_1=\min\{j\mid W_j\cap \bF_q E\neq \emptyset\}$,
$\jmath_2=\min\{j\mid W_j\cap \bF_q F\neq \emptyset\}$ and 
$\jmath=\max(\jmath_1,\jmath_2)$ are introduced in Section~\ref{sec:double-counting}.

\begin{lem}
\label{lem:cross-level-Vk}
Assume \(p\ne 2\). Assume that \(K\) is an integer such that
\[
e-\jmath>K>2\max(\jmath,e_0+1)+2e_0.
\]
Then
\[
c_0(K):=\#\bigl\{k\in[\jmath{+}1,K]\mid
\dim_{\bF_p} V_k\le f{-}1
\bigr\}
\;\ge\;
\frac{K-2\max(\jmath,e_0+1)}{2e_0}.
\]
\end{lem}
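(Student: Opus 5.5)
The plan is to argue by contradiction with the exact-level hypothesis, in the form of Corollary~\ref{cor:all-active}: $d_{k'}<3f$ for every $k'\in[e_0+2,e]$ with $k'\equiv e\pmod{e_0}$. This hypothesis is implicit here, since otherwise the lemma fails for $H=G_e$. The guiding principle is that a level $k$ with $\bF_qD\subset W_k$ (i.e.\ $\dim_{\bF_p}V_k=f$) is very powerful when $p\neq2$. It acts on the nilpotent directions through the brackets $[aD,xE]=2axE$ and $[aD,yF]=-2ayF$, and $2\in\bF_q^*$. So it spreads any single $E$- or $F$-direction into a full $\bF_q$-line at a slightly later level. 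Throughout, $\jmath_1,\jmath_2\le\jmath<e$ provide some $x_0E\in W_{\jmath_1}$ and $y_0F\in W_{\jmath_2}$ with $x_0,y_0\ne0$.

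\emph{Step 1 (one full diagonal level gives a full Lie image).} Suppose $k\in[\jmath+1,K]$ has $V_k=\bF_q$. By Lemma~\ref{lem:lie-bracket}, $[\bF_qD,x_0E]=\bF_qE\subset W_{k+\jmath_1-1}$ and similarly $\bF_qF\subset W_{k+\jmath_2-1}$. Bracketing these two again gives $\bF_qD=[\bF_qE,\bF_qF]\subset W_{2k+\jmath_1+\jmath_2-3}$. Lemma~\ref{lem:propagation} (available since all levels involved are $\ge e_0+2$, with a unit rescaling that does not affect full $\bF_q$-lines) then moves each of the three lines forward in steps of $e_0$. To land all three in one common level, I will first align the $E$- and $F$-lines onto a common level, and then put a third bracket on top of that.

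\emph{Step 2 (choosing the residue class).} The goal is a level $L$ with $W_L=\mathfrak{sl}_2(\bF_q)$, where $L\le e$, $L\ge e_0+2$ and $L\equiv e\pmod{e_0}$. Propagation with step $e_0$ preserves the residue class, so the construction must be steered into the class of $e$. Since $k$ varies, only about one out of every $e_0$ (or $2e_0$) candidate levels $k$ will produce the right residue. Concretely, I expect to show the following: if $k$ and $k+i$ (for a suitable $0\le i<2e_0$) are both ``full diagonal'' levels above $\max(\jmath,e_0+1)$, then brackets built from them reach every residue class mod $e_0$ at a level at most about $2\max(\jmath,e_0+1)+K+\jmath\le e$. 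The hypothesis $K<e-\jmath$ is exactly what keeps the output level at most $e$. Together with Corollary~\ref{cor:all-active} this gives a contradiction. Hence every window of $2e_0$ consecutive levels in $[2\max(\jmath,e_0+1),K]$ contains a level with $\dim_{\bF_p}V_k\le f-1$. Counting windows then yields $c_0(K)\ge (K-2\max(\jmath,e_0+1))/(2e_0)$. The lower bound $K>2\max(\jmath,e_0+1)+2e_0$ guarantees at least one complete window.

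\emph{Main obstacle.} The delicate part is the residue/level bookkeeping in Step 2. Brackets add levels as $j+k-1$, and propagation adds $e_0$. I must check that the combinations available from full diagonal lines at nearby levels, together with the fixed $E$ and $F$ elements at $\jmath_1,\jmath_2$, really cover the class of $e$ modulo $e_0$ within the budget $e$. I would first try the crude route: from $\bF_qD\subset W_k$, obtain $\bF_qD\subset W_{k+i}$ for the needed shifts $i$ via brackets $[\bF_qD,[\bF_qE,\bF_qF]]$ at successive levels. Failing that, I would instead weaken the claim to a window of length $2e_0$ directly and absorb the loss into the constant.
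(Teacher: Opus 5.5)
\textbf{There is a genuine gap in Step~2.} Step~2 is where the proof has to happen, and it does not go through as proposed.

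The level budget is much tighter than you assume. The hypothesis $K<e-\jmath$ only guarantees $k+\jmath-1<e$ for $k\le K$. So it pays for exactly one bracket of a full diagonal level $k\le K$ with the fixed element $\mu E\in W_{\jmath_1}$ (or $\beta F\in W_{\jmath_2}$).
\begin{itemize}
\item It does not give your bound $2\max(\jmath,e_0+1)+K+\jmath\le e$.
\item The nested bracket of Step~1, landing at level $2k+\jmath_1+\jmath_2-3$, can exceed $e$.
\item Brackets add $(\text{level}-1)$ and propagation adds $e_0$. So every bracket with a further full diagonal factor costs about $k$ levels. In general, reaching a prescribed class mod $e_0$ from only two full diagonal levels $k,k+i$ needs on the order of $e_0$ such factors. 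Your two-level claim therefore cannot be obtained from Lemmas~\ref{lem:lie-bracket} and~\ref{lem:propagation} within the budget.
\item The crude route is empty: $[\bF_qE,\bF_qF]=\bF_qD$ and $[D,D]=0$, so $[\bF_qD,[\bF_qE,\bF_qF]]=0$.
\item Absorbing a loss into the constant does not prove the lemma, whose explicit constants are used in Proposition~\ref{prop:Aj-bound-three}.
\end{itemize}
Your first bracket in Step~1, $[\bF_qD,x_0E]=\bF_qE\subset W_{k+\jmath_1-1}$, is exactly the right move; nothing beyond it is needed.

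\textbf{The missing idea.} Argue residue class by residue class modulo $e_0$, and propagate the three lines $\bF_qD$, $\bF_qE$, $\bF_qF$ separately instead of assembling them at a common level.
\begin{enumerate}
\item Put $M=\max(\jmath,e_0+1)$. Suppose every class $r$ mod $e_0$ contains some $k_r\in[M+1,K]$ with $V_{k_r}=\bF_q$.
\item Lemma~\ref{lem:propagation} then gives $\bF_qD\subset W_k$ for all $k\in[\max_r k_r,e]$.
\item One bracket $[\bF_qD,\mu E]=\bF_qE$ (this is where $p\ne2$ is used) gives $\bF_qE\subset W_{k_r+\jmath_1-1}$, with $k_r+\jmath_1-1\le K+\jmath-1<e$.
\item These levels run through all classes mod $e_0$, so propagation gives $\bF_qE\subset W_e$. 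Likewise $\bF_qF\subset W_e$.
\item Hence $W_e=\mathfrak{sl}_2(\bF_q)$, contradicting exact level.
\end{enumerate}
Therefore some entire residue class of $[M+1,K]\subset[\jmath+1,K]$ consists of levels with $\dim_{\bF_p}V_k\le f-1$. This gives $c_0(K)\ge\frac{K-M}{e_0}-1>\frac{K-2M}{2e_0}$, because $K>2e_0$.

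In your window language, the same argument shows that every $e_0$ consecutive levels of $[M+1,K]$ contain a deficient one. Counting disjoint windows of length $2e_0$ in $[2M,K]$, as you propose, would in general fall short of $(K-2M)/(2e_0)$ by a fraction.
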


\begin{proof}
Put \(M:=\max(\jmath,e_0+1)\).  By Lemma~\ref{lem:propagation},
once \(\bF_qD\subset W_{k_0}\) for some \(k_0\ge e_0+2\), then for
every \(k\ge k_0\) with \(k\equiv k_0\pmod {e_0}\), one has
\(\bF_qD\subset W_k\).

Suppose for contradiction that
\begin{equation}\label{eq:contradiction-hypo-c0K}
c_0(K)<\frac{K-2M}{2e_0}.
\end{equation}
For each residue class \(r\in\{0,\ldots,e_0{-}1\}\), let \(k_r^*\) be
the smallest \(k\in[M+1,K]\) such that
\[
k\equiv r\pmod {e_0},
\qquad
\dim_{\bF_p}V_k=f,
\]
if such a \(k\) exists.

We first claim that \(k_r^*\) exists for every residue class \(r\in \bZ/e_0 \bZ\).
Indeed, if no such \(k\) existed in one residue class, then every integer in \([M+1,K]\) belonging to that residue class would contribute to \(c_0(K)\). The number of such integers is at least
\[
\frac{K-M}{e_0}-1.
\]
Since \(K>2M+2e_0\), this is strictly larger than
\(
(K-2M)/(2e_0)
\),
contradicting the assumption \eqref{eq:contradiction-hypo-c0K}.

Set
\(
k^*:=\max_{0\le r<e_0}k_r^*.
\)
Since \(k_r^*\ge M+1\ge e_0+2\), Lemma~\ref{lem:propagation} gives
\begin{equation}\label{eq:three-inclusions-formula-1}
\bF_qD\subset W_k
\qquad\text{for every }k\ge k^*.
\end{equation}

We now produce \(E\) on all sufficiently large levels \(W_k\). By definition of
\(\jmath_1\), there exists \(\mu\in\bF_q^*\) such that
\(\mu E\in W_{\jmath_1}\). For each residue class \(r\in \bZ/e_0 \bZ\), Lemma~\ref{lem:lie-bracket} gives
\[
\bF_qE=[\bF_qD,\mu E]\subset W_{k_r^*+\jmath_1-1}.
\]
As \(r\) varies, the levels \(k_r^*+\jmath_1-1\) run through all residue
classes modulo \(e_0\). Moreover
\[
k_r^*+\jmath_1-1\le K+\jmath-1<e.
\]
Therefore, by Lemma~\ref{lem:propagation},
\begin{equation}\label{eq:three-inclusions-formula-2}
\bF_qE\subset W_k
\qquad\text{for every } k\ge k^*+\jmath_1-1.
\end{equation}
The same argument, using \(\jmath_2\), gives
\begin{equation}\label{eq:three-inclusions-formula-3}
\bF_qF\subset W_k
\qquad\text{for every } k\ge k^*+\jmath_2-1.
\end{equation}

Combining \eqref{eq:three-inclusions-formula-1}, \eqref{eq:three-inclusions-formula-2}, \eqref{eq:three-inclusions-formula-3}, we get
\[
W_k\supset
\bF_qD+\bF_qE+\bF_qF
=
\mathfrak{sl}_2(\bF_q)
\qquad
\text{for every }k\ge k^*+\jmath-1.
\]
Since \(k^*\le K\) and \(K+\jmath-1<e\), this applies in particular to
\(k=e\). Hence $W_e=\mathfrak{sl}_2(\bF_q)$, contradicting the exact level condition.
\end{proof}

Define
\begin{align*}
\jmath_3 & :=\min\{j\mid \forall k\in [j,e],  W_k\cap \bF_q^* F\neq \emptyset\}, \\
\jmath_3 & := e+1 \; \text{if the minimum does not exist},\\
\jmath^* & := \max ({\jmath_1,\jmath_3}),
\qquad
J  :=\max(\jmath^*,2\nu+1).
\end{align*}

\begin{lem}
\label{lem:cross-level-Vk-two}
Assume \(p=2\). Assume that \(K\) is an integer such that
\[
2J+2\nu<K< \frac{1}{4}(e+4-J-10\nu).
\]
Then
\[
c_0^*(K):=\#\bigl\{k\in[J+1,K]\mid
\dim_{\bF_p} V_k\le f{-}1
\bigr\}
\;\ge\;\frac{K-2J-2\nu}{2\nu}.
\]
\end{lem}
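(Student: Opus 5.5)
We argue by contradiction, along the lines of Lemma~\ref{lem:cross-level-Vk}. The bracket step of that proof, \([\bF_qD,\mu E]=\bF_qE\), is unavailable in characteristic two because \([D,E]=0\). We replace it by the iterated commutators of Lemmas~\ref{lem:two-word-one}, \ref{lem:two-word-two} and \ref{lem:two-word-three}. Recall \(e_0=\nu\) when \(p=2\). Since \(J\ge 2\nu+1\), every level \(k\ge J+1\) satisfies \(k\ge\nu+2\), so Lemma~\ref{lem:propagation} applies at all levels we use.

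\emph{Step 1: the diagonal line at all large levels.} Assume \(c_0^*(K)<(K-2J-2\nu)/(2\nu)\). If some residue class \(r\) modulo \(\nu\) contained no \(k\in[J+1,K]\) with \(\dim_{\bF_p}V_k=f\), that class alone would contribute at least \((K-J)/\nu-1\) levels to \(c_0^*(K)\). This exceeds \((K-2J-2\nu)/(2\nu)\) because \(K>2J+2\nu\). So each class has a least such level \(k_r^*\le K\). Lemma~\ref{lem:propagation} preserves the \(\bF_q\)-line \(\bF_qD\) (it multiplies by the unit \(\bar u_0\)), so \(\bF_qD\subset W_k\) for all \(k\ge k^*:=\max_r k_r^*\), with \(k^*\le K\).

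\emph{Step 2: the line \(\bF_qE\) at all large levels.*} This is the step I expect to be the main obstacle.
1. By definition of \(\jmath_1\) there is \(\mu E\in W_{\jmath_1}\). By propagation, some nonzero multiple of \(E\) lies in \(W_a\) for levels \(a\) in every residue class above \(\jmath_1\). Choose such a level \(a\in[k^*,k^*+\nu]\), and call the multiple \(\mu E\) again.
2. Since \(W_a\) is an \(\bF_p\)-space containing \(\bF_qD\), we have \(\alpha D+\mu E\in W_a\) for every \(\alpha\in\bF_q^*\).
3. Since \(\jmath_3\le J\), every level \(b\ge J\) has \(W_b\cap\bF_q^*F\ne\emptyset\). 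Take \(b=J\) and some \(\beta F\in W_b\).
4. Lift to \(x\in N_a\) and \(y\in N_b\). The hypotheses \(a,b\ge\nu+2\) and \(a\ge 2\nu+2\) hold. Lemma~\ref{lem:two-word-three} then gives \(\bar u_0^{3}\alpha^2\mu\beta(\alpha D+\mu E)\in W_{c}\) with \(c=4a+b+3\nu-4\).
5. Subtract the \(D\)-component, which is allowed since \(\bF_qD\subset W_c\) because \(c\ge k^*\). This leaves \(\bar u_0^3\mu^2\beta\,\alpha^2E\in W_c\).
6. In characteristic two, \(\alpha\mapsto\alpha^2\) is a bijection of \(\bF_q\). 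Hence \(\bF_qE\subset W_c\).

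The upper bound \(K<\tfrac14(e+4-J-10\nu)\) is what makes \(c\le e\) even for \(a\le K+\nu\), including room to vary \(a\) over \(\nu\) consecutive values. Varying \(a\) moves \(c\) through all residue classes modulo \(\nu\), and propagation then gives \(\bF_qE\subset W_k\) for all \(k\ge 4K+J+7\nu\). This bound is at most \(e\); checking that the constants close exactly is the routine but delicate part.

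\emph{Step 3: the line \(\bF_qF\) and conclusion.} Repeat Step~2 with the roles of \(E\) and \(F\) exchanged: use \(x\) with \(\psi_a(x)=\alpha D+\mu'F\) and \(y\) with \(\psi_b(y)\in\bF_q^*E\). The computations of Lemmas~\ref{lem:two-word-one}--\ref{lem:two-word-three} are symmetric under transposition, which swaps \(E\) and \(F\) and fixes \(D\). This yields \(\bF_qF\subset W_k\) at the same large levels. At \(k=e\) we then get \(W_e\supset\bF_qD+\bF_qE+\bF_qF=\mathfrak{sl}_2(\bF_q)\), which contradicts exact level \(e\) and proves the bound on \(c_0^*(K)\).
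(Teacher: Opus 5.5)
\textbf{There is a genuine gap in your Step 3.} It does not follow from Step 2 by symmetry, because the two inputs are not symmetric.

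In Step 2 you may put the $F$-input at the low level $b=J$. This works because, by definition of $\jmath_3$, $W_b\cap\bF_q^*F\neq\emptyset$ at \emph{every} level $b\ge\jmath_3$.

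After exchanging roles, the $E$-input must sit at a level where $W$ meets $\bF_q^*E$. At low levels you only know this in the single residue class of $\jmath_1$ modulo $\nu$. So the output level $4a'+b'+3\nu-4$ can change its class mod $\nu$ only through $a'$, that is, in steps of $4$. When $\nu$ is even this misses residue classes, possibly the class of $e$.

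To reach every class you must feed in the $E$'s produced by Step 2. Those live at levels $\ge 4k^*+J+3\nu-4$, so the output lies at levels $\ge 8k^*+J+6\nu-8$. You extracted only $k^*\le K$ from the contradiction hypothesis, and $4K$ may be nearly $e-J-10\nu$. So this output level can exceed $e$, and the claim ``$\bF_qF\subset W_k$ at the same large levels'' is unjustified.

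A smaller slip in Step 2: varying $a$ does not move $c$ through all classes mod $\nu$. The level $a$ is pinned to the class of $\jmath_1$, and $c$ moves in steps of $4$. Vary the $F$-level $b\ge J$ instead, which costs nothing.

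\textbf{The missing idea is to use the contradiction hypothesis a second time to get $k^*<K/2$.} Fix a residue class $r$. Every level of $[J+1,k_r^*-1]$ in class $r$ has $\dim_{\bF_p}V_k\le f-1$. Hence $k_r^*\le J+\nu+\nu B_r$, where $B_r$ is the contribution of class $r$ to $c_0^*(K)$. Since $B_r\le c_0^*(K)<(K-2J-2\nu)/(2\nu)$, this gives $k_r^*<K/2$.

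With this bound the paper's argument closes:
\begin{itemize}
\item Take $k_E\in[k^*,k^*+\nu-1]$ with $k_E\equiv\jmath_1\pmod\nu$. Vary the $F$-level $l\ge J+1$ to obtain $\bF_qE\subset W_r$ for all $r\ge 4k_E+J+3\nu-3$.
\item Combine $y_\alpha\in N_{k^*}$, with $\psi_{k^*}(y_\alpha)=\alpha D+\beta F$, with these $E$'s. This gives $\bF_qF\subset W_r$ for $r\ge 4k^*+4k_E+J+6\nu-7$.
\item The bound $k^*<K/2$ yields $4k^*+4k_E+J+6\nu-7<4K+J+10\nu-11<e$.
\end{itemize}
This is why the hypothesis reads $4K<e+4-J-10\nu$: the $4K$ pays for $4k^*+4k_E$, not for a single $4a$.
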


\begin{proof}
Recall that \(2=u_0\varpi^\nu\). For each residue class
\(r\in\{0,\ldots,\nu{-}1\}\), let
\[
B_r:=
\#\{k\in[J+1,K]\mid k\equiv r\pmod{\nu},
\dim_{\bF_p}V_k\le f-1\}.
\]
Then
\(
c_0^*(K)=\sum_{r=0}^{\nu-1}B_r.
\)

Suppose for contradiction that
\begin{equation}\label{eq:contradiction-hypo-c0K*}
c_0^*(K)=\sum_rB_r<\frac{K-2J-2\nu}{2\nu}.
\end{equation}
For each residue class \(r\), let \(k_r^*\) be the smallest
\(k\in[J+1,K]\) such that
\[
k\equiv r\pmod{\nu},
\qquad
\dim_{\bF_p}V_k=f.
\]
Such a \(k_r^*\) exists. Indeed, if no such integer existed in one
residue class, then every integer in \([J+1,K]\) belonging to that
residue class would contribute to \(c_0^*(K)\). The number of such
integers is at least
\[
\frac{K-J}{\nu}-1>
\frac{K-2J-2\nu}{2\nu}.
\]
This contradicts \eqref{eq:contradiction-hypo-c0K*}.

Set \(k^*:=\max_{0\le r<\nu}k_r^*\). Then, by Lemma~\ref{lem:propagation},
\[
\bF_qD\subset W_k
\qquad\text{for every }k\in[k^*,e].
\]

\medskip\noindent
\emph{Step~1: Upper bound on \(k^*\).}
For each residue class \(r\), the definition of \(k_r^*\) gives
\[
B_r=\left\lfloor\frac{k_r^*-J-1}{\nu}\right\rfloor.
\]
Thus $k_r^*\le J+\nu+\nu B_r$. Since
\[
B_r\le \sum_sB_s<\frac{K-2J-2\nu}{2\nu},
\]
we get $k_r^*<\frac K2$, thus
\begin{equation}\label{eq:kstar-Vkbound-inequality-two}
k^*<\frac K2.
\end{equation}

\medskip\noindent
\emph{Step~2: \(\bF_qE\) on every sufficiently large level.}
By hypothesis, \(W_{\jmath_1}\cap\bF_q^*E\neq\emptyset\). Hence, by
Lemma~\ref{lem:propagation}, for every \(i\ge\jmath_1\) with
\(i\equiv\jmath_1\pmod\nu\), one has
\[
W_i\cap\bF_q^*E\neq\emptyset.
\]
Set
\[
k_E:=\min\{k\ge k^*\mid k\equiv\jmath_1\pmod\nu\}.
\]
Then
\[
k_E\ge k^*\ge J+1\ge 2\nu+2,
\qquad
k_E\le k^*+\nu-1.
\]
Moreover \(\bF_qD\subset W_{k_E}\) and
\(W_{k_E}\cap\bF_q^*E\neq\emptyset\). Therefore there exists
\(\mu\in\bF_q^*\) such that, for every \(\alpha\in\bF_q\), we can choose
\(x_\alpha\in H\) satisfying
\[
x_\alpha\in N_{k_E},
\qquad
\psi_{k_E}(x_\alpha)=\alpha D+\mu E.
\]

By definition of \(\jmath_3\), for every integer \(l\) with
\(e-4k_E-3\nu+4\ge l\ge J+1\), we can choose \(y_l\in H\) satisfying
\[
y_l\in N_l,
\qquad
\psi_l(y_l)=\beta_lF,
\qquad
\beta_l\in\bF_q^*.
\]
Indeed \(J+1\ge\jmath^*+1\ge\jmath_3\). Also \(l\ge J+1\ge2\nu+2\).
Thus Lemma~\ref{lem:two-word-three} applies to \(x_\alpha\) and \(y_l\).
It gives
\begin{equation}\label{eq:Vkbound-two-key-formula-E}
\widehat{(x_\alpha,y_l)}
\in N_{4k_E+l+3\nu-4},
\quad
\psi_{4k_E+l+3\nu-4}\bigl(\widehat{(x_\alpha,y_l)}\bigr)
=
\bar u_0^3\alpha^2\mu\beta_l(\alpha D+\mu E).
\end{equation}

The space \(W_{4k_E+l+3\nu-4}\) contains \(\bF_qD\), because
\(4k_E+l+3\nu-4\ge k^*\). Since \(p=2\), the map
\(\alpha\mapsto\alpha^2\) is an automorphism of \(\bF_q\). Varying
\(\alpha\) in \eqref{eq:Vkbound-two-key-formula-E}, we obtain
\begin{equation}\label{eq:p2-all-basis-formula-1}
\bF_qE\subset W_{4k_E+l+3\nu-4}.
\end{equation}
Varying \(l\), we get
\begin{equation}\label{eq:p2-all-basis-formula-2}
\bF_qE\subset W_r
\qquad\text{for every }
r\ge 4k_E+J+3\nu-3.
\end{equation}

\medskip\noindent
\emph{Step~3: \(\bF_qF\) on every sufficiently large level.}

Since \(k^*\ge J+1\ge\jmath_3\), we have
\(W_{k^*}\cap\bF_q^*F\neq\emptyset\). Also
\(\bF_qD\subset W_{k^*}\) and \(k^*\ge2\nu+2\). Therefore there exists
\(\beta\in\bF_q^*\) such that, for every \(\alpha\in\bF_q\), we can
choose \(y_\alpha\in H\) satisfying
\[
y_\alpha\in N_{k^*},
\qquad
\psi_{k^*}(y_\alpha)=\alpha D+\beta F.
\]
By Step~2, for every \(l\in [4k_E+J+3\nu-3,e]\) and every
\(\xi\in\bF_q^*\), we can choose \(x_{l,\xi}\in H\) satisfying
\[
x_{l,\xi}\in N_l,
\qquad
\psi_l(x_{l,\xi})=\xi E.
\]
We apply Lemma~\ref{lem:two-word-three} again, exchanging the roles of
\(E\) and \(F\), to \(y_\alpha\) and \(x_{l,\xi}\). We obtain
\begin{equation}\label{eq:Vkbound-two-key-formula-F}
\widehat{(y_\alpha,x_{l,\xi})}
\in N_{4k^*+l+3\nu-4},
\quad
\psi_{4k^*+l+3\nu-4}\bigl(\widehat{(y_\alpha,x_{l,\xi})}\bigr)
=
\bar u_0^3\alpha^2\xi\beta(\alpha D+\beta F).
\end{equation}
Thus, varying \(\alpha\), we obtain
\begin{equation}\label{eq:p2-all-basis-formula-3}
\bF_qF\subset W_r
\qquad\text{for every }
r\ge 4k^*+4k_E+J+6\nu-7.
\end{equation}

\medskip\noindent
\emph{Step~4: full image.}
By \eqref{eq:kstar-Vkbound-inequality-two} and the inequality
\(k_E\le k^*+\nu-1\), we have
\[
4k^*+4k_E+J+6\nu-7
<
4K+J+10\nu-11
< e.
\]
Therefore taking \(r=e\) in \eqref{eq:p2-all-basis-formula-1}, \eqref{eq:p2-all-basis-formula-2} and \eqref{eq:p2-all-basis-formula-3} gives
\[
W_e\supset
\bF_qD+\bF_qE+\bF_qF
=
\mathfrak{sl}_2(\bF_q),
\]
contradicting the exact level condition. 
The lemma follows.
\end{proof}

\subsection{Final estimates}\label{sec:final-estimates-local}
Recall that \(\gamma=e+\nu+1-\jmath_2\), \(m=e-j\), and
\(\hat c(j,s)\) were introduced in Section~\ref{sec:double-counting}.
Recall also that \(A_j(\ell)\) is defined in Section~\ref{sec:telescopic}. 

\begin{lem}\label{lem:Aj-bound-D}
Let $j\in [\nu+2,e]$. We have
\[
q^{j-2e}A_j(\ell) 
\le 2q^{1-e}+(e-j+1)^{3/2}(q-1)^{3/2}
q^{(1{+}\nu-\jmath_2-j)/2}
p^{-\hat c(j,0)/2}.
\]
\end{lem}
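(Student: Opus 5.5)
Write $A_j(\ell)=\sum_{b\in\varpi^{j-1}R_e}|\mathcal E_j(b)|$ (Lemma~\ref{lem:Aj-fiber-sum}). Split off the term $b=0$, for which $|\mathcal E_j(0)|=|\Sigma_j|=q^{e-j}$. Organize the remaining $b$ by valuation $v_\fp(b)=j-1+s$ with $0\le s\le m=e-j$; there are $(q-1)q^{m-s}$ such $b$. Each of these is bounded by Corollary~\ref{cor:square-root-D}. The idea is that the trivial term and the ``$+1$'' terms produce the $2q^{1-e}$, and the square-root terms produce the main term.

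First, the $b=0$ term contributes $q^{j-2e}q^{e-j}=q^{-e}$. The ``$1$'' parts contribute $\sum_{s}(q-1)q^{m-s}\le q^{m+1}$, and after multiplying by $q^{j-2e}$ this gives $q^{1-e}$. Together these account for at most $2q^{1-e}$.

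Next, the square-root parts. For fixed $s$ they give
\[
(q-1)q^{m-s}\sqrt{m(q-1)}\,q^{(m+s+\gamma)/2}p^{-\hat c(j,s)/2}.
\]
Use $\gamma=e+\nu+1-\jmath_2$ and $m=e-j$. Multiplying by $q^{j-2e}$, the power of $q$ becomes
\[
j-2e+\tfrac32 m-\tfrac s2+\tfrac{e+\nu+1-\jmath_2}{2}=\tfrac{1+\nu-\jmath_2-j}{2}-\tfrac s2 .
\]
The factor $\hat c(j,s)$ is monotone in $s$: its counting range $[\max(j+s+1,\jmath_2),e]$ only loses at most one integer per unit increase of $s$. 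Hence $\hat c(j,s)\ge\hat c(j,0)-s$, so
\[
p^{-\hat c(j,s)/2}\le p^{-\hat c(j,0)/2}p^{s/2}\le p^{-\hat c(j,0)/2}q^{s/2}.
\]
This cancels the $q^{-s/2}$ factor. Summing over the $m+1$ values of $s$, and using $\sqrt m\le\sqrt{m+1}$, gives the bound
\[
(m+1)^{3/2}(q-1)^{3/2}q^{(1+\nu-\jmath_2-j)/2}p^{-\hat c(j,0)/2}.
\]

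The main points to check are the exponent bookkeeping above and the comparison $\hat c(j,s)\ge \hat c(j,0)-s$. In the trade-off between $q^{-s/2}$ and $p^{s/2}$, the inequality goes the right way because $p\le q$. We also need to check that the hypotheses of Corollary~\ref{cor:square-root-D} ($j\ge\nu+2$, $b\ne0$) hold throughout; they do under the assumption $j\in[\nu+2,e]$.
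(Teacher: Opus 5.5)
Your proposal is correct and follows the paper's proof essentially step for step. The shared steps are: the fiber decomposition of $A_j(\ell)$; splitting off $b=0$ and the ``$+1$'' terms into $2q^{1-e}$; applying Corollary~\ref{cor:square-root-D} for each valuation $j-1+s$; the exponent computation giving $(1+\nu-\jmath_2-j)/2-s/2$; and the inequality $\hat c(j,s)+s\ge\hat c(j,0)$ combined with $p\le q$ to absorb $q^{-s/2}$. Summing the $m+1$ terms with $\sqrt m\le\sqrt{m+1}$ then gives exactly the factor $(e-j+1)^{3/2}$.
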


\begin{proof}
By Lemma~\ref{lem:Aj-fiber-sum},
$A_j(\ell)=\sum_{b\in\varpi^{j-1}R_e}
|\mathcal E_j(b)|$.
The $b=0$ term contributes $q^m$. There are $(q{-}1)q^{m-s}$ choices for $b\ne 0$ with $v_\fp(b)=j{-}1{+}s$,
$0\le s\le m$. 
Applying Corollary~\ref{cor:square-root-D}, we get
\begin{align*}
A_j(\ell)
&\le q^m
+\sum_{s=0}^m(q{-}1)q^{m-s}
\cdot\bigl(1+\sqrt{m(q{-}1)}\;
q^{(m+s+\gamma)/2}\;p^{-\hat c(j,s)/2}
\bigr)\\
&\le 2q^{m+1}
+\sqrt{m}(q-1)^{3/2}
q^{3m/2+\gamma/2}
\sum_{s=0}^m q^{-s/2}\;p^{-\hat c(j,s)/2}.
\end{align*}
Multiplying by
$q^{j-2e}$, using $m=e-j$ and
\begin{align*}
3m/2+\gamma/2+j-2e
& =3(e-j)/2+(\nu+e+1-\jmath_2)/2
+j-2e\\
& =(1+\nu-\jmath_2-j)/2,
\end{align*}
we get 
\begin{equation}\label{eq:Aj-bound-formula-1}
q^{j-2e}A_j(\ell) 
\le 2q^{1-e}+\sqrt{e-j}(q-1)^{3/2}
q^{(1+\nu-\jmath_2-j)/2}
\sum_{s=0}^{e-j} q^{-s/2}\;p^{-\hat c(j,s)/2}.
\end{equation}
Recall that
\[
\hat c(j,s)=
\#\bigl\{k\in\bZ \mid
\max(j{+}s{+}1,\jmath_2)\le k\le e,\;\\
\dim_{\bF_p}V_k\le f-1\bigr\},
\]
so
\begin{equation}\label{eq:Aj-bound-formula-2}
\hat c(j,s)+s\ge \hat c(j,0).
\end{equation}
Substituting \eqref{eq:Aj-bound-formula-2} and $q\ge p$
in \eqref{eq:Aj-bound-formula-1}, we get the conclusion.
\end{proof}

\begin{lem}\label{lem:Aj-bound-F}
Let $j\in [\nu+2,e]$. We have
\[
q^{j-2e}A_j(\ell) 
\le 2q^{1-e}+4\sqrt{e-j}(q-1)^{3/2}
q^{(1+\nu-\jmath_2-j)/2}
q^{-\lvert \Pi\rvert/2}.
\]
\end{lem}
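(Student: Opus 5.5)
The plan is to follow the proof of Lemma~\ref{lem:Aj-bound-D} step by step, replacing Corollary~\ref{cor:square-root-D} by Corollary~\ref{cor:square-root-F}. The \(|\Pi|\)-saving is uniform in \(s\), so the sum over \(s\) is now a plain geometric series with no \(p^{-\hat c}\) factor.

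\textbf{Step 1.} Start from Lemma~\ref{lem:Aj-fiber-sum}, \(A_j(\ell)=\sum_{b\in\varpi^{j-1}R_e}|\mathcal E_j(b)|\). Split off \(b=0\), which contributes \(|\Sigma_j|=q^m\) with \(m=e-j\). Group the nonzero \(b\) by \(v_\fp(b)=j-1+s\) with \(0\le s\le m\). There are \((q-1)q^{m-s}\) of them for each \(s\).

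\textbf{Step 2.} Bound each such term by Corollary~\ref{cor:square-root-F}:
\[
|\mathcal E_j(b)|\le 1+\sqrt{m(q-1)}\,q^{(m+s+\gamma-|\Pi|)/2}.
\]
The constant terms \(1\) sum to \(\sum_s(q-1)q^{m-s}\le q^{m+1}\). Together with \(b=0\), this gives at most \(2q^{m+1}\).

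The main term is
\[
\sqrt{m}\,(q-1)^{3/2}\,q^{3m/2+\gamma/2-|\Pi|/2}\sum_{s=0}^m q^{-s/2}.
\]
Since \(q\ge2\), we have \(\sum_{s\ge0}q^{-s/2}\le 1/(1-2^{-1/2})<4\). This is where the constant \(4\) comes from.

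\textbf{Step 3.} Multiply by \(q^{j-2e}\). The constant part becomes \(2q^{m+1+j-2e}=2q^{1-e}\). For the main part, use the same exponent identity as in Lemma~\ref{lem:Aj-bound-D}:
\[
\tfrac{3m}{2}+\tfrac{\gamma}{2}+j-2e=\tfrac{1+\nu-\jmath_2-j}{2}.
\]
Substituting \(\sqrt m=\sqrt{e-j}\) then yields the stated bound.

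\textbf{Main obstacle.} There is essentially none beyond bookkeeping. The only points to check are that Corollary~\ref{cor:square-root-F} applies, i.e.\ \(j\ge\nu+2\), which is guaranteed by \(j\in[\nu+2,e]\), and that its bound holds for every \(s\in[0,m]\) with the same factor \(q^{-|\Pi|/2}\).

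Degenerate cases need no separate treatment. If \(m=0\), the sum over \(b\neq0\) has a single \(s\) and the bound still holds, with \(\sqrt{e-j}=0\) killing the main term. This is consistent with Corollary~\ref{cor:square-root-F} giving \(|\mathcal E_j(b)|\le1\) in that case.
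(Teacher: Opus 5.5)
Your proposal is correct and follows the paper's own proof: you write $A_j(\ell)=\sum_b|\mathcal E_j(b)|$, group the nonzero $b$ by $v_\fp(b)=j-1+s$, apply Corollary~\ref{cor:square-root-F} in place of Corollary~\ref{cor:square-root-D}, bound $\sum_s q^{-s/2}\le 4$ using $q\ge2$, and conclude with the exponent identity $3m/2+\gamma/2+j-2e=(1+\nu-\jmath_2-j)/2$. Your bookkeeping of the constant terms, $q^m+(q^{m+1}-1)\le 2q^{m+1}$, and your handling of the degenerate case $m=0$ are also fine.
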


\begin{proof}
The proof is similar to that of Lemma~\ref{lem:Aj-bound-D}. Writing $A_j(\ell)=\sum_{b\in\varpi^{j-1}R_e}
|\mathcal E_j(b)|$ and applying Corollary~\ref{cor:square-root-F} instead of Corollary~\ref{cor:square-root-D}, we get
\begin{align*}
A_j(\ell)
&\le q^m
+\sum_{s=0}^m(q{-}1)q^{m-s}
\cdot\bigl(1+\sqrt{m(q-1)}\;
q^{(m+s+\gamma-\lvert \Pi\rvert)/2}
\bigr)\\
&\le 2q^{m+1}
+\sqrt{m}(q-1)^{3/2}
q^{3m/2+\gamma/2-\lvert \Pi\rvert/2}
\sum_{s=0}^m q^{-s/2}.
\end{align*}
Note that $q\ge 2$ implies
\[
\sum_{s=0}^m q^{-s/2}\le \frac{1}{1-1/\sqrt{q}}\le 4.
\]
Combining again with $m=e-j$ and $3m/2+\gamma/2+j-2e=(1+\nu-\jmath_2-j)/2$, we get the result.
\end{proof}

\begin{prop}\label{prop:Aj-bound-three}
Assume that \(p\ge 3\), \(e\ge 2e_0^2+3e_0+4\), and assume that
\(H\subset G_e\) has exact level \(e\). For \(j\in [2,e]\), we have
\[
q^{j-2e}A_j(\ell) 
\le 2q^{1-e}+q^2e^{3/2}
p^{-\frac{e-3}{6+4e_0}}.
\]
\end{prop}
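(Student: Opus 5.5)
The plan is to deduce the bound from Lemma~\ref{lem:Aj-bound-D}, which applies to every \(j\in[2,e]\) because \(p\ge3\) forces \(\nu=v_\fp(2)=0\). I split according to the size of \(\jmath_1,\jmath_2\) and of \(j\) relative to a threshold
\[
L:=\tfrac{e-3}{6+4e_0},
\]
and then combine the resulting estimates.

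\emph{Step 1: reduction to \(j\ge\jmath_1\).} Suppose \(j<\jmath_1\). Every element \(I+bN(\eta)\in N_j\) with \(v_\fp(b)=j-1+s\) has \(\psi_{j+s}\in\bF_q^*E\), so \(b\in\varpi^{\jmath_1-1}R_e\). Hence Lemma~\ref{lem:borel-Sj-recursion} gives \(S_j(\eta)=S_{\jmath_1}(\pi_{j,\jmath_1-j}(\eta))\). The fibres of \(\pi\) have size \(q^{\jmath_1-j}\), so
\[
q^{j-2e}A_j(\ell)=q^{\jmath_1-2e}A_{\jmath_1}(\ell),
\]
and it suffices to treat \(j\ge\jmath_1\). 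If \(\jmath_1=e+1\), then \(A_j(\ell)\) is just the \(b=0\) contribution, and the bound is trivial.

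\emph{Step 2: the trivial bound when \(j\) is large.} Since \(|S_j(\eta)|\le q^{e-j+1}\) and \(|\Sigma_j|=q^{e-j}\), we get
\[
q^{j-2e}A_j(\ell)\le q^{1-j}.
\]
This is already of the required form once \(j\ge L+1\).

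\emph{Step 3: large \(\jmath_2\).} The second term of Lemma~\ref{lem:Aj-bound-D} is at most
\[
e^{3/2}q^{3/2}q^{(1-\jmath_2)/2}.
\]
It is therefore acceptable when \(\jmath_2\gtrsim 2L\).

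\emph{Step 4: \(j\), \(\jmath_1\), \(\jmath_2\) all small.} Now \(\jmath=\max(\jmath_1,\jmath_2)\le 2L+1\) and \(j\le L\), and I must bound \(\hat c(j,0)\) from below. I take \(K\) just below \(e-\jmath\), roughly \(K=e-\jmath-1\). The constraint \(e\ge 2e_0^2+3e_0+4\), together with \(L\) small compared to \(e\), should ensure
\[
K>2\max(\jmath,e_0+1)+2e_0 .
\]
Lemma~\ref{lem:cross-level-Vk} then gives at least \((K-2M)/(2e_0)\) levels \(k\in[\jmath+1,K]\) with \(\dim_{\bF_p}V_k\le f-1\), where \(M=\max(\jmath,e_0+1)\). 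These levels lie in \([\max(j+1,\jmath_2),e]\), except for at most \(\max(0,j-\jmath)\le L\) of them. Hence
\[
\hat c(j,0)\ \ge\ \frac{e-3\jmath-O(e_0)}{2e_0}-L .
\]
Inserting this into \(p^{-\hat c(j,0)/2}\), and bounding \((e-j+1)^{3/2}(q-1)^{3/2}q^{(1-\jmath_2-j)/2}\le e^{3/2}q^{2}\), gives a decay of order \(p^{-(e-cL)/(4e_0)}\).

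\emph{Step 5: balancing.} Finally I compare the exponents coming from Steps 2, 3 and 4, using \(q\ge p\) to pass from powers of \(q\) to powers of \(p\). The threshold is chosen so that all three cases yield at least \(p^{-(e-3)/(6+4e_0)}\). This bookkeeping is the main obstacle. One must track the additive constants (\(2e_0\), the \(-1\)'s, the loss of \(j-\jmath\), and the \(\max\) with \(e_0+1\)) so that the common exponent comes out to exactly \((e-3)/(6+4e_0)\). One must also check that the hypotheses of Lemma~\ref{lem:cross-level-Vk} hold in the regime of Step~4; this is where \(e\ge2e_0^2+3e_0+4\) is used. If the naive balancing fails by a constant, I would first try shifting the thresholds by \(O(e_0)\) and absorbing the loss into the prefactor \(q^2\).
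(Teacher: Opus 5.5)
Your Steps 1--3 are sound. Step 1, which derives $q^{j-2e}A_j(\ell)=q^{\jmath_1-2e}A_{\jmath_1}(\ell)$ directly from $S_j(\eta)\subset\varpi^{\jmath_1-1}R_e$, is a valid alternative to the paper's telescoping. The trivial bound $q^{j-2e}A_j(\ell)\le q^{1-j}$ in Step 2 is also correct. Step 4, however, has a genuine gap that no choice of thresholds in Step 5 can close.

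You discard the factor $q^{-j/2}$ when you bound $(e-j+1)^{3/2}(q-1)^{3/2}q^{(1-\jmath_2-j)/2}\le e^{3/2}q^2$. At the same time you pay, in your lower bound for $\hat c(j,0)$, for the up to $j-\jmath$ levels of $[\jmath+1,K]$ that $\hat c(j,0)$ does not count.

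With your estimate $\hat c(j,0)\ge\frac{e-3\jmath-O(e_0)}{2e_0}-L$ and $\jmath\le 2L+1$, Case C gives an exponent of about $\frac{e-(6+2e_0)L}{4e_0}$. The three cases then balance only at $L\approx e/(6+6e_0)$, which is strictly below $(e-3)/(6+4e_0)$ for large $e$.

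Even the sharpest version of this lossy bookkeeping fails. Step 2 only covers $j\gtrsim\theta:=\frac{e-3}{6+4e_0}$, so Step 4 must handle $j$ just below $\theta$. Take $\jmath_1,\jmath_2$ bounded and such a $j$. Your exponent is then $\frac{e}{4e_0}-\frac{\theta}{2}+O(1)$, which is smaller than $\theta$ once $e_0\ge4$ and $e$ is large. The deficit is linear in $e$, so shifting thresholds by $O(e_0)$ or absorbing constants into $q^2$ cannot repair it.

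The missing idea is to keep $j$ in the exponent. Lemma~\ref{lem:Aj-bound-D} together with $q\ge p$ bounds the second term by $q^2e^{3/2}p^{-\frac12(j+\jmath_2+\hat c(j,0))}$. Since $j\ge\jmath_1$, at most $j-\jmath_1\le j$ levels are lost when passing from $c_0(K)$ to $\hat c(j,0)$. Hence $j+\hat c(j,0)\ge c_0(K)\ge\frac{K-2M}{2e_0}$, with no loss at all.

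With this, a single threshold suffices, as in the paper.
\begin{itemize}
\item If $\max(j,\jmath_2)\ge\frac{e}{3+2e_0}$, then $\frac12(j+\jmath_2)\ge\frac{e}{6+4e_0}$ directly.
\item Otherwise $\jmath<\frac{e}{3+2e_0}$, and Lemma~\ref{lem:cross-level-Vk} applies with $K=e-\jmath-1$. When $M=\jmath$, the bound $\frac{e-3\jmath-1}{2e_0}$ exceeds $\frac{e-3}{3+2e_0}$. When $M=e_0+1$, the bound $\frac{e-3e_0-3}{2e_0}$ is at least $\frac{e-3}{3+2e_0}$, which is equivalent to $e\ge 2e_0^2+3e_0+3$.
\end{itemize}
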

\begin{proof}
By definition of $T_j(\ell)$ (see \S\ref{sec:slope-decomposition}) and $\jmath_1$ (see \S\ref{sec:double-counting}), we have 
\begin{equation}\label{eq:Aj-bound-three-formula-1}
T_j(\ell)=0, \quad \text{when}\ j<\jmath_1.
\end{equation}
Combining \eqref{eq:Aj-bound-three-formula-1} with Lemma~\ref{lem:borel-tail-telescoping}, we deduce that
\begin{equation}\label{eq:Aj-bound-three-formula-2}
q^{j-2e}A_j(\ell)=q^{\jmath_1-2e}A_{\jmath_1}(\ell), \quad \text{when}\ 2\le j\le\jmath_1.
\end{equation}

Note that $\nu=v_\fp(2)=0$ when $p\ge 3$. By Lemma~\ref{lem:Aj-bound-D}, we have
\begin{align}
q^{j-2e}A_j(\ell) 
& \le 2q^{1-e}+(e-j+1)^{3/2}(q-1)^{3/2}
q^{(1{-}\jmath_2-j)/2}
p^{-\hat c(j,0)/2}\notag\\
& \le 2q^{1-e}+e^{3/2}q^{2}
q^{-(j+\jmath_2)/2}p^{-\hat c(j,0)/2}\notag\\
& \le 2q^{1-e}+q^{2}e^{3/2}p^{-\frac{1}{2}\big(j+\jmath_2+\hat c(j,0)\big)}.\label{eq:Aj-bound-three-formula-3}
\end{align}
Because of \eqref{eq:Aj-bound-three-formula-2}, we will assume, without loss of generality, that $j\ge \jmath_1$. 

\smallskip\noindent
\emph{Case (1).} Assume that $\max(j,\jmath_2)\ge \frac{e}{3+2e_0}$. Then \eqref{eq:Aj-bound-three-formula-3} implies the conclusion because $\hat c(j,0)\ge 0$. 

\smallskip\noindent
\emph{Case (2).} Assume now that $\max(j,\jmath_2)< \frac{e}{3+2e_0}$. Then, in particular, $\jmath=\max(\jmath_1,\jmath_2)<\frac{e}{3+2e_0}$. 
Set
\[
K:=e-\jmath-1, \quad M:=\max(\jmath,e_0+1).
\]
We claim that Lemma~\ref{lem:cross-level-Vk} applies. We need to prove $K>2M+2e_0$. If \(M=\jmath\), this follows from
\[
e-\jmath-1-2\jmath-2e_0
>
\frac{2e_0e}{3+2e_0}-2e_0-1>0,
\]
where the last inequality follows from
\(e\ge 2e_0^2+3e_0+4\).  If \(M=e_0+1>\jmath\), then
\(\jmath\le e_0\), and
\[
e-\jmath-1-2(e_0+1)-2e_0
\ge e-5e_0-3>0
\]
again because \(e\ge 2e_0^2+3e_0+4\). Thus Lemma~\ref{lem:cross-level-Vk}
applies and gives
\begin{align}
c_0(K)
&=
\#\{k\in \bZ\mid \jmath<k<e-\jmath,\
\dim_{\bF_p}V_k\le f-1\}
\notag\\
&\ge
\frac{K-2\max(\jmath,e_0+1)}{2e_0}.
\label{eq:Aj-bound-three-formula-4}
\end{align}

Recall that
\[
\hat{c}(j,0)=
\# \{k\in \bZ\mid e\ge k\ge \max(j+1,\jmath_2),\
\dim_{\bF_p}V_k\le f-1\}.
\]
Comparing \(\hat c(j,0)\) and \(c_0(K)\), we obtain
\begin{align}
\hat{c}(j,0)
&\ge
\# \{k\in \bZ\mid e\ge k\ge \jmath+1,\
\dim_{\bF_p}V_k\le f-1\}
-(j-\jmath_1)
\notag\\
&\ge
c_0(K)-j.
\label{eq:Aj-bound-three-formula-5}
\end{align}
Therefore
\[
\hat c(j,0)+j
\ge
\frac{K-2\max(\jmath,e_0+1)}{2e_0}.
\]

If \(\max(\jmath,e_0+1)=\jmath\), then
\[
\hat c(j,0)+j
\ge
\frac{e-3\jmath-1}{2e_0}
>
\frac{e}{3+2e_0}-\frac{1}{2e_0}
\ge
\frac{e-3}{3+2e_0},
\]
where the last inequality follows from \(e_0\ge1\).

If \(\max(\jmath,e_0+1)=e_0+1>\jmath\), then \(\jmath\le e_0\), and
\[
\hat c(j,0)+j
\ge
\frac{e-3e_0-3}{2e_0}.
\]
The inequality
\[
\frac{e-3e_0-3}{2e_0}\ge \frac{e-3}{3+2e_0}
\]
is equivalent to $e\ge 2e_0^2+3e_0+3$, which follows from our hypothesis.
Thus in all cases $\hat c(j,0)+j\ge \frac{e-3}{3+2e_0}$. The conclusion follows from \eqref{eq:Aj-bound-three-formula-3}.
\end{proof}

\begin{prop}\label{prop:Aj-bound-two}
Assume that \(p=2\), \(e>36\nu+13\), and assume that
\(H\subset G_e\) has exact level \(e\).  For \(j\in [2\nu+2,e]\), we have
\[
q^{j-2e}A_j(\ell) 
\le 2q^{1-e}+4q^{2+\nu/2}e^{3/2}
p^{-\frac{e-37\nu}{18+34\nu}}.
\]
\end{prop}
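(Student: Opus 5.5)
The plan follows the proof of Proposition~\ref{prop:Aj-bound-three}. There are two changes. Lemma~\ref{lem:cross-level-Vk} is replaced by its characteristic-two analogue Lemma~\ref{lem:cross-level-Vk-two}, which is phrased with \(\jmath_3\) and \(J=\max(\jmath_1,\jmath_3,2\nu+1)\) rather than \(\jmath\). The missing control on \(\jmath_3\) will be supplied by Lemma~\ref{lem:Aj-bound-F}, through the quantity \(\lvert\Pi\rvert\). Throughout, \(\nu=e_0\ge1\).

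\emph{Reductions.} As in \eqref{eq:Aj-bound-three-formula-2}, we have \(T_j(\ell)=0\) for \(j<\jmath_1\). Lemma~\ref{lem:borel-tail-telescoping} then gives \(q^{j-2e}A_j(\ell)=q^{\jmath_1-2e}A_{\jmath_1}(\ell)\) for \(j\le\jmath_1\), so we may assume \(j\ge\jmath_1\), and \(j\ge 2\nu+2\) by hypothesis. Bounding \((e-j+1)^{3/2}(q-1)^{3/2}\le e^{3/2}q^{3/2}\) and \(q^{(1+\nu)/2}\le q^{1/2+\nu/2}\), Lemmas~\ref{lem:Aj-bound-D} and~\ref{lem:Aj-bound-F} give
\[
q^{j-2e}A_j(\ell)\le 2q^{1-e}+4q^{2+\nu/2}e^{3/2}\,p^{-\frac12\max\bigl(j+\jmath_2+\hat c(j,0),\;j+\jmath_2+\lvert\Pi\rvert\bigr)}.
\]
Here we used \(q\ge p\), and the \(q^{-\lvert\Pi\rvert/2}\) in Lemma~\ref{lem:Aj-bound-F} is weakened to \(p^{-\lvert\Pi\rvert/2}\). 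It therefore suffices to show that the maximum of these two exponents is at least \(2(e-37\nu)/(18+34\nu)\).

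\emph{Relating \(\jmath_3\) to \(\Pi\).} This is the key new input. Assume \(\jmath_2\le e\) and \(\jmath_3>\jmath_2\). Then \(k_0:=\jmath_3-1\in\Pi\). By Lemma~\ref{lem:propagation} in the nilpotent case, if \(W_k\cap\bF_q^*F\ne\emptyset\) for some \(k\ge\nu+1\), then the same holds at level \(k+\nu\). Read contrapositively, the levels \(k_0,k_0-\nu,k_0-2\nu,\dots\) that stay \(\ge\max(\jmath_2,\nu+1)\) all lie in \(\Pi\). This gives
\[
\lvert\Pi\rvert\ge (\jmath_3-1-\max(\jmath_2,\nu+1))/\nu,
\]
hence \(\jmath_3\le \jmath_2+\nu\lvert\Pi\rvert+\nu+1\). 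Consequently
\[
J\le \max(j,\ \jmath_2+\nu\lvert\Pi\rvert+\nu+1,\ 2\nu+1)\le (\nu+1)\,(j+\jmath_2+\lvert\Pi\rvert)+O(\nu),
\]
using \(j\ge\jmath_1\).

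\emph{Case split.} Put \(Q:=j+\jmath_2+\lvert\Pi\rvert\).

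\emph{Case (1).} If \(Q\) is at least a threshold of order \(e/(9+17\nu)\), the \(\lvert\Pi\rvert\)-branch of the bound above already gives the claim.

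\emph{Case (2).} Otherwise \(J\) is at most of order \(e/17\) plus \(O(\nu)\). Take
\[
K:=\Bigl\lceil \tfrac14(e+4-J-10\nu)\Bigr\rceil-1.
\]
The hypothesis \(e>36\nu+13\), together with the smallness of \(J\), will ensure \(2J+2\nu<K\), so Lemma~\ref{lem:cross-level-Vk-two} applies and gives \(c_0^*(K)\ge (K-2J-2\nu)/(2\nu)\). As in \eqref{eq:Aj-bound-three-formula-5}, we compare \(\hat c(j,0)\) with \(c_0^*(K)\). The window \([J+1,K]\) is contained in \([\jmath_2,e]\), since \(\jmath_3\ge\jmath_2\), and the only levels that can be lost lie below \(j+1\). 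Hence \(\hat c(j,0)\ge c_0^*(K)-j\), and so
\[
j+\hat c(j,0)\ge (K-2J-2\nu)/(2\nu)\gtrsim (e-9J-18\nu)/(8\nu).
\]
Inserting the Case~(2) bound on \(J\) in terms of \(Q\) makes this of order \((e-37\nu)/(18+34\nu)\) after the constants are balanced, and the \(\hat c\)-branch gives the claim.

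\emph{Main obstacle.} The difficult step is the bookkeeping of constants in Case~(2). The threshold separating the two cases has to be chosen so that the factor \(\nu+1\) relating \(J\) to \(Q\), the factor \(1/4\) in the definition of \(K\), and the factor \(1/(2\nu)\) in Lemma~\ref{lem:cross-level-Vk-two} together yield exactly the exponent \((e-37\nu)/(18+34\nu)\). I would first fix the threshold as \(Q\ge 2(e-37\nu)/(18+34\nu)\) and check that the condition \(2J+2\nu<K\) follows from \(e>36\nu+13\). If it does not, I would lower the threshold and rebalance the constants until both cases close.
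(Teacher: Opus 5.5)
Your proposal is correct and uses the paper's ingredients: the two bounds from Lemmas~\ref{lem:Aj-bound-D} and~\ref{lem:Aj-bound-F}; propagation of $\bF_q^*F$ along a residue class mod $\nu$, which turns a large gap $\jmath_3-\jmath_2$ into many levels of $\Pi$; and Lemma~\ref{lem:cross-level-Vk-two} with the same $K$, followed by $\hat c(j,0)\ge c_0^*(K)-j$. The difference is only in how the cases are organized. The paper splits three ways, first on $\max(j,\jmath_2)$ and then on $\jmath_3-\jmath_2$. You split once on $Q=j+\jmath_2+\lvert\Pi\rvert$ and feed $\jmath_3\le\jmath_2+\nu\lvert\Pi\rvert+\nu+1$ directly into the bound on $J$; this is the paper's Case~(2)/(3) dichotomy read contrapositively. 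Your restriction to levels $\ge\nu+1$, where Lemma~\ref{lem:propagation} actually applies, is slightly more careful than the paper's count $\lvert\Pi\rvert\ge(\jmath_3-\jmath_2)/\nu$. The discrepancy is at most one level and is absorbed by the available slack.

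The bookkeeping you flag does close with your first threshold $T=(e-37\nu)/(9+17\nu)$.
\begin{itemize}
\item In Case~(2) you have $j<T$ and $\jmath_2+\nu\lvert\Pi\rvert\le\nu(Q-j)<\nu T$. Hence $J<(\nu+1)T+\nu+1$.
\item It follows that $9J+18\nu<9(\nu+1)T+27\nu+9\le e$. The last inequality reduces to $126\nu^2+63\nu+81\le 8\nu e$, which holds for $e\ge 36\nu+14$. So Lemma~\ref{lem:cross-level-Vk-two} applies.
\item Then $j+\hat c(j,0)\ge(e-9J-18\nu)/(8\nu)>(e-27\nu-9-9(\nu+1)T)/(8\nu)\ge T$. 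The last step is equivalent to $10\nu\ge 9$, which is exactly the slack the paper uses to pass from $27\nu+9$ to $37\nu$.
\end{itemize}
Your fallback of lowering the threshold would not have been available: the exponent is fixed by the statement, and Case~(1) needs $Q\ge T$. It is not needed, however.
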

\begin{proof}
As in the beginning of the proof of Proposition~\ref{prop:Aj-bound-three}, we have
\begin{equation}\label{eq:Aj-bound-two-formula-1}
q^{j-2e}A_j(\ell)=q^{\jmath_1-2e}A_{\jmath_1}(\ell), \quad \text{when}\ 2\le j\le\jmath_1.
\end{equation}

Note that $\nu=v_\fp(2)=e_0$ when $p=2$. By Lemma~\ref{lem:Aj-bound-D}, we obtain, by the same arguments as in the proof of Proposition~\ref{prop:Aj-bound-three}, that, for $j\ge 2\nu+2$,
\begin{equation}
q^{j-2e}A_j(\ell)\le
2q^{1-e}+q^{2+\nu/2}e^{3/2}
p^{-\frac{1}{2}\big(j+\jmath_2+\hat c(j,0)\big)}.\label{eq:Aj-bound-two-formula-2}
\end{equation}
By Lemma~\ref{lem:Aj-bound-F}, we obtain also
\begin{equation}
q^{j-2e}A_j(\ell)\le
2q^{1-e}+4q^{2+\nu/2}\sqrt{e}
p^{-\frac{1}{2}\big(j+\jmath_2+\lvert \Pi\rvert\big)}.\label{eq:Aj-bound-two-formula-3}
\end{equation}

Because of \eqref{eq:Aj-bound-two-formula-1}, we will assume, without loss of generality, that $j\ge \jmath_1$. 

\smallskip\noindent
\emph{Case (1).} Assume that $\max(j,\jmath_2)\ge \frac{e-10\nu}{9+17\nu}$. Then \eqref{eq:Aj-bound-two-formula-2} implies the conclusion because $\hat c(j,0)\ge 0$. 

\smallskip\noindent
\emph{Case (2).} Assume now that 
\[
\max(j,\jmath_2)< \frac{e-10\nu}{9+17\nu}, \;
\jmath_3-\jmath_2\ge \nu\frac{e-10\nu}{9+17\nu}.
\]
Recall that $\jmath_3=\min\{k\ge \jmath_2\mid \forall l\ge k, W_l\cap \bF_q^*F\neq \emptyset\}$, see \S\ref{sec:semisimple-bounds}. By minimality we deduce that $W_{\jmath_3-1}\cap \bF_q^*F =\emptyset$. By Lemma~\ref{lem:propagation} we deduce that
\begin{equation*}
\forall l\le \jmath_3-1, l\equiv \jmath_3-1 \pmod \nu \implies W_{l}\cap \bF_q^*F =\emptyset.
\end{equation*}
Thus,
\begin{align}
\lvert \Pi \rvert & = \#\{k\in [\jmath_2,e]\mid W_{k}\cap \bF_q^*F =\emptyset\}
\notag\\
& \ge \#\{k\in [\jmath_2,\jmath_3-1]\mid W_{k}\cap \bF_q^*F =\emptyset\}
\notag\\
& \ge \#\{k\in [\jmath_2,\jmath_3-1]\mid k\equiv \jmath_3-1 \pmod \nu \}
\notag\\
& \ge \frac{\jmath_3-\jmath_2}{\nu} 
\ge \frac{e-10\nu}{9+17\nu}.\label{eq:Aj-bound-two-formula-4}
\end{align}
Substituting \eqref{eq:Aj-bound-two-formula-4} into \eqref{eq:Aj-bound-two-formula-3} implies the conclusion.

\smallskip\noindent
\emph{Case (3).} Assume now that
\[
\max(j,\jmath_2)< \frac{e-10\nu}{9+17\nu},\qquad
\jmath_3-\jmath_2< \nu\frac{e-10\nu}{9+17\nu}.
\]
Then
\begin{align}
& \jmath_1\le j < \frac{e-10\nu}{9+17\nu},\qquad
\jmath_3<(\nu+1)\frac{e-10\nu}{9+17\nu},
\notag\\
& \jmath^*=\max(\jmath_1,\jmath_3)
<(\nu+1)\frac{e-10\nu}{9+17\nu}.
\label{eq:Aj-bound-two-formula-7}
\end{align}
Set
\(
J:=\max(\jmath^*,2\nu+1)
\)
and 
\[
K:=\left\lceil \frac{1}{4}(e+4-J-10\nu)\right\rceil-1.
\]
Then
\[
K<\frac{1}{4}(e+4-J-10\nu).
\]

We first check that Lemma~\ref{lem:cross-level-Vk-two} applies.  From
\eqref{eq:Aj-bound-two-formula-7}, the same computation as before gives
\[
9\jmath^*+18\nu<e-4.
\]
On the other hand,
\[
9(2\nu+1)+18\nu=36\nu+9<e-4,
\]
because \(e>36\nu+13\). Hence
\[
9J+18\nu<e-4.
\]
Therefore
\[
K\ge \frac{1}{4}(e+4-J-10\nu)-1
>2J+2\nu.
\]
Thus Lemma~\ref{lem:cross-level-Vk-two} applies and gives
\begin{align}
c_0^*(K)
&=
\#\{k\in\bZ\mid J<k\le K,\ \dim_{\bF_p}V_k\le f-1\}
\notag\\
&\ge \frac{K-2J-2\nu}{2\nu}.
\label{eq:Aj-bound-two-formula-5}
\end{align}

Recall that
\[
\hat{c}(j,0)
=
\#\{k\in\bZ\mid e\ge k\ge \max(j+1,\jmath_2),\
\dim_{\bF_p}V_k\le f-1\}.
\]
Since \(J\ge\jmath^*\ge\jmath_1\), comparing the two counting sets gives
\begin{align}
\hat{c}(j,0)
&\ge
\#\{k\in\bZ\mid e\ge k\ge J+1,\
\dim_{\bF_p}V_k\le f-1\}
-(j-\jmath_1)
\notag\\
&\ge c_0^*(K)-j
\ge \frac{K-2J-2\nu}{2\nu}-j.
\label{eq:Aj-bound-two-formula-6}
\end{align}
Hence
\[
\hat c(j,0)+j\ge \frac{K-2J}{2\nu}-1.
\]
Using the definition of \(K\), we get
\begin{align*}
\hat c(j,0)+j
&\ge
\frac{1}{2\nu}
\left(
\frac{1}{4}(e+4-J-10\nu)-1-2J-2\nu
\right)
\\
&=
\frac{e-9J-18\nu}{8\nu}.
\end{align*}

If \(J=\jmath^*\), then \eqref{eq:Aj-bound-two-formula-7} gives
\[
\hat c(j,0)+j
>
\frac{e-27\nu-9}{17\nu+9}
\ge
\frac{e-37\nu}{17\nu+9}.
\]
If \(J=2\nu+1\), then
\[
\hat c(j,0)+j
\ge
\frac{e-36\nu-9}{8\nu}.
\]
We claim that
\[
\frac{e-36\nu-9}{8\nu}
>
\frac{e-37\nu}{17\nu+9}.
\]
Indeed, after clearing positive denominators, this is equivalent to
\[
(17\nu+9)(e-36\nu-9)>8\nu(e-37\nu),
\]
or equivalently to
\[
9(\nu+1)e>316\nu^2+477\nu+81.
\]
Since \(e\) is an integer and \(e>36\nu+13\), we have
\(e\ge 36\nu+14\). Therefore
\[
9(\nu+1)e
\ge
9(\nu+1)(36\nu+14),
\]
and the difference is
\[
9(\nu+1)(36\nu+14)-(316\nu^2+477\nu+81)
=
8\nu^2-27\nu+45>0.
\]
This proves the claim. Therefore in all cases
\[
\hat c(j,0)+j
\ge
\frac{e-37\nu}{17\nu+9}.
\]
The conclusion follows from \eqref{eq:Aj-bound-two-formula-2}.
\end{proof}

\subsection{Proof of Theorem~\ref{thm:main_local_chi}}

\begin{prop}\label{prop:chi-N2-odd}
Assume \(p\ge 3\), \(e\ge 2e_0^2+3e_0+4\), and assume that
\(H\subset G_e\) has exact level \(e\). Then
\[
\chi_e(N_2)
\le
q^{-e}+2q^{1-e}+q^2e^{3/2}p^{-(e-3)/(6+4e_0)}.
\]
\end{prop}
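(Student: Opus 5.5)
We apply the exact formula of Proposition~\ref{prop:exact-chi} with \(j_0=2\), which expresses \(\chi_e(N_2)\) as \(q^{-e}\) plus \(\frac{1}{q+1}\sum_{\lambda\in\bP^1(\bF_q)}\sum_{r=2}^e T_r(\lambda)q^{r-2e}\). For each fixed line \(\lambda\), we choose \(g\in\SL_2(\bF_q)\) with \(g\ell=\lambda\) and a lift \(\hat g\). The interval form of the telescoping identity \eqref{eq:conjugated-tail-telescoping-interval}, taken with \([a,b]=[2,e]\) and \(A_{e+1}=0\), gives
\[
\sum_{r=2}^e q^{r-2e}T_r(\lambda)=q^{2-2e}A_2(\ell;H_{\hat g}).
\]
So the whole problem reduces to bounding \(q^{2-2e}A_2(\ell;H_{\hat g})\) uniformly in \(\lambda\).

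The subgroup \(H_{\hat g}=\hat g^{-1}H\hat g\) is again a subgroup of \(G_e\) of exact level \(e\). Indeed, conjugation by \(\hat g\) commutes with the reduction maps \(\rho_{e,e-1}\), so the property \(H\neq\rho_{e,e-1}^{-1}(\rho_{e,e-1}(H))\) is conjugation invariant. Moreover \(p\ge3\) and \(e\ge 2e_0^2+3e_0+4\) by hypothesis. Hence Proposition~\ref{prop:Aj-bound-three}, applied to \(H_{\hat g}\) with \(j=2\), yields
\[
q^{2-2e}A_2(\ell;H_{\hat g})\le 2q^{1-e}+q^2e^{3/2}p^{-\frac{e-3}{6+4e_0}}.
\]
All quantities entering that proposition (\(\jmath_1,\jmath_2,V_k,\ldots\)) are computed for \(H_{\hat g}\) in the standard adapted basis. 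Only exact level and the hypotheses on \(p,e\) were used there, so the bound is uniform in \(\lambda\).

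Summing over the \(q+1\) lines \(\lambda\) cancels the factor \(\frac1{q+1}\) and gives
\[
\chi_e(N_2)\le q^{-e}+2q^{1-e}+q^2e^{3/2}p^{-(e-3)/(6+4e_0)},
\]
as claimed.

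The only point requiring care is checking that the hypotheses of Proposition~\ref{prop:Aj-bound-three} transfer to the conjugate \(H_{\hat g}\), namely that exact level is preserved, and that \(j=2\) lies in its admissible range \([2,e]\). Here \(j=2\) is allowed since \(\nu=0\) when \(p\) is odd. Everything else is bookkeeping already done in earlier results.
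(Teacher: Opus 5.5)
Your proposal is correct and follows the paper's proof essentially step by step. You apply \eqref{eq:exact-chi-lines} with \(j_0=2\), collapse each line's contribution to \(q^{2-2e}A_2(\ell;H_{\hat g})\) via \eqref{eq:conjugated-tail-telescoping-interval}, bound this using Proposition~\ref{prop:Aj-bound-three} for the conjugate \(H_{\hat g}\), and sum over the \(q+1\) lines. Your added justification that exact level is invariant under conjugation by \(\hat g\) is a detail the paper only asserts.
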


\begin{proof}
By \eqref{eq:exact-chi-lines}, applied with \(j_0=2\),
\[
\chi_e(N_2)
=
q^{-e}
+
\frac{1}{q+1}
\sum_{\lambda\in\bP^1(\bF_q)}
\sum_{r=2}^e T_r(\lambda)q^{r-2e}.
\]
Fix \(\lambda\in\bP^1(\bF_q)\).  Choose \(g\in\SL_2(\bF_q)\) with
\(g\ell=\lambda\), choose a lift \(\hat g\in G_e\), and set
\(H_{\hat g}:=\hat g^{-1}H\hat g\). By \eqref{eq:conjugated-tail-telescoping-interval}, applied to \(H_{\hat g}\),
\[
\sum_{r=2}^e T_r(\lambda)q^{r-2e}
=
q^{2-2e}A_2(\ell;H_{\hat g}).
\]
Since \(H_{\hat g}\) has exact level \(e\), Proposition~\ref{prop:Aj-bound-three}
gives
\[
q^{2-2e}A_2(\ell;H_{\hat g})
\le
2q^{1-e}+q^2e^{3/2}p^{-(e-3)/(6+4e_0)}.
\]
Substituting this into the preceding formula for \(\chi_e(N_2)\), and using
\(|\bP^1(\bF_q)|=q+1\), gives the result.
\end{proof}

\begin{prop}\label{prop:chi-Nnu-dyadic}
Assume \(p=2\) and \(e>36\nu+13\).  If
\(H\subset G_e\) has exact level \(e\), then
\[
\chi_e(N_{2\nu+2})
\le
q^{-e}+2q^{1-e}
+4q^{2+\nu/2}e^{3/2}p^{-(e-37\nu)/(18+34\nu)}.
\]
\end{prop}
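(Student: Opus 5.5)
The plan follows the proof of Proposition~\ref{prop:chi-N2-odd} word for word, with the starting level $j_0=2$ replaced by $j_0=2\nu+2$.

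First, apply \eqref{eq:exact-chi-lines} with $j_0=2\nu+2$. This level lies in $\{2,\dots,e\}$ because $e>36\nu+13$. We get
\[
\chi_e(N_{2\nu+2})
=
q^{-e}+\frac{1}{q+1}\sum_{\lambda\in\bP^1(\bF_q)}\sum_{r=2\nu+2}^e T_r(\lambda)q^{r-2e}.
\]

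Next, fix $\lambda\in\bP^1(\bF_q)$. Choose $g\in\SL_2(\bF_q)$ with $g\ell=\lambda$, a lift $\hat g\in G_e$, and set $H_{\hat g}=\hat g^{-1}H\hat g$. The interval form \eqref{eq:conjugated-tail-telescoping-interval} of the telescoping identity, taken with $a=2\nu+2$ and $b=e$ (so that $A_{e+1}=0$), gives
\[
\sum_{r=2\nu+2}^e T_r(\lambda)q^{r-2e}=q^{2\nu+2-2e}A_{2\nu+2}(\ell;H_{\hat g}).
\]

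Then check that $H_{\hat g}$ has exact level $e$. Conjugation by $\hat g$ carries the filtration of $H$ to that of $H_{\hat g}$, and the Lie images transform by $W_j(H_{\hat g})=g^{-1}W_j(H)g$. Hence $W_e(H_{\hat g})\ne\mathfrak{sl}_2(\bF_q)$ exactly when $W_e(H)\ne\mathfrak{sl}_2(\bF_q)$, equivalently $H_{\hat g}\neq\rho_{e,e-1}^{-1}\rho_{e,e-1}(H_{\hat g})$. We may therefore apply Proposition~\ref{prop:Aj-bound-two} to $H_{\hat g}$ with $j=2\nu+2$, which lies in the allowed range $[2\nu+2,e]$. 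This yields
\[
q^{2\nu+2-2e}A_{2\nu+2}(\ell;H_{\hat g})\le 2q^{1-e}+4q^{2+\nu/2}e^{3/2}p^{-(e-37\nu)/(18+34\nu)}.
\]

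Finally, this bound is uniform in $\lambda$. Summing over the $q+1$ points of $\bP^1(\bF_q)$ cancels the factor $1/(q+1)$, and the claimed inequality follows.

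The only real content is Proposition~\ref{prop:Aj-bound-two}, which is taken as given. The one point needing care is the start at level $2\nu+2$ rather than $2$. Proposition~\ref{prop:Aj-bound-two} is only stated for $j\ge 2\nu+2$, because Lemma~\ref{lem:Aj-bound-D} requires $j\ge\nu+2$ and the dyadic commutator lemmas require higher levels. For this reason we bound $\chi_e(N_{2\nu+2})$ and not $\chi_e(N_2)$. The telescoping identity holds on any interval, so no terms with $r<2\nu+2$ appear and no further estimate is needed.
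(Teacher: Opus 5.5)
Your proposal is correct and is the paper's own argument: the paper proves this statement by repeating the proof of Proposition~\ref{prop:chi-N2-odd} with $N_2$ replaced by $N_{2\nu+2}$ and invoking Proposition~\ref{prop:Aj-bound-two} at $j=2\nu+2$. Your explicit check that conjugation by $\hat g$ preserves exact level, via $W_e(H_{\hat g})=g^{-1}W_e(H)g$, justifies a step the paper only asserts. One harmless caveat comes from the paper itself: at $r=e$ one actually has $m_e(\eta)=|S_e(\eta)|-1$, so the telescoped sum equals $q^{2\nu+2-2e}A_{2\nu+2}(\ell;H_{\hat g})-q^{-e}$ rather than $q^{2\nu+2-2e}A_{2\nu+2}(\ell;H_{\hat g})$, and this only strengthens your upper bound.
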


\begin{proof}
Repeat the proof of Proposition~\ref{prop:chi-N2-odd}, with \(N_2\)
replaced with \(N_{2\nu+2}\), and use
Proposition~\ref{prop:Aj-bound-two}.
\end{proof}

\begin{proof}[Proof of Theorem~\ref{thm:main_local_chi}]
Assume first that \(p\ge3\) and \(e\ge 2e_0^2+3e_0+4\). Lemma~\ref{lem:normal-subgroup} gives
\[
\chi_e(H)\le [H:N_2]\chi_e(N_2).
\]
Moreover \([H:N_2]=|H_1|\le |G_1|=q(q^2-1)\).  The first estimate therefore
follows from Proposition~\ref{prop:chi-N2-odd}.

Assume now that \(p=2\). Recall that \(\nu=e_0\) in this case.
Lemma~\ref{lem:normal-subgroup} gives
\[
\chi_e(H)\le [H:N_{2\nu+2}]\chi_e(N_{2\nu+2}).
\]
Moreover
\[
[H:N_{2\nu+2}]
=
|H_{2\nu+1}|
\le
|G_{2\nu+1}|
=
q^{6\nu+1}(q^2-1),
\]
where the last equality follows from Lemma~\ref{lem:order-Ge}. The estimate then follows from
Proposition~\ref{prop:chi-Nnu-dyadic}.
\end{proof}

\begin{proof}[Proof of Theorem~\ref{thm:uniform-exponential-local-chi}]
We have \(e_0\le n\), \(f\le n\), and \(q=p^f\).

Assume first that \(p\ge3\).  Put \(M:=6+4n\). If
\(e\ge 2n^2+3n+4\), then Theorem~\ref{thm:main_local_chi} applies, and the three terms in its formula are bounded by
\(p^{3-e}\), \(p^{5-e}\), and \(e^{3/2}p^{5n+1-e/M}\), respectively. We remark that
\[
  3x^{3/2}\le 3^{M/2+x/(2M)},\qquad M\ge10,\ x\ge1.
\]
Hence
\begin{align*}
\chi_e(H) & \le 3e^{3/2}p^{5n+1-e/M}
  \le
  p^{5n+1+M/2-e/(2M)}
  \\
  & =
  p^{7n+4-e/(2M)}
  \le
  q^{7n+4-e/(2nM)}.
\end{align*}

Assume now that \(p=2\). Put \(M=18+34n\). If \(e>36n+13\), then
Theorem~\ref{thm:main_local_chi} applies, and its three terms are
bounded by
\[
2^{6n^2+3n-e},\qquad
2^{6n^2+4n+1-e},\qquad
e^{3/2}2^{\frac{13}{2}n^2+5n+4-e/M}.
\]
Thus all three terms are bounded by the last expression, and
\[
  \chi_e(H)
  \le
  3e^{3/2}2^{\frac{13}{2}n^2+5n+4-e/M}.
\]
We remark
\[
  3x^{3/2}\le 2^{M/4+x/(2M)},\qquad M\ge52,\ x\ge1.
\]
Hence 
\begin{align*}
\chi_e(H) & \le 2^{\frac{13}{2}n^2+5n+4+M/4-e/(2M)}
  =
  2^{\frac{13}{2}n^2+\frac{27}{2}n+\frac{17}{2}-e/(2M)}.
\end{align*}

For small values of \(e\) in both cases, we take simply the trivial bound \(\chi_e(H)\le1\).
\end{proof}

\section{Growth of cusp numbers with multiplicity}\label{sec:global-field-section}
In this section we will work with subgroups of $\PGL_2(\bK)$ commensurable with $\PSL_2(\cO_\bK)$ where $\bK$ is a number field. This amounts to working with non-cocompact arithmetic irreducible lattices in Lie groups of the form $\PSL_2{(\bR)}^r\times \PSL_2(\bC)^s$ with $r+s\ge 1$. More details will be explained below and we refer to \cite{Vigneras}, \cite{Voight} for the classical theory of these arithmetic groups, and to \cite{MasonSchweizer12} for cusps.

\subsection{Lattices and arithmetic groups}\label{sec:lattices-arithmetic-groups}
We collect in this subsection some classical descriptions of the arithmetic groups central to this paper.
\subsubsection{Lattices}
Let \(\bK\) be a number field, and let \(\cO_\bK\) be its ring of
integers.  Put
\[
G_\infty:=\prod_{v\mid\infty}\PGL_2(\bK_v),
\]
and let
\[
\iota_\infty:\PGL_2(\bK)\hookrightarrow G_\infty
\]
be the diagonal embedding.  Consider
\[
\PSL_2(\cO_\bK)\subset\PGL_2(\bK),
\qquad
\Delta_\bK:=\iota_\infty(\PSL_2(\cO_\bK))\subset G_\infty .
\]
By the Borel--Harish-Chandra theorem, \(\Delta_\bK\) is a lattice in
\(G_\infty\); it is non-cocompact.  If \(\PGL_2(\cO_\bK)\) denotes the
image of \(\GL_2(\cO_\bK)\) in \(\PGL_2(\bK)\), then
\(\PGL_2(\cO_\bK)\) is commensurable with
\(\PSL_2(\cO_\bK)\). Indeed, the determinant induces an injective homomorphism
\[
\PGL_2(\cO_\bK)/\PSL_2(\cO_\bK)
\hookrightarrow
\cO_\bK^*/(\cO_\bK^*)^2,
\]
and the target is finite by Dirichlet's unit theorem.

Let
\[
\cC_{\bK}^{\mathrm{rat}}
:=
\left\{
\Lambda\subset\PGL_2(\bK)\mid
\Lambda \text{ is commensurable with }
\PSL_2(\cO_\bK)
\right\},
\]
and let
\[
\cC_{\bK}^{\infty}
:=
\left\{
\Delta\subset G_\infty\mid
\Delta \text{ is commensurable with } \Delta_\bK
\right\}.
\]
By Borel's description of arithmetic subgroups associated with \(\PGL_2\), the elements of
\(\cC_{\bK}^{\infty}\) are precisely the arithmetically defined subgroups
of \(G_\infty\) belonging to this split \(\bK\)-form.  See
\cite[\S 3.3]{Borel1981}.

More precisely, the map
\[
\cC_{\bK}^{\mathrm{rat}}\longrightarrow \cC_{\bK}^{\infty},
\qquad
\Lambda\longmapsto \iota_\infty(\Lambda),
\]
is a bijection.  
Let \(\Delta\in\cC_{\bK}^{\infty}\).  If \(\delta\in\Delta\), then \(\delta\) commensurates \(\Delta_\bK\). Indeed, \(\Delta\cap\Delta_\bK\) and \(\delta(\Delta\cap\Delta_\bK)\delta^{-1}\) are finite-index subgroups of \(\Delta\), and their intersection is contained in \(\Delta_\bK\cap\delta\Delta_\bK\delta^{-1}\).  Hence \(\Delta\) is contained in the commensurator group \(\operatorname{Comm}_{G_\infty}(\Delta_\bK)\). By \cite[Theorem~3]{Borel1966} we have 
\[
\operatorname{Comm}_{G_\infty}(\Delta_\bK)
=
\iota_\infty(\PGL_2(\bK)).
\]
Thus \(\Delta=\iota_\infty(\Lambda)\), \(\Lambda:=\iota_\infty^{-1}(\Delta)\subset\PGL_2(\bK)\), and \(\Lambda\) is commensurable with
\(\PSL_2(\cO_\bK)\).

\subsubsection{Maximal arithmetic overgroups}
For every finite place \(v\), let \(\mathscr T_v\) be the
Bruhat--Tits tree of \(\PGL_2(\bK_v)\).  We use the standard model in
which the vertices are the homothety classes of
\(\cO_{\bK_v}\)-lattices \(L\subset\bK_v^2\), and two vertices
\([L]\), \([L']\) are adjacent if representatives can be chosen with
\[
\varpi_v L\subsetneq L'\subsetneq L,
\]
where \(\varpi_v\) is a uniformizer of \(\cO_{\bK_v}\).  The action of
\(\PGL_2(\bK_v)\) on \(\mathscr T_v\) is induced by the natural action
of \(\GL_2(\bK_v)\) on lattices.

To a vertex \(x=[L]\) we attach the maximal order
\[
\cM_{v,x}:=\operatorname{End}_{\cO_{\bK_v}}(L)\subset M_2(\bK_v).
\]
This is independent of the representative \(L\), and identifies the vertices of \(\mathscr T_v\) with maximal orders in
\(M_2(\bK_v)\). Under this identification, the action of
\(\PGL_2(\bK_v)\) on vertices is the action by conjugation on maximal
orders.

For a vertex \(x\), one has
\[
\operatorname{Stab}_{\PGL_2(\bK_v)}(x)
=
N_{\GL_2(\bK_v)}(\cM_{v,x})/\bK_v^* .
\]
Indeed, both sides consist of the projective classes of those
\(g\in\GL_2(\bK_v)\) satisfying
\(g\cM_{v,x}g^{-1}=\cM_{v,x}\).

Let \(e=\{x,y\}\) be an unoriented edge of \(\mathscr T_v\), and set
\[
\cE_{v,e}:=\cM_{v,x}\cap\cM_{v,y}.
\]
Then \(\cE_{v,e}\) is an Eichler order.  By
\cite[Ch.~II, \S2, Lemma~2.4]{Vigneras}, an Eichler order in
\(M_2(\bK_v)\) is the intersection of a unique pair of maximal orders.
Therefore
\[
\operatorname{Stab}_{\PGL_2(\bK_v)}(e)
=
N_{\GL_2(\bK_v)}(\cE_{v,e})/\bK_v^* ,
\]
where the stabilizer of \(e\) is the setwise stabilizer of the unordered
pair \(\{x,y\}\).  Indeed, preserving the edge is the same as preserving
the unordered pair \(\{\cM_{v,x},\cM_{v,y}\}\), and by the uniqueness
statement in \cite[Ch.~II, \S2, Lemma~2.4]{Vigneras} this is equivalent
to normalizing their intersection \(\cE_{v,e}\).

By \cite[Proposition~4.4(iii)]{Borel1981}, every arithmetic subgroup of
\(\PGL_2(\bK)\) commensurable with \(\PSL_2(\cO_\bK)\) is, after
conjugacy in \(\PGL_2(\bK)\), contained in a group of the form
\[
\PGL_2(\bK)\cap \prod_{v \, \mathrm{finite}} C_v,
\]
where \(\PGL_2(\bK)\) is embedded diagonally in
\(\prod_{v\,\mathrm{finite}}\PGL_2(\bK_v)\), each \(C_v\) is either the
stabilizer of a vertex of \(\mathscr T_v\) or the setwise stabilizer of
an unoriented edge of \(\mathscr T_v\), and \(C_v\) is the stabilizer of
the vertex represented by \(\cO_{\bK_v}^2\) for all but finitely many
\(v\).

For each finite \(v\), define an order \(\cE_v\subset M_2(\bK_v)\) as
follows.  If \(C_v=\operatorname{Stab}(x)\) for a vertex \(x\), put
\[
\cE_v:=\cM_{v,x}.
\]
If \(C_v=\operatorname{Stab}(e)\) for an unoriented edge \(e\), put
\[
\cE_v:=\cE_{v,e}.
\]
By the preceding stabilizer computations,
\[
C_v=
N_{\GL_2(\bK_v)}(\cE_v)/\bK_v^*
\]
for every finite place \(v\).  Moreover \(\cE_v\) is maximal for all but
finitely many \(v\).

By the local-global correspondence for lattices
\cite[Ch.~III, \S5, Proposition~5.1]{Vigneras}, there is an
\(\cO_\bK\)-lattice \(\cE\subset M_2(\bK)\) whose completion at every
finite place \(v\) is \(\cE_v\); being an order, being a maximal order, and being an
Eichler order are local properties (see \cite{Vigneras} immediately after the cited proposition, see also \cite[Theorem~9.4.9]{Voight}).  Hence \(\cE\) is an Eichler order in
\(M_2(\bK)\). For an order
\(\cA\subset M_2(\bK)\), set
\[
N(\cA):=\{g\in\GL_2(\bK)\mid g\cA g^{-1}=\cA\},
\;
\PN(\cA):=N(\cA)/\bK^*\subset\PGL_2(\bK).
\]
The same local-global theorem gives
\begin{align*}
& N(\cE)
=
\{g\in\GL_2(\bK)\mid g_v\cE_vg_v^{-1}=\cE_v
\text{ for every finite }v\}, 
\\
& \PGL_2(\bK)\cap \prod_{v\,\mathrm{finite}} C_v
=
\PN(\cE).
\end{align*}
Indeed, a projective class \([g]\in\PGL_2(\bK)\) belongs to the left-hand
side if and only if, for every finite \(v\), some scalar multiple of
\(g_v\) normalizes \(\cE_v\).  Since scalar conjugation is trivial, this
is equivalent to \(g_v\cE_vg_v^{-1}=\cE_v\) for every finite \(v\), hence
to \(g\cE g^{-1}=\cE\) by the preceding local-global criterion.

Consequently, if
\(\Lambda\subset\PGL_2(\bK)\) is commensurable with
\(\PSL_2(\cO_\bK)\), then there exist
\(g\in\PGL_2(\bK)\) and an Eichler order
\(\cE\subset M_2(\bK)\) such that
\[
g\Lambda g^{-1}\subset \PN(\cE).
\]
Conversely, every subgroup
\(\Lambda\subset\PN(\cE)\) commensurable with
\(\PSL_2(\cO_\bK)\) has diagonal image commensurable with
\(\Delta_\bK\).  Hence, up to conjugacy, the lattices in
\(\cC_{\bK}^{\infty}\) are exactly the diagonal images of the groups
\[
\Lambda\subset\PN(\cE)\subset\PGL_2(\bK),
\]
where \(\cE\subset M_2(\bK)\) is an Eichler order and
\(\Lambda\) is commensurable with \(\PSL_2(\cO_\bK)\).

\subsection{Global arithmetic setting}
\label{sec:global-arithmetic-setting}
Let $\bK$ be a number field with ring of integers $\cO_\bK$. Let $M_2(\bK)$ be the space of $2 \times 2$ matrices. Let \(\cE\subset M_2(\bK)\) be an Eichler \(\cO_\bK\)-order, and choose a
maximal \(\cO_\bK\)-order \(\cM\subset M_2(\bK)\) containing \(\cE\).  We
use the notation \(N(\cE)\) and \(\PN(\cE)\) introduced in \S\ref{sec:lattices-arithmetic-groups}, and fix a
subgroup
\[
\Lambda\subset\PN(\cE)
\]
commensurable with \(\PSL_2(\cO_\bK)\). Put
\[
\cG := \SL(\cM) = \{g \in \cM : \det g = 1\},
\qquad
\P\cG := \operatorname{im}\!\left(\cG \longrightarrow \PGL_2(\bK)\right).
\]
Since $\cM$ is commensurable with $M_2(\cO_\bK)$ as an $\cO_\bK$-lattice, $\cG$ is commensurable with $\SL_2(\cO_\bK)$. We write $\Proj g$ for the image in $\PGL_2(\bK)$ of an element $g \in \GL_2(\bK)$. We set
\[
\Lambda_\cM := \Lambda \cap \P\cG
\]
and let
\[
\Gamma
:=
\{g \in \cG : \Proj g \in \Lambda_\cM\}
\subset \cG
\]
be the full inverse image of $\Lambda_\cM$ in $\cG$; it has finite index in $\cG$ and contains $-I$, with $\Proj\Gamma = \Lambda_\cM$.

For a non-zero ideal $\fn\subset\cO_\bK$, the product $\fn\cM$ is a two-sided ideal of $\cM$, since $\fn$ is central. Hence $\cM/\fn\cM$ is a finite ring, and reduction modulo $\fn\cM$ gives a homomorphism
\[
\cM^*\longrightarrow(\cM/\fn\cM)^*.
\]
We define the \emph{principal congruence subgroup of level} $\fn$, relative to $\cM$, by
\[
\cG(\fn)
:=
\ker\!\left(
\SL(\cM)\longrightarrow(\cM/\fn\cM)^*
\right).
\]
Equivalently, $\cG(\fn)=\{g\in\SL(\cM):g\equiv I\pmod{\fn\cM}\}$.

\begin{lem}\label{lem:congruence-product-maximal-order}
For any two non-zero ideals $\fa,\fb\subset\cO_\bK$, one has
\[
\cG(\fa)\cG(\fb)=\cG(\fa+\fb).
\]
\end{lem}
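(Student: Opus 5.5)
The inclusion \(\cG(\fa)\cG(\fb)\subset\cG(\fa+\fb)\) is immediate: \(\fa\cM\subset(\fa+\fb)\cM\) and \(\fb\cM\subset(\fa+\fb)\cM\), so each factor is congruent to \(I\) modulo \((\fa+\fb)\cM\). The content is the reverse inclusion. Given \(g\in\cG(\fa+\fb)\), we will write it as \(g=xy\) with \(x\in\cG(\fa)\) and \(y\in\cG(\fb)\). Equivalently, we will find \(x\in\cG(\fa)\) with \(x\equiv g\pmod{\fb\cM}\), and then set \(y:=x^{-1}g\).

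\textbf{Step 1: reduction to strong approximation.} Set \(\fc:=\fa\cap\fb\). By the Chinese remainder theorem for the two-sided ideals \(\fa\cM,\fb\cM\) of \(\cM\), we have \(\fa\cM+\fb\cM=(\fa+\fb)\cM\) and \(\fa\cM\cap\fb\cM=\fc\cM\); the latter holds because \(\cM\) is a projective, hence flat, \(\cO_\bK\)-module. Together these give a ring isomorphism
\[
\cM/\fc\cM\;\cong\;\cM/\fa\cM\times_{\cM/(\fa+\fb)\cM}\cM/\fb\cM .
\]
Consider the element \((I,\bar g)\) of the right-hand side. It is well defined as a compatible pair because \(g\equiv I\pmod{(\fa+\fb)\cM}\), and it has reduced norm \(1\). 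Let \(\bar x\in(\cM/\fc\cM)^*\) be the corresponding element, of determinant \(1\). If \(\bar x\) lifts to some \(x\in\SL(\cM)\), then \(x\in\cG(\fa)\) and \(x\equiv g\pmod{\fb\cM}\), and the reverse inclusion follows.

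\textbf{Step 2: surjectivity of \(\SL(\cM)\to\SL(\cM/\fc\cM)\).} This is the main obstacle. Write \(\cM=\operatorname{End}_{\cO_\bK}(L)\) with \(L\cong\cO_\bK\oplus\mathfrak{a}_0\), so that \(\cG=\SL(L)\). By strong approximation for \(\SL_2\) over \(\bK\), which is simply connected and isotropic at an archimedean place, \(\SL_2(\bK)\) is dense in \(\SL_2(\mathbb A_f)\). Hence \(\SL(L)\) is dense in \(\prod_v\SL(L_v)\), and therefore surjects onto every finite quotient \(\prod_{v\mid\fc}\SL(L_v/\fc L_v)\). It remains to check that \(\SL(L_v)\to\SL(L_v/\fc L_v)\) is surjective locally. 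There \(L_v\cong\cO_v^2\), and \(\SL_2\) of a local ring is generated by elementary matrices, so any element of \(\SL_2(\cO_v/\fc_v)\) lifts to \(\SL_2(\cO_v)\). Combining the local surjectivity with the density statement gives the surjection \(\SL(\cM)\to\SL(\cM/\fc\cM)\).

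As an alternative that avoids adelic language, one could use the elementary-generation argument directly. The ring \(\cM/\fc\cM\) is semilocal, since \(\cO_\bK/\fc\) is semilocal, so \(\SL_2(\cO_\bK/\fc)\) is generated by elementary matrices, and each elementary matrix lifts trivially. For a non-free \(L\) the elementary matrices should be taken with entries in \(\mathfrak a_0^{\pm1}\). I would present the strong approximation version, as it is cleaner and handles non-free \(L\) uniformly.
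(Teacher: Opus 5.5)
Your proof is correct, and its first step is the paper's. The paper also sets $\fd=\fa\cap\fb$ (your $\mathfrak c$) and glues the identity modulo $\fa$ with $g$ modulo $\fb$. It does this through the exact sequence $0\to L/\fd L\to L/\fa L\oplus L/\fb L\to L/(\fa+\fb)L\to 0$ rather than through your fibre product of rings for $\cM$, but this is the same construction. It then checks that the determinant is $1$ modulo $\fa\cap\fb$ and lifts. Where you differ is in proving that $\SL(L)\to\SL(L/\fd L)$ is onto.

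The paper argues globally and purely algebraically:
\begin{itemize}
\item it quotes Bass's result that $E_2(\cO_\bK)\to\SL_2(\cO_\bK/\fd)$ is surjective;
\item it identifies $L/\fd L$ with $(\cO_\bK/\fd)^2$ by choosing a generator of the free rank-one module $I/\fd I$, where $L\simeq\cO_\bK\oplus I$;
\item it lifts the two kinds of transvections to $\SL(L)$, one using an element of $I$ and the other using a homomorphism $I\to\cO_\bK$, which exists because $I$ is projective.
\end{itemize}

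You instead use strong approximation for $\SL_2$ to reduce to a local statement. Locally $L_v$ is free and elementary generation over the local ring $\cO_v/\mathfrak c_v$ is immediate, so the non-free case needs no twisting. The price is invoking strong approximation, though for $\SL_2$ that theorem is itself usually proved with elementary matrices and the Chinese remainder theorem, so the two arguments are closer than they look.

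Your density step implicitly uses $\SL(L)=\SL(L_\bK)\cap\prod_{v}\SL(L_v)$ and the openness of the kernel of reduction. Both are standard.

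Your \emph{alternative} paragraph is exactly the paper's route. If you write it out, note that \emph{lifts trivially} hides its one non-formal point. Lifting the transvection $(\bar x,\bar y)\mapsto(\bar x+\bar a\bar y,\bar y)$ requires lifting a map $I/\fd I\to\cO_\bK/\fd$ to an element of $\operatorname{Hom}_{\cO_\bK}(I,\cO_\bK)$, which is where projectivity of $I$ enters.
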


\begin{proof}
The case $\cM=M_2(\cO_\bK)$ is \cite[Cor.~(9.3), p.~267]{BassBook}. We include the proof for general $\cM$, since we did not find an exact reference for the required form. Since $\cM$ is a maximal order in $M_2(\bK)$, there exists a projective $\cO_\bK$-module $L$ of rank two such that $\cM=\operatorname{End}_{\cO_\bK}(L)$. Thus $\cG=\SL(L)$, where $\SL(L)$ denotes the group of determinant-one automorphisms of $L$.

We first show that, for every non-zero ideal $\fd\subset\cO_\bK$, the reduction map $\SL(L)\to \SL(L/\fd L)$ is surjective. By the structure theorem for finitely generated torsion-free modules over a
Dedekind domain, we may write $L\simeq \cO_\bK\oplus I$ for some
fractional ideal $I$. Since $\cO_\bK/\fd$ is finite, it is semi-local. Hence the invertible $\cO_\bK/\fd$-module $I/\fd I$ is free of rank one. 

Let $E_2(\cO_\bK)$ denote the subgroup of $\SL_2(\cO_\bK)$ generated by the matrices
\[
\begin{pmatrix}1&a\\0&1\end{pmatrix},
\qquad
\begin{pmatrix}1&0\\a&1\end{pmatrix},
\qquad a\in\cO_\bK.
\]
By \cite[Cor.~(9.3), p.~267]{BassBook}, the reduction map
\[
E_2(\cO_\bK)\longrightarrow \SL_2(\cO_\bK/\fd)
\]
is surjective. After choosing a generator $\bar s$ of the free rank-one $\cO_\bK/\fd$-module $I/\fd I$, we identify
\[
L/\fd L=(\cO_\bK/\fd)\oplus(I/\fd I)
\]
with $(\cO_\bK/\fd)^2$. Under this identification, every element of $\SL(L/\fd L)$ is a product of transformations of the following two forms:
\[
(\bar x,\bar y)\longmapsto(\bar x+\bar a\bar y,\bar y),
\qquad
(\bar x,\bar y)\longmapsto(\bar x,\bar y+\bar a\bar x),
\qquad
\bar a\in\cO_\bK/\fd.
\]

We check that both types lift to determinant-one automorphisms of $L=\cO_\bK\oplus I$. For the transformation $(\bar x,\bar y)\mapsto(\bar x,\bar y+\bar a\bar x)$, choose $c\in I$ lifting $\bar a\bar s\in I/\fd I$. Then
\[
(x,y)\longmapsto(x,y+cx)
\]
is an automorphism of $L$ with determinant one, and its reduction modulo $\fd$ gives the required transformation. For the transformation $(\bar x,\bar y)\mapsto(\bar x+\bar a\bar y,\bar y)$, choose
\[
\varphi\in\operatorname{Hom}_{\cO_\bK}(I,\cO_\bK)
\]
whose reduction sends $\bar s$ to $\bar a$. Such a lift exists because $I$ is projective, hence the natural map
\[
\operatorname{Hom}_{\cO_\bK}(I,\cO_\bK)
\longrightarrow
\operatorname{Hom}_{\cO_\bK/\fd}(I/\fd I,\cO_\bK/\fd)
\]
is surjective. Then
\[
(x,y)\longmapsto(x+\varphi(y),y)
\]
is an automorphism of $L$ with determinant one, and its reduction modulo $\fd$ gives the required transformation. Thus every generator of $\SL(L/\fd L)$ obtained from Bass's theorem lifts to an element of $\SL(L)$, so the reduction map $\SL(L)\to\SL(L/\fd L)$ is surjective.

We now prove the product formula. The inclusion $\cG(\fa)\cG(\fb)\subset \cG(\fa+\fb)$ is immediate. Conversely, let $g\in\cG(\fa+\fb)$ and set $\fd:=\fa\cap\fb$. We use the standard exact sequence
\[
0\to \cO_\bK/\fd
\to \cO_\bK/\fa\oplus \cO_\bK/\fb
\to \cO_\bK/(\fa+\fb)\to 0.
\]
Since $L$ is projective, hence flat, tensoring with $L$ gives
\[
0\to L/\fd L
\to L/\fa L\oplus L/\fb L
\to L/(\fa+\fb)L\to 0.
\]
Now $g\in\cG(\fa+\fb)$ acts trivially on $L/(\fa+\fb)L$. Hence the identity on $L/\fa L$ and the automorphism induced by $g$ on $L/\fb L$ are compatible, and they glue to an automorphism $\bar x$ of $L/\fd L$. By construction, $\bar x\equiv I\pmod{\fa}$ and $\bar x\equiv g\pmod{\fb}$. The compatible pair $(I,g^{-1})$ glues to the inverse of $\bar x$, so $\bar x$ is an automorphism. Its determinant, in the sense of the action
on $\bigwedge^2_{\cO_\bK/\fd}(L/\fd L)$, reduces to $1$ modulo both
$\fa$ and $\fb$; hence it is $1$ modulo $\fd=\fa\cap\fb$. By the surjectivity proved above, choose $x\in\SL(L)=\cG$ lifting $\bar x$. Then $x\in\cG(\fa)$ and $x^{-1}g\in\cG(\fb)$. Hence $g=x(x^{-1}g)\in\cG(\fa)\cG(\fb)$, proving the reverse inclusion.
\end{proof}

A subgroup of $\cG$ is a \emph{congruence subgroup}, relative to $\cM$, if it contains $\cG(\fn)$ for some non-zero ideal $\fn$. Henceforth $\Gamma$ is assumed to be a congruence subgroup relative to $\cM$. By Lemma~\ref{lem:congruence-product-maximal-order}, the set of ideals $\fa$ such that $\cG(\fa)\subset\Gamma$ is closed under sums. It therefore has a largest element with respect to divisibility. We denote it by $\fn$ and call it the \emph{congruence level of $\Gamma$ relative to $\cM$}.

Let $\cH$ be a subgroup of $\GL_2(\bK)$ or of $\PGL_2(\bK)$. A \emph{cusp of $\cH$} is an $\cH$-orbit of parabolic fixed points of $\cH$ on $\bP^1(\bK)$. When $\cH$ is commensurable with $\cG$ or with $\P\cG$, every point of $\bP^1(\bK)$ is a parabolic fixed point of $\cH$, so the set of cusps is naturally identified with $\cH \backslash \bP^1(\bK)$. The inclusion $\Lambda_\cM \subset \Lambda$ induces a finite map
\[
\Gamma \backslash \bP^1(\bK)
=
\Lambda_\cM \backslash \bP^1(\bK)
\longrightarrow
\Lambda \backslash \bP^1(\bK),
\]
so each $\Lambda$-cusp is the image of finitely many $\Gamma$-cusps.

\subsection{Finiteness for fixed congruence level}
\label{sec:fixed-level-finiteness}

Let \(G_\infty=\prod_{v\mid\infty}\PGL_2(\bK_v)\) be the archimedean group
fixed in Subsection~\ref{sec:global-arithmetic-setting}. The following fact is classical:
\begin{prop}\label{prop:fixed-level-finitely-many-lie-classes}
Fix a non-zero ideal \(\fn\subset\cO_\bK\). Consider all subgroups
\(\Lambda\subset\PGL_2(\bK)\) for which there exist an Eichler order
\(\cE\subset M_2(\bK)\) and a maximal order \(\cM\supset\cE\) such that
\(
\Lambda\subset\PN(\cE)
\), and such that the congruence level of \(\Gamma\) relative to \(\cM\) is exactly \(\fn\). Then these groups \(\Lambda\) form only finitely many
\(\PGL_2(\bK)\)-conjugacy classes. Equivalently, their diagonal images in
\(G_\infty\) form only finitely many \(G_\infty\)-conjugacy classes.
\end{prop}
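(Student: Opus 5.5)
We reduce to two finiteness statements: first, finitely many pairs \((\cE,\cM)\) up to \(\PGL_2(\bK)\)-conjugacy among those that can occur at level \(\fn\); second, for each such pair, finitely many possible \(\Lambda\).

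\emph{Step 1: the maximal order \(\cM\).} Every maximal order of \(M_2(\bK)\) has the form \(\operatorname{End}_{\cO_\bK}(L)\) for a rank-two projective \(\cO_\bK\)-module \(L\subset\bK^2\). Writing \(L\simeq\cO_\bK\oplus I\) and choosing \(g\in\GL_2(\bK)\) carrying \(L\) to \(\cO_\bK\oplus I\) shows that, up to conjugacy, \(\cM\) is determined by the class of \(I\) in \(\Cl(\bK)\). Hence there are at most \(h_\bK\) conjugacy classes of \(\cM\). We fix one representative \(\cM=\operatorname{End}(\cO_\bK\oplus I)\) in each class.

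\emph{Step 2: bounding the level of \(\cE\).} The main point is to show that \(\cE\) is forced to be close to \(\cM\) by the level \(\fn\). Since \(\Gamma\) has exact level \(\fn\), we have \(\cG(\fn)\subset\Gamma\), so \(\cG(\fn)\) normalizes \(\cE\). Fix a finite place \(v\) at which \(\cE_v\neq\cM_v\). Then \(\cE_v\) is the intersection of \(\cM_v\) with an adjacent order \(\cM'_v\) on an edge of the Bruhat--Tits tree, and \(N(\cE_v)\) fixes that edge setwise. However, the principal congruence subgroup \(\cG(\fn)_v\), which is dense in \(\SL(\cM_v)(\fp^{e_\fp})\) by strong approximation (or simply by the surjectivity in Lemma~\ref{lem:congruence-product-maximal-order}), moves every edge at distance \(>e_\fp\) from the vertex of \(\cM_v\). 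Here \(e_\fp=v_\fp(\fn)\), and the edge would have to lie within this ball. This bounds the distance between the vertex \(\cM_v\) and the edge defining \(\cE_v\) by \(v_\fp(\fn)+1\). It also shows that \(\cE_v=\cM_v\) for every \(\fp\nmid\fn\), because away from \(\fn\) the group \(\cG(\fn)_v\) is all of \(\SL(\cM_v)\), and this group fixes only the vertex of \(\cM_v\). Consequently \(\cE\) ranges over finitely many orders containing \(\varpi\)-power multiples of \(\cM\) determined by \(\fn\): each \(\cE_v\) corresponds to one of finitely many edges in a bounded ball, and \(\cE\) is recovered from its completions by the local-global correspondence.

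\emph{Step 3: finitely many \(\Lambda\).} With \(\cE\) and \(\cM\) fixed, we have \(\Gamma\supset\cG(\fn)\), so \(\Lambda_\cM=\Proj\Gamma\) contains \(\Proj\cG(\fn)\). The index \([\P\cG:\Proj\cG(\fn)]\) is finite, so there are only finitely many intermediate groups \(\Lambda_\cM\). Next, \(\Lambda\subset\PN(\cE)\) and \(\Lambda_\cM=\Lambda\cap\P\cG\). The group \(\PN(\cE)\) contains \(\PN(\cE)\cap\P\cG\) with finite index; this is the Atkin--Lehner-type quotient, which is finite because it injects into a product of the local Atkin--Lehner groups at the finitely many places of \(S_\cE\) together with \(\bK^*/\bK^{*2}\cO^*\)-type data modulo a finitely generated part. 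It follows that \(\Lambda\) is generated by \(\Lambda_\cM\) together with a subset of a finite set of coset representatives of \(\PN(\cE)/(\PN(\cE)\cap\P\cG)\). Hence only finitely many \(\Lambda\) are possible. Composing with \(\iota_\infty\), which is injective and intertwines the two conjugation actions, gives the equivalent statement in \(G_\infty\).

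The main obstacle is Step 2, that is, showing that normalization by \(\cG(\fn)\) confines the edge of \(\cE_v\) to a ball of radius \(\approx v_\fp(\fn)\). The approach we would try first is to use the explicit fixed-edge computation: the elements \(\begin{pmatrix}1&\varpi^{e}x\\0&1\end{pmatrix}\) and their transposes fix exactly the ball of radius \(e\) around the standard vertex. The finiteness of \(\PN(\cE)/(\PN(\cE)\cap\P\cG)\) in Step 3 also needs care. It is finite because both \(\PN(\cE)\) and \(\P\cG\) are commensurable lattices and \(\P\cG\cap\PN(\cE)\) has finite index in the lattice \(\PN(\cE)\).
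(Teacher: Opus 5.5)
Your argument is correct in outline, but it follows a genuinely different route from the paper's.

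\emph{Comparison.} The paper never analyses \(\cE\). From \(\P\cG(\fn)\subset\Lambda\) it bounds the covolume of \(\iota_\infty(\Lambda)\) by that of \(\iota_\infty(\P\cG(\fn))\). This bound is uniform in \(\cM\) for two reasons: there are finitely many conjugacy classes of maximal orders (your Step~1), and \([\P\cG:\P\cG(\fn)]\le|\SL_2(\cO_\bK/\fn)|\). The paper then quotes the Borel--Prasad finiteness theorem for arithmetic lattices of bounded covolume. Finally it uses Borel's commensurator theorem to pass from \(G_\infty\)-conjugacy back to \(\PGL_2(\bK)\)-conjugacy.

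You replace Borel--Prasad by a local Bruhat--Tits argument. Once \(\cM\) is one of finitely many fixed representatives, only finitely many \(\cE\) occur, and \(\Lambda\) is sandwiched with finite index between \(\P\cG(\fn)\) and \(\PN(\cE)\). What your route buys: it is elementary and effective, and it proves the \(\PGL_2(\bK)\) statement directly; the \(G_\infty\) statement then follows by applying \(\iota_\infty\), with no commensurator theorem needed. What the paper's route buys: it is shorter and never uses the Eichler structure, so it applies to any discrete \(\Lambda\supset\P\cG(\fn)\).

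\emph{Step~2 must be restated.}
\begin{enumerate}
\item The proposition allows Eichler orders of any level. So \(\cE_v=\cM_{v,x}\cap\cM_{v,y}\) with \(d(x,y)\) arbitrary, not the intersection of \(\cM_v\) with an adjacent order. The inclusion \(\cE_v\subset\cM_v\) says exactly that the vertex \(x_0\) of \(\cM_v\) lies on the geodesic \([x,y]\). What you must bound is \(d(x_0,x)\) and \(d(x_0,y)\).
\item Let \(K_v(e)\) be the closure of \(\cG(\fn)\), i.e.\ the depth-\(e\) principal congruence subgroup of \(\SL(\cM_v)\), where \(e=v_\fp(\fn)\). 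It only stabilizes \(\{x,y\}\) setwise, not pointwise.
\item If \(e=0\), then \(K_v(0)=\SL(\cM_v)\) acts transitively on every sphere around \(x_0\). Each such sphere has at least \(q+1\ge 3\) points, so the orbit of \(x\) cannot fit in \(\{x,y\}\) unless \(x=y=x_0\).
\item If \(e\ge1\), the pointwise stabilizer of \(\{x,y\}\) in \(K_v(e)\) has index at most \(2\). Hence it contains all squares, in particular \(u(2\varpi^e t)\) and their transposes (in a basis with \(\cM_v\) standard). By exactly the computation you propose, the common fixed set of these elements is the ball of radius \(e+v_\fp(2)\) around \(x_0\). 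So \(d(x_0,x),d(x_0,y)\le e+v_\fp(2)\), which gives the needed finiteness.
\end{enumerate}

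\emph{Step~3, minor fix.} The sentence about a fixed finite set of coset representatives is not literally right, since such representatives need not lie in \(\Lambda\). Instead, note that \(\P\cG(\fn)\) has finite index in \(\PN(\cE)\). Indeed \([\P\cG:\P\cG(\fn)]<\infty\), and \([\PN(\cE):\PN(\cE)\cap\P\cG]<\infty\) by the argument in the proof of Proposition~\ref{prop:d-lambda-level-prime-bound}. Hence there are only finitely many intermediate groups \(\Lambda\).
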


\begin{proof}
Recall the notations \(\cG=\SL(\cM)\), \(\Lambda_\cM=\Lambda\cap\P\cG\), so that
\[
\Gamma=\{g\in\cG\mid \Proj g\in\Lambda_\cM\}.
\]
To prove the proposition it is enough to prove that the covolumes of the groups \(\Lambda\) are
uniformly bounded in terms of \(\bK\) and \(\fn\), because of the finiteness theorem of \cite{BorelPrasad1989}.

Since \(\cG(\fn)\subset\Gamma\), we have \(\P\cG(\fn)\subset\Lambda\), and
therefore
\[
  \operatorname{vol}\bigl(G_\infty/\iota_\infty(\Lambda)\bigr)
  \le
  \operatorname{vol}\bigl(G_\infty/\iota_\infty(\P\cG(\fn))\bigr).
\]
We bound the right-hand side as \(\cM\) varies. Every maximal order in \(M_2(\bK)\) is of the form \(\operatorname{End}_{\cO_\bK}(L)\), with \(L\) a rank-two projective
\(\cO_\bK\)-module. By the Steinitz classification there are only finitely many such \(L\) up to isomorphism. Hence maximal orders in \(M_2(\bK)\) fall into only finitely
many \(\PGL_2(\bK)\)-conjugacy classes.
For \(\cM=\operatorname{End}_{\cO_\bK}(L)\), 
\[
  [\P\cG:\P\cG(\fn)]
  \leq
  |\cG/\cG(\fn)|
  =
  |\SL(L/\fn L)|
  =
  |\SL_2(\cO_\bK/\fn)|.
\]

Finally, in the commensurability class \(G_\infty\)-conjugacy is the
same as \(\PGL_2(\bK)\)-conjugacy. If
\(h\iota_\infty(\Lambda_1)h^{-1}=\iota_\infty(\Lambda_2)\), then \(h\)
commensurates \(\Delta_\bK=\iota_\infty(\PSL_2(\cO_\bK))\). By
\cite{Borel1966} (cf.\ \cite[\S~4.3]{Borel1981}), \(h=\iota_\infty(g)\) for some \(g\in\PGL_2(\bK)\), and injectivity of \(\iota_\infty\) gives \(g\Lambda_1g^{-1}=\Lambda_2\).
\end{proof}

\subsection{Quasi-amplitudes and cusp multiplicity}
\label{sec:cusps-hirzebruch-multiplicity}

Set
\[
\bK_+^*
:=
\{x \in \bK^* \mid \rho(x) > 0 \text{ for every real embedding } \rho : \bK \hookrightarrow \bR\}.
\]
If $\bK$ has no real embeddings, then $\bK_+^* = \bK^*$. Set
\[
\PGL_2^+(\bK):=\{\lambda\in \PGL_2(\bK)\mid \exists A\in \GL_2(\bK), \det A\in \bK_+^*, \operatorname{P}A=\lambda\}.
\]
Then $\iota_\infty$ sends arithmetic subgroups contained in $\PGL_2^+(\bK)$ to lattices contained in the connected component of $G_\infty$ which has the form
\[
\PSL_2(\bR)^r\times \PSL_2(\bC)^s. 
\]
From now on we will always assume that 
\[
\Lambda \subset \PGL_2^+(\bK).
\]

For a full $\bZ$-lattice $M \subset \bK$, set
\[
U_M^+ := \{\varepsilon \in \bK_+^* \mid \varepsilon M = M\}.
\]

Let $\sigma \in \bP^1(\bK)$ be a $\Gamma$-cusp and choose $g \in \SL_2(\bK)$ with $g(\infty) = \sigma$. Set
\[
B(\bK)
:=
\left\{ \begin{pmatrix} a & b \\ 0 & a^{-1} \end{pmatrix} \mid a \in \bK^*,\ b \in \bK \right\},
\]
and write
\[
u(x) := \begin{pmatrix} 1 & x \\ 0 & 1 \end{pmatrix}
\quad\text{for } x \in \bK.
\]
We think of the conjugation element \(g\) as providing a coordinate chart around the cusp. We use the following terminology from \cite{MasonSchweizer12}. The \emph{quasi-amplitude} of $\sigma$ in the chart $g$ is the additive group
\[
b(\Gamma, g)
:=
\{x \in \bK \mid u(x) \in g^{-1}\Gamma g\},
\]
a full $\bZ$-lattice in $\bK$. We have the \emph{ambient quasi-amplitude}
\[
b(\cG, g) := \{x \in \bK \mid u(x) \in g^{-1} \cG g\},
\]
which contains $b(\Gamma, g)$ as a finite-index subgroup, because $\Gamma$ has finite index in $\cG$. Write
\[
N_g := g
\begin{pmatrix}0&1\\0&0\end{pmatrix}
g^{-1}.
\]
Then $N_g^2=0$, and $g u(x) g^{-1}=I+xN_g$. Hence $\det(I+xN_g)=1$ for every $x\in \bK$. Since $\cG=\SL(\cM)=\{h\in \cM\mid\det h=1\}$ and $I\in\cM$, we get
\[
b(\cG,g)=\{x\in \bK \mid xN_g \in \cM\}.
\]

\begin{lem}\label{lem:ambient-quasiamplitude-ideal}
The $\cO_\bK$-submodule $b(\cG,g)$ is a fractional ideal of $\bK$.
\end{lem}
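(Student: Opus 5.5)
We use the description just obtained, \(b(\cG,g)=\{x\in\bK\mid xN_g\in\cM\}\), and show three things in turn: it is an \(\cO_\bK\)-submodule of \(\bK\), it is nonzero, and it is finitely generated.

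\emph{Submodule.} Because \(\cM\) is an \(\cO_\bK\)-module and scalars are central, \(x,y\in b(\cG,g)\) and \(a\in\cO_\bK\) give
\[
(x+y)N_g=xN_g+yN_g\in\cM,\qquad (ax)N_g=a(xN_g)\in\cM .
\]
This closure under addition and \(\cO_\bK\)-multiplication does not need \(\det=1\): that condition holds automatically, since \(N_g^2=0\).

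\emph{Nonzero and bounded.} We compare \(\cM\) with \(M_2(\cO_\bK)\).
\begin{itemize}
\item Since \(\cM\) is an \(\cO_\bK\)-lattice in \(M_2(\bK)\) commensurable with \(M_2(\cO_\bK)\), there is a nonzero \(d\in\cO_\bK\) with \(dM_2(\cO_\bK)\subset\cM\subset d^{-1}M_2(\cO_\bK)\).
\item \(N_g\) is a fixed nonzero matrix in \(M_2(\bK)\), so some nonzero \(c\in\cO_\bK\) satisfies \(cN_g\in M_2(\cO_\bK)\). Then \(dc\,N_g\in\cM\), so \(dc\in b(\cG,g)\) and the module is nonzero.
\item Pick an entry \(n\ne0\) of \(N_g\). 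If \(xN_g\in\cM\subset d^{-1}M_2(\cO_\bK)\), then \(xn\in d^{-1}\cO_\bK\). Hence \(b(\cG,g)\subset (dn)^{-1}\cO_\bK\).
\end{itemize}

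\emph{Conclusion.} \(b(\cG,g)\) is a nonzero \(\cO_\bK\)-submodule of the finitely generated module \((dn)^{-1}\cO_\bK\). As \(\cO_\bK\) is Noetherian, \(b(\cG,g)\) is finitely generated, hence a fractional ideal.

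The only step that needs care is the sandwich of \(\cM\) between scalar multiples of \(M_2(\cO_\bK)\). This follows because both are full \(\cO_\bK\)-lattices in \(M_2(\bK)\), as stated in \S\ref{sec:global-arithmetic-setting}. The rest is routine.
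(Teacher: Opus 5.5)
Your proof is correct and is essentially the paper's argument: a nonzero scalar multiple of $N_g$ lies in $\cM$, so $b(\cG,g)\neq 0$, and a nonzero entry of $N_g$ together with $\cM\subset d^{-1}M_2(\cO_\bK)$ traps $b(\cG,g)$ inside a principal fractional ideal. Your explicit appeal to Noetherianity only spells out the finite generation that the paper's proof leaves implicit.
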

\begin{proof}
The $\cO_\bK$-submodule $b(\cG,g)$ is a full $\bZ$-lattice; indeed, since $\cM$ is a full $\cO_\bK$-lattice in $M_2(\bK)$, there exists a non-zero $d\in\cO_\bK$ such that $dN_g\in\cM$, so $d\cO_\bK\subset b(\cG,g)$. On the other hand, choosing a non-zero entry $\alpha$ of $N_g$ and a non-zero $c\in\cO_\bK$ with $\cM\subset c^{-1}M_2(\cO_\bK)$, the condition $xN_g\in\cM$ implies $x\alpha\in c^{-1}\cO_\bK$, hence
\[
b(\cG,g)\subset \alpha^{-1}c^{-1}\cO_\bK.
\]
Thus $b(\cG,g)$ is a fractional ideal. 
\end{proof}

The \emph{cusp width} of $\sigma$ is the index \(w_\sigma := [b(\cG, g) : b(\Gamma, g)]\). The corresponding \emph{multiplier group} is
\[
V(\Gamma, g)
:=
\left\{
a^2 \mid
\begin{pmatrix} a & t \\ 0 & a^{-1} \end{pmatrix} \in g^{-1}\Gamma g
\text{ for some } t \in \bK
\right\}
\subset \bK_+^*.
\]

Any two charts for $\sigma$ differ by right multiplication by an element of $B(\bK)$, the stabilizer of $\infty$ in $\SL_2(\bK)$. The conjugation identity
\[
\begin{pmatrix} a & t \\ 0 & a^{-1}\end{pmatrix}
u(x)
\begin{pmatrix} a & t \\ 0 & a^{-1}\end{pmatrix}^{-1}
=
u(a^2 x)
\]
has three consequences. First, $V(\Gamma, g) \subset U_{b(\Gamma, g)}^+$. Second, replacing $g$ by $g h$ with $h = \begin{pmatrix} a & t \\ 0 & a^{-1}\end{pmatrix} \in B(\bK)$ scales both $b(\Gamma, g)$ and $b(\cG, g)$ by the same factor $a^{-2} \in \bK_+^*$, so $w_\sigma$ is well defined and independent of $g$. Third, conjugation by $h$ preserves the diagonal entries of every element of $B(\bK)$, so $V(\Gamma, gh) = V(\Gamma, g)$.

We set $V_\sigma := V(\Gamma, g)$, and we denote by $M_\sigma$ the class of the lattice $b(\Gamma, g)$ up to multiplication by an element of $\bK_+^*$. Both depend only on $\sigma$. The pair $(M_\sigma, V_\sigma)$ is the \emph{cusp type} of $\sigma$, and its \emph{cusp multiplicity} is the index
\[
a_\sigma(\Gamma):=[U_{M_\sigma(\Gamma)}^+:V_\sigma(\Gamma)].
\]
We will show in Lemma~\ref{lem:cusp-multiplicity-finite} that $a_\sigma(\Gamma)$ is finite. When there is no confusion, we shall simply write $a_\sigma$ for
$a_\sigma(\Gamma)$.

\begin{lem}\label{lem:ambient-multiplier-square}
We have $V(\cG,g)=(\mathcal O_\bK^*)^2$.
\end{lem}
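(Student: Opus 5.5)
The plan is to reduce to a statement about the maximal order \(g^{-1}\cM g\) and then prove the two inclusions separately. First I will note that \(g^{-1}\cG g=\SL(g^{-1}\cM g)\). Writing \(\cM=\operatorname{End}_{\cO_\bK}(L)\) (as in the proof of Lemma~\ref{lem:congruence-product-maximal-order}), we have \(g^{-1}\cM g=\operatorname{End}_{\cO_\bK}(L')\) with \(L':=g^{-1}L\subset\bK^2\) again a projective rank-two \(\cO_\bK\)-lattice. So it suffices to show that, for any such \(L'\), the set of \(a\in\bK^*\) for which some \(\begin{pmatrix} a&t\\0&a^{-1}\end{pmatrix}\) preserves \(L'\) is exactly \(\cO_\bK^*\). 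Squaring then gives \(V(\cG,g)=(\cO_\bK^*)^2\). These squares are automatically totally positive, which is consistent with \(V\subset\bK_+^*\).

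For the inclusion \(V(\cG,g)\subset(\cO_\bK^*)^2\), let \(h=\begin{pmatrix} a&t\\0&a^{-1}\end{pmatrix}\in\operatorname{End}(L')\). Since \(L'\) is a finitely generated \(\cO_\bK\)-module and \(h\) preserves it, \(h\) is integral over \(\cO_\bK\). Hence its eigenvalues \(a\) and \(a^{-1}\) are integral over \(\cO_\bK\) and lie in \(\bK\), so both belong to \(\cO_\bK\). Therefore \(a\in\cO_\bK^*\).

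For the reverse inclusion, I will build an adapted basis of \(L'\). Set \(L'_1:=L'\cap\bK e_1=I e_1\) for a fractional ideal \(I\). The image of \(L'\) under the second-coordinate projection is a fractional ideal \(J\), and \(L'/L'_1\cong J\). Since \(J\) is projective, the sequence \(0\to L'_1\to L'\to J\to0\) splits. An \(\cO_\bK\)-linear section \(J\to L'\) has the form \(y\mapsto(\varphi(y),y)\), and \(\varphi:J\to\bK\) is multiplication by some \(s\in\bK\), since \(J\otimes\bQ=\bK\). Hence
\[
L'=Ie_1\oplus Jv,\qquad v:=se_1+e_2 .
\]
For \(\varepsilon\in\cO_\bK^*\), the endomorphism acting by \(\varepsilon\) on \(e_1\) and by \(\varepsilon^{-1}\) on \(v\) preserves \(L'\). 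In the standard basis it equals
\[
P\,\mathrm{diag}(\varepsilon,\varepsilon^{-1})\,P^{-1},\qquad P=\begin{pmatrix}1&s\\0&1\end{pmatrix},
\]
which is upper triangular of determinant \(1\) with diagonal \((\varepsilon,\varepsilon^{-1})\). This shows \(\varepsilon^2\in V(\cG,g)\).

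I expect the only delicate point to be the splitting step in the reverse inclusion, namely identifying the section with multiplication by an element \(s\). It is routine once one uses that \(J\) is a rank-one projective module and that the map extends \(\bK\)-linearly.
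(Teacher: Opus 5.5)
Your proof is correct and follows essentially the same route as the paper: you split the lattice along the cusp line as $Ie_1\oplus Jv$, observe that the diagonal entries are units, and conversely act by $\varepsilon$ and $\varepsilon^{-1}$ on the two summands to realize every $\varepsilon^2$. The differences are cosmetic: you first conjugate by $g$ so as to work at $\infty$ in explicit coordinates, and you obtain $a\in\cO_\bK^*$ from integrality of the eigenvalues of $h$, whereas the paper uses the unit actions of $p$ on $L\cap\ell$ and on $L/(L\cap\ell)$ together with the determinant condition.
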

\begin{proof}
Let $L$ be a rank-two projective $\cO_\bK$-module such that $\cG=\SL(L)$. Choose a \(\bK\)-linear identification \(L_\bK:=L\otimes_{\cO_\bK}\bK\simeq \bK^2\). Under the induced identification \(\bP(L_\bK)=\bP^1(\bK)\), let \(\ell\subset L_\bK\) be the \(\bK\)-line represented by the cusp \(\xi\). Put
\(I:=L\cap \ell\). Then \(I\) is a saturated rank-one
\(\cO_\bK\)-submodule of \(L\). Since over a Dedekind domain every finitely generated torsion-free module is projective, both \(I\) and \(L/I\) are projective \(\cO_\bK\)-modules of rank one. Note that, since \(L/I\) is projective, the
sequence \(0\to I\to L\to L/I\to 0\) splits.

Let \(P_\ell\subset\cG\) be the stabilizer of \(\ell\).
Every \(p\in P_\ell\) preserves \(I=L\cap\ell\), and therefore acts on
\(I\) by multiplication by some \(u\in\cO_\bK^*\). It also acts on
\(L/I\) by multiplication by some \(v\in\cO_\bK^*\). The determinant
condition gives \(v=u^{-1}\). After choosing a splitting
\(L\simeq I\oplus L'\), with \(L'\simeq L/I\), the \(\bK\)-linear extension
of \(p\) has the form
\[
  \begin{pmatrix} u&*\\0&u^{-1}\end{pmatrix}
\]
with respect to the basis adapted to \(\ell\). Thus \(V(\cG,g)\subset(\cO_\bK^*)^2\).

Conversely, let \(u\in\cO_\bK^*\). Choose a splitting
\(L\simeq I\oplus L'\). Multiplying by \(u\) on \(I\) and by
\(u^{-1}\) on \(L'\) gives an automorphism in \(\SL(L)\). Hence \((\cO_\bK^*)^2\subset V(\cG,g)\). 
\end{proof}

We use the same notations for the projective group $\Lambda$ as follows. Let $\xi\in\bP^1(\bK)$ represent a $\Lambda$-cusp, and choose $g\in\SL_2(\bK)$ with $g(\infty)=\xi$. Define
\[
b(\Lambda,g)
:=
\{x\in\bK\mid\Proj(g u(x)g^{-1})\in\Lambda\}
\]
and
\begin{align*}
V(\Lambda,g)
& :=
\left\{
\alpha\delta^{-1}\in\bK^*\mid \exists \beta\in \bK,
\Proj\begin{pmatrix}\alpha&\beta\\0&\delta\end{pmatrix}
\in \Proj(g)^{-1}\Stab_\Lambda(\xi)\Proj(g)
\right\}
\subset \bK_+^*.
\end{align*}
Here $V(\Lambda,g)\subset \bK_+^*$ because we have assumed $\Lambda\subset \PGL_2^+(\bK)$. 

The same conjugation argument as above shows that the class
$M_\xi(\Lambda)$ of $b(\Lambda,g)$ up to multiplication by an element of
$\bK_+^*$, and the group $V(\Lambda,g)$, are independent of
the cusp representative $\xi$ and the choice of $g$. We define the cusp type similarly and the cusp multiplicity
\[
a_\xi(\Lambda):=[U_{M_\xi(\Lambda)}^+:V_\xi(\Lambda)].
\]
For $\Lambda_\cM=\Proj\Gamma$, the projective versions coincide with the previous ones: if $\sigma$ is the $\Lambda_\cM$-orbit of $g(\infty)$, then
\[
b(\Lambda_\cM,g)=b(\Gamma,g),\qquad
V(\Lambda_\cM,g)=V(\Gamma,g),\qquad
a_\sigma(\Lambda_\cM)=a_\sigma .
\]

The cusp width $w_\sigma$ and the cusp multiplicity $a_\sigma$ capture different parts of the cusp stabilizer: $w_\sigma$ its unipotent part, $a_\sigma$ its diagonal part.

\begin{lem}
\label{lem:cusp-multiplicity-finite}
The cusp multiplicity $a_\sigma = [U_{M_\sigma}^+ : V_\sigma]$ is finite.
\end{lem}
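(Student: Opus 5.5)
Fix a chart $g$ for $\sigma$ and put $M:=b(\Gamma,g)$ and $\mathfrak b:=b(\cG,g)$. The idea is to squeeze $V_\sigma$ between two groups of totally positive units: it contains a subgroup of finite index in $(\cO_\bK^*)^2\cap\bK_+^*$, while $U^+_{M}$ is contained in $\cO_\bK^*\cap\bK_+^*$. Then the index $[U^+_M:V_\sigma]$ is bounded by $[\cO_\bK^*\cap\bK_+^*:\text{(finite-index subgroup)}]$. This is finite because $\cO_\bK^*$ is finitely generated by Dirichlet's unit theorem, so $(\cO_\bK^*)^2$ has finite index in it.

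\emph{Upper containment.} Let $\varepsilon\in U^+_M$, so $\varepsilon M=M$. Then $\varepsilon$ preserves a full $\bZ$-lattice in $\bK$, and hence is integral over $\bZ$, since its characteristic polynomial on $M\simeq\bZ^n$ has integer coefficients. Likewise $\varepsilon^{-1}$ is integral. Hence $\varepsilon\in\cO_\bK^*$, and so $U^+_M\subset\cO_\bK^*\cap\bK_+^*$. This group is finitely generated, of rank $r+s-1$.

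\emph{Lower containment.} By Lemma~\ref{lem:ambient-multiplier-square}, for every $u\in\cO_\bK^*$ there is an element $\beta_u=\begin{pmatrix}u&t_u\\0&u^{-1}\end{pmatrix}\in g^{-1}\cG g$. Since $\Gamma$ has finite index in $\cG$, the group $g^{-1}\Gamma g\cap g^{-1}\cG g$ has finite index in $g^{-1}\cG g$. Intersecting with the stabilizer of $\infty$ shows that $B_\Gamma:=g^{-1}\Gamma g\cap B(\bK)$ has finite index in $B_\cG:=g^{-1}\cG g\cap B(\bK)$. The map $\begin{pmatrix}a&t\\0&a^{-1}\end{pmatrix}\mapsto a^2$ is a homomorphism from $B_\cG$ onto $V(\cG,g)=(\cO_\bK^*)^2$, and its image of $B_\Gamma$ is exactly $V_\sigma$. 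A homomorphic image of a finite-index subgroup has finite index in the image. Therefore $V_\sigma$ has finite index in $(\cO_\bK^*)^2$.

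\emph{Conclusion.} We have $(\cO_\bK^*)^2\subset\cO_\bK^*\cap\bK_+^*$, because squares are totally positive. Hence
\[
[U^+_M:V_\sigma]\le[\cO_\bK^*\cap\bK_+^*:V_\sigma]=[\cO_\bK^*\cap\bK_+^*:(\cO_\bK^*)^2]\,[(\cO_\bK^*)^2:V_\sigma],
\]
and both factors are finite: the first because $[\cO_\bK^*:(\cO_\bK^*)^2]<\infty$, the second by the previous step. The independence of the chart $g$ was established in the text, so it is harmless to fix one chart throughout.

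I expect the main obstacle to be the lower containment. The finite-index statement for $B_\Gamma$ in $B_\cG$ is routine. What needs care is that $V(\cG,g)$ really is the full group $(\cO_\bK^*)^2$ for the given chart $g$, and not merely up to rescaling. This is exactly what Lemma~\ref{lem:ambient-multiplier-square} provides, together with the invariance of $V$ under change of chart.
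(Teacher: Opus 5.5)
Your proof is correct, and its core step is the same as the paper's: both show that $V_\sigma$ has finite index in $V(\cG,g)$ by pushing the finite-index inclusion $B_\Gamma\subset B_\cG$ through $a\mapsto a^2$. The two auxiliary steps differ.

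For $U^+_{M_\sigma}$, the paper proves full commensurability with $\cO_\bK^*\cap\bK_+^*$. It does this via the multiplier ring $\{\alpha\in\bK\mid\alpha M\subset M\}$, which is an order, together with Dirichlet's theorem for orders. You observe that, since $V_\sigma\subset U^+_{M_\sigma}$, only the containment $U^+_{M_\sigma}\subset\cO_\bK^*\cap\bK_+^*$ is needed, and the integrality argument gives it directly. The paper itself uses that argument later, in \S\ref{sec:cusp-multiplicities-and-relative-multiplier-indices}.

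For $V(\cG,g)$, the paper does not invoke Lemma~\ref{lem:ambient-multiplier-square}. Instead it sandwiches the group between the squares of units congruent to $1$ modulo a suitable ideal $\fa$ and $(\cO_\bK^*)^2$. That argument uses only that $g^{-1}\cM g$ is an order commensurable with $M_2(\cO_\bK)$, not the maximality of $\cM$. You use the exact equality $V(\cG,g)=(\cO_\bK^*)^2$, which rests on $\cM=\operatorname{End}_{\cO_\bK}(L)$.

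Your route is shorter, and its final inequality is precisely $a_\sigma\le C_{\rm UV}\,[V(\cG,g):V(\Gamma,g)]$; in other words, it already contains Lemma~\ref{lem:cusp-multiplicity-bounded-by-relative-multiplier}. The paper's route is more self-contained. It also yields the stronger statement that $U^+_{M_\sigma}$ is commensurable with $\cO_\bK^*\cap\bK_+^*$, though that is not needed for this lemma.
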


\begin{proof}
We show that both $U_{M_\sigma}^+$ and $V_\sigma$ are commensurable with $\cO_\bK^* \cap \bK_+^*$. Combined with $V_\sigma \subset U_{M_\sigma}^+$, this implies $[U_{M_\sigma}^+ : V_\sigma] < \infty$.

We start with $U_{M_\sigma}^+$. Pick a representative $M$ of $M_\sigma$ and set
\[
\cO_M := \{\alpha \in \bK \mid \alpha M \subset M\}.
\]
Then $\cO_M$ is a subring of $\bK$ containing $\bZ$. For any non-zero $m \in M$ one has $\cO_M \subset m^{-1} M$, so $\cO_M$ is a finitely generated $\bZ$-module; in particular, for every $\alpha \in \cO_M$, the subring $\bZ[\alpha] \subset \cO_M$ is finitely generated over $\bZ$, so $\alpha$ is integral over $\bZ$. Hence $\cO_M \subset \cO_\bK$. Moreover $\cO_M$ has full rank $n = [\bK : \bQ]$ as a $\bZ$-module. Indeed, fixing a $\bZ$-basis $m_1, \ldots, m_n$ of $M$ --- which is also a $\bQ$-basis of $\bK$, since $M$ is a full $\bZ$-lattice --- for any $\alpha \in \bK$ we may write $\alpha m_i = \sum_j q_{ij} m_j$ with $q_{ij} \in \bQ$, and clearing denominators by some non-zero $c \in \bZ$ gives $c\alpha \cdot m_i \in M$ for all $i$, hence $c\alpha \in \cO_M$. Thus every element of $\bK$ has a non-zero $\bZ$-multiple in $\cO_M$, so $\cO_M \otimes_\bZ \bQ = \bK$. Hence $\cO_M$ is an order in $\bK$, so by Dirichlet's unit theorem $\cO_M^*$ has finite index in $\cO_\bK^*$. Intersecting with $\bK_+^*$ and using $U_M^+ = \cO_M^* \cap \bK_+^*$ gives the commensurability of $U_{M_\sigma}^+$ with $\cO_\bK^* \cap \bK_+^*$.

We now turn to $V_\sigma$. Put $\cM_g:=g^{-1}\cM g$ and $H_g:=\SL(\cM_g)\cap B(\bK)$. Let
\[
\mu:H_g\to \bK_+^*
\]
be the homomorphism sending $\begin{pmatrix} a & t \\ 0 & a^{-1} \end{pmatrix}$ to $a^2$. We first reduce the claim to the corresponding statement for $H_g$. Since $g^{-1}\Gamma g$ has finite index in $g^{-1}\cG g=\SL(\cM_g)$, the subgroup $(g^{-1}\Gamma g)\cap B(\bK)$ has finite index in $H_g$. Therefore its image under $\mu$ has finite index in $\mu(H_g)$. But this image is exactly $V_\sigma$. It is therefore enough to prove that $\mu(H_g)$ is commensurable with $\cO_\bK^*\cap\bK_+^*$.

We prove this by sandwiching $\mu(H_g)$ between two finite-index subgroups of $\cO_\bK^*\cap\bK_+^*$. Since $\cM_g$ is a full $\cO_\bK$-lattice in $M_2(\bK)$, there exists a non-zero $c\in\cO_\bK$ such that $\cM_g\subset c^{-1}M_2(\cO_\bK)$. Hence, if $\begin{pmatrix} a & t \\ 0 & a^{-1}\end{pmatrix}\in H_g$, then $a,a^{-1}\in c^{-1}\cO_\bK$. Thus the valuations $v_\fp(a)$ are bounded independently of $a$. Since the set of such $a$ is a subgroup of $\bK^*$, each $v_\fp$ vanishes on it. Hence $a\in\cO_\bK^*$, and so $\mu(H_g)\subset(\cO_\bK^*)^2\subset \cO_\bK^*\cap\bK_+^*$.

Conversely, since $\cM_g$ and $M_2(\cO_\bK)$ are commensurable, there exists a non-zero ideal $\fa\subset\cO_\bK$ such that $\fa M_2(\cO_\bK)\subset\cM_g$. If $\varepsilon\in\cO_\bK^*$ satisfies $\varepsilon\equiv1\pmod{\fa}$, then also $\varepsilon^{-1}\equiv1\pmod{\fa}$, hence
\[
\begin{pmatrix} \varepsilon & 0 \\ 0 & \varepsilon^{-1} \end{pmatrix}
\in I+\fa M_2(\cO_\bK)\subset \cM_g.
\]
This matrix belongs to $H_g$, so $\varepsilon^2\in\mu(H_g)$. The group of units congruent to $1$ modulo $\fa$ has finite index in $\cO_\bK^*$, and its square therefore has finite index in $\cO_\bK^*\cap\bK_+^*$. Thus $\mu(H_g)$ contains a finite-index subgroup of $\cO_\bK^*\cap\bK_+^*$, while the previous paragraph showed that it is contained in $\cO_\bK^*\cap\bK_+^*$. Hence $\mu(H_g)$ is commensurable with $\cO_\bK^*\cap\bK_+^*$. Since $V_\sigma$ has finite index in $\mu(H_g)$, the same is true for $V_\sigma$.
\end{proof}

\subsection{Comparison of cusp multiplicities for \texorpdfstring{$\Lambda$}{Lambda} and \texorpdfstring{$\Gamma$}{Gamma}}
\label{sec:lambda-gamma-cusp-multiplicity-comparison}

Recall that $\Lambda$ is the projective arithmetic group fixed in Subsection~\ref{sec:global-arithmetic-setting}. Set
\[
d_\Lambda:=[\Lambda:\Lambda_\cM].
\]
The inclusion $\Lambda_\cM\subset\Lambda$ induces a finite map
\[
\Gamma\backslash\bP^1(\bK)
=
\Lambda_\cM\backslash\bP^1(\bK)
\longrightarrow
\Lambda\backslash\bP^1(\bK).
\]

\begin{prop}\label{prop:lambda-cusp-multiplicity-controlled-by-gamma}
One has
\[
\sum_{\kappa\in\Lambda\backslash\bP^1(\bK)}a_\kappa(\Lambda)
\leq
d_\Lambda^{2([\bK:\bQ]-1)}
\sum_{\sigma\in\Gamma\backslash\bP^1(\bK)}a_\sigma(\Gamma).
\]
\end{prop}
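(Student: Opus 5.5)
Since each \(\Lambda\)-cusp \(\kappa\) is the image of at least one \(\Gamma\)-cusp \(\sigma\), it suffices to prove the pointwise comparison
\[
a_\kappa(\Lambda)\le d_\Lambda^{2([\bK:\bQ]-1)}\,a_\sigma(\Gamma)
\]
for any \(\Gamma\)-cusp \(\sigma\) lying over \(\kappa\). Summing over \(\kappa\) and choosing one such \(\sigma\) for each \(\kappa\) then bounds the left-hand side by \(d_\Lambda^{2(n-1)}\) times a sub-sum of \(\sum_\sigma a_\sigma(\Gamma)\), where \(n=[\bK:\bQ]\). To compare, fix \(\xi\) representing both cusps and a common chart \(g\). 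By the identities after Lemma~\ref{lem:ambient-multiplier-square}, \(b(\Lambda_\cM,g)=b(\Gamma,g)\) and \(V(\Lambda_\cM,g)=V(\Gamma,g)\).

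The key step is to control the passage from \(\Lambda_\cM\) to \(\Lambda\) at the level of the stabilizer. Since \(\Lambda_\cM=\Lambda\cap\P\cG\) and \(\P\cG\) is commensurable with \(\PN(\cE)\supset\Lambda\), I expect \(\Lambda_\cM\) to be of index \(d_\Lambda\) in \(\Lambda\). Then \(\Stab_{\Lambda_\cM}(\xi)\) has index at most \(d_\Lambda\) in \(\Stab_\Lambda(\xi)\). Apply the homomorphism
\[
\Proj\begin{pmatrix}\alpha&\beta\\0&\delta\end{pmatrix}\mapsto \alpha\delta^{-1}
\]
to the conjugated stabilizers. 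This gives \([V(\Lambda,g):V(\Gamma,g)]\le d_\Lambda\). Intersecting the unipotent subgroups likewise gives \([b(\Lambda,g):b(\Gamma,g)]\le d_\Lambda\).

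Next write
\[
a_\kappa(\Lambda)=\frac{[U^+_{b(\Lambda,g)}:V(\Lambda,g)]\cdot[V(\Lambda,g):V(\Gamma,g)]}{[V(\Lambda,g):V(\Gamma,g)]}.
\]
Pass to the common subgroup \(U^+_{b(\Lambda,g)}\cap U^+_{b(\Gamma,g)}\). By multiplicativity of indices of commensurable groups,
\[
a_\kappa(\Lambda)\le a_\sigma(\Gamma)\cdot\frac{[U^+_{b(\Lambda)}:U^+_{b(\Lambda)}\cap U^+_{b(\Gamma)}]}{[V(\Lambda):V(\Gamma)]}\le a_\sigma(\Gamma)\,[U^+_{b(\Lambda)}:U^+_{b(\Lambda)}\cap U^+_{b(\Gamma)}].
\]
So the task reduces to bounding this unit index by \(d_\Lambda^{2(n-1)}\).

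\textbf{Main obstacle.} The unit groups \(U^+_M\) and \(U^+_{M'}\) for lattices \(M'=b(\Gamma,g)\subset M=b(\Lambda,g)\) with \([M:M']\le d:=d_\Lambda\) must be compared. First, \(dM\subset M'\subset M\). Any \(\varepsilon\in U^+_M\) permutes the finite set of lattices between \(dM\) and \(M\), so \(U^+_M\) acts on \(M/dM\simeq(\bZ/d)^n\) through \((\cO_M/d)^*\). Every \(\varepsilon\equiv1\pmod{d\cO_M}\) fixes \(M'\).

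However, this only gives a bound of order \(d^{n}\), not \(d^{n-1}\). To improve it, I would use that \(U^+_M\) has rank at most \(n-1\) by Dirichlet's theorem. The subgroup \(U^+_M\cap U^+_{M'}\) contains \(\{\varepsilon^{d^2}\}\) or a similar subgroup. Indeed, the stabilizer of \(M'\) in the finite abelian image of \(U^+_M\) has index bounded by the exponent of \((\cO_M/d)^*\), which divides \(d\cdot\phi\)-type quantities, heuristically at most \(d^2\). Hence
\[
[U^+_M:U^+_M\cap U^+_{M'}]\le [U^+_M:(U^+_M)^{d^2}]\le (d^2)^{\operatorname{rank}}\cdot|\text{torsion}| .
\]
Torsion in \(\bK_+^*\) is trivial except for \(\pm\) issues, and totally positive roots of unity are \(1\) when real embeddings exist; otherwise I would absorb torsion into the exponent. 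This yields \(d^{2(n-1)}\). Making the exponent bound \(d^2\) precise, and handling torsion cleanly, is where I expect the real work to be.
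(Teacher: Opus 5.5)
Your reduction is sound and is the same as the paper's. With $M=b(\Lambda,g)$, $M'=b(\Gamma,g)$ and $m=[M:M']\le d_\Lambda$, the inclusion $V(\Gamma,g)\subset U^+_M\cap U^+_{M'}$ gives $a_\kappa(\Lambda)\le a_\sigma(\Gamma)\,[U^+_M:U^+_M\cap U^+_{M'}]$. Summing over one chosen $\sigma$ per $\kappa$ is also fine.

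The gap is the last step, which you leave heuristic and which does not work as proposed. The index of the stabilizer of $M'$ is controlled by the \emph{order} of the image of $U^+_M$ in $(\cO_M/d\cO_M)^*$, not by its exponent. The exponent $e$ only gives $(U^+_M)^e\subset U^+_M\cap U^+_{M'}$, hence a bound $e^{\operatorname{rank}}\cdot|\mathrm{torsion}|$. Moreover, $e$ is not bounded by $d^2$ in general. For instance, if $d=p$ is inert in a totally real cubic field and $\cO_M=\cO_\bK$, then $(\cO_M/p\cO_M)^*\simeq\bF_{p^3}^*$ has exponent $p^3-1$. Already $(p^3-1)^2$ exceeds the target $p^{4}$.

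The fix is already in your text. You read the target as $d^{n-1}$, but the statement asks for $d^{2(n-1)}$, and the bound you dismissed suffices.

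Work with $mM\subset M'$, which holds automatically since $|M/M'|=m$. Your $dM\subset M'$ would need $m\mid d_\Lambda$, which you did not justify. The kernel of $U^+_M\to(\cO_M/m\cO_M)^*$ acts trivially on $M/mM$, hence preserves $M'$. Therefore $[U^+_M:U^+_M\cap U^+_{M'}]\le|(\cO_M/m\cO_M)^*|\le m^n\le d_\Lambda^{2(n-1)}$ for $n\ge 2$. For $n=1$ the group $U^+_M$ is trivial, so there is nothing to prove.

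The paper bounds the same index differently. The $U^+_M$-orbit of $M'$ lies among the $s_n(m)$ index-$m$ sublattices of $M$, and a Hermite normal form count gives $s_n(m)\le m^{2(n-1)}$. Your reduction-mod-$m$ bound $m^n$ is no weaker when $n\ge 2$ and avoids the lattice count.
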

\begin{proof}
Put $n=[\bK:\bQ]$. Let \(\xi\in\Lambda\backslash\bP^1(\bK)\).  Choose a lift
\(\widetilde\xi\in\Lambda_\cM\backslash\bP^1(\bK)\), and choose
\(g\in\SL_2(\bK)\) with \(g(\infty)\) representing this lift. Write
\[
\mathfrak b_\Lambda:=b(\Lambda,g),\quad
\mathfrak b_\Gamma:=b(\Gamma,g),
\quad
V_\Lambda:=V(\Lambda,g),\quad
V_\Gamma:=V(\Gamma,g).
\]
Then $\mathfrak b_\Gamma\subset\mathfrak b_\Lambda$ and
$V_\Gamma\subset V_\Lambda$. If $A_\Lambda$ and $A_\Gamma$ denote the
affine-action images of the two projective stabilizers in the chart $g$, then
$A_\Gamma\subset A_\Lambda$ and $[A_\Lambda:A_\Gamma]\leq d_\Lambda$.
Their translation subgroups are respectively $\mathfrak b_\Lambda$ and
$\mathfrak b_\Gamma$, hence
\[
m:=[\mathfrak b_\Lambda:\mathfrak b_\Gamma]\leq d_\Lambda.
\]

Write $U_\Lambda:=U_{\mathfrak b_\Lambda}^+$ and
$U_\Gamma:=U_{\mathfrak b_\Gamma}^+$. The group $U_\Lambda$ acts on the set of
index-$m$ sublattices of $\mathfrak b_\Lambda$. The stabilizer of
$\mathfrak b_\Gamma$ is $W:=U_\Lambda\cap U_\Gamma$, so
$[U_\Lambda:W]\leq s_n(m)$, where $s_n(m)$ denotes the number of index-$m$
sublattices of $\bZ^n$. Since $V_\Gamma\subset V_\Lambda\cap W$,
\[
[V_\Lambda W:V_\Lambda]\leq [W:V_\Gamma]\leq [U_\Gamma:V_\Gamma]
=a_{\widetilde \xi}(\Gamma).
\]
Therefore
\[
a_{\widetilde \xi}(\Lambda)
=[U_\Lambda:V_\Lambda]
\leq [U_\Lambda:W]\,[V_\Lambda W:V_\Lambda]
\leq s_n(m)a_{\widetilde \xi}(\Gamma).
\]

It remains to estimate \(s_n(m)\). Choose a \(\mathbb Z\)-basis of
\(\mathfrak b_\Lambda\). An index-\(m\) sublattice \(L\subset\mathfrak b_\Lambda\)
is represented by an integral matrix whose columns form a basis of \(L\).
Changing the basis of \(L\) amounts exactly to right multiplication by an
element of \(\GL_n(\mathbb Z)\). Thus sublattices of index \(m\) are the same
thing as right \(\GL_n(\mathbb Z)\)-equivalence classes of integral matrices
of determinant \(\pm m\).

By column Hermite normal form, each such class has a unique triangular
representative \(R=(r_{ij})\), with positive diagonal entries
\(d_i=r_{ii}\), determinant \(\prod_i d_i=m\), and with the remaining
triangular entries reduced modulo the corresponding diagonal entries. In
particular, for fixed \(d_1,\ldots,d_n\), the number of possible reduced
triangular entries is at most
\[
\prod_{i=1}^n d_i^{\,n-1}\leq m^{n-1}.
\]
The number of possible diagonal tuples is at most \(m^{n-1}\), since
\(d_1,\ldots,d_{n-1}\) determine \(d_n\), and each \(d_i\) is a positive
divisor of \(m\). Hence \(s_n(m)\leq m^{2(n-1)}\).
\end{proof}

We shall also need a bound for \(d_\Lambda\). Recall that, in
Subsection~\ref{sec:global-arithmetic-setting}, the group \(\Lambda\) was
chosen inside the projective normalizer \(\PN(\cE)\) of an Eichler order
\(\cE\subset M_2(\bK)\). The maximal order \(\cM\supset\cE\) was then used to
define \(\cG=\SL(\cM)\) and \(\Lambda_\cM=\Lambda\cap\P\cG\). Thus
\(d_\Lambda=[\Lambda:\Lambda_\cM]\) is controlled by the relative position of
the projective normalizer of \(\cE\) with respect to \(\P\cG\).

\begin{prop}\label{prop:d-lambda-level-prime-bound}
There exists a constant \(C_{\mathrm{AL}}(\bK)>0\), depending only on \(\bK\),
such that
\[
d_\Lambda\leq C_{\mathrm{AL}}(\bK)\,2^{\omega(\fn)}.
\]
\end{prop}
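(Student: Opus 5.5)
Since \(\Lambda\subset\PN(\cE)\) and \(\Lambda_\cM=\Lambda\cap\P\cG\), we have
\[
d_\Lambda=[\Lambda:\Lambda\cap\P\cG]\le[\PN(\cE):\PN(\cE)\cap\P\cG].
\]
So it suffices to bound the right-hand side by \(C_{\mathrm{AL}}(\bK)2^{\omega(\fn)}\). I would split \(\PN(\cE)\) in two stages. First take the subgroup \(\PN(\cE)\cap\P(\cE^*)\) of classes represented by units of \(\cE\); these also normalize \(\cM\)-locally away from the Eichler places. Then compare \(\P(\cE^*)\) with \(\P(\cE^1)\subset\P\cG\). The quotient \(\PN(\cE)/\P(\cE^*)\) is controlled locally. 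At each finite place \(v\), \(N(\cE_v)/\bK_v^*\cE_v^*\) is trivial if \(\cE_v\) is maximal, and has order \(2\) (generated by an Atkin--Lehner element swapping the two vertices of the edge) if \(\cE_v\) is a proper Eichler order. Globally one gets an injection of \(\PN(\cE)/\P(N(\cE)\cap \bK^*\cE^*)\) into \(\prod_{v\in S_\cE}\bZ/2\) times a class-group contribution. The latter is a subgroup of \(\Cl(\bK)\)-type data, coming from determinants and ideal classes of the principal two-sided ideals generating the normalizer, and is bounded by \(h_\bK\). This gives a factor \(h_\bK\,2^{|S_\cE|}\).

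Next, \([\P(\cE^*):\P(\cE^1)]\) injects via the determinant into \(\cO_\bK^*/(\cO_\bK^*)^2\), which is finite by Dirichlet. The constant \(|\cO_\bK^*/(\cO_\bK^*)^2|=2^{r_1+r_2}\) depends only on \(\bK\). Also \(\cE^1\subset\cM^1=\cG\). Hence
\[
d_\Lambda\le h_\bK\,2^{r_1+r_2}\,2^{|S_\cE|},
\]
with the possible refinement that elements of \(\PN(\cE)\) lying in \(\P\cG\) only reduce the count.

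The main obstacle is relating \(|S_\cE|\) to \(\omega(\fn)\). One must show that if \(\cE_\fp\neq\cM_\fp\) and \(\fp\nmid\fn\), then the Atkin--Lehner involution at \(\fp\) cannot lie in \(\Lambda\). Equivalently, the \(\bZ/2\) factor at such \(\fp\) contributes nothing to \(\Lambda/\Lambda_\cM\). I would argue as follows. Since \(\Gamma\supset\cG(\fn)\) with \(\fp\nmid\fn\), strong approximation makes the closure of \(\Lambda_\cM\) at \(\fp\) all of \(\P\SL_2(\cO_{\bK_\fp})\) (image of \(\cM_\fp^1\)). An element \(\lambda\in\Lambda\) normalizes \(\Lambda_\cM\), because \(\P\cG\)-intersection is preserved up to finite index. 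Suppose \(\lambda\) swapped the two vertices of the edge at \(\fp\). Then conjugation by \(\lambda\) would send \(\SL(\cM_\fp)\), the stabilizer of vertex \(x\), into a group containing a finite-index piece of \(\Lambda_\cM\) whose closure is \(\SL(\cM_\fp)\). This forces \(\lambda\) to fix \(x\), a contradiction. More carefully, I would use the closure of \(\Lambda\cap\lambda^{-1}\Lambda_\cM\lambda\). Consequently only primes \(\fp\mid\fn\) contribute a factor \(2\), giving \(2^{\omega(\fn)}\) with \(C_{\mathrm{AL}}(\bK)=h_\bK\,2^{r_1+r_2}\). Making this strong-approximation step rigorous, in particular handling \(\lambda\) that normalize only up to finite index, is where I expect the real work.
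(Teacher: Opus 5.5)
Your reduction \(d_\Lambda\le[\PN(\cE):\PN(\cE)\cap\P\cG]\) is fine. So is your count of the right-hand side through \(\P(\cE^*)\) and \(\P(\cE^1)\subset\P\cG\): two-sided ideals of \(\cE\) control \(N(\cE)/\bK^*\cE^*\) (the central part actually lands in \(\Cl(\bK)[2]\)), and the determinant into \(\cO_\bK^*/(\cO_\bK^*)^2\) controls the rest. This is a workable substitute for the paper's local-coset map \(\ell\) with fibres injected into \(D_\bK\), and it gives \(h_\bK 2^{r_1+r_2}2^{|S_\cE|}\).

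The gap is the step you flag yourself, and it is the only place where \(\fn\) enters. Your reason for \(\lambda\in\Lambda\) normalizing \(\Lambda_\cM\) (``preserved up to finite index'') gives only commensurability. Your fallback of using closures of finite-index pieces cannot work. For every \(\lambda_\fp\in\PGL_2(\bK_\fp)\), the group \(\SL(\cM_\fp)\cap\lambda_\fp^{-1}\SL(\cM_\fp)\lambda_\fp\) is open in \(\SL(\cM_\fp)\). So information about open subgroups never detects whether \(\lambda_\fp\) moves the vertex \(x\). As written, ``this forces \(\lambda\) to fix \(x\)'' is unsupported, and \(|S_\cE|\) is never related to \(\omega(\fn)\).

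The missing idea is that you need no element \(\lambda\) at all: the situation \(\cE_\fp\neq\cM_\fp\), \(\fp\nmid\fn\) cannot occur.

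\emph{Step 1.} Since \(\Lambda\subset\PN(\cE)\) and scalars act trivially, already \(\Gamma\subset N(\cE)\cap\cG\).

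\emph{Step 2.} If \(\fp\nmid\fn\), Lemma~\ref{lem:congruence-product-maximal-order} gives \(\cG(\fn)\cG(\fp)=\cG\). Hence \(\Gamma\supset\cG(\fn)\) surjects onto \(\SL(L/\fp L)\cong\SL_2(\bF_q)\). This is your density statement, applied to \(\Gamma\) itself.

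\emph{Step 3.} An element of \(N(\cE_\fp)\cap\SL(\cM_\fp)\) fixes the vertex \(x\) of \(\cM_\fp\) and preserves the segment of maximal orders containing \(\cE_\fp\). In the edge case it preserves \(\{x,y\}\) and fixes \(x\), hence fixes \(y\). So its reduction mod \(\fp\) stabilizes the neighbours of \(x\) on that segment, which form a set of one or two points of \(\bP^1(\bF_q)\).

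\emph{Step 4.} \(\SL_2(\bF_q)\) acts transitively on the \(q+1\geq 3\) points of \(\bP^1(\bF_q)\), contradicting Steps 2 and 3.

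Hence \(S_\cE\subset\{\fp\mid\fn\}\) and \(|S_\cE|\le\omega(\fn)\). This plugs directly into your unrefined bound, with no refinement over \(\Lambda\) needed; this is exactly how the paper argues. (One can in fact show \(\Lambda_\cM\trianglelefteq\Lambda\), so your closure argument is repairable, but it then becomes redundant.)
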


\begin{proof}
Let \(S_\cE\) denote the set of prime ideals \(\fp\) for which
\(\cE_\fp\neq\cM_\fp\). We first prove that
\(
S_\cE\subset\{\fp\mid \fp\vert\fn\}
\).
Suppose that \(\fp\nmid\fn\). By
Lemma~\ref{lem:congruence-product-maximal-order},
\(\cG(\fn)\cG(\fp)=\cG\), so the image of \(\cG(\fn)\) in
\(\cG/\cG(\fp)\simeq \SL(L/\fp L)\) is the whole group. Since
\[
\cG(\fn)\subset\Gamma\subset\operatorname N(\cE)\cap\cG,
\]
the reduction modulo \(\fp\) of \(\operatorname N(\cE)\cap\cG\) is all of
\(\SL(L/\fp L)\). This is impossible if \(\cE_\fp\) is non-maximal. Indeed,
after localizing at \(\fp\) and choosing a basis, \(\cE_\fp\) is a standard
Eichler order of positive level, and the reduction of
\(\operatorname N(\cE_\fp)\cap\SL(\cM_\fp)\) preserves the corresponding line
in \(L/\fp L\). Hence its image is contained in a Borel subgroup of
\(\SL(L/\fp L)\), a contradiction. Thus \(\cE_\fp=\cM_\fp\), proving the
inclusion.

Since \(\Lambda\subset\PN(\cE)\), the natural map
\[
\Lambda/(\Lambda\cap\P\cG)
\longrightarrow
\PN(\cE)/(\PN(\cE)\cap\P\cG)
\]
is injective. It is enough to bound the quotient on the right.

Set
\(
Q:=\PN(\cE)/(\PN(\cE)\cap\P\cG)
\), and for each \(\fp\in S_\cE\),
\[
\mathcal Q_\fp:=
\operatorname N(\cE_\fp)\big/
\bigl(\operatorname N(\cE_\fp)\cap \bK_\fp^*\cM_\fp^*\bigr),
\]
viewed as a finite set of right cosets. Localization gives a well-defined map
\[
\ell:Q\longrightarrow \prod_{\fp\in S_\cE}\mathcal Q_\fp .
\]
Indeed, multiplying a representative by an element of \(\bK^*\) does not
change its local cosets, and an element of \(\PN(\cE)\cap\P\cG\) is locally
represented by an element of
\(\operatorname N(\cE_\fp)\cap\bK_\fp^*\cM_\fp^*\).

By the local form of Eichler orders and the local normalizer theorem
\cite[Definition~23.4.11 and Proposition~23.4.14]{Voight},
the quotient \(\operatorname N(\cE_\fp)/\bK_\fp^*\cE_\fp^*\) has order \(2\)
when \(\cE_\fp\) is non-maximal, and its non-trivial class is the local
Atkin--Lehner involution. Since
\[
\bK_\fp^*\cE_\fp^*
\subset
\operatorname N(\cE_\fp)\cap\bK_\fp^*\cM_\fp^*,
\]
each \(\mathcal Q_\fp\) has at most two elements. Hence
\begin{equation}\label{eq:AL-image-l-bound}
\lvert\operatorname{im}\ell\rvert\leq 2^{|S_\cE|}.
\end{equation}

We now bound the fibers of \(\ell\).  Fix an element
\(q_0\in\operatorname{im}\ell\), and choose \(h_0\in\operatorname N(\cE)\)
representing a class in \(Q\) whose image is \(q_0\).  Let
\(h\in\operatorname N(\cE)\) represent another class in the same fiber.  We
claim first that
\[
  h_0^{-1}h\in \bK_\fp^*\cM_\fp^*
  \qquad\text{for every finite prime }\fp .
\]
For \(\fp\in S_\cE\), this follows from the equality of the local right
cosets of \(h_0\) and \(h\) in \(\mathcal Q_\fp\).  For
\(\fp\notin S_\cE\), one has \(\cE_\fp=\cM_\fp\), and the normalizer of a
maximal order in \(M_2(\bK_\fp)\) is \(\bK_\fp^*\cM_\fp^*\).  This proves the
claim.

For such an \(h\), write locally
\[
  h_0^{-1}h=\lambda_\fp m_\fp,
  \qquad
  \lambda_\fp\in\bK_\fp^*,\quad m_\fp\in\cM_\fp^* .
\]
Since \(\det(m_\fp)\in\cO_{\bK,\fp}^*\), we get
\[
  v_\fp(\det(h_0^{-1}h))=2v_\fp(\lambda_\fp).
\]
Thus \(v_\fp(\det(h_0^{-1}h))\) is even for every finite prime \(\fp\).

We shall use the determinant only modulo squares.  Let
\[
D_\bK:=
\left\{
\alpha(\bK^*)^2\in \bK^*/(\bK^*)^2
\ \middle|\ 
v_\fp(\alpha)\equiv 0 \pmod 2,\;
\forall \fp
\right\}.
\]
This condition is independent of the representative \(\alpha\), because
multiplying \(\alpha\) by a square changes each valuation by an even integer.
The preceding paragraph shows that \(\det(h_0^{-1}h)\) defines an element of
\(D_\bK\).

For the fixed fiber over \(q_0\), we therefore have a map
\begin{equation}\label{eq:AL-fiber-l-bound}
\ell^{-1}(q_0)\longrightarrow D_\bK,\qquad
\bar h\longmapsto \det(h_0^{-1}h)(\bK^*)^2 .
\end{equation}
This map is well-defined.  Multiplying \(h\) by a scalar changes the
determinant by a square.  Moreover, if we multiply the projective class of
\(h\) on the right by an element of \(\PN(\cE)\cap\P\cG\), then this element
may be represented by \(\lambda g\), with \(\lambda\in\bK^*\) and
\(g\in\cG=\SL(\cM)\), so its determinant is \(\lambda^2\).

We next prove that this map is injective.  Let
\(h_1,h_2\in\operatorname N(\cE)\) represent two classes in the fiber over
\(q_0\), and suppose that
\[
  \det(h_0^{-1}h_1)(\bK^*)^2
  =
  \det(h_0^{-1}h_2)(\bK^*)^2 .
\]
Then
\[
  \det(h_2^{-1}h_1)=a^2
\]
for some \(a\in\bK^*\).  Put \(h'=h_2^{-1}h_1\).  Since both
\(h_0^{-1}h_1\) and \(h_0^{-1}h_2\) belong locally to
\(\bK_\fp^*\cM_\fp^*\), so does \(h'\).  Thus, for every finite \(\fp\), we
may write
\[
  h'=\lambda_\fp m_\fp,
  \qquad
  \lambda_\fp\in\bK_\fp^*,\quad m_\fp\in\cM_\fp^* .
\]
From \(\det h'=a^2\), we get
\(v_\fp(a)=v_\fp(\lambda_\fp)\).  Hence
\(a^{-1}\lambda_\fp\in\cO_{\bK,\fp}^*\), and therefore
\[
  a^{-1}h'=(a^{-1}\lambda_\fp)m_\fp\in\cM_\fp^*
  \qquad\text{for every finite }\fp .
\]
It follows that \(a^{-1}h'\in\cM^*\).  Since
\(\det(a^{-1}h')=1\), we have \(a^{-1}h'\in\SL(\cM)=\cG\).  Also
\(h'\in\operatorname N(\cE)\), so
\(a^{-1}h'\in\operatorname N(\cE)\cap\cG\).  Therefore the projective classes
of \(h_1\) and \(h_2\) differ by an element of
\(\PN(\cE)\cap\P\cG\), and they define the same element of \(Q\).  This proves
the injectivity.

It remains to count \(D_\bK\). For \(\alpha(\bK^*)^2\in D_\bK\), all valuations \(v_\fp(\alpha)\) are even.
Hence the principal fractional ideal \((\alpha)\) is the square of a fractional
ideal \(\fa\). Since \((\alpha)\) is principal, the class of \(\fa\) belongs to the \(2\)-torsion subgroup of the
ideal class group. Conversely, once this ideal class is
fixed, the remaining ambiguity is multiplication by a unit, modulo squares.
There is a natural homomorphism
\(
D_\bK\to \Cl(\bK)[2]
\)
whose kernel is \(\cO_\bK^*/(\cO_\bK^*)^2\). In particular,
\begin{equation}\label{eq:AL-fiber-l-bound-2}
\lvert D_\bK\rvert\leq
\lvert\cO_\bK^*/(\cO_\bK^*)^2\rvert\,\lvert\Cl(\bK)[2]\rvert.
\end{equation}
Consequently each fiber of \(\ell\) has cardinality at most this constant. 

Combining the bound \eqref{eq:AL-image-l-bound} for the image of \(\ell\) with the fiber bound \eqref{eq:AL-fiber-l-bound}, \eqref{eq:AL-fiber-l-bound-2} gives
\[
|Q|
\leq
|\cO_\bK^*/(\cO_\bK^*)^2|\,|\Cl(\bK)[2]|\,2^{|S_\cE|}.
\]
Since \(S_\cE\subset\{\fp\mid\fp\vert\fn\}\), we have $|S_\cE|\leq \omega(\fn)$. Thus
\[
d_\Lambda\leq |Q|
\leq
|\cO_\bK^*/(\cO_\bK^*)^2|\,|\Cl(\bK)[2]|\,2^{\omega(\fn)}.
\]
Thus the desired inequlity is proved with 
\[
C_{\mathrm{AL}}(\bK)
:=
\lvert\cO_\bK^*/(\cO_\bK^*)^2\rvert\,\lvert\Cl(\bK)[2]\rvert.
\]
\end{proof}

\subsection{Examples: some classical congruence subgroups}\label{sec:examples-cusp-multiplicity}
For a non-zero ideal \(\fn\), consider the following classical congruence subgroups
\begin{align*}
\Gamma(\fn)
&=
\{g\in\SL_2(\cO_\bK)\mid g\equiv I\pmod{\fn}\},\\
\Gamma_1(\fn)
&=
\left\{
\begin{pmatrix}a&b\\ c&d\end{pmatrix}\in\SL_2(\cO_\bK)\mid 
a\equiv d\equiv 1,\ c\equiv 0\pmod{\fn}
\right\},\\
\Gamma_0(\fn)
&=
\left\{
\begin{pmatrix}a&b\\ c&d\end{pmatrix}\in\SL_2(\cO_\bK)\mid
c\equiv 0\pmod{\fn}
\right\}.
\end{align*}
These three congruence subgroups of level \(\fn\) are often studied in the literature. The quantity \(\sum a_\sigma\) can sometimes be computed directly for these special subgroups. We gather a few such formulas for illustration. We will see that for these examples the ratio $\sum a_\sigma/[\cG:\Gamma]$ goes to zero as the congruence ideal grows. Later in this section we will prove this phenomenon for arbitrary congruence subgroups.  

\subsubsection{Rational numbers}
Let \(\bK=\mathbb Q\). Then \(\cO_\bK^*\cap\bK_+^*=\{1\}\). Hence the
diagonal unit group which enters the definition of \(a_\sigma\) is trivial,
and \(a_\sigma=1\) for every cusp. Thus the sum of cusp multiplicities is the
number of cusps. Denote \(\varphi(m)=|(\mathbb Z/m\mathbb Z)^*|\) the Euler function. We have (see \cite[\S 3]{DiamondShurman}), for \(N>4\), 
\begin{align*}
\sum_\sigma a_\sigma(\Gamma(N))
&=
\frac12 N^2\prod_{p\mid N}(1-p^{-2}), 
& \, 
\frac{\sum_\sigma a_\sigma(\Gamma(N))}
{[\SL_2(\mathbb Z):\Gamma(N)]}
&=
\frac1N,
\\
\sum_\sigma a_\sigma(\Gamma_1(N))
&=
\frac12\sum_{d\mid N}\varphi(d)\varphi(N/d),
&\,
\frac{\sum_\sigma a_\sigma(\Gamma_1(N))}
{[\SL_2(\mathbb Z):\Gamma_1(N)]}
&=
\frac{\sum_{d\vert N}\varphi(d)\varphi(N/d)}
{N^2\prod_{p\vert N}(1-p^{-2})},
\\
\sum_\sigma a_\sigma(\Gamma_0(N))
&=
\sum_{d\mid N}\varphi\bigl(\operatorname{gcd}(d,N/d)\bigr),
&\,
\frac{\sum_\sigma a_\sigma(\Gamma_0(N))}
{[\SL_2(\mathbb Z):\Gamma_0(N)]}
&=
\frac{\sum_{d\vert N}\varphi(\operatorname{gcd}(d,N/d))}
{N\prod_{p\vert N}(1+p^{-1})}.
\end{align*}

The fact that the three ratios tend to \(0\) as \(N\to\infty\) follows directly from the formulas. Indeed, use
\(\sum_{d\vert N}\varphi(d)\varphi(N/d)\le N\,\#\{d\mid d\vert N\}\) and
\(\sum_{d\vert N}\varphi(\operatorname{gcd}(d,N/d))\le
N^{1/2}\#\{d\mid d\vert N\}\).

For arbitrary congruence subgroups over \(\bQ\), the corresponding
sublinearity of the cusp contribution is  controlled by the parabolic
estimates of Cox--Parry~\cite{CoxParry1984}, which is a special case of our main result. 

\subsubsection{Imaginary quadratic fields}
Let \(\bK\) be imaginary quadratic. Since the unit group is finite, the
multiplicities \(a_\sigma\) are bounded above by a constant depending only on
\(\bK\). Hence \(\sum_\sigma a_\sigma\) and the number of cusps are comparable
up to a constant depending only on \(\bK\). For $\PSL_2(\cO_\bK)$ the number of cusps is exactly the class number \(h_\bK\) of $\bK$.

Assume now that \(h_\bK=1\). For the principal congruence subgroup one has
\[
\sum_\sigma 1
=
\frac{|\{u\in\cO_\bK^*\mid u\equiv 1\pmod{\fn}\}|}{|\cO_\bK^*|}
N(\fn)^2
\prod_{\fp\mid\fn}\bigl(1-N(\fp)^{-2}\bigr),
\]
see \cite[Theorem~2.6]{BakerReidPrincipal}. For \(\Gamma_0(\fn)\), 
\[
\sum_\sigma 1
=
\sum_{\fd\vert\fn}
\left| \bigl(\cO_\bK/(\fd,\fn\fd^{-1})\bigr)^* \right|,
\]
see \cite[Theorem~7]{CremonaAranes}. In particular, for square-free \(\fn\),
\(
\sum_\sigma 1=2^{\omega(\fn)}
\), so
\[
\frac{\sum_\sigma 1}
{[\PSL_2(\cO_\bK):\Gamma_0(\fn)]}
=
\prod_{\fp\mid\fn}\frac{2}{N(\fp)+1},
\]
which tends to \(0\) as \(N(\fn)\to\infty\).

\subsubsection{Totally real fields}
Let \(\bK\) be totally real of degree \(2\). Put \(U^+=\cO_\bK^*\cap\bK_+^*\) and \(U_\fn=\{u\in \cO_\bK^*\mid u\equiv 1\pmod{\fn}\}\). For the principal congruence subgroup \(\Gamma(\fn)\), by \cite[Lemma~IV.5.2]{VDG} the number of $\Gamma(\fn)$-cusps above each cusp of the full Hilbert modular group $\SL_2(\cO_\bK)$ is the same number
\[
\frac{N(\fn)^2\prod_{\fp\mid\fn}(1-N(\fp)^{-2})}{[\cO_\bK^*:U_\fn]}.
\]
Moreover
\(
a_\sigma(\Gamma(\fn))=[U^+:U_\fn^2]
\)
for every cusp \(\sigma\) of \(\Gamma(\fn)\). 
Since
\begin{align*}
[U^+:U_\fn^2]
&=
[U^+:(\cO_\bK^*)^2]\,[(\cO_\bK^*)^2:U_\fn^2],\\
[(\cO_\bK^*)^2:U_\fn^2]
&= a_1[\cO_\bK^*:U_\fn], \quad a_1=1 \,\text{or}\, \frac{1}{2},\\
[\SL_2(\cO_\bK):\Gamma(\fn)]
&=a_2 N(\fn)^3\prod_{\fp\mid\fn}(1-N(\fp)^{-2}),\; a_2 =2,1 \,\text{or}\, \frac{1}{2},
\end{align*}
we obtain, for some uniformly bounded constant $a$ (actually $a=1,2$),
\[
\frac{\sum_\sigma a_\sigma(\Gamma(\fn))}
{[\SL_2(\cO_\bK):\Gamma(\fn)]}
=
a\frac{h_\bK[U^+:(\cO_\bK^*)^2]}{N(\fn)}.
\]
In particular this ratio tends to \(0\) as \(N(\fn)\to\infty\).

\subsection{Stabilizer-index decomposition above an ambient cusp}
\label{sec:stabilizer-index-decomposition}

Fix a $\cG$-cusp $\tau \in \bP^1(\bK)$, and denote by
\[
\cC_\tau(\Gamma) := \Gamma \backslash (\cG \cdot \tau)
\]
the set of $\Gamma$-cusps lying above $\cG \cdot \tau$.

\begin{lem}\label{lem:stabilizer-index-sum}
\[
[\cG : \Gamma]
=
\sum_{\sigma \in \cC_\tau(\Gamma)}
[\Stab_\cG(\sigma) : \Stab_\Gamma(\sigma)],
\]
where each summand is computed using any representative point of the $\Gamma$-cusp $\sigma$.
\end{lem}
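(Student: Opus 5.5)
This is an orbit-stabilizer count. We apply it to the action of \(\Gamma\) on the \(\cG\)-orbit \(\cG\cdot\tau\), and then compare everything with the ambient stabilizer \(P:=\Stab_\cG(\tau)\).

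The steps, in order:

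\begin{enumerate}
\item Use the \(\cG\)-equivariant bijection \(\cG/P\to\cG\cdot\tau\), \(gP\mapsto g\tau\). Under it, \(\Gamma\)-orbits on \(\cG\cdot\tau\) correspond to double cosets \(\Gamma\backslash\cG/P\). So \(\cC_\tau(\Gamma)\) is in bijection with \(\Gamma\backslash\cG/P\).

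\item Decompose \(\cG/P\) into \(\Gamma\)-orbits, using that \(\Gamma\) has finite index in \(\cG\). Fix a \(\Gamma\)-cusp \(\sigma\) and choose a representative \(\sigma=g\tau\). The \(\Gamma\)-orbit of \(gP\) in \(\cG/P\) has size
\[
[\Gamma:\Gamma\cap gPg^{-1}]=[\Gamma:\Stab_\Gamma(\sigma)],
\]
since \(gPg^{-1}=\Stab_\cG(\sigma)\). This index is finite because \([\Gamma:\Gamma\cap gPg^{-1}]\le[\cG:\cdots]\) is controlled by the finite-index intersection.

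\item Since \(\cG/P\) is infinite, do not count its elements directly. Instead, count the double coset \(\Gamma gP\) by its number of right \(\Gamma\)-cosets. The coset space \(\Gamma\backslash\Gamma gP\) is in bijection with \((g^{-1}\Gamma g\cap P)\backslash P\). It therefore has cardinality
\[
[P:g^{-1}\Gamma g\cap P]=[gPg^{-1}:\Gamma\cap gPg^{-1}]=[\Stab_\cG(\sigma):\Stab_\Gamma(\sigma)].
\]

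\item Summing over the finitely many double cosets, which partition \(\Gamma\backslash\cG\), gives
\[
[\cG:\Gamma]=|\Gamma\backslash\cG|=\sum_{\sigma\in\cC_\tau(\Gamma)}[\Stab_\cG(\sigma):\Stab_\Gamma(\sigma)].
\]

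\item Check independence of the representative. Replacing \(\sigma\) by \(\gamma\sigma\) with \(\gamma\in\Gamma\) conjugates both stabilizers by \(\gamma\), so the index is unchanged.
\end{enumerate}

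The only point needing care is the bookkeeping in step 3. One has to use right cosets \(\Gamma\backslash\cG\) and the map \(\Gamma gp\mapsto (g^{-1}\Gamma g\cap P)p\), and verify that this map is well defined and bijective. This is routine, and I expect no real obstacle.
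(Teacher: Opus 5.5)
Your steps 1, 3 and 4 are the paper's argument. The paper lets $\Stab_\cG(\tau)$ act on the right on the finite set $\Gamma\backslash\cG$ and identifies the orbits with $\cC_\tau(\Gamma)$. It then applies orbit–stabilizer with stabilizer $\Stab_\cG(\tau)\cap r^{-1}\Gamma r$, which is exactly your coset bijection $\Gamma\backslash\Gamma gP\simeq (g^{-1}\Gamma g\cap P)\backslash P$.

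Step 2, however, is false and should be deleted. The index $[\Gamma:\Stab_\Gamma(\sigma)]$ is the size of the $\Gamma$-orbit of $\sigma$ in $\bP^1(\bK)$, and that orbit is infinite. Indeed, $\Gamma$ has finite index in $\cG$, so it contains non-trivial unipotent elements fixing some cusp $\sigma'\neq\sigma$. Such an element acts as a non-zero translation in a chart at $\sigma'$, so its powers move $\sigma$ to infinitely many distinct points. The justification you give for finiteness does not bound anything.

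Since steps 3 and 4 never use step 2, the proof is correct once it is removed.
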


\begin{proof}
The finite set $X := \Gamma \backslash \cG$ has size $[\cG : \Gamma]$. The group $\Stab_\cG(\tau)$ acts on $X$ on the right by $(\Gamma r) \cdot p := \Gamma r p$. We claim that the orbits of this action are in bijection with $\cC_\tau(\Gamma)$ via $(\Gamma r) \Stab_\cG(\tau) \mapsto [r\tau]_\Gamma$. Surjectivity is clear. For injectivity, suppose $[r_1 \tau]_\Gamma = [r_2 \tau]_\Gamma$. Then $\gamma r_1 \tau = r_2 \tau$ for some $\gamma \in \Gamma$, so $r_2^{-1} \gamma r_1$ fixes $\tau$ and therefore belongs to $\Stab_\cG(\tau)$. Hence $r_2 \in \Gamma r_1 \Stab_\cG(\tau)$, so $\Gamma r_1$ and $\Gamma r_2$ lie in the same $\Stab_\cG(\tau)$-orbit.

The stabilizer of $\Gamma r$ for this right action is $\Stab_\cG(\tau) \cap r^{-1} \Gamma r$, so the orbit through $\Gamma r$ has size
\begin{align*}
[\Stab_\cG(\tau) : \Stab_\cG(\tau) \cap r^{-1} \Gamma r]
&= [r \Stab_\cG(\tau) r^{-1} : \Gamma \cap r \Stab_\cG(\tau) r^{-1}] \\
&= [\Stab_\cG(r\tau) : \Stab_\Gamma(r\tau)].
\end{align*}
Summing over orbits gives the formula.
\end{proof}

Choose now a chart $g_\tau \in \SL_2(\bK)$ with $g_\tau(\infty) = \tau$. For $\sigma = [r\tau]_\Gamma \in \cC_\tau(\Gamma)$, set $g_\sigma := r g_\tau$, so that $g_\sigma(\infty) = r\tau$. For $\cH \in \{\cG, \Gamma\}$, let
\[
P_\cH(g_\sigma)
:=
\bigl(g_\sigma^{-1} \cH g_\sigma \cap B(\bK)\bigr) \big/ \{\pm I\}
\]
be the projective cusp stabilizer of $\cH$ in this chart.

\begin{lem}\label{lem:affine-cusp-sequence}
For $\cH \in \{\cG, \Gamma\}$, there is an exact sequence
\[
1 \longrightarrow b(\cH, g_\sigma) \longrightarrow P_\cH(g_\sigma) \longrightarrow V(\cH, g_\sigma) \longrightarrow 1.
\]
\end{lem}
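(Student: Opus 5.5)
The plan is to build the exact sequence from the affine action of the Borel subgroup on $\bK$, which is the coordinate near $\infty$. Every element of $g_\sigma^{-1}\cH g_\sigma\cap B(\bK)$ has the form $\begin{pmatrix}a&t\\0&a^{-1}\end{pmatrix}$. We send it to $a^2\in\bK_+^*$. This map is a homomorphism on $B(\bK)$, since it is the linear part $x\mapsto a^2x+at$ of the induced affine map. It kills $-I$, so it descends to $P_\cH(g_\sigma)$. By the very definition of $V(\cH,g_\sigma)$ its image is exactly $V(\cH,g_\sigma)$. This gives the surjection on the right.

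Next we identify the kernel. An element with $a^2=1$ has $a=\pm1$, and modulo $\{\pm I\}$ we may take $a=1$. The kernel is then the image of $\{u(x)\mid u(x)\in g_\sigma^{-1}\cH g_\sigma\}$ in $P_\cH(g_\sigma)$.

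The one point to check is that $-u(x)$ does not give extra kernel elements. We need this for the injectivity of $x\mapsto [u(x)]$ and for the kernel to be exactly $b(\cH,g_\sigma)$. Suppose $-u(x)\in g_\sigma^{-1}\cH g_\sigma$. Since $-I\in\cH$ for $\cH=\cG$ (because $-I\in\SL(\cM)$) and for $\cH=\Gamma$ (noted in \S\ref{sec:global-arithmetic-setting}), we get $u(x)=(-I)(-u(x))$ in the group as well. So the classes of $\pm u(x)$ coincide and correspond to $x\in b(\cH,g_\sigma)$.

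Finally, $x\mapsto[u(x)]$ is an injective homomorphism $b(\cH,g_\sigma)\to P_\cH(g_\sigma)$: the relation $u(x)u(y)=u(x+y)$ holds, and $u(x)=\pm I$ forces $x=0$. Its image is the kernel of the map to $V$, which gives exactness in the middle. No step is a real obstacle; the only care needed is the $\pm I$ bookkeeping, which the fact $-I\in\cH$ settles.
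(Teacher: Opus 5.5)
Your proof is correct and follows the paper's own argument: the same maps $x\mapsto[u(x)]$ and $\left[\begin{pmatrix}a&t\\0&a^{-1}\end{pmatrix}\right]\mapsto a^2$, with the kernel identified through the case split $a=\pm1$ and the fact that $-I\in\cH$. One small correction: $-I\in\cH$ is what places the class of an element $-u(x)$ in the image of $b(\cH,g_\sigma)$; it is not needed for injectivity, which only uses $u(x)\neq -I$.
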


\begin{proof}
Let $H_\sigma:=g_\sigma^{-1}\cH g_\sigma\cap B(\bK)$. Since $-I\in\cH$, we have $-I\in H_\sigma$. Define
\[
\iota:b(\cH,g_\sigma)\longrightarrow P_\cH(g_\sigma),
\qquad
x\longmapsto [u(x)].
\]
This is an injective homomorphism, because if $[u(x)]$ is trivial in $P_\cH(g_\sigma)$, then $u(x)\in\{\pm I\}$, hence $x=0$.
Next define
\[
\mu:P_\cH(g_\sigma)\longrightarrow V(\cH,g_\sigma),
\qquad
\left[
\begin{pmatrix}a&t\\0&a^{-1}\end{pmatrix}
\right]
\longmapsto a^2.
\]
This is well defined, because replacing a representative by its negative does not change $a^2$. It is a homomorphism, and it is surjective by the definition of $V(\cH,g_\sigma)$.

It remains to identify the kernel. If an element of $P_\cH(g_\sigma)$ represented by $\begin{pmatrix}a&t\\0&a^{-1}\end{pmatrix}$ lies in $\ker\mu$, then $a^2=1$, so $a=\pm1$. If $a=1$, the representative is $u(t)$. If $a=-1$, then
\[
\begin{pmatrix}-1&t\\0&-1\end{pmatrix}
=
-I\cdot u(-t),
\]
so it has the same class in $P_\cH(g_\sigma)$ as $u(-t)$. In both cases the class belongs to the image of $\iota$. Conversely every $u(x)$ maps to $1$ under $\mu$. Hence $\ker\mu=\operatorname{im}\iota$, proving exactness.
\end{proof}

\begin{lem}\label{lem:stabilizer-index-product}
For every $\sigma = [r\tau]_\Gamma \in \cC_\tau(\Gamma)$,
\[
[\Stab_\cG(r\tau) : \Stab_\Gamma(r\tau)]
=
[b(\cG, g_\sigma) : b(\Gamma, g_\sigma)] \cdot [V(\cG, g_\sigma) : V(\Gamma, g_\sigma)].
\]
\end{lem}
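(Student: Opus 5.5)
The plan is to identify the stabilizer index with the index of projective cusp stabilizers in the chart $g_\sigma$, and then compare the two short exact sequences of Lemma~\ref{lem:affine-cusp-sequence} for $\cH=\Gamma\subset\cG$.

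First, conjugating by $g_\sigma$ carries $\Stab_\cH(r\tau)$ onto $g_\sigma^{-1}\cH g_\sigma\cap B(\bK)$, because $g_\sigma(\infty)=r\tau$ and $B(\bK)$ is the stabilizer of $\infty$ in $\SL_2(\bK)$. Since $-I$ lies in both $\Gamma$ and $\cG$, we have
\[
[\Stab_\cG(r\tau):\Stab_\Gamma(r\tau)]=[P_\cG(g_\sigma):P_\Gamma(g_\sigma)].
\]
Second, the inclusion $\Gamma\subset\cG$ gives a commutative diagram between the two exact sequences
\[
1\to b(\cH,g_\sigma)\to P_\cH(g_\sigma)\to V(\cH,g_\sigma)\to1,
\]
in which all vertical maps are inclusions. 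In particular $b(\Gamma,g_\sigma)=b(\cG,g_\sigma)\cap\{x : u(x)\in g_\sigma^{-1}\Gamma g_\sigma\}$, so the kernel of the lower sequence is exactly the intersection of $P_\Gamma(g_\sigma)$ with the kernel of the upper one.

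Third, apply the standard index multiplicativity. For a group $P$ with normal subgroup $N$, quotient map $\mu$, and finite-index subgroup $P'$, one has
\[
[P:P']=[\mu(P):\mu(P')]\cdot[N:N\cap P'].
\]
This follows by writing $[P:P']=[P:NP'][NP':P']$, using $[P:NP']=[\mu(P):\mu(P')]$ and $[NP':P']=[N:N\cap P']$. Here $N\cap P_\Gamma(g_\sigma)$ is the image of $b(\Gamma,g_\sigma)$: a class in $P_\Gamma$ with trivial $\mu$ is represented by some $u(x)$, possibly after multiplying by $-I\in\Gamma$, and then $x\in b(\Gamma,g_\sigma)$. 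Likewise $\mu(P_\Gamma(g_\sigma))=V(\Gamma,g_\sigma)$. All indices involved are finite because $\Gamma$ has finite index in $\cG$.

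The only point needing care is the bookkeeping of $\pm I$ in this kernel identification. It is handled exactly as in the proof of Lemma~\ref{lem:affine-cusp-sequence}, so no real obstacle is expected.
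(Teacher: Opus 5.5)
Your proposal is correct and follows essentially the same route as the paper. You conjugate by $g_\sigma$ and quotient by $\pm I$ to get the index $[P_\cG(g_\sigma):P_\Gamma(g_\sigma)]$, then compare the two exact sequences of Lemma~\ref{lem:affine-cusp-sequence}; you also spell out the index-multiplicativity step $[P:P']=[P:NP'][NP':P']$, which the paper leaves implicit.
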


\begin{proof}
Conjugating by $g_\sigma$ and quotienting by $\{\pm I\} \subset \Gamma$ gives
\[
[\Stab_\cG(r\tau) : \Stab_\Gamma(r\tau)] = [P_\cG(g_\sigma) : P_\Gamma(g_\sigma)].
\]
The lemma then follows from Lemma~\ref{lem:affine-cusp-sequence} applied to $\cG$ and $\Gamma$.
\end{proof}

Set
\[
w_{\sigma|\tau} := [b(\cG, g_\sigma) : b(\Gamma, g_\sigma)],
\qquad
u_{\sigma|\tau} := [V(\cG, g_\sigma) : V(\Gamma, g_\sigma)].
\]
Both indices depend only on $\sigma$ and $\tau$, by an argument analogous to the one in \S\ref{sec:cusps-hirzebruch-multiplicity}.

\begin{prop}\label{prop:cusp-width-multiplier-index-sum}
\[
[\cG : \Gamma]
=
\sum_{\sigma \in \cC_\tau(\Gamma)} w_{\sigma|\tau} u_{\sigma|\tau}.
\]
\end{prop}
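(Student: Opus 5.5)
This is a direct combination of the two preceding lemmas. First, Lemma~\ref{lem:stabilizer-index-sum} gives
\[
[\cG:\Gamma]=\sum_{\sigma\in\cC_\tau(\Gamma)}[\Stab_\cG(\sigma):\Stab_\Gamma(\sigma)],
\]
where each summand may be computed at any representative point of $\sigma$. I will choose, for each $\sigma$, a representative of the form $r\tau$ with $r\in\cG$; this is possible because $\sigma\in\Gamma\backslash(\cG\cdot\tau)$. Then Lemma~\ref{lem:stabilizer-index-product}, applied in the chart $g_\sigma=rg_\tau$, rewrites each summand as
\[
[b(\cG,g_\sigma):b(\Gamma,g_\sigma)]\cdot[V(\cG,g_\sigma):V(\Gamma,g_\sigma)]=w_{\sigma|\tau}\,u_{\sigma|\tau}.
\]
Substituting this into the sum gives the proposition.

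The only point needing care is well-definedness: $w_{\sigma|\tau}$ and $u_{\sigma|\tau}$ must not depend on the choice of $r$ within the $\Gamma$-orbit. I would check this directly.

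\emph{Case 1: change of representative.} Replacing $r$ by $\gamma r p$ with $\gamma\in\Gamma$ and $p\in\Stab_\cG(\tau)$ changes the chart to
\[
g_\sigma' = \gamma r p g_\tau = \gamma\, g_\sigma\,\bigl(g_\tau^{-1}pg_\tau\bigr).
\]
Here $g_\tau^{-1}pg_\tau\in B(\bK)$, possibly up to $\pm$.

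\emph{Case 2: left multiplication by $\gamma$.} Left multiplication by $\gamma\in\Gamma$ leaves $g^{-1}\Gamma g$ and $g^{-1}\cG g$ unchanged, since $\gamma\in\Gamma\subset\cG$.

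\emph{Case 3: right multiplication by $h\in B(\bK)$.} By the conjugation identity in \S\ref{sec:cusps-hirzebruch-multiplicity}, right multiplication by $h$ scales both $b(\cG,\cdot)$ and $b(\Gamma,\cdot)$ by the same factor $a^{-2}$. It also leaves both $V(\cG,\cdot)$ and $V(\Gamma,\cdot)$ unchanged. Hence both indices are preserved.

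This bookkeeping is the only real step; everything else is substitution. I expect no genuine obstacle.
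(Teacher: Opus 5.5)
Your proposal is correct and matches the paper's proof, which simply combines Lemma~\ref{lem:stabilizer-index-sum} with Lemma~\ref{lem:stabilizer-index-product} in the chart $g_\sigma=rg_\tau$. Your three cases (left multiplication by $\Gamma$, right multiplication by $B(\bK)$) spell out the independence of $w_{\sigma|\tau}$ and $u_{\sigma|\tau}$ from the choices made, which the paper only asserts by analogy with \S\ref{sec:cusps-hirzebruch-multiplicity}. This check is harmless but not strictly needed, since Lemma~\ref{lem:stabilizer-index-sum} already allows any representative point for each $\sigma$.
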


\begin{proof}
Combine Lemmas~\ref{lem:stabilizer-index-sum} and~\ref{lem:stabilizer-index-product}.
\end{proof}

\subsection{Cusp multiplicities and relative multiplier indices}
\label{sec:cusp-multiplicities-and-relative-multiplier-indices}

In general $u_{\sigma|\tau} \neq a_\sigma$, but $a_\sigma$ is controlled by $u_{\sigma|\tau}$ up to a constant depending only on the ambient cusp $\tau$.

Fix a $\cG$-cusp $\tau \in \bP^1(\bK)$ and keep the notation of \S\ref{sec:stabilizer-index-decomposition}; in particular, for $\sigma = [r\tau]_\Gamma \in \cC_\tau(\Gamma)$ we use the chart $g_\sigma = r g_\tau$. Remark that $U_{b(\cG, g_\sigma)}^+ = \cO_\bK^* \cap \bK_+^*$ since $b(\cG, g_\sigma)$ is an $\cO_\bK$-fractional ideal by Lemma~\ref{lem:ambient-quasiamplitude-ideal}. Recall that $V(\cG, g_\sigma)=(\cO_\bK^*)^2$ by Lemma~\ref{lem:ambient-multiplier-square}. Set
\[
C_{\rm UV}:= [U_{b(\cG, g_\sigma)}^+ : V(\cG, g_\sigma)]=[\cO_\bK^* \cap \bK_+^* : (\cO_\bK^*)^2].
\]
This is a finite constant depending only on $\bK$. The inclusion \(U_{M_\sigma}^+ \subset U_{b(\cG,g_\sigma)}^+\) holds as follows. If \(\varepsilon\in U_{M_\sigma}^+\), then \(\varepsilon M=M\) for some representative \(M\) of \(M_\sigma\).  Hence
both \(\varepsilon\) and \(\varepsilon^{-1}\) preserve the finitely generated
\(\bZ\)-module \(M\).  They are therefore integral over \(\bZ\), and so
\(\varepsilon\in\cO_\bK^*\cap\bK_+^*=U_{b(\cG,g_\sigma)}^+\).

\begin{lem}\label{lem:cusp-multiplicity-bounded-by-relative-multiplier}
For every $\sigma \in \cC_\tau(\Gamma)$,
\(
a_\sigma \leq C_{\rm UV} \cdot u_{\sigma|\tau}
\).
\end{lem}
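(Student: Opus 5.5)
The plan is to compare the two indices through a tower of subgroups of $U^+_{b(\cG,g_\sigma)}=\cO_\bK^*\cap\bK_+^*$. Fix $\sigma=[r\tau]_\Gamma$ and the chart $g_\sigma=rg_\tau$. Choose the representative $M:=b(\Gamma,g_\sigma)$ of $M_\sigma$. Then $a_\sigma=[U_M^+:V(\Gamma,g_\sigma)]$.

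\emph{Step 1: containments.} The surrounding text shows $U_M^+\subset \cO_\bK^*\cap\bK_+^*$. The three conjugation consequences in \S\ref{sec:cusps-hirzebruch-multiplicity} give $V(\Gamma,g_\sigma)\subset U_M^+$. Finally $\Gamma\subset\cG$ gives $V(\Gamma,g_\sigma)\subset V(\cG,g_\sigma)=(\cO_\bK^*)^2$ by Lemma~\ref{lem:ambient-multiplier-square}. So all of these groups sit inside the abelian group $A:=\cO_\bK^*\cap\bK_+^*$, with $V(\Gamma,g_\sigma)$ contained in both $U_M^+$ and $(\cO_\bK^*)^2$.

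\emph{Step 2: index bound.} Multiplicativity of indices in $A$ gives
\[
a_\sigma=[U_M^+:V(\Gamma,g_\sigma)]\le [A:V(\Gamma,g_\sigma)]=[A:(\cO_\bK^*)^2]\,[(\cO_\bK^*)^2:V(\Gamma,g_\sigma)].
\]
The first factor is $C_{\rm UV}$. The second factor is exactly $u_{\sigma|\tau}=[V(\cG,g_\sigma):V(\Gamma,g_\sigma)]$. Together these yield $a_\sigma\le C_{\rm UV}\,u_{\sigma|\tau}$.

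\emph{Step 3: finiteness.} I still need to check that all the indices are finite, so that multiplicativity is legitimate. This follows from Lemma~\ref{lem:cusp-multiplicity-finite}, which shows $V_\sigma$ is commensurable with $A$. I should also check that $a_\sigma$ is computed with the same $V$ and the same lattice class as in the chart $g_\sigma$. This is the chart-independence remark made after the definition of the cusp type. I expect no real obstacle; the only point needing care is confirming $U_M^+\subset A$, and the paper has already justified this just before the lemma.
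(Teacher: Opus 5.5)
Your proof is correct and is essentially the paper's argument: both are index chases inside $\cO_\bK^*\cap\bK_+^*$ that use only $V_\sigma\subset U^+_{M_\sigma}\cap(\cO_\bK^*)^2$ and $U^+_{M_\sigma}\subset\cO_\bK^*\cap\bK_+^*$. The only difference is bookkeeping. The paper factors through $W=U^+_{M_\sigma}\cap V(\cG,g_\sigma)$ and the injection $U^+_{M_\sigma}/W\hookrightarrow U^+_{b(\cG,g_\sigma)}/V(\cG,g_\sigma)$. You instead enlarge to $[\cO_\bK^*\cap\bK_+^*:V_\sigma]$ and factor through $(\cO_\bK^*)^2$, which is marginally shorter.
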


\begin{proof}
Write $M_0=b(\cG, g_\sigma)$, $V_0=V(\cG, g_\sigma)$. Set $W := U_{M_\sigma}^+ \cap V_0$, so that $V_\sigma \subset W$ since $V_\sigma \subset U_{M_\sigma}^+$ and $V_\sigma \subset V_0$. The inclusion $U_{M_\sigma}^+ \subset U_{M_0}^+$ induces an injection
\[
U_{M_\sigma}^+ / W \hookrightarrow U_{M_0}^+ / V_0,
\]
hence $[U_{M_\sigma}^+ : W] \leq [U_{M_0}^+ : V_0] = C_{\rm UV}$. Multiplicativity of indices gives
\[
a_\sigma = [U_{M_\sigma}^+ : V_\sigma] = [U_{M_\sigma}^+ : W] \cdot [W : V_\sigma] \leq C_{\rm UV} \cdot [V_0 : V_\sigma] = C_{\rm UV} \cdot u_{\sigma|\tau}.
\]
\end{proof}

The following is well known (see \cite{VDG} for real quadratic fields):
\begin{lem}\label{lem:ambient-cusps-maximal-order}
The number of $\cG$-cusps is the class number $h_\bK$.
\end{lem}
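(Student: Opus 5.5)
The plan is to identify the set of $\cG$-cusps with $\cG\backslash\bP^1(\bK)$ and compare it with the ideal class group through the Steinitz class of a saturated line. Write $\cM=\operatorname{End}_{\cO_\bK}(L)$ with $L$ a rank-two projective $\cO_\bK$-module, so $\cG=\SL(L)$, and fix an identification $L\otimes_{\cO_\bK}\bK\simeq\bK^2$. To a point $\xi\in\bP^1(\bK)$, viewed as a $\bK$-line $\ell\subset L\otimes\bK$, attach the saturated rank-one submodule $I_\xi:=L\cap\ell$, as in the proof of Lemma~\ref{lem:ambient-multiplier-square}. The map we use is $\xi\mapsto[I_\xi]\in\Cl(\bK)$, where $I_\xi$ is regarded as an invertible module and hence as an ideal class. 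Since $g\in\SL(L)$ carries $L\cap\ell$ isomorphically onto $L\cap g\ell$, this class is $\cG$-invariant, so the map descends to $\cG\backslash\bP^1(\bK)\to\Cl(\bK)$. It then suffices to prove that this map is bijective.

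\emph{Surjectivity.} Write $L\simeq\cO_\bK\oplus J$ with $[J]$ the Steinitz class. Because $L/I_\xi$ is projective, the sequence $0\to I_\xi\to L\to L/I_\xi\to0$ splits, so $L\simeq I_\xi\oplus L/I_\xi$ and $[I_\xi][L/I_\xi]=[J]$. Conversely, given any class $[\fa]$, I will use Steinitz's theorem to get $\fa\oplus\fa^{-1}J\simeq L$. The image of $\fa$ is then a saturated rank-one summand of $L$ whose $\bK$-span is a line $\ell$ with $L\cap\ell=\fa$. Hence every class occurs.

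\emph{Injectivity.} Suppose $I_{\xi}\simeq I_{\xi'}$. The quotients are then isomorphic as well, since $[L/I]=[J][I]^{-1}$ and rank-one projectives are determined by their class. Choose splittings $L=I_\xi\oplus L_1$ and $L=I_{\xi'}\oplus L_1'$, together with isomorphisms $\alpha:I_\xi\to I_{\xi'}$ and $\beta:L_1\to L_1'$. Then $g=\alpha\oplus\beta\in\GL(L)$ sends $\ell$ to $\ell'$, and $\det g\in\cO_\bK^*$. To land in $\SL(L)$, replace $\beta$ by $(\det g)^{-1}\beta$; this is still an isomorphism $L_1\to L_1'$, because $\det g$ is a unit. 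The resulting $g$ lies in $\SL(L)=\cG$ and maps $\xi$ to $\xi'$.

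The one delicate point is the determinant correction: $\det(\alpha\oplus\beta)$ has to be read via $\bigwedge^2 L\simeq I\otimes L/I$, and one must check that scaling one factor by a unit changes the determinant by exactly that unit. This is routine. After that the number of $\cG$-cusps is $|\Cl(\bK)|=h_\bK$.
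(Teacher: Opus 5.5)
Your proof is correct. The paper does not prove this lemma at all. It states it as well known and cites van der Geer for real quadratic fields, so your proposal supplies an argument the paper omits.

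Your argument is the standard module-theoretic one:
\begin{itemize}
\item the $\cG$-orbit of $\xi$ is detected by the isomorphism class of the saturated line $L\cap\ell$;
\item surjectivity follows from Steinitz's theorem $\fa\oplus\fa^{-1}J\simeq\cO_\bK\oplus J$;
\item injectivity follows from building $\alpha\oplus\beta$ on adapted splittings and correcting the determinant by a unit on the complementary factor.
\end{itemize}

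Compared with the citation, your argument works verbatim for every number field and every maximal order $\cM=\operatorname{End}_{\cO_\bK}(L)$ with $L$ possibly non-free. That is the generality in which the paper uses the lemma, namely to bound $|\cT|$ in Lemma~\ref{lem:global-reduction-to-weighted-width} and the proof of Theorem~\ref{thm:quantitative-global-cusp-decay}. It also reuses the saturated-submodule setup from the proof of Lemma~\ref{lem:ambient-multiplier-square}.

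The determinant point you flag is indeed routine. In a $\bK$-basis $e_1\in\ell$, $e_2\in L_1\otimes\bK$, replacing $\beta$ by $u\beta$ multiplies $ge_1\wedge ge_2$ by $u$.

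For $L=\cO_\bK^2$, your map sends $[a:c]$ to the class of $(a\cO_\bK+c\cO_\bK)^{-1}$. This is the inverse of the classical assignment, which does not affect the count.
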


Choose a finite set $\cT \subset \bP^1(\bK)$ of representatives for the $\cG$-cusps. By Lemma~\ref{lem:ambient-cusps-maximal-order}, one has $|\cT|= h_\bK$. 
\begin{lem}\label{lem:global-reduction-to-weighted-width}
We have
\[
\frac{\sum_{\sigma \in \Gamma \backslash \bP^1(\bK)} a_\sigma}{[\cG:\Gamma]}
\leq
C_{\rm UV}
\sum_{\tau \in \cT}
\frac{\sum_{\sigma \in \cC_\tau(\Gamma)} u_{\sigma|\tau}}{\sum_{\sigma \in \cC_\tau(\Gamma)} w_{\sigma|\tau} u_{\sigma|\tau}}.
\]
\end{lem}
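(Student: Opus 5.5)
The plan is to split the sum over all $\Gamma$-cusps according to the $\cG$-cusp lying underneath each one, and then compare each piece with the index $[\cG:\Gamma]$.

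First, the $\Gamma$-cusps are partitioned by their images in $\cG\backslash\bP^1(\bK)$. Taking representatives $\cT$, this gives
\[
\sum_{\sigma\in\Gamma\backslash\bP^1(\bK)}a_\sigma=\sum_{\tau\in\cT}\ \sum_{\sigma\in\cC_\tau(\Gamma)}a_\sigma .
\]
For each fixed $\tau$, Lemma~\ref{lem:cusp-multiplicity-bounded-by-relative-multiplier} bounds every term, giving
\[
\sum_{\sigma\in\cC_\tau(\Gamma)}a_\sigma\le C_{\rm UV}\sum_{\sigma\in\cC_\tau(\Gamma)}u_{\sigma|\tau}.
\]

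Next, apply Proposition~\ref{prop:cusp-width-multiplier-index-sum} separately for each $\tau\in\cT$. It identifies the denominator as
\[
[\cG:\Gamma]=\sum_{\sigma\in\cC_\tau(\Gamma)}w_{\sigma|\tau}u_{\sigma|\tau},
\]
and this holds for every choice of $\tau$. So when we divide the $\tau$-piece of the numerator by $[\cG:\Gamma]$, we may write that index in the form adapted to $\tau$. This gives the $\tau$-th summand on the right-hand side of the statement.

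Summing these bounds over $\tau\in\cT$ then yields the claimed inequality. There is no real obstacle here. The one point to check is that $\cC_\tau(\Gamma)$ is nonempty and that the indices $w_{\sigma|\tau}$ and $u_{\sigma|\tau}$ are well defined, independent of the chosen representatives. Both facts were established before Proposition~\ref{prop:cusp-width-multiplier-index-sum}.
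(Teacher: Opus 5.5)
Your proposal is correct and follows the paper's proof exactly: you partition the $\Gamma$-cusps by the sets $\cC_\tau(\Gamma)$, bound each $a_\sigma$ by $C_{\rm UV}\,u_{\sigma|\tau}$ via Lemma~\ref{lem:cusp-multiplicity-bounded-by-relative-multiplier}, and for each $\tau$ write $[\cG:\Gamma]$ in the form given by Proposition~\ref{prop:cusp-width-multiplier-index-sum}. The nonemptiness of $\cC_\tau(\Gamma)$ you mention is automatic, since the orbit of $\tau$ itself lies in it.
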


\begin{proof}
The sets $\cC_\tau(\Gamma)$ for $\tau \in \cT$ partition $\Gamma \backslash \bP^1(\bK)$. By Lemma~\ref{lem:cusp-multiplicity-bounded-by-relative-multiplier},
\[
\sum_{\sigma \in \Gamma \backslash \bP^1(\bK)} a_\sigma
=
\sum_{\tau \in \cT} \sum_{\sigma \in \cC_\tau(\Gamma)} a_\sigma
\leq
C_{\rm UV} \sum_{\tau \in \cT} \sum_{\sigma \in \cC_\tau(\Gamma)} u_{\sigma|\tau}.
\]
Dividing by $[\cG : \Gamma]$ and using $[\cG:\Gamma] = \sum_{\sigma \in \cC_\tau(\Gamma)} w_{\sigma|\tau} u_{\sigma|\tau}$ for each $\tau \in \cT$ (see Proposition~\ref{prop:cusp-width-multiplier-index-sum}) gives the bound.
\end{proof}

\subsection{Reduction to the finite quotient}
\label{sec:finite-quotient-weighted-width}

Recall that $\fn \subset \cO_\bK$ denotes the congruence level of $\Gamma$ relative to $\cM$, so that $\cG(\fn) \subset \Gamma$. Write
\[
\overline \cG := \cG/\cG(\fn),
\qquad
\overline\Gamma := \Gamma/\cG(\fn).
\]
We denote by a bar the image in $\overline\cG$ of any subgroup or element of $\cG$.

Fix a $\cG$-cusp $\tau \in \bP^1(\bK)$ and write $P_\tau := \Stab_\cG(\tau)$. Choose a chart $g_\tau \in \SL_2(\bK)$ with $g_\tau(\infty)=\tau$, and set
\[
U_\tau := g_\tau\{u(x)\mid x\in\bK\}g_\tau^{-1}\cap\cG=\{g_\tau u(x)g_\tau^{-1}\mid x\in b(\cG,g_\tau)\}.
\]
Remark that $U_\tau$ is normal in $P_\tau$ and is independent of the choice of $g_\tau$.

For $r\in\cG$, set
\[
S(r,\tau):=P_\tau\cap r^{-1}\Gamma r.
\]
Its image in $\overline P_\tau$ is denoted by $\overline S(r,\tau)$. Since $\cG(\fn)\subset r^{-1}\Gamma r$, one has
\[
\overline S(r,\tau)=\overline P_\tau\cap \bar r^{-1}\overline\Gamma\bar r.
\]

\begin{lem}\label{lem:finite-quotient-cusps}
There are bijections
\[
\cC_\tau(\Gamma) \cong \Gamma\backslash \cG/P_\tau \cong \overline\Gamma\backslash\overline \cG/\overline P_\tau,
\]
the first induced by $r\mapsto [r\tau]_\Gamma$ and the second induced by the quotient map $\cG\to\overline\cG$.
\end{lem}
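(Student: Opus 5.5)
The plan is to prove the two bijections separately; both are formal orbit–stabilizer and double-coset manipulations.

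\emph{First bijection.} The map $\cG\to\cG\cdot\tau$, $r\mapsto r\tau$, is surjective by definition of the orbit, and its fibres are the left cosets $rP_\tau$, since $P_\tau=\Stab_\cG(\tau)$. It therefore induces a $\cG$-equivariant bijection $\cG/P_\tau\cong\cG\cdot\tau$, where $\Gamma$ acts on the left on both sides. Passing to $\Gamma$-orbits gives $\Gamma\backslash\cG/P_\tau\cong\Gamma\backslash(\cG\cdot\tau)=\cC_\tau(\Gamma)$, induced by $r\mapsto[r\tau]_\Gamma$. This is essentially the argument already used in the proof of Lemma~\ref{lem:stabilizer-index-sum}, written with left rather than right cosets.

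\emph{Second bijection.} Let $\pi:\cG\to\overline\cG$ be the quotient map. I will check that $\pi$ induces a well-defined surjection
\[
\Gamma\backslash\cG/P_\tau\to\overline\Gamma\backslash\overline\cG/\overline P_\tau .
\]
It is well defined because $\pi(\Gamma)=\overline\Gamma$ and $\pi(P_\tau)=\overline P_\tau$, and it is surjective because $\pi$ is.

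For injectivity, suppose $\bar r_2=\bar\gamma\,\bar r_1\,\bar p$ with $\gamma\in\Gamma$ and $p\in P_\tau$. Then
\[
r_2=\gamma r_1 p\, k \qquad\text{for some } k\in\cG(\fn).
\]
Since $\cG(\fn)$ is normal in $\cG$, we can move $k$ past $r_1p$: write $r_1pk=k'r_1p$ with $k'=(r_1p)k(r_1p)^{-1}\in\cG(\fn)$. Because $\cG(\fn)\subset\Gamma$, this gives $r_2=(\gamma k')r_1p$ with $\gamma k'\in\Gamma$. Hence $r_1$ and $r_2$ lie in the same $(\Gamma,P_\tau)$ double coset.

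The key point is this last step. The kernel $\cG(\fn)$ is normal in $\cG$ and contained in $\Gamma$, so it can always be absorbed into the left factor. Beyond this, no step presents a real obstacle.
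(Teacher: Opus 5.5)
Your proposal is correct and follows the paper's proof essentially verbatim. The first bijection is the orbit--stabilizer identification already used in Lemma~\ref{lem:stabilizer-index-sum}. Injectivity of the second comes, as in the paper, from conjugating the kernel element past $r_1p$, using that $\cG(\fn)$ is normal in $\cG$ and contained in $\Gamma$.
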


\begin{proof}
The first bijection is the one used in the proof of Lemma~\ref{lem:stabilizer-index-sum}. For the second, the map sends $\Gamma rP_\tau$ to $\overline\Gamma\bar r\overline P_\tau$. It is well defined and surjective. For injectivity, suppose $\bar r_2=\bar\gamma\bar r_1\bar p$ with $\gamma\in\Gamma$ and $p\in P_\tau$. Then $r_2=\gamma r_1pn$ for some $n\in\cG(\fn)$. Since $\cG(\fn)$ is normal in $\cG$, the element $r_1pnp^{-1}r_1^{-1}$ lies in $\cG(\fn)\subset\Gamma$, and
\[
r_2=\gamma(r_1pnp^{-1}r_1^{-1})r_1p.
\]
Thus $r_2\in\Gamma r_1P_\tau$.
\end{proof}

\begin{lem}\label{lem:finite-stabilizer-indices}
Let $\sigma=[r\tau]_\Gamma\in\cC_\tau(\Gamma)$. Then
\[
w_{\sigma|\tau}u_{\sigma|\tau}=[\overline P_\tau:\overline S(r,\tau)]
\qquad\text{and}\qquad
u_{\sigma|\tau}=[\overline P_\tau:\overline U_\tau\overline S(r,\tau)].
\]
\end{lem}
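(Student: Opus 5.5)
We will derive both identities from Lemma~\ref{lem:stabilizer-index-product}, Lemma~\ref{lem:affine-cusp-sequence}, and the fact that \(\cG(\fn)\) is contained in every conjugate of \(\Gamma\).

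\emph{First identity.} By Lemma~\ref{lem:stabilizer-index-product} and the conjugation in the proof of Lemma~\ref{lem:stabilizer-index-sum},
\[
w_{\sigma|\tau}u_{\sigma|\tau}=[\Stab_\cG(r\tau):\Stab_\Gamma(r\tau)]=[P_\tau:S(r,\tau)].
\]
Here we use \(\Stab_\cG(r\tau)=rP_\tau r^{-1}\) and \(\Stab_\Gamma(r\tau)=\Gamma\cap rP_\tau r^{-1}\), then conjugate by \(r^{-1}\). (One should check that the chart \(g_\sigma=rg_\tau\) indeed gives these indices, which is immediate.)

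To pass to the finite quotient, note that \(S(r,\tau)\supset P_\tau\cap\cG(\fn)\), since \(\cG(\fn)\) is normal and contained in \(\Gamma\), hence in \(r^{-1}\Gamma r\). Therefore the reduction map \(P_\tau\to\overline P_\tau\) has kernel \(P_\tau\cap\cG(\fn)\subset S(r,\tau)\), and
\[
[P_\tau:S(r,\tau)]=[\overline P_\tau:\overline S(r,\tau)].
\]

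\emph{Second identity.} The plan is to show
\[
w_{\sigma|\tau}=[U_\tau S(r,\tau):S(r,\tau)]=[U_\tau:U_\tau\cap S(r,\tau)],
\]
and then divide. Conjugating by \(r\), the group \(U_\tau\) becomes the unipotent radical \(rU_\tau r^{-1}=\{g_\sigma u(x)g_\sigma^{-1}: x\in b(\cG,g_\sigma)\}\). Its intersection with \(\Gamma\) corresponds exactly to \(b(\Gamma,g_\sigma)\). This gives \([U_\tau:U_\tau\cap S(r,\tau)]=w_{\sigma|\tau}\).

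Since \(U_\tau\) is normal in \(P_\tau\), the product \(U_\tau S(r,\tau)\) is a subgroup. Hence
\[
[P_\tau:U_\tau S(r,\tau)]=\frac{[P_\tau:S(r,\tau)]}{[U_\tau S:S]}=\frac{w_{\sigma|\tau}u_{\sigma|\tau}}{w_{\sigma|\tau}}=u_{\sigma|\tau}.
\]
Finally, \(U_\tau S(r,\tau)\) contains the kernel \(P_\tau\cap\cG(\fn)\), so its image \(\overline U_\tau\,\overline S(r,\tau)\) has the same index in \(\overline P_\tau\).

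\emph{Main obstacle.} The main point needing care is the sign \(-I\). Lemma~\ref{lem:affine-cusp-sequence} works projectively, so one must check that \(-I\in S(r,\tau)\) (true because \(-I\in\Gamma\)) and that \(-I\) lies in \(P_\tau\). This ensures the passage to \(\{\pm I\}\)-quotients does not distort the indices. The identification of \(U_\tau\cap r^{-1}\Gamma r\) with \(b(\Gamma,g_\sigma)\) is otherwise a direct unwinding of the definitions.
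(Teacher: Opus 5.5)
Your proposal is correct and follows essentially the same route as the paper. The first identity comes from Lemma~\ref{lem:stabilizer-index-product} together with $P_\tau\cap\cG(\fn)\subset S(r,\tau)$, and the second from $[U_\tau:U_\tau\cap S(r,\tau)]=w_{\sigma|\tau}$ followed by division. The only difference is order: you divide inside $P_\tau$ and only then reduce modulo $\cG(\fn)$, which is legitimate since $U_\tau S(r,\tau)\supset P_\tau\cap\cG(\fn)$. This spares you the paper's explicit check that $\overline U_\tau\cap\overline S(r,\tau)=\overline{U_\tau\cap S(r,\tau)}$, and your concern about $-I$ is already handled inside Lemma~\ref{lem:stabilizer-index-product}.
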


\begin{proof}
Conjugation by $r$ identifies $P_\tau$ with $\Stab_\cG(r\tau)$ and $S(r,\tau)$ with $\Stab_\Gamma(r\tau)$. Hence Lemma~\ref{lem:stabilizer-index-product} gives
\[
[P_\tau:S(r,\tau)]=w_{\sigma|\tau}u_{\sigma|\tau}.
\]
Since $P_\tau\cap\cG(\fn)\subset S(r,\tau)$, quotienting by $\cG(\fn)$ gives
\[
[P_\tau:S(r,\tau)]=[\overline P_\tau:\overline S(r,\tau)].
\]
This proves the first identity.

For the second identity, the map $x\mapsto g_\tau u(x)g_\tau^{-1}$ identifies $b(\cG,g_\tau)$ with $U_\tau$. Since $g_\sigma=rg_\tau$ and $r\in\cG$, one has $b(\cG,g_\sigma)=b(\cG,g_\tau)$. Under this identification, $b(\Gamma,g_\sigma)$ corresponds to $U_\tau\cap S(r,\tau)$. Therefore
\[
[U_\tau:U_\tau\cap S(r,\tau)]=w_{\sigma|\tau}.
\]
Again $U_\tau\cap\cG(\fn)\subset U_\tau\cap S(r,\tau)$. Moreover
\[
\overline U_\tau\cap \overline S(r,\tau)
=
\overline{\,U_\tau\cap S(r,\tau)\,}.
\]
Indeed, if an element of the left-hand side is represented by
$u\in U_\tau$ and by $s\in S(r,\tau)$, then $u=s n$ for some
$n\in\cG(\fn)$. Since $\cG(\fn)\subset r^{-1}\Gamma r$, this implies
$u\in r^{-1}\Gamma r$; as $u\in U_\tau\subset P_\tau$, we get
$u\in S(r,\tau)$. Hence
\[
[\overline U_\tau:\overline U_\tau\cap\overline S(r,\tau)]
=
[U_\tau:U_\tau\cap S(r,\tau)]
=
w_{\sigma|\tau}.
\]
Since $\overline U_\tau$ is normal in $\overline P_\tau$, the product
$\overline U_\tau\overline S(r,\tau)$ is a subgroup of $\overline P_\tau$,
and
\[
[\overline U_\tau\overline S(r,\tau):\overline S(r,\tau)]
=
[\overline U_\tau:\overline U_\tau\cap\overline S(r,\tau)]
=
w_{\sigma|\tau}.
\]
Combining this with the first identity gives
\[
[\overline P_\tau:\overline U_\tau\overline S(r,\tau)]
=
u_{\sigma|\tau}.
\]
\end{proof}

\begin{prop}\label{prop:finite-quotient-counts}
For every $\cG$-cusp $\tau$,
\begin{gather*}
\sum_{\sigma\in\cC_\tau(\Gamma)}w_{\sigma|\tau}u_{\sigma|\tau}
=
|\overline\Gamma\backslash\overline\cG|
=
[\cG:\Gamma],
\quad 
\sum_{\sigma\in\cC_\tau(\Gamma)}u_{\sigma|\tau}
=
|\overline\Gamma\backslash\overline\cG/\overline U_\tau|,
\\
\frac{\sum_{\sigma\in\cC_\tau(\Gamma)}u_{\sigma|\tau}}
{\sum_{\sigma\in\cC_\tau(\Gamma)}w_{\sigma|\tau}u_{\sigma|\tau}}
=
\frac{|\overline\Gamma\backslash\overline\cG/\overline U_\tau|}
{|\overline\Gamma\backslash\overline\cG|}.
\end{gather*}
\end{prop}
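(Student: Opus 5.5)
The plan is to combine Lemma~\ref{lem:finite-quotient-cusps} and Lemma~\ref{lem:finite-stabilizer-indices} with orbit counting in the finite group $\overline\cG$. For the first identity, Proposition~\ref{prop:cusp-width-multiplier-index-sum} already gives $\sum_{\sigma}w_{\sigma|\tau}u_{\sigma|\tau}=[\cG:\Gamma]$. Since $\cG(\fn)\subset\Gamma$ is normal in $\cG$, we also have $[\cG:\Gamma]=[\overline\cG:\overline\Gamma]=|\overline\Gamma\backslash\overline\cG|$. As a consistency check, this also follows from the finite picture. By Lemma~\ref{lem:finite-quotient-cusps}, the cusps $\sigma\in\cC_\tau(\Gamma)$ correspond to the double cosets $\overline\Gamma\bar r\overline P_\tau$. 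Each such double coset is the $\overline P_\tau$-orbit of $\overline\Gamma\bar r$ under the right action on $\overline\Gamma\backslash\overline\cG$. The stabilizer of $\overline\Gamma\bar r$ is $\overline P_\tau\cap\bar r^{-1}\overline\Gamma\bar r=\overline S(r,\tau)$, so the orbit has size $[\overline P_\tau:\overline S(r,\tau)]=w_{\sigma|\tau}u_{\sigma|\tau}$ by Lemma~\ref{lem:finite-stabilizer-indices}. Summing over orbits recovers $|\overline\Gamma\backslash\overline\cG|$.

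For the second identity, consider the right action of $\overline P_\tau$ on the finite set $Y:=\overline\Gamma\backslash\overline\cG/\overline U_\tau$. This action is well defined because $\overline U_\tau$ is normal in $\overline P_\tau$: for $p\in\overline P_\tau$ we have $\overline\Gamma x\overline U_\tau p=\overline\Gamma xp\overline U_\tau$. The orbits of $\overline P_\tau$ on $Y$ are in bijection with $\overline\Gamma\backslash\overline\cG/\overline P_\tau$, hence with $\cC_\tau(\Gamma)$. The orbit of $\overline\Gamma\bar r\overline U_\tau$ is in bijection with $(\overline U_\tau\overline S(r,\tau))\backslash\overline P_\tau$. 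To see this, note that $\overline\Gamma\bar rp\overline U_\tau=\overline\Gamma\bar r\overline U_\tau$ holds if and only if $\bar rp\in\overline\Gamma\bar r\overline U_\tau$. This is equivalent to $p\in\bar r^{-1}\overline\Gamma\bar r\,\overline U_\tau\cap\overline P_\tau$. Because $\overline U_\tau\subset\overline P_\tau$, this intersection equals $\overline S(r,\tau)\overline U_\tau$. Hence the orbit has cardinality $[\overline P_\tau:\overline U_\tau\overline S(r,\tau)]=u_{\sigma|\tau}$ by Lemma~\ref{lem:finite-stabilizer-indices}. Summing over $\sigma\in\cC_\tau(\Gamma)$ gives $|Y|$.

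The third identity is the quotient of the first two. The one point needing care is the modular-law step $\bar r^{-1}\overline\Gamma\bar r\,\overline U_\tau\cap\overline P_\tau=(\bar r^{-1}\overline\Gamma\bar r\cap\overline P_\tau)\overline U_\tau$. Take $p=gu$ with $g\in\bar r^{-1}\overline\Gamma\bar r$ and $u\in\overline U_\tau$. Then $g=pu^{-1}\in\overline P_\tau$, so $g\in\overline S(r,\tau)$, which gives one inclusion; the reverse inclusion is clear. Everything else is formal bookkeeping, so I expect no real obstacle.
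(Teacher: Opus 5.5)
Your proposal is correct and is essentially the paper's argument: both count orbits of the right action of $\overline P_\tau$, identify them with $\cC_\tau(\Gamma)$ via Lemma~\ref{lem:finite-quotient-cusps}, and read off the orbit sizes $[\overline P_\tau:\overline S(r,\tau)]$ and $[\overline P_\tau:\overline U_\tau\overline S(r,\tau)]$ from Lemma~\ref{lem:finite-stabilizer-indices}. The only difference is cosmetic. You let $\overline P_\tau$ act directly on $\overline\Gamma\backslash\overline\cG/\overline U_\tau$ and compute the stabilizer $\overline S(r,\tau)\overline U_\tau$ by the modular law, whereas the paper counts the $\overline U_\tau$-orbits inside each $\overline P_\tau$-orbit of $\overline\Gamma\backslash\overline\cG$ as $|\overline S(r,\tau)\backslash\overline P_\tau/\overline U_\tau|$, using normality of $\overline U_\tau$.
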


\begin{proof}
Let $\overline X:=\overline\Gamma\backslash\overline\cG$. The group $\overline P_\tau$ acts on $\overline X$ on the right. Its orbits are parametrized by $\overline\Gamma\backslash\overline\cG/\overline P_\tau$, hence by $\cC_\tau(\Gamma)$, by Lemma~\ref{lem:finite-quotient-cusps}. The stabilizer of $\overline\Gamma\bar r$ is $\overline S(r,\tau)$, so the corresponding $\overline P_\tau$-orbit has cardinality
\[
[\overline P_\tau:\overline S(r,\tau)]
=
w_{\sigma|\tau}u_{\sigma|\tau}
\]
by Lemma~\ref{lem:finite-stabilizer-indices}. Summing over the $\overline P_\tau$-orbits gives the first identity.

For the second identity, count the $\overline U_\tau$-orbits inside each $\overline P_\tau$-orbit. The $\overline P_\tau$-orbit through $\overline\Gamma\bar r$ is identified with $\overline S(r,\tau)\backslash\overline P_\tau$. Hence the number of $\overline U_\tau$-orbits in it is
\[
|\overline S(r,\tau)\backslash\overline P_\tau/\overline U_\tau|.
\]
Since $\overline U_\tau$ is normal in $\overline P_\tau$, this number is
\[
[\overline P_\tau:\overline S(r,\tau)\overline U_\tau]
=
[\overline P_\tau:\overline U_\tau\overline S(r,\tau)]
=
u_{\sigma|\tau}
\]
by Lemma~\ref{lem:finite-stabilizer-indices}. Summing over all $\sigma\in\cC_\tau(\Gamma)$ gives the second identity. The ratio formula follows.
\end{proof}

\subsection{Product bounds and localization}
\label{sec:product-bounds-localization}

For a finite group $Q$ with subgroups $H,U\subset Q$, set
\[
\chi_{Q,U}(H):=\frac{|H\backslash Q/U|}{[Q:H]}.
\]

\begin{rem}\label{rem:chi-bounded-by-one}
$\chi_{Q,U}(H)\leq 1$, since $|H\backslash Q/U|\leq |H\backslash Q|=[Q:H]$.
\end{rem}

\begin{lem}\label{lem:burnside-expression-for-finite-ratio}
$\chi_{Q,U}(H)=\frac{1}{|Q|}\sum_{h\in H}|X^h|$, where $X:=Q/U$.
\end{lem}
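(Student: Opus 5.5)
The plan is to apply Burnside's lemma to the left action of $H$ on the finite coset space $X=Q/U$, and then divide by the index $[Q:H]$. The double coset space $H\backslash Q/U$ is exactly the orbit set $H\backslash X$ of this action. Burnside's lemma gives
\[
|H\backslash Q/U| = |H\backslash X| = \frac{1}{|H|}\sum_{h\in H}|X^h|,
\]
where $X^h$ denotes the set of cosets $xU$ with $hxU=xU$.

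Next, divide by $[Q:H]=|Q|/|H|$. The factor $|H|$ cancels, which yields
\[
\chi_{Q,U}(H)=\frac{|H|}{|Q|}\cdot\frac{1}{|H|}\sum_{h\in H}|X^h| = \frac{1}{|Q|}\sum_{h\in H}|X^h|.
\]
This is the same manipulation already carried out in \eqref{eq:burnside-H-expanded} in the proof of Lemma~\ref{lem:burnside-first-level-comparison}, now in an abstract finite group.

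There is no real obstacle. The only point needing a check is the identification of $H\backslash Q/U$ with the $H$-orbits on $Q/U$. This holds because the map $HqU\mapsto H\cdot(qU)$ is well defined and bijective: two cosets $q_1U$ and $q_2U$ lie in the same $H$-orbit if and only if $q_2\in Hq_1U$.
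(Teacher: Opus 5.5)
Your proposal is correct and matches the paper's proof: apply Burnside's lemma to the left action of $H$ on $X=Q/U$, then divide by $[Q:H]=|Q|/|H|$. Your explicit identification of $H\backslash Q/U$ with the orbit set $H\backslash X$ is the one step the paper leaves implicit.
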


\begin{proof}
By Burnside's lemma, $|H\backslash X|=\frac{1}{|H|}\sum_{h\in H}|X^h|$. Divide by $[Q:H]=|Q|/|H|$.
\end{proof}

\begin{lem}\label{prop:product-bound-finite-ratio}
Assume that $Q=\prod_{i\in I}Q_i$ is a direct product of finite groups, and that $U=\prod_{i\in I}U_i$ with each $U_i\subset Q_i$ a subgroup. Let $H\subset Q$ be a subgroup, and let $I=I_1\sqcup\cdots\sqcup I_m$ be a partition. For each $j$, set
\[
Q_{I_j}:=\prod_{i\in I_j}Q_i,
\qquad
U_{I_j}:=\prod_{i\in I_j}U_i\subset Q_{I_j},
\]
and let $H_{I_j}$ denote the image of $H$ under the projection $Q\twoheadrightarrow Q_{I_j}$. Then
\[
\chi_{Q,U}(H)\leq \prod_{j=1}^m \chi_{Q_{I_j},U_{I_j}}(H_{I_j}).
\]
\end{lem}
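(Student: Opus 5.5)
We argue by Burnside's lemma, first reducing to the case $m=2$ and then inducting on $m$. Grouping the factors, we may regard $Q=Q_{I_1}\times Q'$ with $Q'=\prod_{j\ge2}Q_{I_j}$, $U=U_{I_1}\times U'$, and $X=Q/U=X_1\times X'$ with $X_1=Q_{I_1}/U_{I_1}$ and $X'=Q'/U'$. Let $H'$ be the image of $H$ in $Q'$. If we prove $\chi_{Q,U}(H)\le\chi_{Q_{I_1},U_{I_1}}(H_{I_1})\,\chi_{Q',U'}(H')$, then applying the same inequality to $H'\subset Q'$ with the remaining partition $I_2\sqcup\cdots\sqcup I_m$ finishes the induction. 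The images of $H'$ in each $Q_{I_j}$ are exactly the $H_{I_j}$, since projections compose.

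For the two-factor case, use Lemma~\ref{lem:burnside-expression-for-finite-ratio}. For $h=(h_1,h')\in H$ the action on $X_1\times X'$ is componentwise, so $|X^h|=|X_1^{h_1}|\,|X'^{h'}|$. Thus
\[
\chi_{Q,U}(H)=\frac{1}{|Q_{I_1}|\,|Q'|}\sum_{(h_1,h')\in H}|X_1^{h_1}|\,|X'^{h'}|.
\]
Since all terms are nonnegative and $H\subset H_{I_1}\times H'$, we can enlarge the sum to run over all of $H_{I_1}\times H'$. The enlarged sum factors as
\[
\Big(\frac{1}{|Q_{I_1}|}\sum_{h_1\in H_{I_1}}|X_1^{h_1}|\Big)\Big(\frac{1}{|Q'|}\sum_{h'\in H'}|X'^{h'}|\Big),
\]
and by Lemma~\ref{lem:burnside-expression-for-finite-ratio} again this product equals $\chi_{Q_{I_1},U_{I_1}}(H_{I_1})\,\chi_{Q',U'}(H')$.

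We expect no real obstacle. The only point needing care is the enlargement step: it is legitimate because fixed-point counts are nonnegative and because Lemma~\ref{lem:burnside-expression-for-finite-ratio} normalizes by $|Q|$ rather than $|H|$. This is exactly why the bound runs in the stated direction. Alternatively, one could do the whole argument in one step with $m$ factors, using $H\subset\prod_j H_{I_j}$ and the multiplicativity $|X^h|=\prod_j|X_{I_j}^{h_j}|$.
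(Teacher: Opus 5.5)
Your proof is correct and is essentially the paper's argument. The paper does it in one step with all $m$ factors at once, using $X^h=\prod_j X_{I_j}^{h_j}$, the inclusion $H\subseteq\prod_j H_{I_j}$, and nonnegativity to enlarge the sum before factoring; this is exactly your two-factor step, and the one-step variant you mention at the end is the paper's version verbatim, while your induction on $m$ is a valid repackaging since projections compose.
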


\begin{proof}
Set $X:=Q/U$ and $X_{I_j}:=Q_{I_j}/U_{I_j}$. The product decompositions identify $X=\prod_j X_{I_j}$ as $Q$-sets, with $Q=\prod_jQ_{I_j}$ acting componentwise. Decomposing $h\in Q$ as $h=(h_j)_j$ with $h_j\in Q_{I_j}$, this gives $X^h=\prod_j X_{I_j}^{h_j}$. By definition of $H_{I_j}$, $h\in H$ implies $h_j\in H_{I_j}$ for all $j$, so $H\subseteq\prod_j H_{I_j}$. Therefore
\[
\sum_{h\in H}|X^h|
=
\sum_{h\in H}\prod_j |X_{I_j}^{h_j}|
\leq
\sum_{(h_j)\in\prod_j H_{I_j}}\prod_j |X_{I_j}^{h_j}|
=
\prod_{j=1}^m\sum_{h_j\in H_{I_j}}|X_{I_j}^{h_j}|.
\]
Divide by $|Q|=\prod_j |Q_{I_j}|$ and apply Lemma~\ref{lem:burnside-expression-for-finite-ratio} to each factor.
\end{proof}

\begin{lem}\label{lem:fibre-chi-inequality}
Assume that \(Q=A\times B\), \(U=U_A\times U_B\), and let \(H\subset Q\). Let
\(H_A:=H\cap A\), and \(H^B:=\operatorname{pr}_B(H)\) be the projection of \(H\) onto \(B\). Then
\[
  \chi_{Q,U}(H)
  \le
  \chi_{A,U_A}(H_A)\chi_{B,U_B}(H^B).
\]
\end{lem}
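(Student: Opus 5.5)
We would argue via Burnside, exactly as in Lemma~\ref{prop:product-bound-finite-ratio}. By Lemma~\ref{lem:burnside-expression-for-finite-ratio}, with \(X=Q/U=X_A\times X_B\), where \(X_A=A/U_A\) and \(X_B=B/U_B\), we have
\[
\chi_{Q,U}(H)=\frac{1}{|A||B|}\sum_{h\in H}|X_A^{h_A}|\,|X_B^{h_B}|,
\]
where \(h=(h_A,h_B)\) and the identity \(X^h=X_A^{h_A}\times X_B^{h_B}\) comes from the componentwise action. The plan is to organize this sum by fibres of the projection \(\operatorname{pr}_B:H\to H^B\).

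First, for each \(b\in H^B\), the fibre \(\{h\in H\mid h_B=b\}\) is a coset of the kernel \(H\cap A=H_A\). Concretely, fix some \(h^{(b)}=(a_b,b)\in H\). The fibre is then \(\{(a a_b,b)\mid a\in H_A\}\), so its \(A\)-components form the coset \(H_Aa_b\) in \(A\).

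Second, we would bound the fibre contribution
\[
\sum_{a\in H_A}|X_A^{aa_b}|\le\sum_{a\in H_A}|X_A^{a}|.
\]
This inequality is the one genuinely non-formal step, and we expect it to be the main obstacle. We would prove it by double counting pairs \((g,x)\) with \(g\in H_Aa_b\) and \(gx=x\). For fixed \(x\in X_A\), the elements of the coset \(H_Aa_b\) fixing \(x\) are the set \(H_Aa_b\cap\Stab(x)\). If this set is nonempty, it is a coset of \(H_A\cap\Stab(x)\). Hence its size is either \(0\) or \(|H_A\cap\Stab(x)|\), which is the number of \(a\in H_A\) fixing \(x\). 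Summing over \(x\) gives the inequality.

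Third, we substitute the fibre bound back in:
\[
\sum_{h\in H}|X^h|\le\Bigl(\sum_{a\in H_A}|X_A^a|\Bigr)\Bigl(\sum_{b\in H^B}|X_B^b|\Bigr).
\]
Dividing by \(|A||B|\) and applying Lemma~\ref{lem:burnside-expression-for-finite-ratio} to each factor gives \(\chi_{A,U_A}(H_A)\,\chi_{B,U_B}(H^B)\), which is the claimed bound.
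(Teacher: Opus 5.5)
Your proof is correct and is essentially the paper's: Burnside's formula, the decomposition of $H$ into the fibres $H_Aa_b\times\{b\}$ over $b\in H^B$, and on each fibre the bound $\sum_{a\in H_Aa_b}|X_A^a|\le\sum_{a\in H_A}|X_A^a|=|H_A|\,|H_A\backslash X_A|$. The only difference is in how that fibre bound is proved. You argue pointwise that $H_Aa_b\cap\Stab(x)$ is either empty or a right coset of $H_A\cap\Stab(x)$, which needs no normality. The paper instead uses that $a_b$ normalizes $H_A$ and counts the $a_b$-stable $H_A$-orbits, which gives the exact value $|H_A|\,|(H_A\backslash X_A)^{a_b}|$.
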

\begin{proof}
Let \(X_A=A/U_A\) and \(X_B=B/U_B\).
Fix \(b\in H^B\), and choose \(a_b\in A\) with \((a_b,b)\in H\).  Then
the fibre of \(H\to H^B\) over \(b\) is \(H_Aa_b\).  Moreover \(a_b\)
normalizes \(H_A\), since conjugation by \((a_b,b)\) preserves the kernel
\(H_A\) of \(H\to H^B\).

We claim that
\[
  \sum_{a\in H_Aa_b}|X_A^a|
  \le |H_A|\,|H_A\backslash X_A|.
\]
Indeed, more generally, if a finite group \(K\) acts on a finite set \(X\)
and \(t\) normalizes $K$, then
\[
  \sum_{k\in K}|X^{kt}|
  =
  |K|\, |(K\backslash X)^t|
  \le
  |K|\,|K\backslash X|.
\]
This follows by counting pairs \((k,x)\in K\times X\) such that \(ktx=x\).
A \(K\)-orbit contributes \(|K|\) such pairs if it is preserved by \(t\),
and contributes \(0\) otherwise.  Applying this with \(K=H_A\), \(X=X_A\),
and \(t=a_b\), gives the claim. Therefore
\begin{align*}
  \chi_{Q,U}(H)
  &=
  \frac{1}{|A||B|}
  \sum_{b\in H^B}
  |X_B^b|
  \sum_{a\in H_Aa_b}|X_A^a|                                      \\
  &\le
  \frac{|H_A|\,|H_A\backslash X_A|}{|A|}
  \cdot
  \frac{1}{|B|}\sum_{b\in H^B}|X_B^b|                              \\
  &=
  \chi_{A,U_A}(H_A)\,\chi_{B,U_B}(H^B).
\end{align*}
\end{proof}

Choose a projective $\cO_\bK$-module $L$ of rank two such that $\cM=\operatorname{End}_{\cO_\bK}(L)$. Write the factorization of \(\fn\) into prime ideals:
\[
\fn=\prod_{\fp\mid\fn}\fp^{e_\fp}.
\]
The proof of Lemma~\ref{lem:congruence-product-maximal-order} shows that the reduction map $\SL(L)\to\SL(L/\fn L)$ is surjective, with kernel $\cG(\fn)$. Hence
\[
\overline\cG\simeq \SL(L/\fn L).
\]
The Chinese remainder theorem gives
\[
\cO_\bK/\fn\simeq \prod_{\fp\mid\fn}\cO_\bK/\fp^{e_\fp}.
\]
Tensoring with \(L\) gives
\[
L/\fn L
\simeq
L\otimes_{\cO_\bK}(\cO_\bK/\fn)
\simeq
\prod_{\fp\mid\fn} L\otimes_{\cO_\bK}(\cO_\bK/\fp^{e_\fp})
\simeq
\prod_{\fp\mid\fn} L/\fp^{e_\fp}L.
\]
Thus an $\cO_\bK/\fn$-linear automorphism of $L/\fn L$ is equivalently a
tuple of $\cO_\bK/\fp^{e_\fp}$-linear automorphisms of the modules
$L/\fp^{e_\fp}L$. Since the determinant is computed componentwise under
this product decomposition, we obtain
\[
\overline\cG\simeq
\prod_{\fp\mid\fn}G_{e_\fp}(\fp),
\qquad
G_e(\fp):=\SL(L/\fp^eL).
\]
Since $L/\fp^eL$ is a free module of rank two over $\cO_\bK/\fp^e$, each choice of a basis identifies $G_e(\fp)$ with $\SL_2(\cO_\bK/\fp^e)$. The local computations below are independent of this choice of basis.

Fix a $\cG$-cusp $\tau$ and a chart $g_\tau\in\SL_2(\bK)$ with $g_\tau(\infty)=\tau$. Write
\[
\fb_\tau:=b(\cG,g_\tau).
\]
By \S\ref{sec:cusps-hirzebruch-multiplicity}, this is an
$\cO_\bK$-fractional ideal. Let
\[
N_\tau:=g_\tau
\begin{pmatrix}0&1\\0&0\end{pmatrix}
g_\tau^{-1}.
\]
Then $\fb_\tau=\{x\in\bK\mid xN_\tau\in\cM\}$, and $x\mapsto I+xN_\tau$ is a bijection between $\fb_\tau$ and $U_\tau$.

\begin{lem}\label{lem:cusp-quotient-iso}
For every non-zero ideal $\fa\subseteq\cO_\bK$, the map
$x\mapsto I+xN_\tau$ induces a group isomorphism
\[
\fb_\tau/\fa\fb_\tau \xrightarrow{\sim} U_\tau/(U_\tau\cap\cG(\fa)).
\]
\end{lem}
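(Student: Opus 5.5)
The plan is to show that the map $\phi\colon\fb_\tau\to U_\tau$, $x\mapsto I+xN_\tau$, is a group isomorphism under which $\fa\fb_\tau$ corresponds exactly to $U_\tau\cap\cG(\fa)$. Passing to quotients then gives the claim.

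First, $\phi$ is a homomorphism from $(\fb_\tau,+)$ to $U_\tau$. Since $N_\tau^2=0$, we have $(I+xN_\tau)(I+yN_\tau)=I+(x+y)N_\tau$. It is bijective onto $U_\tau$ by the description $U_\tau=\{g_\tau u(x)g_\tau^{-1}\mid x\in\fb_\tau\}$ and the identity $g_\tau u(x)g_\tau^{-1}=I+xN_\tau$. Composing with the quotient map gives a surjective homomorphism $\fb_\tau\to U_\tau/(U_\tau\cap\cG(\fa))$. It therefore suffices to identify its kernel:
\[
\{x\in\fb_\tau\mid xN_\tau\in\fa\cM\}=\fa\fb_\tau .
\]

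The inclusion $\supseteq$ is immediate. If $x=\sum a_iy_i$ with $a_i\in\fa$ and $y_i\in\fb_\tau$, then $xN_\tau=\sum a_i(y_iN_\tau)\in\fa\cM$.

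The reverse inclusion is the main obstacle, since it requires knowing that $\fb_\tau$ is exactly the ideal cut out by $\cM$ along the line $\bK N_\tau$. I would argue locally at each prime, because membership in the fractional ideal $\fa\fb_\tau$, and in $\fa\cM$, is a local condition. At a prime $\fp$, let $\cM_\fp=\operatorname{End}(L_\fp)$ with $L_\fp$ free of rank two. The set $\{x\in\bK_\fp\mid xN_\tau\in\cM_\fp\}$ is a fractional ideal, which equals $\fb_{\tau,\fp}=\varpi^{k}\cO_\fp$ for some integer $k$; localization commutes with this intersection because $\cM=\bigcap_\fp(\cM_\fp\cap M_2(\bK))$. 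Then $\varpi^kN_\tau\in\cM_\fp$, and $\varpi^{k-1}N_\tau\notin\cM_\fp$ by minimality. So $\varpi^kN_\tau$ is a primitive element of the lattice $\cM_\fp$, meaning it does not lie in $\varpi\cM_\fp$.

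Now suppose $xN_\tau\in\fa\cM$. Write $\fa_\fp=\varpi^{a}\cO_\fp$ and $x=\varpi^{k+t}u$ with $u$ a unit. Then $\varpi^{t}u(\varpi^kN_\tau)\in\varpi^a\cM_\fp$. Since $\varpi^kN_\tau\notin\varpi\cM_\fp$, this forces $t\ge a$, that is, $x\in\fa_\fp\fb_{\tau,\fp}$. Since this holds at every prime, $x\in\fa\fb_\tau$.

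Finally, I would note that $\cG(\fa)=\{g\in\SL(\cM)\mid g-I\in\fa\cM\}$. So for $x\in\fb_\tau$, the condition $\phi(x)\in\cG(\fa)$ is exactly $xN_\tau\in\fa\cM$. This completes the identification of the kernel, and hence the isomorphism.
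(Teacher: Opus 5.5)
Your proof is correct, but you reach the key kernel identification $\{x\in\fb_\tau\mid xN_\tau\in\fa\cM\}=\fa\fb_\tau$ by a local route, whereas the paper does it globally in one line.

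The paper only uses that $\fa$ is invertible in the Dedekind domain $\cO_\bK$:
\[
xN_\tau\in\fa\cM \iff \fa^{-1}xN_\tau\subset\cM \iff \fa^{-1}x\subset\fb_\tau \iff x\in\fa\fb_\tau .
\]
The middle step is just the definition $\fb_\tau=\{y\in\bK\mid yN_\tau\in\cM\}$.

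You instead localize at each $\fp$. You note that a generator $\varpi^kN_\tau$ of $\fb_{\tau,\fp}N_\tau$ is primitive in $\cM_\fp$, compare valuations, and reassemble using the local--global principle for fractional ideals. Both arguments express the same fact: $\fb_\tau N_\tau=\bK N_\tau\cap\cM$ is saturated in $\cM$, so $\bK N_\tau\cap\fa\cM=\fa(\bK N_\tau\cap\cM)$. Your version makes the valuation bookkeeping explicit. The paper's avoids localization, uniformizers and primitivity altogether.

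If you keep your route, justify $\{x\mid xN_\tau\in\cM_\fp\}=\fb_\tau\cO_{\bK,\fp}$ directly. For instance: if $xN_\tau\in\cM_\fp$, then $sxN_\tau\in\cM$ for some $s\notin\fp$, as in the proof of Lemma~\ref{lem:local-cusp-unipotent-standard}. The identity $\cM=\bigcap_\fp(\cM_\fp\cap M_2(\bK))$ that you cite only expresses $\fb_\tau$ as an intersection of local ideals. Recovering its localizations from that requires a further (standard) step.
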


\begin{proof}
By the discussion in \S\ref{sec:cusps-hirzebruch-multiplicity},
$\fb_\tau=b(\cG,g_\tau)=\{x\in\bK\mid xN_\tau\in\cM\}$ is an
$\cO_\bK$-fractional ideal. The map $x\mapsto I+xN_\tau$ defines an isomorphism from the additive group $\fb_\tau$ onto $U_\tau$.

The kernel of the composite $\fb_\tau\to U_\tau\to
U_\tau/(U_\tau\cap\cG(\fa))$ consists of those $x\in\fb_\tau$ such that
$xN_\tau\in\fa\cM$. Since $\cO_\bK$ is a Dedekind domain, $\fa$ is
invertible. Hence
\[
xN_\tau\in\fa\cM
\,\Longleftrightarrow\,
\fa^{-1}xN_\tau\subset\cM
\,\Longleftrightarrow\,
\fa^{-1}x\subset\fb_\tau
\,\Longleftrightarrow\,
x\in\fa\fb_\tau.
\]
\end{proof}

For each prime $\fp\mid\fn$, write $U_{\tau,e_\fp}(\fp)\subset G_{e_\fp}(\fp)$ for the image of $U_\tau$ under the reduction map $\cG\to G_{e_\fp}(\fp)$. Equivalently,
\[
U_{\tau,e_\fp}(\fp)=U_\tau/(U_\tau\cap\cG(\fp^{e_\fp})).
\]
Lemma~\ref{lem:cusp-quotient-iso}, applied to $\fa=\fn$ and to $\fa=\fp^{e_\fp}$, gives
\[
\fb_\tau/\fn\fb_\tau \simeq \overline U_\tau,
\qquad
\fb_\tau/\fp^{e_\fp}\fb_\tau \simeq U_{\tau,e_\fp}(\fp).
\]
Since $\fb_\tau$ is a fractional $\cO_\bK$-ideal, the Chinese remainder theorem gives
\[
\fb_\tau/\fn\fb_\tau\simeq
\prod_{\fp\mid\fn}\fb_\tau/\fp^{e_\fp}\fb_\tau.
\]
The following statement follows from our discussion.
\begin{lem}\label{lem:U-bar-product-decomposition}
The isomorphism $\overline\cG\simeq \prod_{\fp\mid\fn}G_{e_\fp}(\fp)$ restricts to an isomorphism
$\overline U_\tau\simeq \prod_{\fp\mid\fn}U_{\tau,e_\fp}(\fp)$.
\end{lem}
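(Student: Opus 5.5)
The statement to prove is Lemma~\ref{lem:U-bar-product-decomposition}: the isomorphism \(\overline\cG\simeq\prod_{\fp\mid\fn}G_{e_\fp}(\fp)\) restricts to an isomorphism \(\overline U_\tau\simeq\prod_{\fp\mid\fn}U_{\tau,e_\fp}(\fp)\). The plan is to compare two maps out of the additive group \(\fb_\tau\) and to use Lemma~\ref{lem:cusp-quotient-iso} on both sides.

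\emph{Step 1: identify the restricted map.} The isomorphism \(\overline\cG\simeq\prod_\fp G_{e_\fp}(\fp)\) is induced by the reduction maps \(\cG\to\SL(L/\fp^{e_\fp}L)\). Its restriction to \(\overline U_\tau\) is therefore the product of the reductions \(U_\tau\to U_{\tau,e_\fp}(\fp)\). By definition, each of these is surjective onto \(U_{\tau,e_\fp}(\fp)\). Since the ambient map is an isomorphism, it is injective on \(\overline U_\tau\). Hence its image is a subgroup \(\Phi(\overline U_\tau)\subset\prod_\fp U_{\tau,e_\fp}(\fp)\) isomorphic to \(\overline U_\tau\), and it only remains to prove that this image is the whole product.

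\emph{Step 2: compare the maps through \(\fb_\tau\).} Parametrize by \(x\mapsto I+xN_\tau\). Lemma~\ref{lem:cusp-quotient-iso} identifies \(\overline U_\tau\) with \(\fb_\tau/\fn\fb_\tau\), and each \(U_{\tau,e_\fp}(\fp)\) with \(\fb_\tau/\fp^{e_\fp}\fb_\tau\). Both identifications are induced by the same map out of \(\fb_\tau\). Consequently the restricted map of Step~1 becomes the natural map
\[
\fb_\tau/\fn\fb_\tau\longrightarrow\prod_{\fp\mid\fn}\fb_\tau/\fp^{e_\fp}\fb_\tau .
\]

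\emph{Step 3: apply the Chinese remainder theorem.} This natural map is an isomorphism, by the Chinese remainder theorem for the fractional ideal \(\fb_\tau\). That in turn follows from \(\fb_\tau/\fn\fb_\tau\simeq\fb_\tau\otimes_{\cO_\bK}\cO_\bK/\fn\) together with \(\cO_\bK/\fn\simeq\prod_\fp\cO_\bK/\fp^{e_\fp}\); the tensor identification holds because \(\fb_\tau\) is projective, hence flat.

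A cardinality check is enough to finish: the two finite sets \(\overline U_\tau\) and \(\prod_\fp U_{\tau,e_\fp}(\fp)\) have the same size, so the injective map of Step~1 is surjective. The only point requiring care is compatibility: one must make sure that the identifications of Lemma~\ref{lem:cusp-quotient-iso} at level \(\fn\) and at each level \(\fp^{e_\fp}\) come from the same parametrization by \(\fb_\tau\), so that the square formed with the reduction maps commutes. This holds because every map involved is induced by the single map \(x\mapsto I+xN_\tau\).
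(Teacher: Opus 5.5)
Your proposal is correct and follows the paper's argument: both sides are identified with quotients of \(\fb_\tau\) via Lemma~\ref{lem:cusp-quotient-iso} (at level \(\fn\) and at each \(\fp^{e_\fp}\), through the same map \(x\mapsto I+xN_\tau\)), and the Chinese remainder theorem for \(\fb_\tau\) concludes. Two minor remarks: \(M/\fn M\simeq M\otimes_{\cO_\bK}\cO_\bK/\fn\) holds for any module, so flatness is not needed; and once Step~3 gives an isomorphism, the final cardinality check is redundant.
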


Moreover, we have:

\begin{lem}\label{lem:local-cusp-unipotent-standard}
For every \(e\ge1\), there is an \(\cO_\bK/\fp^e\)-basis of \(L/\fp^eL\) such that the induced identification
\[
  G_e(\fp)=\SL(L/\fp^eL)\simeq \SL_2(\cO_\bK/\fp^e)
\]
carries \(U_{\tau,e}(\fp)\) onto the standard upper-triangular unipotent subgroup \(U_e\). 
\end{lem}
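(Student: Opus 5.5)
We localize at \(\fp\) and choose a basis of \(L_\fp\) adapted to the cusp line. Let \(\ell\subset L_\bK=L\otimes_{\cO_\bK}\bK\) be the \(\bK\)-line corresponding to the image of \(N_\tau\), so that \(N_\tau\) kills \(\ell\) and has image \(\ell\). As in the proof of Lemma~\ref{lem:ambient-multiplier-square}, set \(I:=L\cap\ell\). This is a saturated rank-one submodule, and \(L\simeq I\oplus L'\) with \(L'\simeq L/I\) projective of rank one. After localizing at \(\fp\), the ring \(\cO_{\bK,\fp}\) is a discrete valuation ring, so \(I_\fp=\cO_{\bK,\fp}v_1\) and \(L'_\fp=\cO_{\bK,\fp}v_2\) for suitable \(v_1,v_2\). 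Then \((v_1,v_2)\) is a basis of \(L_\fp\), and its reduction is an \(\cO_\bK/\fp^e\)-basis of \(L/\fp^eL=L_\fp/\fp^eL_\fp\).

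Next we compute \(N_\tau\) in this basis. Since \(N_\tau^2=0\) and \(\ker N_\tau=\operatorname{im}N_\tau=\ell\), we have \(N_\tau v_1=0\) and \(N_\tau v_2=cv_1\) for some \(c\in\bK^*\). Thus in the basis \((v_1,v_2)\) we get \(xN_\tau=\begin{psmallmatrix}0&xc\\0&0\end{psmallmatrix}\). Moreover \(\cM_\fp=\operatorname{End}(L_\fp)=M_2(\cO_{\bK,\fp})\) in this basis. Hence \(x\in\fb_{\tau,\fp}\) if and only if \(xc\in\cO_{\bK,\fp}\), which gives \(\fb_{\tau,\fp}=c^{-1}\cO_{\bK,\fp}\). (We use that \(\fb_\tau\), being a fractional ideal, has localization \(\{x\in\bK_\fp: xN_\tau\in\cM_\fp\}\); this holds by the local–global description of lattices.)

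Finally we identify the image. By Lemma~\ref{lem:cusp-quotient-iso} with \(\fa=\fp^e\), the group \(U_{\tau,e}(\fp)\) is the image of \(\fb_\tau\) in \(\fb_\tau/\fp^e\fb_\tau\simeq\fb_{\tau,\fp}/\fp^e\fb_{\tau,\fp}\), the isomorphism coming from weak approximation (\(\fb_\tau\) is dense in \(\fb_{\tau,\fp}\)). Under \(x\mapsto xc\), this quotient is \(\cO_{\bK,\fp}/\fp^e=\cO_\bK/\fp^e\). So the image consists exactly of the matrices \(\begin{psmallmatrix}1&y\\0&1\end{psmallmatrix}\) with \(y\) ranging over all of \(\cO_\bK/\fp^e\), which is \(U_e\).

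The main point needing care is the density step: every residue class in \(\cO_{\bK,\fp}/\fp^e\) must come from a global element of \(\fb_\tau\). This holds because \(\fb_\tau\to\fb_\tau/\fp^e\fb_\tau\) is surjective and \(\fb_\tau/\fp^e\fb_\tau=\fb_{\tau,\fp}/\fp^e\fb_{\tau,\fp}\) for a fractional ideal. A second point is that \((v_1,v_2)\) is a local basis only. This is harmless, since \(L/\fp^eL\) depends only on \(L_\fp\), and the result only requires a basis of \(L/\fp^eL\).
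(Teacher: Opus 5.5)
Your proof is correct and follows essentially the same route as the paper. Both arguments use an adapted local basis coming from $L\cap\ell_\tau$, establish the identification $\fb_{\tau,\fp}N_\tau=\cO_{\bK,\fp}e_{12}$ (you get it by writing $N_\tau=c\,e_{12}$ explicitly, while the paper phrases it as an equality with $\operatorname{Hom}(L_\fp/I_{\tau,\fp},I_{\tau,\fp})$), and then reduce modulo $\fp^e$ via Lemma~\ref{lem:cusp-quotient-iso}. Your explicit remark that $\fb_\tau/\fp^e\fb_\tau\simeq\fb_{\tau,\fp}/\fp^e\fb_{\tau,\fp}$, so that every residue class comes from a global element, makes precise a step the paper leaves implicit.
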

See \S\ref{sec:setup} for the notation \(U_e\).
\begin{proof}
Let \(\ell_\tau\subset L_\bK\) be the line corresponding to \(\tau\), and put
\(I_\tau=L\cap\ell_\tau\).  By the proof of
Lemma~\ref{lem:ambient-multiplier-square}, both \(I_\tau\) and
\(L/I_\tau\) are rank-one projective \(\cO_\bK\)-modules.  After localizing
at \(\fp\), the sequence
\[
  0\to I_{\tau,\fp}\to L_\fp\to L_\fp/I_{\tau,\fp}\to0
\]
splits, and both end terms are free of rank one.  Choose a generator \(s\) of
\(I_{\tau,\fp}\), and choose \(t\in L_\fp\) whose image generates
\(L_\fp/I_{\tau,\fp}\).  Then \(L_\fp=\cO_{\bK,\fp}s\oplus
\cO_{\bK,\fp}t\), and the reductions of \(s,t\) give an
\(\cO_\bK/\fp^e\)-basis of
\(L/\fp^eL\simeq L_\fp/\fp^eL_\fp\).

Choose \(g_\tau\in\SL_2(\bK)\) with \(g_\tau(\infty)=\tau\), and write
\[
  N_\tau=g_\tau
  \begin{pmatrix}0&1\\0&0\end{pmatrix}
  g_\tau^{-1}.
\]
Then \(N_\tau\) annihilates \(\ell_\tau\) and has image contained in
\(\ell_\tau\). Put \(\cM_\fp=\cM\otimes_{\cO_\bK}\cO_{\bK,\fp}\).  Since \(L\) is projective over \(\cO_\bK\), localization commutes with endomorphism rings:
\[
  \operatorname{End}_{\cO_\bK}(L)\otimes_{\cO_\bK}\cO_{\bK,\fp}
  \simeq
  \operatorname{End}_{\cO_{\bK,\fp}}(L_\fp).
\]
If \(xN_\tau\in\cM_\fp\), then \(sxN_\tau\in\cM\) for some
\(s\notin\fp\), hence \(sx\in\fb_\tau\), so \(x\in\fb_{\tau,\fp}\). Thus, \(\fb_{\tau,\fp}:=\fb_\tau\cO_{\bK,\fp} = \{x\in\bK_\fp\mid xN_\tau\in\cM_\fp\} \). We claim that
\begin{equation}\label{eq:localize-unipotent-identification-formula}
  \fb_{\tau,\fp}N_\tau
  =
  \operatorname{Hom}_{\cO_{\bK,\fp}}(L_\fp/I_{\tau,\fp},I_{\tau,\fp}),
\end{equation}
viewed as endomorphisms of \(L_\fp\) annihilating \(I_{\tau,\fp}\).  Indeed, the
inclusion \(\subset\) is immediate. Conversely, let \(T\in \operatorname{Hom}_{\cO_{\bK,\fp}}(L_\fp/I_{\tau,\fp},I_{\tau,\fp})\), viewed as an endomorphism of \(L_\fp\). After tensoring with \(\bK\), the endomorphism \(T_\bK\) annihilates \(\ell_\tau\) and has image contained in \(\ell_\tau\). The \(\bK\)-vector space of such endomorphisms is generated by \(N_\tau\). Hence \(T_\bK=xN_\tau\) for some \(x\in\bK\). Since \(T\) preserves \(L_\fp\), we have \(xN_\tau\in
\operatorname{End}_{\cO_{\bK,\fp}}(L_\fp)=\cM_\fp\).  By the equality \(\fb_{\tau,\fp}=\{x\in\bK\mid xN_\tau\in\cM_\fp\}\), it follows that \(x\in\fb_{\tau,\fp}\).

With respect to the decomposition
\(L_\fp=\cO_{\bK,\fp}s\oplus\cO_{\bK,\fp}t\), the last Hom-module is
generated by the endomorphism \(t\mapsto s\), \(s\mapsto0\), namely by the elementary matrix \(e_{12}\). By Lemma~\ref{lem:cusp-quotient-iso}, the image \(U_{\tau,e}(\fp)\) is the image of \(\fb_\tau/\fp^e\fb_\tau\) under \(x\mapsto I+xN_\tau\). By \eqref{eq:localize-unipotent-identification-formula}, \(\fb_{\tau,\fp}N_\tau=\cO_{\bK,\fp}e_{12}\). Thus, the reduction of \(\fb_{\tau,\fp}N_\tau\) in
\(\operatorname{End}_{\cO_\bK/\fp^e}(L/\fp^eL)\) is \((\cO_\bK/\fp^e)e_{12}\). Hence, under the induced identification
\(G_e(\fp)\simeq\SL_2(\cO_\bK/\fp^e)\),
\[
  U_{\tau,e}(\fp)
  =
  \left\{
  I+ae_{12}\mid a\in\cO_\bK/\fp^e
  \right\},
\]
which is the subgroup \(U_e\) of \S\ref{sec:setup}.
\end{proof}

We will group the local factors according to their residue characteristic. For
each rational prime \(p\mid N(\fn)\), define
\[
  G(p):=\prod_{\substack{\fp\vert\fn,\; \fp\vert p}}G_{e_\fp}(\fp),
  \qquad
  U_{\tau}(p):=\prod_{\substack{\fp\vert\fn,\; \fp\vert p}}
  U_{\tau,e_\fp}(\fp).
\]
We view \(G(p)\) and \(U_{\tau}(p)\) as subgroups of \(\overline\cG\) by
putting the identity in all factors not lying above \(p\). We have isomorphisms
\[
  \overline\cG\cong\prod_{p\vert N(\fn)}G(p),
  \qquad
  \overline{U}_\tau\cong\prod_{p\vert N(\fn)}U_{\tau}(p).
\]

\begin{lem}\label{lem:primary-product-chi-Gamma}
\[
  \chi_{\overline\cG,\overline U_\tau}(\overline\Gamma)
  =
  \prod_{p\mid N(\fn)}
  \chi_{G(p),U_{\tau}(p)}(\overline\Gamma\cap G(p)).
\]
\end{lem}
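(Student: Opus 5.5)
The identity will follow from the product formula for \(\chi\) as soon as \(\overline\Gamma\) splits as the internal direct product
\[
\overline\Gamma=\prod_{p\mid N(\fn)}\bigl(\overline\Gamma\cap G(p)\bigr)
\]
inside \(\overline\cG\cong\prod_{p}G(p)\). So the plan has two parts: first prove this splitting, then compute \(\chi\) factor by factor.

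\emph{Step 1: from the splitting to the identity.} Write \(H_p:=\overline\Gamma\cap G(p)\) and assume \(\overline\Gamma=\prod_p H_p\). Set \(X:=\overline\cG/\overline U_\tau\) and \(X_p:=G(p)/U_\tau(p)\). By Lemma~\ref{lem:U-bar-product-decomposition}, \(X=\prod_p X_p\) as \(\overline\cG\)-sets, with \(\overline\cG\) acting componentwise. For \(h=(h_p)_p\in\prod_pH_p\) one has \(X^h=\prod_pX_p^{h_p}\). Since the sum over \(h\in\overline\Gamma\) now runs over the full product \(\prod_pH_p\), the computation in the proof of Lemma~\ref{prop:product-bound-finite-ratio} becomes an equality. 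Together with Lemma~\ref{lem:burnside-expression-for-finite-ratio} and \(|\overline\cG|=\prod_p|G(p)|\), this gives exactly the claimed product formula.

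\emph{Step 2: the splitting.} We always have \(\prod_pH_p\subset\overline\Gamma\subset\prod_p\mathrm{pr}_p(\overline\Gamma)\). Order the primes \(p_1,\dots,p_m\) and induct on \(m\) using Goursat's lemma for \(A=G(p_1)\) and \(B=\prod_{i\ge2}G(p_i)\). Goursat gives an isomorphism
\[
\mathrm{pr}_A(\overline\Gamma)/(\overline\Gamma\cap A)\;\cong\;\mathrm{pr}_B(\overline\Gamma)/(\overline\Gamma\cap B).
\]
It therefore suffices to show that this common quotient, call it \(Q\), is trivial.

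The first thing to try is to use the congruence structure. The group \(\overline\Gamma\cap A\) contains the image of \(\Gamma\cap\cG(\fn/\fp\text{-part})\). Lemma~\ref{lem:congruence-product-maximal-order} lets one lift elements of each local factor to \(\cG\) while staying congruent to \(I\) at the other primes. From this we want to show that \(Q\) is a common quotient of a subgroup of \(G(p_1)\) and of a subgroup of \(B\) with incompatible structure. Concretely, \(\overline\Gamma\cap A\) contains the deep congruence kernels, which are pro-\(p_1\) in nature, while the part of \(B\) above \(p_i\) has pro-\(p_i\) kernels. The remaining comparison is at level one, between subgroups of \(\SL_2(\bF_{q})\) for residue fields of different characteristics. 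Here the intended argument is that a common quotient of the kernel parts must be a group whose order divides both a power of \(p_1\) and a product of powers of the other \(p_i\), hence is trivial. The level-one contributions should then be handled via the Dickson classification already used in Proposition~\ref{prop:proper-subgroup-level-one-bound}.

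\emph{Main obstacle.} Step 2 is the hard step. Over level one, small groups such as \(\SL_2(\bF_2)\cong S_3\) and \(\SL_2(\bF_3)\) share nontrivial solvable quotients (for example \(\bZ/2\)) with subgroups of other factors, so a diagonal "graph" subgroup is a priori possible. I would first try to rule out a nontrivial \(Q\) by coprimality of the relevant composition factors. If that fails, the fallback is to prove only the inequality \(\le\). This is obtained by iterating Lemma~\ref{lem:fibre-chi-inequality} with \(A=G(p_1)\) and \(B\) the remaining factors, which yields \(\chi(\overline\Gamma)\le\chi(H_{p_1})\,\chi(\mathrm{pr}_B\overline\Gamma)\). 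The inequality is all that the subsequent global decay estimate actually uses, and I would check that it is enough there.
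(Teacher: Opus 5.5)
Your Step~1 is correct \emph{given} the splitting. The gap is that the splitting $\overline\Gamma=\prod_p(\overline\Gamma\cap G(p))$ you try to establish in Step~2 is false in general, so no Goursat or coprimality argument can prove it. The orders $q(q^2-1)$ of the level-one groups share prime factors across different characteristics, and fibre products over a common nontrivial quotient do occur. For instance, take $\bQ$ with $\fn=6$. Let $B\subset\SL_2(\bF_3)$ be the upper triangular Borel, $\chi\colon B\to B/U_1\cong\{\pm1\}$, and $\varepsilon$ the nontrivial character of $\SL_2(\bF_2)\cong S_3$. Then
\[
\overline\Gamma=\{(a,b)\in\SL_2(\bF_2)\times B\mid \varepsilon(a)=\chi(b)\}
\]
has order $18$ and level exactly $6$, while $(\overline\Gamma\cap G(2))\times(\overline\Gamma\cap G(3))=A_3\times U_1$ has order $9$. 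The graph of an isomorphism between cyclic subgroups of order $6$ in $\SL_2(\bF_5)$ and $\SL_2(\bF_7)$ is another example.

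Your fallback does not close the gap either. Iterating Lemma~\ref{lem:fibre-chi-inequality} produces the groups $\mathrm{pr}_{\{p_i,\dots,p_m\}}(\overline\Gamma)\cap G(p_i)\supseteq\overline\Gamma\cap G(p_i)$, ending with a full projection. Since $\chi$ is increasing in the subgroup, this is weaker than a bound by $\prod_p\chi(\overline\Gamma\cap G(p))$. Worse, these enlarged groups need not have exact level: in the example, the ordering $(3,2)$ ends with the factor $\chi(\SL_2(\bF_2))=1$. So Lemma~\ref{lem:local-intersections-exact-level} and the local theorems cannot be applied to them.

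The idea you are missing is that the splitting is only needed for the elements that actually contribute to the Burnside sum $\sum_{\gamma\in\overline\Gamma}|X^\gamma|$. Suppose $\gamma=(\gamma_p)_p\in\overline\Gamma$ has $X^\gamma\neq\emptyset$. Then each $\gamma_p$ fixes some $aU_\tau(p)$, so $a^{-1}\gamma_pa\in U_\tau(p)$, which is a $p$-group by Lemma~\ref{lem:cusp-quotient-iso}. Hence $\gamma_p$ has $p$-power order. These orders are pairwise coprime, so by the Chinese remainder theorem on exponents each $\gamma_p$ is a power of $\gamma$, and therefore lies in $\overline\Gamma\cap G(p)$. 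Thus every contributing element of $\overline\Gamma$ lies in the subgroup $\prod_p(\overline\Gamma\cap G(p))\subseteq\overline\Gamma$. The two Burnside sums coincide, and your Step~1 computation gives the equality directly; this is the paper's proof. In the example above, the elements outside $A_3\times U_1$ have second component $-u$ with $\tr(-u)=-2\neq2$ in $\bF_3$, so they fix nothing, consistent with the lemma.
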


\begin{proof}
Set \(X:=\overline\cG/\overline U_\tau\) and
\(X(p):=G(p)/U_{\tau}(p)\). The product decompositions of
\(\overline\cG\) and \(\overline U_\tau\) give
\(X\simeq\prod_{p\mid N(\fn)}X(p)\). Hence, if
\(\gamma=(\gamma_p)_p\in\overline\cG=\prod_{p\mid N(\fn)}G(p)\), then
\[
  |X^\gamma|=\prod_{p\mid N(\fn)}|X(p)^{\gamma_p}|.
\]

If \(g\in G(p)\) fixes a point \(aU_{\tau}(p)\in X(p)\), then
\(g aU_{\tau}(p)=aU_{\tau}(p)\), hence \(a^{-1}ga\in U_{\tau}(p)\).
Thus \(g\) is conjugate in \(G(p)\) to an element of \(U_{\tau}(p)\). By
Lemma~\ref{lem:cusp-quotient-iso}, \(U_{\tau}(p)\) is a \(p\)-group.

Now let \(\gamma=(\gamma_p)_p\in\overline\Gamma\) and suppose that
\(X^\gamma\neq \emptyset\). Then each \(\gamma_p\) fixes a point of \(X(p)\), hence
has \(p\)-power order. The orders of the non-trivial components
\(\gamma_p\) are therefore powers of distinct rational primes. In particular, 
we can raise \(\gamma\) to a power to kill a chosen coordinate. In other words each \(\gamma_p\) is a power of \(\gamma\). Thus
\[
  \gamma_p\in\overline\Gamma\cap G(p)
  \qquad\text{for every }p\mid N(\fn).
\]
Therefore
\[
\begin{aligned}
\sum_{\gamma\in\overline\Gamma}|X^\gamma|
&=
\sum_{(\gamma_p)\in\prod_p(\overline\Gamma\cap G(p))}
\prod_{p\mid N(\fn)}|X(p)^{\gamma_p}|  \\
&=
\prod_{p\mid N(\fn)}
\sum_{\gamma_p\in\overline\Gamma\cap G(p)}|X(p)^{\gamma_p}|.
\end{aligned}
\]
Dividing by \(|\overline\cG|=\prod_{p\mid N(\fn)}|G(p)|\) and using
Lemma~\ref{lem:burnside-expression-for-finite-ratio} for each factor gives
the result.
\end{proof}

\subsection{Global decay}\label{sec:quantitative-global-decay}

Recall that \(\Lambda\subset \PN(\cE)\) for an Eichler order
\(\cE\subset M_2(\bK)\), a maximal order \(\cM\supset\cE\) has been chosen, and we set
\(\cG=\SL(\cM)\), \(\Lambda_\cM=\Lambda\cap\P\cG\), and $\Gamma=\{g\in\cG\mid \Proj(g)\in\Lambda_\cM\}$. The congruence level of \(\Gamma\), relative to \(\cM\), is denoted by \(\fn\).  For a non-zero ideal \(\fa\subset\cO_\bK\), write
\[
N(\fa)=|\cO_\bK/\fa|,\quad
\omega(\fa)=\#\{\fp\mid\fp\vert\fa\}.
\] 
We will assume that the factorization of $\fn$ is
\[
\fn:=\prod_{{\fp\vert\fn}}\fp^{e_\fp}. 
\]
We shall use the notations of Subsection~\ref{sec:product-bounds-localization}.
Thus, if \(\fp\vert\fn\), then
\[
G_{e_\fp}(\fp)=\SL(L/\fp^{e_\fp}L)\cong \SL_2(\cO_\bK/\fp^{e_\fp}),
\]
where \(\cM=\operatorname{End}_{\cO_\bK}(L)\) is the maximal order used to define $\cG$. If \(\tau\) is a \(\cG\)-cusp, then \(U_{\tau,e_\fp}(\fp)\subset G_{e_\fp}(\fp)\) is the image of the unipotent group \(U_\tau\). 
For every prime ideal \(\fp\vert\fn\), view \(G_{e_\fp}(\fp)\) as a factor of \(\overline\cG\simeq\prod_{\fq\vert\fn}G_{e_\fq}(\fq)\), and set
\[
  H_\fp:=\overline\Gamma\cap G_{e_\fp}(\fp).
\]
Recall that the notion of "exact level" is introduced in \S\ref{sec:setup}. The minimality of the congruence level ideal $\fn$ implies:
\begin{lem}\label{lem:local-intersections-exact-level}
The subgroup \(H_\fp\subset G_{e_\fp}(\fp)\) has exact level \(e_\fp\).
\end{lem}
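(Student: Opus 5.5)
We argue by contradiction, comparing $H_\fp$ with the full preimage of its reduction. Fix $\fp\mid\fn$ and write $e=e_\fp$. Put $\fn'=\fn\fp^{-1}$; by minimality of $\fn$ we have $\cG(\fn')\not\subset\Gamma$ (when $e\ge1$). Assume that $H_\fp$ does not have exact level $e$. For $e>1$ this means $H_\fp=\rho_{e,e-1}^{-1}(\rho_{e,e-1}(H_\fp))$. In particular $H_\fp$ contains the kernel $K_e=\ker(G_e(\fp)\to G_{e-1}(\fp))$. For $e=1$ it means $H_\fp=G_1(\fp)$, which again says that $H_\fp$ contains the kernel of $G_1(\fp)\to\{1\}$. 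The goal is to deduce $\cG(\fn')\subset\Gamma$, which contradicts the minimality of $\fn$.

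Under the isomorphism $\overline\cG\simeq\prod_{\fq\mid\fn}G_{e_\fq}(\fq)$ of \S\ref{sec:product-bounds-localization}, we first compute the image of $\cG(\fn')$. It is the kernel of $\overline\cG\to\SL(L/\fn'L)$, namely the set of tuples that are trivial in every factor $\fq\ne\fp$ and lie in $K_e$ (respectively in all of $G_1(\fp)$ when $e=1$) in the $\fp$-factor. This uses only the CRT decomposition $L/\fn' L\simeq\prod L/\fq^{e'_\fq}L$ and the surjectivity of $\SL(L)\to\SL(L/\fn L)$ established in the proof of Lemma~\ref{lem:congruence-product-maximal-order}. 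Thus $\overline{\cG(\fn')}$ is exactly the subgroup $K_e\subset G_e(\fp)$, viewed inside $\overline\cG$ by placing the identity in the other factors.

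By hypothesis $K_e\subset H_\fp=\overline\Gamma\cap G_e(\fp)\subset\overline\Gamma$. Therefore $\overline{\cG(\fn')}\subset\overline\Gamma$. Since $\cG(\fn)\subset\Gamma$ and $\Gamma$ is the full preimage of $\overline\Gamma$, this gives $\cG(\fn')\subset\Gamma$. This contradicts the maximality of $\fn$ among ideals $\fa$ with $\cG(\fa)\subset\Gamma$, because $\fn'\supsetneq\fn$.

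The only delicate point is the identification of $\overline{\cG(\fn')}$ with the local kernel sitting in the $\fp$-factor. This requires that the reduction of $\cG(\fn')$ modulo $\fn$ is the entire kernel, and not just a subgroup of it. Surjectivity of $\SL(L)\to\SL(L/\fn L)$ gives this directly: any element of the kernel lifts to some $g\in\cG$, and that $g$ is $\equiv I$ modulo $\fn'$. The case $e=1$ needs no separate treatment, since then $\fn'$ is simply coprime to $\fp$.
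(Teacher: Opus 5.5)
Your proof is correct and is the minimality-of-$\fn$ argument that the paper invokes; the paper offers only that one-line justification, and you carry it out with $\fn'=\fn\fp^{-1}$. One small remark: the argument only uses the easy inclusion $\overline{\cG(\fn')}\subseteq K_e$ (placed in the $\fp$-factor, trivial elsewhere), so the surjectivity you single out as the delicate point is not actually needed.
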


We set
\[
\mathcal R(\Gamma):=
\frac{\sum_{\sigma\in\Gamma\backslash\bP^1(\bK)}a_\sigma(\Gamma)}
{[\cG:\Gamma]}, \qquad 
\mathcal R(\Lambda):=
\frac{\sum_{\kappa\in\Lambda\backslash\bP^1(\bK)}a_\kappa(\Lambda)}
{[\cG:\Gamma]}.
\]

\begin{thm}\label{thm:quantitative-global-cusp-decay}
There are constants \(C>0\) and \(c>0\), depending only on \(\bK\), such
that every \(\Lambda\) as above satisfies
\[
  \mathcal R(\Lambda)\le C\,N(\fn)^{-c}.
\]
\end{thm}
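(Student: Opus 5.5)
The plan is to chain the global reductions of this section down to the local estimate Theorem~\ref{thm:uniform-exponential-local-chi}, and then to absorb all the multiplicative losses into a small power of $N(\fn)$.

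\emph{Step 1: pass from $\Lambda$ to $\Gamma$.} By Proposition~\ref{prop:lambda-cusp-multiplicity-controlled-by-gamma} and Proposition~\ref{prop:d-lambda-level-prime-bound},
\[
\mathcal R(\Lambda)\le \bigl(C_{\mathrm{AL}}(\bK)2^{\omega(\fn)}\bigr)^{2(n-1)}\mathcal R(\Gamma),
\]
where $n=[\bK:\bQ]$.

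\emph{Step 2: reduce to the finite quotient.} By Lemma~\ref{lem:global-reduction-to-weighted-width} and Proposition~\ref{prop:finite-quotient-counts},
\[
\mathcal R(\Gamma)\le C_{\rm UV}\sum_{\tau\in\cT}\chi_{\overline\cG,\overline U_\tau}(\overline\Gamma),
\]
and $|\cT|=h_\bK$. For each $\tau$, Lemma~\ref{lem:primary-product-chi-Gamma} factors $\chi_{\overline\cG,\overline U_\tau}(\overline\Gamma)$ over rational primes $p$. Within a fixed $p$, the group $G(p)=\prod_{\fp\mid p}G_{e_\fp}(\fp)$ has at most $n$ factors. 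Apply Lemma~\ref{lem:fibre-chi-inequality} repeatedly, peeling off one factor $A=G_{e_\fp}(\fp)$ at a time. At each step keep the intersection $H_A=\overline\Gamma\cap G_{e_\fp}(\fp)=H_\fp$ for that factor and project onto the rest; for the projected pieces use the trivial bound of Remark~\ref{rem:chi-bounded-by-one} only when necessary. To make every factor keep its intersection $H_\fp$, I would rather argue directly as in the proof of Lemma~\ref{lem:fibre-chi-inequality}, bounding by the product over $\fp\mid p$ of $\chi_{G_{e_\fp}(\fp),U}(H_\fp)$. If this symmetric version fails, one can at least keep a single prime $\fp$ above $p$, chosen to maximize $N(\fp^{e_\fp})$, which costs only an exponent factor $1/n$. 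Lemma~\ref{lem:local-cusp-unipotent-standard} identifies $U_{\tau,e}(\fp)$ with $U_e$, and Lemma~\ref{lem:local-intersections-exact-level} says $H_\fp$ has exact level $e_\fp$. Theorem~\ref{thm:uniform-exponential-local-chi} then gives
\[
\chi\le \min\bigl(1,\,q_\fp^{B-\alpha e_\fp}\bigr),
\]
with $B,\alpha$ depending only on $n$.

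\emph{Step 3: assemble the bound.} Write $N(\fp^{e_\fp})=q_\fp^{e_\fp}$. For each prime we have
\[
\min\bigl(1,q_\fp^{B-\alpha e_\fp}\bigr)\le C_0\,N(\fp^{e_\fp})^{-\alpha/2}\cdot \kappa_\fp,
\]
where $\kappa_\fp=1$ unless $q_\fp^{e_\fp}$ is small. Concretely, if $e_\fp\ge 2B/\alpha$ the bound is at most $N(\fp^{e_\fp})^{-\alpha/2}$. Otherwise we use $1\le q_\fp^{(B/\alpha')}\,N(\fp^{e_\fp})^{-c}$ in a lossy way. This is only acceptable when $q_\fp$ is bounded; for large $q_\fp$ with small $e_\fp$, the local theorem's constant $q^{B}$ is a genuine problem. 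This is the \textbf{main obstacle}: Theorem~\ref{thm:uniform-exponential-local-chi} is useless for large residue field and small exponent.

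My first attempt to handle it is to invoke Theorem~\ref{thm:main_local_chi_B} for $q\ge59$. Together with Remark~\ref{rem:the-H1G1-condition-unramified} for unramified $\fp$ with $p\ge5$ (all but finitely many $\fp$), this gives $\chi<2/(q-1)\le q^{-1/2}$ for large $q$, and hence a bound of the form $N(\fp^{e_\fp})^{-c}$ by combining with the exponential bound in $e$ (interpolate: $\min(q^{-1/2},q^{B-\alpha e})\le q^{-ce}$ for $c=\alpha/(2B+1)$, say). The finitely many remaining primes (ramified, $p\le 3$, or $q<59$) have bounded $q$, so $q^{B}$ is a constant $C_1(\bK)$ each, with a number of such primes bounded in terms of $\bK$.

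\emph{Step 4: absorb the remaining losses.} Multiplying over $\fp\mid\fn$ gives
\[
\mathcal R(\Lambda)\le C(\bK)\,2^{2(n-1)\omega(\fn)}\prod_\fp N(\fp^{e_\fp})^{-c}.
\]
The factor $2^{2(n-1)\omega(\fn)}$ is not absorbed directly, since each prime contributes at least $N(\fp)^{-c}$, which is small only for large $N(\fp)$. I would therefore shrink $c$ to $c/2$ and bound $2^{2(n-1)}N(\fp^{e_\fp})^{-c/2}\le 1$ except for the finitely many $\fp$ with $N(\fp)\le 2^{4(n-1)/c}$, whose total contribution is a constant depending on $\bK$. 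This yields $\mathcal R(\Lambda)\le C N(\fn)^{-c/2}$.
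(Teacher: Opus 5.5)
Your proposal is correct and follows essentially the paper's route: pass from $\Lambda$ to $\Gamma$ via $d_\Lambda\le C_{\mathrm{AL}}(\bK)2^{\omega(\fn)}$, reduce to $\chi_{\overline\cG,\overline U_\tau}(\overline\Gamma)$ and factor over rational primes, keep one prime above each $p$ maximizing $N(\fp^{e_\fp})$ via Lemma~\ref{lem:fibre-chi-inequality}, then combine Theorem~\ref{thm:uniform-exponential-local-chi} for large $e_\fp$ with Theorem~\ref{thm:main_local_chi_B} and Remark~\ref{rem:the-H1G1-condition-unramified} for small $e_\fp$ and large $q$ (trivial bound at the finitely many exceptional primes), and finally absorb $A^{\omega(\fn)}$ into a power of $N(\fn)$. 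Only your fallback should be used in Step~2, and it is exactly what the paper does: the symmetric bound $\prod_{\fp\mid p}\chi(H_\fp)$ is false in general, since for the diagonal subgroup of $G_1(\fp_1)\times G_1(\fp_2)$ with $N(\fp_1)=N(\fp_2)=q$ both intersections are trivial, yet $\chi=q^{-2}\bigl(1+\tfrac{q-1}{q+1}\bigr)>\chi_1(\{1\})^2$.
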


\begin{proof}
Write \(n=[\bK:\mathbb Q]\). 
Fix a cusp \(\tau\in\cT\). By Lemma~\ref{lem:primary-product-chi-Gamma},
\[
  \chi_{\overline\cG,\overline U_\tau}(\overline\Gamma)
  =
  \prod_{p\mid N(\fn)}
  \chi_{G(p),U_\tau(p)}(\overline\Gamma\cap G(p)).
\]

For each rational prime \(p\vert N(\fn)\), set
\[
  \fn(p):=
  \prod_{\substack{\fp\vert\fn,\; \fp\vert p}}\fp^{e_\fp}.
\]
Choose a prime ideal \(\fp(p)\) such that \(\fp(p)\vert p\), \(\fp(p)\vert\fn\), satisfying
\[
  e_{\fp(p)}\log N(\fp(p))
  =
  \max_{\substack{\fp\vert\fn,\; \fp\vert p}}
  e_\fp\log N(\fp).
\]
Since at most \(n\) prime ideals of \(\cO_\bK\) lie above \(p\), we have
\begin{equation}\label{eq:global-exponential-decay-formula-1}
  e_{\fp(p)}\log N(\fp(p))
  \ge
  \frac{1}{n}\log N(\fn(p)).   
\end{equation}

Write
\[
  G(p)=G_{e_{\fp(p)}}(\fp(p))\times G'(p),
  \quad
  U_\tau(p)=U_{\tau,e_{\fp(p)}}(\fp(p))\times U'_\tau(p),
\]
where \(G'(p)\) and \(U'_\tau(p)\) are the products of the remaining factors
above \(p\). Applying Lemma~\ref{lem:fibre-chi-inequality} to
\(H=\overline\Gamma\cap G(p)\), and using
Remark~\ref{rem:chi-bounded-by-one} for the second factor, we get
\[
\begin{aligned}
  \chi_{G(p),U_\tau(p)}(\overline\Gamma\cap G(p))
  &\le
  \chi_{G_{e_{\fp(p)}}(\fp(p)),\,U_{\tau,e_{\fp(p)}}(\fp(p))}
  \bigl(\overline\Gamma\cap G_{e_{\fp(p)}}(\fp(p))\bigr).
\end{aligned}
\]
Therefore
\[
  \chi_{\overline\cG,\overline U_\tau}(\overline\Gamma)
  \le
  \prod_{p\mid N(\fn)}
  \chi_{G_{e_{\fp(p)}}(\fp(p)),\,U_{\tau,e_{\fp(p)}}(\fp(p))}
  \bigl(\overline\Gamma\cap G_{e_{\fp(p)}}(\fp(p))\bigr).
\]

For ease of notation, set \(e_p:=e_{\fp(p)}\), \(q_p:=N(\fp(p))\), and
\[
  H_p:=\overline\Gamma\cap G_{e_p}(\fp(p)).
\]
By Lemma~\ref{lem:local-intersections-exact-level}, each \(H_p\) has exact level \(e_p\). After choosing a local basis as in Lemma~\ref{lem:local-cusp-unipotent-standard}, a local factor $\chi$ in the above formulas is identified with the corresponding \(\chi_{e_p}(H_p)\), written in concrete matrix notations from \S\ref{sec:setup}.

We now divide the rational primes \(p\vert N(\fn)\) into different types. Let \(S\) be the subset of rational primes which are ramified in \(\bK\) or are \(<59\). From
Theorem~\ref{thm:uniform-exponential-local-chi}, there are constants
\(B>0\) and \(\alpha>0\), depending only on \(\bK\), such that, for every
\(p\vert N(\fn)\),
\[
  \chi_{G_{e_p}(\fp(p)),\,U_{\tau,e_p}(\fp(p))}(H_p)
  \le q_p^{B-\alpha e_p}.
\]
If \(e_p>2B/\alpha\), then
\[
  \chi_{G_{e_p}(\fp(p)),\,U_{\tau,e_p}(\fp(p))}(H_p)
  \le q_p^{B-\alpha e_p}
  \le q_p^{-\alpha e_p/2}
  \le q_p^{-\alpha e_p/(2B)}.
\]
If \(e_p\le 2B/\alpha\) and \(p\notin S\), then \(p\) is unramified,
\(p\ge59\), and \(q_p\ge59\). Since \(H_p\) has exact level \(e_p\),
Remark~\ref{rem:the-H1G1-condition-unramified} says that the reduction of $H_p$ modulo $\fp(p)$ is not all of $\SL_2(\cO_\bK/\fp(p))$, i.e.\ \(\rho_{e_p,1}(H_p)\ne \SL_2(\cO_\bK/\fp(p))\) with the notation of \S\ref{sec:setup}. Hence
Theorem~\ref{thm:main_local_chi_B} gives
\[
  \chi_{G_{e_p}(\fp(p)),\,U_{\tau,e_p}(\fp(p))}(H_p)
  <\frac{2}{q_p-1}\le 3q_p^{-1}\le 3q_p^{-\alpha e_p/(2B)}.
\]

Finally, if \(e_p\le 2B/\alpha\) and \(p\in S\), we use only the trivial
estimate \(\chi\le1\). For these exceptional factors, by \eqref{eq:global-exponential-decay-formula-1} we have
\begin{equation}\label{eq:global-exponential-decay-formula-2}
 \log N(\fn(p))
  \le
  \frac{2nB}{\alpha}\log N(\fp(p)).
\end{equation}
Since \(S\) is finite of size depending only on $\bK$, the product of such \(N(\fn(p))\)'s is bounded by a constant depending only on \(\bK\). 

Combining the preceding estimates with \eqref{eq:global-exponential-decay-formula-1}, we obtain constants \(C_1>0\) and \(A_1>0\), depending only on \(\bK\), such that
\[
  \chi_{\overline\cG,\overline U_\tau}(\overline\Gamma)
  \le
  C_1 A_1^{\omega(\fn)}N(\fn)^{-\alpha/(2Bn)}.
\]
By Lemma~\ref{lem:global-reduction-to-weighted-width} and
Proposition~\ref{prop:finite-quotient-counts},
\[
  \mathcal R(\Gamma)
  \le
  C_{\rm UV}
  \sum_{\tau\in\cT}
  \chi_{\overline\cG,\overline U_\tau}(\overline\Gamma).
\]
Therefore
\[
  \mathcal R(\Gamma)
  \le
  C_{\rm UV}|\cT|\,C_1 A_1^{\omega(\fn)}N(\fn)^{-\alpha/(2Bn)}.
\]
By Proposition~\ref{prop:lambda-cusp-multiplicity-controlled-by-gamma},
\[
  \mathcal R(\Lambda)
  \le
  d_\Lambda^{2(n-1)}\mathcal R(\Gamma).
\]
Using Proposition~\ref{prop:d-lambda-level-prime-bound}, we get
\[
  \mathcal R(\Lambda)
  \le
  C_2
  \bigl(2^{2(n-1)}A_1\bigr)^{\omega(\fn)}
  N(\fn)^{-\alpha/(2Bn)}
\]
for some constant \(C_2>0\) depending only on \(\bK\).

It remains to absorb the factor depending on \(\omega(\fn)\). We use only the
elementary fact that, for every fixed \(A>0\) and every \(\eta>0\),
\(A^{\omega(\fn)}N(\fn)^{-\eta}\) is bounded as \(\fn\) varies. Indeed, after
removing the finitely many prime ideals with \(N(\fp)^\eta<A\), each remaining
prime divisor of \(\fn\) is paid for by the factor \(N(\fn)^\eta\).

Applying this with \(A=2^{2(n-1)}A_1\) and
\(\eta=\alpha/(4Bn)\), and enlarging the constant, gives
\[
  \mathcal R(\Lambda)
  \le
  C\,N(\fn)^{-\alpha/(4Bn)}
\]
for some constant \(C>0\) depending only on \(\bK\). This proves the theorem.

\end{proof}

For \(\Delta=\iota_\infty(\Lambda)\), we set
\[
  \mathcal R(\Delta):=\mathcal R(\Lambda).
\]
The discussion in Subsection~\ref{sec:lattices-arithmetic-groups} identifies the
following arithmetic and Lie-theoretic formulations.

\begin{thm}\label{thm:global-field-finiteness-arithmetic}
Fix a number field \(\bK\), and let \(\epsilon>0\). Consider a subgroup
\[
\Lambda\in \cC_\bK^{\mathrm{rat}}\subset \PGL_2(\bK),
\]
commensurable with \(\PSL_2(\cO_\bK)\), congruent with respect to some maximal
order, and satisfying
\[
\mathcal R(\Lambda)>\epsilon.
\]
Then the set of such \(\Lambda\)'s is contained in the union of finitely many
\(\PGL_2(\bK)\)-conjugacy classes.
\end{thm}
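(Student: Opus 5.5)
The plan is to combine Theorem~\ref{thm:quantitative-global-cusp-decay} with the finiteness statement Proposition~\ref{prop:fixed-level-finitely-many-lie-classes}. First I reduce to the normalized situation of \S\ref{sec:global-arithmetic-setting}. By \S\ref{sec:lattices-arithmetic-groups}, every \(\Lambda\in\cC_\bK^{\mathrm{rat}}\) is conjugate by some \(g\in\PGL_2(\bK)\) into \(\PN(\cE)\) for an Eichler order \(\cE\). Conjugation preserves both the cusp count with multiplicities and the relevant indices, so \(\mathcal R(\Lambda)\) is a conjugacy invariant. To see this, note that conjugating by \(g\) transports \(\cM\), \(\cG\), \(\Gamma\) and the level \(\fn\) to the corresponding objects for \(g\cM g^{-1}\), and it carries cusps and their types \((M_\sigma,V_\sigma)\) bijectively. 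We may therefore assume \(\Lambda\subset\PN(\cE)\), with a maximal order \(\cM\supset\cE\) relative to which \(\Gamma\) is congruence of level \(\fn\). I would also check that the hypothesis ``congruent with respect to some maximal order'' can be taken relative to a maximal order containing \(\cE\). If needed, I would argue that congruence relative to one maximal order implies congruence relative to any other, since the orders are commensurable and their principal congruence subgroups are mutually nested up to changing the level ideal.

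Next I bound the level. Theorem~\ref{thm:quantitative-global-cusp-decay} gives \(\epsilon<\mathcal R(\Lambda)\le C\,N(\fn)^{-c}\), hence \(N(\fn)< (C/\epsilon)^{1/c}\). Since \(C\) and \(c\) depend only on \(\bK\), only finitely many ideals \(\fn\subset\cO_\bK\) can occur, because there are finitely many ideals of bounded norm. If the sign condition \(\Lambda\subset\PGL_2^+(\bK)\) imposed in \S\ref{sec:cusps-hirzebruch-multiplicity} is not automatic, I would pass to \(\Lambda\cap\PGL_2^+(\bK)\). This subgroup has index at most \(|\bK^*/\bK_+^*(\bK^*)^2|\) in \(\Lambda\), a bound depending only on \(\bK\), and finiteness for the subgroups then transfers to their overgroups of bounded index by the same covolume argument.

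Finally I apply Proposition~\ref{prop:fixed-level-finitely-many-lie-classes} to each of the finitely many admissible \(\fn\). Each such level contributes only finitely many \(\PGL_2(\bK)\)-conjugacy classes of such \(\Lambda\), and taking the union over the finitely many \(\fn\) gives the claim. The main obstacle I expect is bookkeeping rather than mathematics. Specifically, one must ensure that the quantity \(\mathcal R(\Lambda)\) in the hypothesis is the same normalization, with denominator \([\cG:\Gamma]\), as the one controlled by Theorem~\ref{thm:quantitative-global-cusp-decay}. One must also ensure that the level entering Proposition~\ref{prop:fixed-level-finitely-many-lie-classes} is the level relative to the maximal order \(\cM\supset\cE\) used in that theorem. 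If the two maximal orders differ, I would use the bounded covolume argument in the proof of Proposition~\ref{prop:fixed-level-finitely-many-lie-classes} directly: \(\operatorname{vol}(G_\infty/\iota_\infty(\Lambda))\) is bounded in terms of \(|\SL_2(\cO_\bK/\fn)|\), and Borel--Prasad finiteness then applies.
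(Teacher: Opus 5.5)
Your proposal is correct and is essentially the paper's proof. Theorem~\ref{thm:quantitative-global-cusp-decay} gives $\epsilon<\mathcal R(\Lambda)\le C\,N(\fn)^{-c}$, so only finitely many levels $\fn$ occur, and Proposition~\ref{prop:fixed-level-finitely-many-lie-classes} gives finitely many $\PGL_2(\bK)$-conjugacy classes for each level. Your extra bookkeeping (conjugating into $\PN(\cE)$, taking the level relative to a maximal order $\cM\supset\cE$, and the sign condition $\Lambda\subset\PGL_2^+(\bK)$) is harmless; the paper simply takes these as the standing assumptions under which $\mathcal R(\Lambda)$ is defined.
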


\begin{thm}\label{thm:global-field-finiteness-lie}
Fix a number field \(\bK\), and let \(\epsilon>0\). Let
\[
G_\infty=\prod_{v\mid\infty}\PGL_2(\bK_v),
\qquad
\Delta_\bK=\iota_\infty(\PSL_2(\cO_\bK)).
\]
Consider a lattice
\(
\Delta\in \cC_\bK^{\infty}\subset G_\infty
\)
commensurable with \(\Delta_\bK\), congruent with respect to some maximal
order, and satisfying
\[
\mathcal R(\Delta)>\epsilon .
\]
Then the set of such \(\Delta\)'s is contained in the union of finitely many
\(G_\infty\)-conjugacy classes.
\end{thm}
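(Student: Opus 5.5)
The plan is to deduce this from Theorem~\ref{thm:quantitative-global-cusp-decay} combined with Proposition~\ref{prop:fixed-level-finitely-many-lie-classes}, together with the dictionary between the two formulations given in Subsection~\ref{sec:lattices-arithmetic-groups}. Here $\Delta=\iota_\infty(\Lambda)$ and, by definition, $\mathcal R(\Delta)=\mathcal R(\Lambda)$.

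\emph{Step 1: pass to the arithmetic formulation.} Let $\Delta\in\cC_\bK^\infty$. Since $\iota_\infty:\cC_\bK^{\mathrm{rat}}\to\cC_\bK^\infty$ is a bijection, $\Delta=\iota_\infty(\Lambda)$ for a unique $\Lambda\in\cC_\bK^{\mathrm{rat}}$. The hypotheses on $\Delta$ (congruent with respect to some maximal order, and $\mathcal R(\Delta)>\epsilon$) are then, by definition, the same hypotheses on $\Lambda$.

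\emph{Step 2: the arithmetic finiteness.} By Subsection~\ref{sec:lattices-arithmetic-groups}, after conjugating by an element of $\PGL_2(\bK)$ we may assume $\Lambda\subset\PN(\cE)$ for some Eichler order $\cE$ and some maximal order $\cM\supset\cE$ for which $\Gamma$ is congruence. Conjugation does not change $\mathcal R$, since it transports cusps, cusp types and indices. For the normalization $\Lambda\subset\PGL_2^+(\bK)$, I would replace $\Lambda$ by $\Lambda\cap\PGL_2^+(\bK)$; this has index bounded in terms of $\bK$ only, which should change $\mathcal R$ by at most a constant factor. This is the step I am least sure of, and it may instead already be built into the setting. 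Theorem~\ref{thm:quantitative-global-cusp-decay} gives $\epsilon<\mathcal R(\Lambda)\le C N(\fn)^{-c}$, hence $N(\fn)<(C/\epsilon)^{1/c}$. There are only finitely many ideals $\fn$ of bounded norm. For each such $\fn$, Proposition~\ref{prop:fixed-level-finitely-many-lie-classes} gives finitely many $\PGL_2(\bK)$-conjugacy classes. This proves Theorem~\ref{thm:global-field-finiteness-arithmetic}.

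\emph{Step 3: transfer to $G_\infty$.} The statement at hand follows by applying $\iota_\infty$. If $\Lambda_1=g\Lambda_2g^{-1}$ with $g\in\PGL_2(\bK)$, then $\iota_\infty(\Lambda_1)=\iota_\infty(g)\,\iota_\infty(\Lambda_2)\,\iota_\infty(g)^{-1}$ in $G_\infty$. So finitely many $\PGL_2(\bK)$-classes map into finitely many $G_\infty$-classes. Apart from the $\PGL_2^+$ caveat, I expect no real obstacle: all the substance is in Theorem~\ref{thm:quantitative-global-cusp-decay}.
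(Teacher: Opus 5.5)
Your proposal is correct and follows the paper's proof: reduce to the arithmetic statement, bound $N(\fn)$ via Theorem~\ref{thm:quantitative-global-cusp-decay}, apply Proposition~\ref{prop:fixed-level-finitely-many-lie-classes} to each of the finitely many possible levels, and push the conjugacy classes forward along $\iota_\infty$. On your $\PGL_2^+$ caveat, it is the second option: the paper makes $\Lambda\subset\PGL_2^+(\bK)$ a standing assumption from \S\ref{sec:cusps-hirzebruch-multiplicity} onward, since it is needed for $a_\kappa(\Lambda)$, and hence $\mathcal R$, to be defined at all, so you do not need to replace $\Lambda$ by $\Lambda\cap\PGL_2^+(\bK)$.
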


\begin{proof}
It is enough to prove Theorem~\ref{thm:global-field-finiteness-arithmetic}.
Let \(\Lambda\) be as in that theorem. Choose a maximal order with respect to
which \(\Lambda\) is congruent, and let \(\fn\) be the corresponding
congruence level. By Theorem~\ref{thm:quantitative-global-cusp-decay},
\[
  \epsilon<\mathcal R(\Lambda)\le C N(\fn)^{-c}.
\]
Thus \(N(\fn)\) is bounded in terms of \(\bK\) and \(\epsilon\). Hence only
finitely many ideals \(\fn\) can occur. For each fixed \(\fn\),
Proposition~\ref{prop:fixed-level-finitely-many-lie-classes} gives only
finitely many \(\PGL_2(\bK)\)-conjugacy classes. This proves the arithmetic
statement, and the Lie-theoretic statement follows from
Subsection~\ref{sec:lattices-arithmetic-groups}.
\end{proof}

\begin{rem}\label{rem:arithmeticity-and-csp}
Assume that \(\bK\ne\mathbb Q\) and that \(\bK\) is not imaginary quadratic.
Then \(G_\infty\) has real rank at least \(2\). By Margulis' arithmeticity
theorem, every irreducible lattice in \(G_\infty\) is arithmetic
\cite[Theorem~IX.1.11]{Margulis}. If such a lattice is non-cocompact, the
corresponding \(\bK\)-group is \(\bK\)-isotropic. For a quaternion algebra over
\(\bK\), this is equivalent to being split. Thus every non-cocompact
irreducible lattice in \(G_\infty\) is, up to commensurability and
\(G_\infty\)-conjugacy, in the class \(\cC_\bK^\infty\).

Assume moreover that \(\bK\) has a real place. Then Serre's theorem says that every arithmetic subgroup
commensurable with \(\SL_2(\cO_\bK)\) is congruence, see \cite[Théorème~2 and Corollaire~3]{SerreSL2CSP} (cf.\ \cite{BassMilnorSerreCSP}). The same assertion holds
relative to any maximal order. Indeed, let
\(\cM\subset M_2(\bK)\) be maximal. The two \(\cO_\bK\)-lattices
\(\cM\) and \(M_2(\cO_\bK)\) are commensurable. Hence, for every non-zero
ideal \(\fa\), there is a non-zero ideal \(\fb\) such that
\[
\fb\cM\subset \fa M_2(\cO_\bK).
\]
It follows that
\[
\SL(\cM)(\fb)\subset \SL_2(\cO_\bK)(\fa).
\]
Thus the congruence property for \(\SL_2(\cO_\bK)\) implies the congruence
property for \(\SL(\cM)\).

Therefore if $\bK$ is not rational or totally imaginary, then Theorem~\ref{thm:global-field-finiteness-lie} works for arbitrary non-cocompact irreducible lattices. 
\end{rem}

\appendix
\section{Relation with the work of Finis--Lapid}
\label{sec:finis-lapid}
For each fixed number field $\bK$, a qualitative variant of
Theorem~\ref{mainthmB}, with constants allowed to depend on
$\bK$, follows from
\cite[Corollary~5.28]{FinisLapid}.

Let $\mathbb A_{\mathrm{fin}}$ denote the ring of finite adeles of
$\bQ$, namely the restricted product of the fields $\bQ_\ell$ over
all rational primes $\ell$, with respect to the subrings $\bZ_\ell$.
Set $\widehat{\bZ}:=\prod_\ell\bZ_\ell$. For every finite prime
$\mathfrak q$ of $\bK$, let $\bK_{\mathfrak q}$ be the completion of
$\bK$ at $\mathfrak q$, with ring of integers
$\cO_{\bK_{\mathfrak q}}$.

Let $\mathbf G=\operatorname{Res}_{\bK/\bQ}\SL_2$. There is a
canonical identification
\[
\mathbf G(\mathbb A_{\mathrm{fin}})
\simeq
\prod_{\mathfrak q}'\SL_2(\bK_{\mathfrak q}),
\]
where the restricted product is taken with respect to the subgroups
$\SL_2(\cO_{\bK_{\mathfrak q}})$. Choose a $\bZ$-basis of
$\cO_{\bK}$ and let
$\rho_0:\mathbf G\to\GL_{2[\bK:\bQ]}$ be the faithful
$\bQ$-rational representation induced by the action of $\SL_2(\bK)$
on $\bK^2$. The compact open subgroup determined by this
representation is
\[
\mathbf K
:=
\rho_0^{-1}\bigl(\GL_{2[\bK:\bQ]}(\widehat{\bZ})\bigr)
=
\prod_{\mathfrak q}\SL_2(\cO_{\bK_{\mathfrak q}})
\subseteq\mathbf G(\mathbb A_{\mathrm{fin}}).
\]

Let $\mathbf P\subseteq\mathbf G$ be the restriction of scalars of
the standard upper triangular Borel subgroup of $\SL_{2,\bK}$, and
let $\mathbf U$ be its unipotent radical. Under the preceding
identification,
\[
\mathbf U(\mathbb A_{\mathrm{fin}})\cap\mathbf K
=
\prod_{\mathfrak q}
\left\{
\begin{pmatrix}
1&x\\
0&1
\end{pmatrix}
:x\in\cO_{\bK_{\mathfrak q}}
\right\}.
\]

Let $\mathfrak p$ be a prime of $\bK$ above the rational prime $p$,
and let $H\subseteq G_e$. The canonical isomorphism
$\cO_{\bK}/\mathfrak p^e
\simeq
\cO_{\bK_{\mathfrak p}}/
\mathfrak p^e\cO_{\bK_{\mathfrak p}}$
identifies
\[
G_e
\simeq
\SL_2\bigl(
\cO_{\bK_{\mathfrak p}}/
\mathfrak p^e\cO_{\bK_{\mathfrak p}}
\bigr).
\]
Let $\widetilde H_{\mathfrak p}$ be the inverse image of $H$ under
the reduction map
\[
\SL_2(\cO_{\bK_{\mathfrak p}})
\longrightarrow G_e,
\]
and define the open subgroup
\[
\mathcal K_H
:=
\widetilde H_{\mathfrak p}
\times
\prod_{\mathfrak q\neq\mathfrak p}
\SL_2(\cO_{\bK_{\mathfrak q}})
\subseteq\mathbf K.
\]

Give $\mathbf K$ Haar measure of total mass one. On
$\mathbf U(\mathbb A_{\mathrm{fin}})$, take the product Haar measure
for which each subgroup
$\mathbf U(\bK_{\mathfrak q})
\cap\SL_2(\cO_{\bK_{\mathfrak q}})$ has measure one. Consider
\[
I_H
:=
\int_{\mathbf U(\mathbb A_{\mathrm{fin}})}
\int_{\mathbf K}
\mathbf 1_{\mathcal K_H}(k^{-1}uk)\,dk\,du.
\]
Since $\mathcal K_H$ agrees with $\mathbf K$ at every
$\mathfrak q\neq\mathfrak p$, all factors at these primes are equal
to one. At $\mathfrak p$, reduction modulo $\mathfrak p^e$ gives
\[
I_H
=
\frac{1}{|U_e||G_e|}
\sum_{u\in U_e}\sum_{g\in G_e}
\mathbf 1_H(g^{-1}ug).
\]
Burnside's lemma, applied to the action of $U_e$ on the coset space
$G_e/H$, yields
\[
I_H
=
\frac{|U_e\backslash G_e/H|}{[G_e:H]}
=
\frac{|H\backslash G_e/U_e|}{[G_e:H]}
=
\chi_e(H),
\]
where the middle equality follows by inversion in $G_e$.

Assume that $H$ has exact level $e$. Then
$\widetilde H_{\mathfrak p}$ contains
\[
\ker\bigl(
\SL_2(\cO_{\bK_{\mathfrak p}})
\longrightarrow
\SL_2(\cO_{\bK_{\mathfrak p}}/
\mathfrak p^e\cO_{\bK_{\mathfrak p}})
\bigr),
\]
but does not contain the analogous principal congruence subgroup of
depth $e-1$. Let $e_0$ be the ramification index of
$\mathfrak p$ over $p$. Since
$p^m\cO_{\bK_{\mathfrak p}}
=\mathfrak p^{e_0m}\cO_{\bK_{\mathfrak p}}$,
the $p$-part of the rational congruence level of $\mathcal K_H$,
with respect to $\rho_0$, is exactly
$p^{\lceil e/e_0\rceil}$.

The group $\mathbf G$ is semisimple and simply connected, and the
smallest normal $\bQ$-subgroup of $\mathbf G$ containing
$\mathbf U$ is $\mathbf G$. Hence the relative level appearing in
\cite[Corollary~5.28]{FinisLapid} is, in this case, the ordinary
congruence level of $\mathcal K_H$. The cited corollary therefore
gives constants $C_{\bK},\varepsilon_{\bK}>0$, independent of
$\mathfrak p$, $e$, and $H$, such that
\[
\chi_e(H)
=
I_H
\leq
C_{\bK}p^{-\varepsilon_{\bK}\lceil e/e_0\rceil}.
\]
Writing $|\cO_{\bK}/\mathfrak p|=p^f$ and using
$e_0f=[\bK_{\mathfrak p}:\bQ_p]\leq[\bK:\bQ]$, we obtain
\[
p^{-\varepsilon_{\bK}\lceil e/e_0\rceil}
\leq
|\cO_{\bK}/\mathfrak p|^{
-\varepsilon_{\bK}e/[\bK:\bQ]}.
\]
Consequently, there exist constants $C_{\bK},c_{\bK}>0$ such that
\[
\chi_e(H)
\leq
C_{\bK}
|\cO_{\bK}/\mathfrak p|^{-c_{\bK}e}.
\]

\section{Minimal models of Hilbert Modular Surfaces}\label{sec:Hilbert-modular-surfaces}
Let \(\bK\) be a real quadratic field. By a generalized full Hilbert
modular group over \(\bK\) we mean a group of the form
\[
  \Omega_L
  =
  \operatorname{im}\bigl(\operatorname{SL}(L)\to\operatorname{PGL}_2(\bK)\bigr),
\]
where \(L\) is a rank-two projective \(\cO_\bK\)-module and
\[
  \operatorname{SL}(L)
  =
  \{g\in\operatorname{Aut}_{\cO_\bK}(L)\mid\det(g)=1\}.
\]
Equivalently, after writing \(L\simeq\cO_\bK\oplus I\), this is the
projective image of
\[
  \left\{
  \begin{pmatrix}
  a&b\\ c&d
  \end{pmatrix}
  \mid 
  a,d\in\cO_\bK,\ b\in I^{-1},\ c\in I,\ ad-bc=1
  \right\}.
\]

Let $\Gamma$ be a lattice contained in a generalized Hilbert modular group. Then it is necessarily a congruence subgroup, see Remark~\ref{rem:arithmeticity-and-csp}. 
Bailey--Borel's theorem~\cite{BB1966} implies that $X(\Gamma):=(\bH\times \bH)/\Gamma$ is a quasi-projective surface which is projective when $\Gamma$ is cocompact, where  $\bH\subset \bC$ is the upper half plane. 
Hirzebruch~\cite{Hirzebruch1973} discovered how to resolve $X(\Gamma)$ to get a smooth projective surface. There is a projective surface $\overline{X}(\Gamma)$ compactifying $X(\Gamma)$ such that the boundary $\overline{X}(\Gamma)\setminus X(\Gamma)$ is a finite union of isolated points in bijection with cusps of $\Gamma$. The surface $\overline{X}(\Gamma)$ may have two types of isolated singularities: cusps, and cyclic quotient singularities caused by torsion in $\Gamma$. After resolving these singularities of $\overline{X}$ in a minimal way by cycles of rational curves, we obtain a smooth complex projective surface $Y(\Gamma)$ whose isomorphism class only depends on the conjugacy class of $\Gamma$ in $\PSL(2,\bR)^2$.

Assume that \(\Gamma\) is non-cocompact. We use the notation of
Section~\ref{sec:cusps-hirzebruch-multiplicity}; the cusps are cosets
\(\sigma\in\Gamma\backslash\bP^1(\bK)\), and each cusp has a cusp type
\((M_\sigma,V_\sigma)\) and a multiplicity
\[
a_\sigma(\Gamma)=[U_{M_\sigma}^+:V_\sigma]. 
\]

Two complete $\bZ$-modules $M,M'$ in $\bK$ are said to be \emph{strictly equivalent} if $M=\lambda M'$ for some totally positive $\lambda\in \bK_+^*$. Hirzebruch~\cite{Hirzebruch1973} associates to the strict class of the module \(M_\sigma\) a primitive cycle of integers
\[
C_\sigma^\circ=(b_{\sigma,1}^\circ,\ldots,b_{\sigma,s_\sigma}^\circ),
\qquad b_{\sigma,i}^\circ\ge 2,
\]
with at least one \(b_{\sigma,i}^\circ\ge3\). This is the period of the
continued-fraction cycle attached to the convex hull of the totally positive
elements of \(M_\sigma\). By \cite[\S2.5]{Hirzebruch1973}, the minimal
resolution of the cusp singularity corresponding to \(\sigma\) has cyclic
type
\[
C_\sigma=(C_\sigma^\circ)^{a_\sigma}.
\]
Geometrically this means the following. For \(r_\sigma\ge2\), the exceptional divisor over \(\sigma\) is a cycle
\[
E_{\sigma,1}+\cdots+E_{\sigma,r_\sigma}
\]
of smooth rational curves satisfying
\[
E_{\sigma,i}^2=-b_{\sigma,i},\qquad
E_{\sigma,i}E_{\sigma,i+1}=1
\]
cyclically, all other intersections being zero.

\begin{lem}\label{lem:R-stable-lattices-finite}
Let \(R\subset \cO_\bK\) be a subring containing \(1\) such that \(\cO_\bK/R\) is finite. Consider the set of complete \(\mathbb Z\)-lattices \(M\subset \bK\) satisfying $RM\subset M$. There are only finitely many strict equivalence classes of such lattices.
\end{lem}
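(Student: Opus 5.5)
The plan is to reduce from arbitrary $R$-stable lattices to fractional ideals of finitely many orders, and then use finiteness of class groups together with finiteness of $\bK^*/\bK_+^*$.

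\textbf{Step 1: reduction to finitely many orders.} Let $M\subset\bK$ be a complete $\bZ$-lattice with $RM\subset M$. Its multiplier ring is
\[
\cO_M=\{\alpha\in\bK\mid \alpha M\subset M\}.
\]
The proof of Lemma~\ref{lem:cusp-multiplicity-finite} shows that $\cO_M$ is an order in $\bK$ contained in $\cO_\bK$. By hypothesis it also contains $R$. So $\cO_M$ lies between $R$ and $\cO_\bK$, and its image in $\cO_\bK/R$ is an additive subgroup of this finite group. Hence only finitely many orders $\cO$ can arise as $\cO_M$.

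\textbf{Step 2: finitely many homothety classes for a fixed order.} Fix such an order $\cO$. A lattice $M$ with $\cO_M=\cO$ is an $\cO$-module, hence a fractional $\cO$-ideal, though not necessarily invertible. Show that there are only finitely many of these up to multiplication by $\bK^*$, by rescaling into a bounded box.

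Let $\fc$ be the conductor of $\cO$, so $\fc\cO_\bK\subset\cO$. Then $\cO_\bK M$ is a fractional $\cO_\bK$-ideal. Since $\Cl(\bK)$ is finite, after multiplying $M$ by an element of $\bK^*$ we may assume $\cO_\bK M=\fa$, with $\fa$ drawn from a fixed finite set of representatives.

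Now $M\subset\fa$, and also
\[
\fc\fa=\fc\cO_\bK M\subset \cO M=M.
\]
So $M$ lies between $\fc\fa$ and $\fa$. It therefore corresponds to a subgroup of the finite group $\fa/\fc\fa$. This gives finitely many possibilities for each $\fa$, hence finitely many $\bK^*$-homothety classes for each $\cO$.

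(An equivalent route is the Jordan--Zassenhaus/Latimer--MacDuffee finiteness of ideal classes of an order. The conductor sandwich above is self-contained, so I would present that.)

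\textbf{Step 3: from homothety to strict equivalence.} Strict equivalence uses only scalars in $\bK_+^*$. Each $\bK^*$-homothety class is therefore a union of at most $[\bK^*:\bK_+^*]$ strict classes. This index is at most $4$, since $\bK^*/\bK_+^*$ embeds into $\{\pm1\}^2$ via the signs at the two real embeddings. Combining Steps 1--3 proves the lemma.

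\textbf{Main obstacle.} The only delicate point is Step 2: lattices with a given multiplier order need not be invertible ideals of that order, so the usual class group of $\cO$ does not apply directly. The conductor sandwich $\fc\fa\subset M\subset\fa$ avoids this, and I will verify it carefully, in particular the inclusion $\fc\cO_\bK\subset\cO$ and the normalization $\cO_\bK M=\fa$.
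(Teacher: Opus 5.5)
Your proof is correct and is essentially the paper's argument: normalize $\cO_\bK M$ to one of finitely many ideal representatives, and trap $M$ between that ideal and a fixed multiple of it, so that $M$ corresponds to a subgroup of a finite quotient. The paper's version is more economical in two places. It uses a single integer $m$ with $m\cO_\bK\subset R$, so that $m\cO_\bK M\subset RM\subset M$ for every $M$, which makes your Step~1 reduction to multiplier orders unnecessary. It also normalizes directly by narrow class representatives with $\lambda\in\bK_+^*$, which replaces your Step~3.
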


\begin{proof}
Since \(\cO_\bK/R\) is finite, there exists an integer \(m\ge 1\) such that \(m\cO_\bK\subset R\). We fix fractional ideals \( J_1,\ldots,J_h\) representing the narrow ideal classes of \(\bK\).

Let \(M\subset\bK\) be a complete \(\mathbb Z\)-submodule with \(RM\subset M\). Set $J:=\cO_\bK M$. Then \(J\) is a fractional ideal. 
Thus there exist \(i\) and \(\lambda\in\bK_+^*\) such that \(\lambda J=J_i\). 

We have \(M\subset J\) and 
\[
  mJ=m\cO_\bK M\subset RM\subset M.
\]
Multiplying the inclusions \(mJ\subset M\subset J\) by \(\lambda\), we get
\[
  mJ_i\subset \lambda M\subset J_i.
\]
Therefore every strict equivalence class has a representative \(M'\) satisfying 
\(mJ_i\subset M'\subset J_i\) for one of the finitely many \(J_i\).

The possible subgroups \(M'\) with \(mJ_i\subset M'\subset J_i\) correspond to subgroups of the finite abelian group \(J_i/mJ_i\). Hence there are only finitely many such \(M'\) for each \(i\), and therefore only finitely many strict equivalence classes.
\end{proof}

\begin{proof}[Proof of Theorem~\ref{mainthm:large-level-normal-minimal}]
Let \(\Delta_\Gamma\) be the reduced exceptional divisor on \(Y_\Gamma\) obtained from the cusp resolution.

\smallskip\noindent
\emph{Step 1.} 
Choose a non-rational unit \(v\in(\cO_\bK^*)^2\), which is possible by Dirichlet's unit theorem. Set \(R=\bZ[v]\subset\cO_\bK\). Then \(\cO_\bK/R\) is finite. Let
\(\sigma\) be a \(\Gamma\)-cusp, choose \(g\in\SL_2(\bK)\) with
\(g(\infty)=\sigma\). By Lemma~\ref{lem:ambient-multiplier-square} the stabilizer of \(\sigma\) in \(\Omega\) contains an element $ghg^{-1}$ where $h$ has the form
\[
h= \begin{pmatrix}a&b\\0&a^{-1}\end{pmatrix}, \; a^2=v.
\]
Note that \(h u(x)h^{-1}=u(vx)\). Since \(\Gamma\triangleleft\Omega\), if
\(x\in b(\Gamma,g)\), then
\[
  g u(vx)g^{-1}=gh u(x)(gh)^{-1}\in\Gamma.
\]
Hence \(Rb(\Gamma,g)\subset b(\Gamma,g)\).

By Lemma~\ref{lem:R-stable-lattices-finite}, as \(\Omega\), \(\Gamma\), and \(\sigma\) vary, only finitely many strict equivalence classes of the $\bZ$-modules \(b(\Gamma,g)\) can occur; recall that strict equivalence means multiplication by an element of \(\bK_+^*\). The primitive cycle of Hirzebruch's resolution of one cusp depends only on this strict equivalence class. Therefore only finitely many
primitive cusp cycles can occur.

Let these primitive cycles be
\[
  (-b_{1,1},\ldots,-b_{1,\ell_1}),\ldots,
  (-b_{s,1},\ldots,-b_{s,\ell_s}).
\]
Set
\[
  B_\bK
  =
  \max\left\{
  1,\,
  \sum_{j=1}^{\ell_i}(b_{i,j}-2)\mid 1\le i\le s
  \right\}.
\]
For each cusp \(\sigma\), the cusp cycle is obtained by repeating its primitive cycle \(a_\sigma(\Gamma)\) times. If \(C\) is an irreducible component with \(C^2=-b\), then adjunction formula gives \(K_{Y_\Gamma}C=b-2\). Hence \(K_{Y_\Gamma}\Delta_{\Gamma,\sigma} \le B_\bK a_\sigma(\Gamma)\). Summing over the cusps gives
\begin{equation}\label{eq:appendix-hilbert-modular-formula-1}
K_{Y_\Gamma}\Delta_\Gamma   \le   B_\bK \sum_{\sigma\in\Gamma\backslash\bP^1(\bK)} a_\sigma(\Gamma). 
\end{equation}

\smallskip\noindent
\emph{Step 2.} 
The second part of the proof is the same as van der Geer's, see \cite[Theorem~VII.7.19]{VDG}. Assume that \(E\subset Y_\Gamma\) is a $(-1)$-curve. As $\Gamma$ is normal, the quotient group \(G=\Omega/\Gamma\) acts on \(Y_\Gamma\) and the other components of the $G$-orbit of $E$ are all disjoint $(-1)$-curves (this is the place where we use normality). The proofs of \cite[Proposition~7.18]{VDG} and \cite[Theorem~VII.7.19]{VDG} show that the $G$-orbit \(E_1,\ldots,E_r\) satisfies
\[
\sum_{i=1}^r E_i\Delta_\Gamma\ge r+\frac{1}{3} [\Omega:\Gamma].
\]
Then the paragraph after \cite[Equation~VII.(6)]{VDG} shows that \begin{equation}\label{eq:appendix-hilbert-modular-formula-2}
K_{Y_\Gamma}\Delta_\Gamma\ge \frac{1}{3}[\Omega:\Gamma].
\end{equation}
This step uses the fact that the canonical bundle is non negative because $Y_\Gamma$ is not rational.

\smallskip\noindent
\emph{Step 3.} 
Equations \eqref{eq:appendix-hilbert-modular-formula-1} and \eqref{eq:appendix-hilbert-modular-formula-2} are not compatible for large $\fn$ by Theorem~\ref{thm:quantitative-global-cusp-decay}.
\end{proof}

\addtocontents{toc}{\protect\setcounter{tocdepth}{-1}}
\section*{Acknowledgements}
\addtocontents{toc}{\protect\setcounter{tocdepth}{2}}

I thank Song-Yan Xie for related discussions and comments on a previous version. I thank Luanyin Shen for suggestions.

I acknowledge partial support from the French National Research Agency under the projects GAG (ANR-24-CE40-3526-01) and DynAtrois (ANR-24-CE40-1163).

\medskip
\bibliographystyle{amsplain}
\bibliography{references_hilbert} 
\end{document}